\newtheorem{mainthm}{Theorem}
\newtheorem{theorem}{Theorem}[section]
\newtheorem*{theorem*}{Theorem}
\newtheorem{lemma}[theorem]{Lemma}
\newtheorem{proposition}[theorem]{Proposition}
\newtheorem*{proposition*}{Proposition}
\newtheorem*{conjecture*}{Conjecture}
\theoremstyle{definition}
\newtheorem{remark}[theorem]{Remark}
\numberwithin{equation}{section}
\def\bN {\mathbb{N}}
\def\bR {\mathbb{R}}
\def\cW {\mathcal{W}}
\def\cY {\mathcal{Y}}
\def\cZ {\mathcal{Z}}
\def\scrC{\mathscr{C}}
\def\grad {{\nabla}}
\def\la {\langle}
\def\ra {\rangle}
\def\wto {{\rightharpoonup}}
\newcommand{\tx}[1]{\mathrm{#1}}
\newcommand{\wt}[1]{\widetilde{#1}}
\newcommand{\bs}[1]{\boldsymbol{#1}}
\newcommand{\ud}{\mathrm{\,d}}
\newcommand{\vd}{\mathrm{d}}
\newcommand{\dd}[1]{{\frac{\vd}{\vd{#1}}}}
\newcommand{\pd}[1]{{\frac{\partial}{\partial{#1}}}}
\newcommand{\enorm}{{\dot H^1\times L^2}}
\newcommand{\uln}[1]{{\underline{ #1 }}}
\newcommand{\ulstar}{{u\lin\!\!^*}}
\newcommand{\Mod}{\mathrm{Mod}}
\newcommand{\Int}{\mathrm{Int}}
\newcommand{\lin}{{_{\textsc l}}}
\title{Construction of type II blow-up solutions for the energy-critical wave equation in dimension 5}
\author{Jacek Jendrej}
\address{Department of Mathematics, University of Chicago, 5734 S. University Avenue, Chicago, IL 60637}
\email{jacek@math.uchicago.edu}
\keywords{blow-up; critical nonlinearity; soliton}
\begin{document}
%\begin{center} \noindent
%\vskip 0.6 cm
%Jacek Jendrej, Yvan Martel, Frank Merle
%{\footnotesize \\CMLS Ecole Polytechnique, Palaiseau,  France \\ {\tt jacek.jendrej@polytechnique.edu }}
%
%\vskip 0.3cm
%\today
%\end{center}
%\vskip 0.2 cm
\begin{abstract}
We consider the semilinear wave equation with focusing energy-critical nonlinearity in space dimension $N=5$
\begin{equation*}
  \partial_{tt}u = \Delta u + |u|^{4/3}u,
\end{equation*}
with radial data. It is known \cite{DKM2} that a solution $(u, \partial_t u)$ which blows up at $t = 0$
in a neighborhood (in the energy norm) of the family of solitons $W_\lambda$, decomposes in the energy space as
\begin{equation*}
  (u(t), \partial_t u(t)) = (W_{\lambda(t)} + u^*_0, u^*_1) + o(1),
\end{equation*}
where $\lim_{t\to 0}\lambda(t)/t = 0$ and $(u^*_0, u^*_1) \in \dot H^1\times L^2$.
We construct a blow-up solution of this type
such that the asymptotic profile $(u^*_0, u^*_1)$ is any pair of sufficiently regular functions with $u_0^*(0) > 0$.
For these solutions the concentration rate is $\lambda(t) \sim t^4$.
We also provide examples of solutions with concentration rate $\lambda(t) \sim t^{\nu + 1}$ for $\nu > 8$,
related to the behaviour of the asymptotic profile near the origin.
\end{abstract}
\maketitle{}

%-------------------------INTRODUCTION------------------------------------------%
\section{Introduction}
\subsection{General setting}
We are interested in the problem of constructing type II blow-up solutions for the energy-critical wave equation in space dimension $N=5$:
\begin{equation*}
  \partial_{tt}u(t, x) = \Delta u(t, x) + |u(t, x)|^{4/3}u(t, x),\qquad (t, x) \in \bR\times \bR^5.
\end{equation*}
Denote $f(u) := |u|^{4/3}u$. It will be convenient to write the wave equation as a first-order in time system:
\begin{empheq}{equation}
  \label{eq:nlw}
  \left\lbrace
  \begin{aligned}
  \pd t
  \begin{pmatrix}
    u \\ \partial_t u
  \end{pmatrix}
  &=
  \begin{pmatrix}
    \partial_t u \\
    \Delta u + f(u)
  \end{pmatrix}, \\
  \begin{pmatrix}
    u(t_0) \\ \partial_t u(t_0)
  \end{pmatrix}
  &=
  \begin{pmatrix}
    u_0 \\ u_1
  \end{pmatrix} \in \dot H^1(\bR^5)\times L^2(\bR^5).
\end{aligned}\right.
\tag{NLW}
\end{empheq}
This equation is locally well-posed in the energy space $\dot H^1\times L^2$ (see for example \cite{KeMe08} and the references therein).
In particular, for any initial data $(u_0, u_1)$ there exists a maximal interval of existence $(T_-, T_+)$, $-\infty \leq T_- < t_0 < T_+ \leq +\infty$,
and a unique solution $(u, \partial_t u) \in C((T_-, T_+); \enorm) \cap L_\tx{loc}^{\frac{7}{3}}((T_-, T_+); L^\frac{14}{3}(\bR^5))$.
This solution conserves the energy:
\begin{equation*}
  E(u(t), \partial_t u(t)) = \frac 12 \int|\partial_t u|^2\ud x + \frac 12\int|\grad u|^2\ud x - \int F(u)\ud x = E(u_0, u_1),
\end{equation*}
where $F(u) = \int f(u)\ud u = \frac{3}{10}|u|^{10/3}$
(notice that $\int F(u)\ud x$ is finite by the Sobolev Embedding Theorem).

For a function $v: \bR^5 \to \bR$ and $\lambda > 0$, we denote
\begin{equation*}
  v_\lambda(x) := \frac{1}{\lambda^{3/2}} v\big(\frac{x}{\lambda}\big), \qquad v_\uln\lambda(x) := \frac{1}{\lambda^{5/2}} v\big(\frac{x}{\lambda}\big).
\end{equation*}
A change of variables shows that
\begin{equation*}
  E\big((u_0)_\lambda, (u_1)_\uln\lambda\big) = E(u_0, u_1).
\end{equation*}
Equation \eqref{eq:nlw} is invariant under the same scaling. If $(u, \partial_t u)$ is a solution of \eqref{eq:nlw} and $\lambda > 0$, then
$$
t \mapsto \Big(u\big(\frac{t}{\lambda} + t_0\big)_\lambda,\,\partial_t u\big(\frac{t}{\lambda} + t_0\big)_\uln\lambda\Big)
$$ is also a solution
with initial data $\big((u_0)_\lambda, (u_1)_\uln\lambda\big)$ at time $t = 0$.
This is why equation \eqref{eq:nlw} is called \emph{energy-critical}.

We introduce also the infinitesimal generators of scale change:
\begin{equation*}
  \begin{aligned}
  \Lambda v &:= -\pd \lambda v_\lambda \Big\vert_{\lambda = 1} = \big(\frac 32 + x\cdot\grad\big)v, \\
  \Lambda_0 v &:= -\pd \lambda v_\uln\lambda \Big\vert_{\lambda = 1} = \big(\frac 52 + x\cdot\grad\big)v.
\end{aligned}
\end{equation*}

A fundamental object in the study of \eqref{eq:nlw} is the family of solutions $(u, \partial_t u) = (W_\lambda, 0)$, where
\begin{equation*}
  W(x) = \Big(1 + \frac{|x|^2}{15}\Big)^{-3/2}.
\end{equation*}
The functions $W_\lambda$ are called \emph{ground states}. % They achieve the optimal constant in the critical Sobolev inequality, see Aubin \cite{aubin}, Talenti \cite{talenti}.
In this paper we are interested in radial solutions $(u, \partial_t u)$ of \eqref{eq:nlw} such that $\inf_\lambda\|(u - W_\lambda, \partial_t u)\|_\enorm$
remains small for $T_- < t \leq t_0$. In the case $N = 3$ it was proved by Krieger, Nakanishi and Schlag \cite{KrNaSc15} that such solutions form a codimension one manifold
in a neighborhood of the family $\{W_\lambda\}$. This is expected to hold also for $N = 5$.
The asymptotic behavior of such (not necessarily radial) solutions as $t \to T_-$ was described by Duyckaerts, Kenig and Merle in \cite{DKM2}, both in the case $T_- = -\infty$ and $T_- > -\infty$.
In the second case, which is relevant for us, they obtain the following result.
\begin{theorem*}\cite[Theorem 2]{DKM2}
%  \label{thm:dkm1}
  Let $(u, \partial_t u)$ be a solution of \eqref{eq:nlw} such that $T_- = 0$ and $\inf_\lambda\|(u - W_\lambda, \partial_t u)\|_\enorm$
  remains small for $T_- < t \leq T_0$. Then there exists a $C^0$ function $\lambda(t): (0, T_0) \to (0, +\infty)$,
  such that
  \begin{equation}
    \label{eq:1bubble}
    \lim_{t\to 0^+} \big(u(t) - W_{\lambda(t)}, \partial_t u(t)\big) = (u_0^*, u_1^*) \in \enorm,
  \end{equation}
  and the convergence is strong in $\enorm$. In addition, $\lambda(t) \ll t$ as $t \to 0^+$.
\end{theorem*}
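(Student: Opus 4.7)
The plan is to proceed in four steps: modulation, ruling out the self-similar regime, extraction of an exterior profile, and upgrading to strong convergence. First, since $\inf_\lambda \|(u(t) - W_\lambda, \partial_t u(t))\|_\enorm$ is small on $(0,T_0)$ and $\partial_\lambda W_\lambda|_{\lambda=1} = -\Lambda W$ is a nonzero direction in $\dot H^1$, a routine application of the implicit function theorem yields a continuous (in fact $C^1$) parameter $\lambda(t)$ such that the remainder $g(t) := u(t) - W_{\lambda(t)}$ is orthogonal to $\Lambda W_{\lambda(t)}$ in the $\dot H^1$ pairing. The small-infimum assumption then ensures $\|(g(t), \partial_t u(t))\|_\enorm$ itself is small uniformly in $t$.

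Next, I would rule out $\lambda(t) \gtrsim t$ by contradiction. If $\lambda(t_n)/t_n \geq \eta > 0$ along some sequence $t_n \to 0^+$, then rescaling the solution at each $t_n$ by $\lambda(t_n)$ produces a sequence of solutions defined on uniformly large backward time intervals and staying close to the family $\{W_\mu\}$. Extracting a limit via a Bahouri--G\'erard profile decomposition yields a nontrivial solution on $(-\infty,0]$ that concentrates at the origin in finite time while staying in a small neighborhood of $\{W_\mu\}$; the Kenig--Merle--type rigidity classification of such compactness objects near $W$ forces it to be a stationary scaling of $W$ itself, contradicting the assumed blow-up at $t = 0$.

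Then I would extract the limit $(u_0^*, u_1^*)$ using finite speed of propagation. For each fixed $r_0 > 0$, the restriction of $(u(t), \partial_t u(t))$ to $\{|x| > r_0 + t\}$ depends only on data outside the backward light cone, and since $\lambda(t) \to 0$ by the previous step, the contribution of $W_{\lambda(t)}$ on any such exterior region tends to zero in $\dot H^1 \times L^2$. Combined with weak compactness in $\enorm$ and diagonal extraction over a decreasing sequence of radii $r_0 \to 0^+$, this yields a weak limit $(u_0^*, u_1^*) \in \enorm$ that is in fact a strong limit on each exterior region $\{|x| > r_0\}$.

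The main obstacle is the final step: upgrading weak to strong convergence on all of $\bR^5$, i.e.\ ruling out energy loss concentrated at scales between $\lambda(t)$ and $1$. I would apply a Bahouri--G\'erard profile decomposition to $(u(t_n) - W_{\lambda(t_n)} - u_0^*, \partial_t u(t_n) - u_1^*)$ along $t_n \to 0^+$. Each concentrating profile evolves into a nonlinear profile which, by the channels-of-energy estimate available for the free wave equation in odd dimension $N=5$, must radiate a nontrivial fixed fraction of its energy to spatial infinity; this radiation contradicts either the smallness of the distance to the soliton family $\{W_\mu\}$ (combined with the orthogonality and coercivity near $W$) or the strong exterior convergence established in the previous step. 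Consequently all profiles and the residual free-wave remainder must vanish, yielding strong convergence in $\enorm$. This rigidity-via-channels-of-energy argument in the critical odd-dimensional setting is the technical heart of the proof.
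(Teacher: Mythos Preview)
The paper does not prove this statement at all: it is quoted verbatim as \cite[Theorem 2]{DKM2} and used as background to motivate the construction that follows. There is therefore no ``paper's own proof'' to compare against.

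That said, your sketch is a faithful outline of the Duyckaerts--Kenig--Merle argument in \cite{DKM2}: modulation via the implicit function theorem, ruling out the self-similar regime $\lambda(t)\gtrsim t$ by compactness/rigidity, extraction of the exterior profile by finite speed of propagation, and then upgrading to strong convergence via profile decomposition combined with the odd-dimensional channels-of-energy estimate. One caveat: in step two you invoke a limiting solution on $(-\infty,0]$ and appeal to rigidity, but the actual argument in \cite{DKM2} is more delicate --- the channels-of-energy mechanism is already used at this stage, not just Kenig--Merle rigidity, and the contradiction comes from energy escaping the light cone rather than from classifying a compact limit. Your step four is the correct idea but is stated somewhat loosely; the genuine work lies in showing that any additional profile would violate the exterior energy bound, and this requires the specific structure of the radial channels estimate in dimension $5$ together with careful handling of the interaction between $W_{\lambda(t)}$ and the profiles.
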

In this context, $W_\lambda$ is called the \emph{bubble of energy} and $(u_0^*, u_1^*)$ is called the \emph{asymptotic profile}.

Solutions of this type were first constructed by Krieger, Schlag and Tataru \cite{KrScTa09} in space dimension $N = 3$,
where it is shown that for any $\nu > 1/2$ there exists a solution such that the concentration speed is $\lambda(t) \sim t^{1+\nu}$
(later Krieger and Schlag \cite{KrSc14} improved this to $\nu > 0$).
Similar results where obtained for energy-critical wave maps by the same authors \cite{KrScTa08},
for energy-critical NLS in dimension $N = 3$ by Ortoleva and Perelman \cite{OrPe13}
and for energy-critical Schr\"odinger maps by Perelman \cite{Perelman14}.
Using a different approach, Hillairet and Rapha\"el \cite{HiRa12} obtained $C^\infty$ blow-up solutions for energy-critical wave equation in dimension $N = 4$
with blow-up rate $\lambda(t) = t\exp\big({-}\sqrt{-\log t}(1+o(1))\big)$.
Collot \cite{Collot14p} obtained a related result for supercritical wave equation in large dimension.

It follows from the classification of solutions with energy $E(W)$ by Duyckaerts and Merle \cite{DM08} that necessarily $(u_0^*, u_1^*) \neq 0$.
In other words, we have non-existence of minimal energy blow-up solutions. An analogous result is true also for
energy-critical wave maps, energy-critical Schr\"odinger maps and energy-critical NLS.

This is in contrast with the $L^2$-critical NLS where the conformal invariance produces explicit solutions
concentrating a bubble of mass and tending weakly to 0 at blow-up. Existence of blow-up solutions with a non-zero smooth asymptotic profile
was first observed by Bourgain and Wang \cite{BoWa97}. Blow-up solutions close to the ground state in the case of $L^2$-critical NLS were extensively
studied in a series of papers by Merle and Rapha\"el. They examined in particular the relationship between regularity of the asymptotic profile
and the blow-up speed. One can consult a survey \cite{MMRS14} for an account of these results in a proper perspective
and a presentation of some recent developments in the case of $L^2$-critical gKdV.

\subsection{Main results}
The aim of this paper is to construct solutions which blow up by concentration of one bubble of energy in space dimension $N = 5$.
Our approach differs substantially from \cite{KrScTa09} in that it produces a blow-up solution with a given asymptotic profile.
This profile is seen as a source term which permits concentration of the bubble.
This point of view is close to a recent construction by Martel, Merle and~Rapha\"el \cite{MMR15-3} in the case of $L^2$-critical gKdV.

Denote $X^s := \dot H^{s+1}\cap \dot H^1$. We prove the following two results.
\begin{mainthm}
  \label{thm:non-deg}
  Let $(u^*_0, u^*_1) \in X^4\times H^4$ be any pair of radial functions with $u^*_0(0) > 0$.
  Let $(u^*(t), \partial_t u^*(t))$ be the solution of \eqref{eq:nlw} with the initial data $(u^*(0), \partial_t u^*(0)) = (u^*_0, u^*_1)$.
  There exists a solution $(u, \partial_t u)$
  of \eqref{eq:nlw} defined on a time interval $(0, T_0)$ and a $C^1$ function $\lambda(t): (0, T_0) \to (0, +\infty)$ such that
  \begin{equation}
    \label{eq:blowup-1}
    \|(u(t) - W_{\lambda(t)} - u^*(t),\ \partial_t u(t) +\lambda_t(t) (\Lambda W)_{\uln{\lambda(t)}}- \partial_t u^*(t))\|_\enorm = O(t^{9/2})\qquad \text {as }t \to 0^+,
  \end{equation}
  and $\lambda(t) = \big(\frac{32}{315\pi}\big)^2\big(u^*(0, 0)\big)^2 t^4 + o(t^4)$.
\end{mainthm}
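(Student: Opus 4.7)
The plan is to construct the solution by a modulated ansatz and control the error with an energy method. Write
\[
(u,\partial_t u)(t) = (u^*,\partial_t u^*)(t) + \bigl(W_{\lambda(t)},\ -\lambda_t(t)(\Lambda W)_{\uln{\lambda(t)}}\bigr) + (V,\partial_t V)(t) + (\varepsilon,\partial_t\varepsilon)(t),
\]
where $\lambda(t)>0$ is a $C^1$ modulation parameter, $V = V_1+V_2+\cdots+V_k$ is an explicit radial correction, and $\varepsilon$ is the error. Since $u^*$ solves \eqref{eq:nlw} and $W_\lambda$ is stationary, the residual reduces to terms generated by the $t$-dependence of $\lambda$ (at leading order $\partial_{tt}W_{\lambda(t)}$) and to the nonlinear coupling
\[
f(W_\lambda + u^* + V) - f(W_\lambda) - f(u^*) - f'(W_\lambda)V.
\]
Because $u^*$ is radial and smooth, $u^*(t,x) = u^*(t,0) + O(|x|^2)$ on the bubble scale $|x|\sim \lambda \ll 1$, and the dominant part of the coupling is $\tfrac{7}{3}u^*(t,0) W_\lambda^{4/3}$. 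Each $V_j$ is then defined by solving an elliptic equation $L_\lambda V_j = G_j$, where $L_\lambda = -\Delta - f'(W_\lambda)$ is the rescaled linearization and $G_j$ is the residual truncated at order $j-1$.

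The heart of the proof is the modulation ODE, obtained from solvability of this elliptic hierarchy. The radial sector of $L := L_1$ has a one-dimensional kernel spanned by the scaling mode $\Lambda W$, and in dimension $N=5$ one has $\Lambda W \in L^2(\bR^5)$ thanks to the decay $|\Lambda W(x)| \lesssim (1+|x|)^{-3}$; this square-integrability, which fails for $N=3,4$, is the dimensional feature that makes a polynomial rate possible here. Self-adjointness then forces the orthogonality condition
\[
\int G_j\, \Lambda W_\lambda\, dx = 0
\]
at each order. Enforcing it at first order with the forcing $\tfrac{7}{3} u^*(t,0) W_\lambda^{4/3}$ and the contributions from $\partial_{tt}W_{\lambda(t)}$ will produce at leading order an ODE of the shape $(\sqrt{\lambda})_{tt} = \kappa\, u^*(t,0) + (\text{l.o.t.})$, with $\kappa$ an explicit ratio of integrals of $W$. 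Double integration using $\lambda(0^+) = \lambda_t(0^+) = 0$ then yields
\[
\lambda(t) = \Big(\tfrac{32}{315\pi}\Big)^{\!2}\bigl(u^*_0(0)\bigr)^2 t^4 + o(t^4),
\]
and the hypothesis $u^*_0(0)>0$ guarantees positivity of the leading term, hence $\lambda(t)>0$.

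With the approximate solution $u^*+W_\lambda+V$ constructed, the error $\varepsilon$ is controlled by an energy estimate combined with a bootstrap. A modulation condition fixes $\lambda(t)$ so that $\varepsilon(t)$ is orthogonal to $\Lambda W_{\lambda(t)}$, eliminating the null direction; coercivity of $L_\lambda$ on this orthogonal complement then controls $\|\varepsilon\|_\enorm$ by an energy-type functional, up to cubic remainders. A further difficulty is that $L$ carries a negative eigenvalue in the radial sector, producing one unstable direction; this is handled by a topological shooting in the spirit of Merle--Rapha\"el and \cite{MMR15-3}. Concretely, at each time $t_n\to 0^+$ one prescribes data equal to the approximate solution plus a one-parameter perturbation along the unstable direction, and a Brouwer-type continuity argument selects a parameter for which the unstable projection of $\varepsilon$ remains of the expected size on $(t_n,T_0)$. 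Uniform energy bounds and a compactness argument then produce the desired solution on $(0,T_0)$.

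The main obstacle will be the quantitative algebra of the modulation expansion: to achieve a residual of size $O(t^{9/2})$ in the energy norm as demanded by \eqref{eq:blowup-1}, one must iterate the construction of $V$ to order $k\approx 4$, and each $V_j$ depends on spatial derivatives of $u^*$ up to order comparable to $j$, which is the origin of the regularity assumption $(u^*_0,u^*_1)\in X^4\times H^4$. Closing the energy bootstrap at this level requires careful tracking of the slow spatial decay of the successive $V_j$ and of the exact powers of $\lambda(t)$ in every cross-term of the residual; this technical bookkeeping is expected to be the most delicate part of the argument.
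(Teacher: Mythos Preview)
Your outline captures the overall architecture used in the paper---modulated ansatz, formal modulation ODE leading to $\lambda\sim t^4$, shooting for the unstable mode, compactness via a sequence $t_n\to 0^+$---but it misses one essential mechanism and misidentifies the role of the regularity hypothesis.

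The main gap is in the energy step. You write that coercivity of $L_\lambda$ on the orthogonal complement controls $\|\bs\varepsilon\|_\enorm$ ``by an energy-type functional, up to cubic remainders.'' This is not enough to close the bootstrap. When you differentiate the natural energy
\[
I(t)=\int \tfrac12|\varepsilon_1|^2+\tfrac12|\nabla\varepsilon_0|^2-\bigl(F(\varphi_0+\varepsilon_0)-F(\varphi_0)-f(\varphi_0)\varepsilon_0\bigr)\,dx,
\]
the time derivative of the potential part produces a quadratic term of size $\tfrac{|\lambda_t|}{\lambda}\|\bs\varepsilon\|_\enorm^2$, and in the regime $\lambda\sim t^{4}$ one has $|\lambda_t|/\lambda\sim 4/t$ with a constant that is \emph{not} small. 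Gronwall then only gives $\|\bs\varepsilon\|\lesssim t^{-C}$, not the required $t^{9/2}$. The paper resolves this by adding a virial correction
\[
J(t)=b\int\varepsilon_1\Bigl(\tfrac{1}{2\lambda}(\Delta a)_\lambda+(\nabla a)_\lambda\cdot\nabla\Bigr)\varepsilon_0\,dx,
\]
with $a$ a truncated $\tfrac12|x|^2$, so that after differentiating at scale $\lambda$ and integrating by parts the bad $\tfrac1t$-term is converted, via a localized Pohozaev identity and a localized coercivity lemma for $L$, into a term with a \emph{favorable sign}. This mixed energy--virial functional (from Rapha\"el--Szeftel) is the key analytic device and is absent from your plan.

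A secondary point: the paper does \emph{not} iterate the profile construction to high order. A single correction layer $(P_0,P_1)$, built from two profiles $A,B$ solving $LA=\kappa\Lambda W+f'(W)$ and $LB=-\Lambda_0\Lambda W$, already gives a residual of size $t^{7/2}$ in $L^2$, which suffices. The hypothesis $(u_0^*,u_1^*)\in X^4\times H^4$ is not there to feed spatial derivatives into successive $V_j$; it is used, via local persistence of regularity and Schauder estimates, to ensure that $t\mapsto u^*(t,0)$ is $C^2$ in time, since $\partial_{tt}v^*(t)$ appears when one differentiates $P_1$.
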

\begin{mainthm}
  \label{thm:deg}
  Let $\nu > 8$. There exists a solution $(u, \partial_t u)$ of \eqref{eq:nlw}
  defined on the time interval $(0, T_0)$  such that
  \begin{equation}
    \label{eq:blowup-2}
    \lim_{t \to 0^+}\big\|\big(u(t) - W_{\lambda(t)} - u^*_0,\ \partial_t u(t) - u^*_1\big)\big\|_\enorm = 0,
  \end{equation}
  where $\lambda(t) = t^{\nu+1}$, and $(u^*_0, u^*_1)$ is an explicit radial $C^2$ function.
\end{mainthm}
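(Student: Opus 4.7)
The strategy parallels the construction for Main Theorem~1 with two key differences: the concentration rate $\lambda(t) = t^{\nu+1}$ is imposed (rather than solved for by modulation), and the asymptotic profile $(u^*_0, u^*_1)$ is designed as part of the proof so as to be compatible with this rate. The formal modulation identity underlying Main Theorem~1 reads, schematically,
\begin{equation*}
  \lambda_{tt}(t)\,\|\Lambda W\|_{L^2}^2 \sim \lambda(t)^{1/2}\, c_0\, u^*(t, 0),
\end{equation*}
obtained by projecting the error generated by $\partial_{tt}W_{\lambda(t)}$ against the interaction term $f'(W_{\lambda}) u^*$ onto the scaling mode $(\Lambda W)_{\uln{\lambda}}$. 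Substituting $\lambda(t) = t^{\nu+1}$ forces $u^*(t, 0) \sim t^{(\nu-3)/2}$, prescribing an explicit order of vanishing of the profile at the origin.

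Accordingly, I would take $(u^*_0, u^*_1)$ to be an explicit radial profile whose near-origin behavior is $u^*_0(x) = c\,|x|^{(\nu-3)/2}\chi(|x|)$ and $u^*_1(x) = 0$, with $\chi$ a smooth compactly supported cutoff equal to $1$ near the origin and $c$ a constant fixed by matching with the identity above. The condition $\nu > 8$ is natural: it corresponds precisely to the threshold $(\nu-3)/2 > 5/2$ beyond which a radial $|x|^\alpha$ belongs to $\dot H^5(\bR^5)$, so that $(u^*_0, u^*_1) \in X^4\times H^4$ and the energy estimates developed for Main Theorem~1 can be imported. The resulting profile is manifestly $C^2$ but not $C^3$, consistent with the statement. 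By finite speed of propagation and radial scaling, the evolution $u^*(t, x)$ of \eqref{eq:nlw} with this initial datum satisfies $u^*(t, 0) \sim c'\, t^{(\nu-3)/2}$, exactly as demanded by the modulation identity.

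With the profile now explicit, I would construct an approximate solution
\begin{equation*}
  U_{\tx{app}}(t) = W_{\lambda(t)} + u^*(t) + \sum_{j=1}^{J} V_j(t),
\end{equation*}
where each $V_j$ is a bubble-localized correction designed to cancel the next resonant term in the error produced by inserting $W_{\lambda(t)} + u^*(t)$ into \eqref{eq:nlw}. Each correction gains a further power of $t$, and finitely many of them suffice to bring the $\enorm$-error down to $O(t^p)$ for some $p > 0$ depending on $\nu$. Since $\lambda(t)$ is now rigid, no modulation is performed on it; I would instead solve for the remainder $\epsilon(t) = u(t) - U_{\tx{app}}(t)$ with $\epsilon(t) \to 0$ in $\enorm$ as $t \to 0^+$ by a backward-in-time fixed-point argument, propagating energy estimates for the linearization around $W_{\lambda(t)}$ from a small time $T_0$ down to $0$.

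The principal obstacle is the coercivity of the energy functional for this linearization: the kernel of the linearized operator contains the scaling mode $(\Lambda W)_{\uln{\lambda}}$, and one must either correct the energy functional or impose an orthogonality condition on $\epsilon$ to restore coercivity. The threshold $\nu > 8$ plays a second role here, controlling the integrability in time of the source terms in the backward Gr\"onwall estimate and ensuring that $\epsilon(t) \to 0$ in $\enorm$ as $t \to 0^+$, whence the blow-up behavior \eqref{eq:blowup-2} follows.
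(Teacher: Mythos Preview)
Your identification of the asymptotic profile is correct: the paper takes exactly $u^*_0(x) = \chi(|x|/\rho)\,p|x|^{(\nu-3)/2}$ with $u^*_1 = 0$ and a specific constant $p$ fixed by the formal modulation identity you wrote down. Beyond that, however, your plan diverges from the paper in two substantive ways, and contains one genuine gap.

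\textbf{Modulation versus rigid $\lambda$.} You propose to fix $\lambda(t)=t^{\nu+1}$ from the outset and run a backward fixed-point scheme. The paper does the opposite: it keeps $\lambda(t)$ and an auxiliary parameter $b(t)$ as genuine modulation parameters, evolved by \eqref{eq:lambda} and \eqref{eq:b_t}, and runs exactly the same energy--virial bootstrap and compactness argument (Sections~\ref{sec:everr}--\ref{sec:shooting}) as in the non-degenerate case. Only a posteriori does one check that the modulated $\lambda$ satisfies $\lambda(t)=t^{\nu+1}(1+o(1))$, after which $W_{\lambda(t)}$ may be replaced by $W_{t^{\nu+1}}$ in the final statement. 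Your rigid-$\lambda$ route is closer in spirit to Krieger--Schlag--Tataru and could perhaps be made to work, but it would require different machinery, not an ``import'' of the Theorem~1 estimates.

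\textbf{The role of $\nu>8$.} Your claim that $\nu>8$ is the threshold for $(u^*_0,u^*_1)\in X^4\times H^4$ is numerologically correct but is not why the paper needs it. In the degenerate case the paper explicitly works only with $X^1\times H^1$ regularity (see the opening of Section~\ref{sec:deg} and Propositions~\ref{prop:u-lin}--\ref{prop:u-nonlin}), because higher regularity of $u^*$ near the origin is unavailable in the form used for Theorem~1. The condition $\nu>8$ enters instead through the bootstrap: the error exponent is $\gamma=\tfrac{7}{6}\nu-\tfrac{7}{3}$, and the argument requires $\gamma+1>\nu$ so that $\|\bs\varepsilon\|_\enorm\ll b$ and $\|\bs\varepsilon\|_\enorm\ll \lambda/t$; this is exactly $\nu>8$.

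\textbf{The unstable eigendirection.} You address the scaling kernel $(\Lambda W)_{\uln\lambda}$ but say nothing about the strictly negative eigenvalue $-e_0^2$ of $L$ with eigenfunction $\cY$. This direction is genuinely unstable for the forward flow, and no amount of orthogonality or energy correction removes it. The paper handles it by a one-parameter shooting argument on the coefficient $\alpha^+$ (Lemma~\ref{lem:modstable} and Proposition~\ref{prop:eps-boot}). A backward fixed-point scheme as you describe would face the mirror problem: the forward-stable direction $\alpha^-$ becomes backward-unstable, and you would still need a topological selection of data at $T_0$. As written, your plan does not account for this, and without it the construction cannot close.
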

We will refer to the situation of Theorem 
\ref{thm:non-deg} as the \emph{non-degenerate case} and to the situation of Theorem 
\ref{thm:deg} as the \emph{degenerate case}.
Note that in Theorem 
\ref{thm:non-deg} we allow any regular $(u_0^*, u_1^*)$ with $u_0^*(0) > 0$.
Our result might be seen as a first step in a possible classification of all blow-up solutions with a non-degenerate asymptotic profile.
Theorem 
\ref{thm:deg} demonstrates how the asymptotic behaviour of $(u^*_0, u^*_1)$ at $x = 0$ influences the blow-up speed.
The condition $\nu > 8$ is imposed by our method. It could be improved at the cost of some technical details,
but we are far from obtaining the whole range $\nu > 0$ as in \cite{KrSc14} for $N = 3$.

Let us mention that radiality is only a simplifying assumption. A similar construction should be possible also for non-radial $(u^*_0, u^*_1)$.

In Theorem 
\ref{thm:deg}, the function $u^*_0$ is given explicitely by \eqref{eq:ustar-deg} and $u^*_1 = 0$.
It follows from our proof that there exists a $C^1$ function $\wt\lambda(t): (0, T_0) \to (0, +\infty)$
such that $\wt\lambda(t) = t^{\nu+1} + o(t^{\nu+1})$ and the solution $(u, \partial_t u)$ satisfies
\begin{equation}
  \label{eq:blowup-2prim}
  \big\|\big(u(t) - W_{\wt\lambda(t)} - u^*(t),\ \partial_t u(t) + \wt\lambda_t(t) (\Lambda W)_{\uln{\wt\lambda(t)}}- \partial_t u^*(t)\big)\big\|_\enorm
  = O(t^{\frac 76 \nu - \frac 43}).
\end{equation}

\subsection{Structure of the proof}
\label{ssec:structure}

The first part of the proof consists in obtaining an accurate enough approximate solution, or \emph{ansatz}, $(\varphi_0(t), \varphi_1(t))$.
This ansatz depends on the modulation parameter $\lambda(t)$ and an auxiliary parameter $b(t)$, and is close in the energy space $\dot H^1 \times L^2$
to $(W_{\lambda(t)} + u_0^*, u_1^*)$.

A natural way to define such an ansatz is to apply a version of the separation of variables method.
This leads to a solvability condition yielding a system of differential equations for $\lambda(t)$ and $b(t)$.
The derivation of these equations at a formal level is presented in Section~\ref{sec:formal}.
It explains the relation between the asymptotic behaviour of $(u_0^*, u_1^*)$ and the blow-up speed,
as well as the relevance of the condition $u^*_0(0) > 0$.

In Section~\ref{sec:non-deg} we give a precise definition of $(\varphi_0(t), \varphi_1(t))$ in the non-degenerate case.
We prove bounds on the error of this approximate solution under some assumptions on the parameters $\lambda$ and $b$
which will be satisfied when we will use these bounds in the second part of the proof.
In particular, we assume that $\lambda(t)$ and $b(t)$ approximately solve the differential equations
found in the preceding section.

In Section~\ref{sec:deg} we choose $(u^*_0(0), u^*_1(0))$ such that the differential equations for $\lambda$ and $b$
obtained in Section~\ref{sec:formal} lead to $\lambda(t) \sim t^{1+\nu}$. Next, we repeat the procedure from Section~\ref{sec:non-deg}.

In Section~\ref{sec:everr} we analyze solutions $(u, \partial_t u)$ of \eqref{eq:nlw} such that $(u, \partial_t u) = (\varphi_0, \varphi_1) + (\varepsilon_0, \varepsilon_1)$
with $\|(\varepsilon_0, \varepsilon_1)\|_\enorm$ small. In order to control $\|(\varepsilon_0, \varepsilon_1)\|_\enorm$,
we use the energy functional $I(t)$, which is simply the energy of $(u, \partial_t u)$ with the terms of order $0$ and $1$ in $(\varepsilon_0, \varepsilon_1)$ removed.
Using classical variational properties of $W$ one can show that, modulo eigendirections of the flow (which can be easily controlled),
we have $\|(\varepsilon_0, \varepsilon_1)\|_\enorm^2 \lesssim I(t)$.
We need to correct $I(t)$ with a virial-type functional, which yields a mixed energy-virial functional $H(t)$.
The correction is negligible with respect to $\|(\varepsilon_0, \varepsilon_1)\|_\enorm^2$, hence we can still
bound $\|(\varepsilon_0, \varepsilon_1)\|_\enorm^2$ by $H(t)$.
However, the virial part is significant when we compute the time derivative of $H(t)$.
We will essentially prove that
$$
H'(t) \leq \frac ct \|(\varepsilon_0, \varepsilon_1)\|_\enorm^2 + C_1t^\gamma\|(\varepsilon_0, \varepsilon_1)\|_\enorm,
$$
where $c$ is a small constant, $C_1$ is a large constant and $t^\gamma$ is the size of the error of the approximate solution.
The above inequality yields $\|(\varepsilon_0, \varepsilon_1)\|_\enorm \lesssim t^{\gamma+1}$
by a straightforward continuity argument.

The reason for using a virial correction can be understood by considering the following quadratic term with a large potential concentrated at scale $\lambda$:
$$
\int_{\bR^5} f'(W_\lambda)\varepsilon_0^2 \ud x = \int_{\bR^5} \frac{1}{\lambda^2}V\Big(\frac{\cdot}{\lambda}\Big)\varepsilon_0^2\ud x.
$$
If we differentiate the potential directly with respect to time, we obtain the term
$$
\int_{\bR^5}{-}\frac{\lambda_t}{\lambda^3}(2V+ x\cdot \grad V)\Big(\frac{\cdot}{\lambda}\Big)\varepsilon_0^2\ud x.
$$
If $\lambda \sim t^{1+\nu}$, then $\big|\frac{\lambda_t}{\lambda}\big| \sim \frac 1t$,
hence there is no hope of closing a bootstrap argument.
Instead, we can differentiate at scale $\lambda$ and then scale back:
$$
\begin{aligned}
  \dd t\int_{\bR^5}\frac{1}{\lambda^2}V\Big(\frac{\cdot}{\lambda}\Big)\varepsilon_0^2 \ud x &= \dd t\int_{\bR^5} V(\cdot)(\varepsilon_0)_{1/\lambda}^2\ud x \\
  &= 2\int_{\bR^5} V(\cdot)(\varepsilon_0)_{1/\lambda}\Big(\frac{\lambda_t}{\lambda}(\Lambda \varepsilon_0)_{1/\lambda} + (\partial_t \varepsilon_0)_{1/\lambda}\Big)\ud x \\
  &= 2\int_{\bR^5}\frac{1}{\lambda^2}V\Big(\frac{\cdot}{\lambda}\Big)\varepsilon_0\Big(\frac{\lambda_t}{\lambda}\Lambda \varepsilon_0 + \partial_t \varepsilon_0\Big)\ud x
\end{aligned}
$$
which is how the virial correction appears.

The method of using a mixed energy-virial functional to control the error term was introduced by Rapha\"el and Szeftel \cite{RaSz11}
for a construction of minimal mass blow-up solutions for inhomogeneous $L^2$-critical NLS.

In Section~\ref{sec:shooting} we follow a well-known compactness argument introduced by Merle \cite{Merle90}
and used by several authors starting with the work of Martel \cite{Martel05} for constructions of multi-solitons.
We take a decreasing sequence $t_n \to 0^+$ and we define $(u_n, \partial_t u_n)$ as the solution of \eqref{eq:nlw}
such that $(u_n(t_n), \partial_t u_n(t_n))$ is close to the approximate solution at time $t = t_n$.
By a continuity argument, we obtain that there exists $T_0 > 0$, independent of $n$, such that $\|(\varepsilon_0, \varepsilon_1)\|_\enorm \leq Ct^{\gamma+1}$
for $t \in [t_n, T_0]$, with $C$ independent of $n$.
It turns out that this bound on $(\varepsilon_0, \varepsilon_1)$ is sufficient to prove that the modulation parameters $\lambda$ and $b$
are close to the formally predicted values from Section~\ref{sec:formal}.
The reason for this is that the solution of the differential equations for $\lambda$ and $b$ obtained in Section~\ref{sec:formal} is stable.
In this step it is important to have $\gamma$ sufficiently large, in other words to work with a sufficiently accurate approximate solution.

Note that the exponential instability of $W_\lambda$ causes an additional difficulty in the argument.
We use the shooting method to eliminate the unstable mode.
The desired blow-up solution $(u, \partial_t u)$ is obtained by taking a weak limit $(u_0, u_1)$ of a subsequence of $(u_n(T_0), \partial_t u_n(T_0))$
and solving \eqref{eq:nlw} with the initial data $u(T_0) = u_0$ and $\partial_t u(T_0) = u_1$.

In Appendix~\ref{sec:weak} we prove sequential weak continuity of the dynamical system \eqref{eq:nlw} under some natural (non-optimal) condition,
which is an adaptation of an analogous result of Bahouri and G\'erard in the defocusing case \cite[Corollary 1]{BaGe99}.
This result is required in the last step of the proof.

In Appendix \ref{sec:cauchy} we provide for the reader's convenience some well-known estimates of the $X^1 \times H^1$ norm of solutions of \eqref{eq:nlw}.
The persistence of $X^1\times H^1$ regularity is used in Section~\ref{sec:everr}. The energy estimates are used in Section~\ref{sec:deg}.
They are non-optimal, but sufficient for our purposes. We prove also propagation of regularity in a neighbourhood of the origin in the non-degenerate case,
which is used in Section~\ref{sec:non-deg}.

\subsection{Acknowledgements}
This paper was prepared as a part of my PhD under supervision of Y.~Martel and F.~Merle, at \'Ecole polytechnique in Palaiseau, France.
I was partially supported by the ERC grant $291214$ BLOWDISOL.

\subsection{Notation}
\label{ssec:notation}
For $v, w \in L^2$ we denote
\begin{equation*}
  \la v, w\ra := \int_{\bR^5} v\cdot w\ud x.
\end{equation*}
We use the same notation for the duality pairing when $v \in \dot H^{-s}$ and $w \in \dot H^s$.

Linearizing $-\Delta V - f(V)$ around $V = W_\lambda$ we obtain a self-adjoint operator
\begin{equation*}
  L_\lambda h := -\Delta h - f'(W_\lambda)h.
\end{equation*}
Differentiating $-\Delta W_\lambda - f(W_\lambda) = 0$ with respect to $\lambda$ we find $$L_\lambda (\Lambda W)_\lambda = 0.$$
We denote $L := L_1 = -\Delta - f'(W)$.

We will also use the notation $\bs v(t) := (v(t), \partial_t v(t))$.

We denote $\cZ$ a fixed radial $C_0^\infty$ function such that $\la \Lambda W, \cZ \ra > 0$.

Finally, $\chi$ is a fixed standard $C^\infty$ cut-off function, that is $\chi(r) = 1$ for $r \leq 1$, $\chi(r) = 0$ for $r \geq 2$, $\chi'(r) \leq 0$.

\section{Formal picture and construction of blow-up profiles}
\label{sec:formal}
\subsection{Inverting the operator $L$}
We define
\begin{equation}\label{eq:c}
  \kappa := -\frac{\la\Lambda W, f'(W)\ra}{\la \Lambda W, \Lambda W\ra} = \frac{128}{105\pi}.
\end{equation}

\begin{proposition}
  \label{prop:fredholm}
  There exist radial functions $A, B \in C^\infty(\bR^5)$ such that
  \begin{equation}
    \label{eq:AB}
    LA = \kappa \Lambda W + f'(W), \qquad LB = -\Lambda_0 \Lambda W.
  \end{equation}
  In addition, $A(r) \sim r^{-1}$, $A'(r) \sim r^{-2}$, $A''(r) \sim r^{-3}$ and $B(r) \sim r^{-1}$, $B'(r) \sim r^{-2}$, $B''(r) \sim r^{-3}$ as $r \to +\infty$.
\end{proposition}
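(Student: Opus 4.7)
The key observation is that the right-hand sides are orthogonal to the kernel element $\Lambda W$ of the self-adjoint operator $L$. For the first equation, the definition \eqref{eq:c} of $\kappa$ gives exactly
$$
\la \kappa\Lambda W + f'(W),\Lambda W\ra = \kappa \la\Lambda W,\Lambda W\ra + \la f'(W),\Lambda W\ra = 0.
$$
For the second, integration by parts in $\bR^5$ shows that $\Lambda_0$ is anti-selfadjoint: $\la\Lambda_0 f,g\ra = -\la f,\Lambda_0 g\ra$ because $\nabla\cdot(xg) = 5g + x\cdot\nabla g$. In particular $\la\Lambda_0 \Lambda W,\Lambda W\ra = 0$. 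These are the Fredholm solvability conditions I would use to produce solutions with the required decay.

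The plan is to reduce to a radial ODE and invert explicitly by variation of parameters. On radial functions $L u = -u''-\tfrac{4}{r}u'-f'(W)u$, and $u_1 := \Lambda W$ is one solution of $L u_1 = 0$, smooth at the origin and satisfying $u_1(r) \sim -\tfrac{3}{2}r^{-3}$ at infinity. From the Sturm--Liouville identity $r^4 (u_1 u_2'-u_1'u_2) = c$ I construct by reduction of order a second independent solution $\Gamma$ of $L\Gamma = 0$, which is singular like $r^{-3}$ at the origin and tends to a nonzero constant at infinity (the two indicial behaviors $1$ and $r^{-3}$ come from the Euler part of the equation, since $f'(W)=O(r^{-4})$). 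Then for $g\in\{\kappa\Lambda W + f'(W),\,-\Lambda_0\Lambda W\}$ I set
$$
u(r) := -\tfrac{1}{c}\,\Gamma(r)\!\int_0^r \!\Lambda W(s)\,g(s)\,s^4\,ds + \tfrac{1}{c}\,\Lambda W(r)\!\int_0^r \!\Gamma(s)\,g(s)\,s^4\,ds.
$$
Choosing both integrals from $0$ makes the first integral vanish like $r^5$ at the origin, cancelling the $r^{-3}$ singularity of $\Gamma$; together with $\Lambda W$ being smooth this gives $u\in C^\infty(\bR^5)$ by standard radial regularity (the equation is elliptic with smooth coefficients and $u$ is bounded near $0$).

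The asymptotics at infinity are driven by the orthogonality. Since $g\sim r^{-3}$, we have $\Lambda W\cdot g\cdot s^4 \sim s^{-2}$, which is integrable, and the total integral $\int_0^\infty \Lambda W\,g\,s^4\,ds = \omega_5^{-1}\la\Lambda W,g\ra$ \emph{vanishes}; therefore $\int_0^r \Lambda W\,g\,s^4\,ds = -\int_r^\infty \Lambda W\,g\,s^4\,ds = O(r^{-1})$, so the $\Gamma$-term contributes $O(r^{-1})$. The other integral $\int_0^r \Gamma\,g\,s^4\,ds$ has integrand $\sim s$ and thus grows like $r^2$, which multiplied by $\Lambda W(r)\sim r^{-3}$ again gives $O(r^{-1})$. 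Adding gives $u(r) = O(r^{-1})$. The bounds on $u',u''$ follow by differentiating the variation-of-parameters formula, or more directly from the ODE: once $u\sim r^{-1}$ is known, $u'\sim r^{-2}$ comes from differentiating, and $u'' = -\tfrac{4}{r}u' - f'(W) u - g \sim r^{-3}$ since each term on the right is $O(r^{-3})$. Applying this construction to $g_1=\kappa\Lambda W+f'(W)$ and $g_2=-\Lambda_0\Lambda W$ yields $A$ and $B$ respectively.

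The main technical point is the construction of $\Gamma$ and tracking the cancellations precisely enough to ensure that the coefficient of the $r^{-1}$ behavior is realized and not accidentally zero (though even if it were zero, $u$ would merely decay faster, which is acceptable). Equivalently, the only subtle step is checking that $\int_0^\infty \Lambda W\, g\, s^4\,ds = 0$ forces just $r^{-1}$ decay rather than only $o(1)$ decay via the matched asymptotic expansion above; this relies on the fact that $g(r)$ has a genuine $r^{-3}$ leading term at infinity, which is true for both $g_1$ (dominated by $\kappa\Lambda W$) and $g_2=-\Lambda_0\Lambda W$ (since $\Lambda_0\Lambda W \sim \tfrac{3}{4}r^{-3}$).
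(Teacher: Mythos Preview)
Your approach is essentially identical to the paper's: both reduce to the radial Sturm--Liouville ODE, build the second fundamental solution $\Gamma$ with asymptotics $\Gamma\sim r^{-3}$ at $0$ and $\Gamma\sim\text{const}$ at $\infty$, apply variation of parameters from $r=0$, and invoke the orthogonality $\la g,\Lambda W\ra=0$ to convert $\int_0^r \Lambda W\,g\,s^4\,ds$ into $-\int_r^\infty(\cdots)=O(r^{-1})$ and thereby obtain $O(r^{-1})$ decay. The paper is slightly more careful about one point you pass over, namely that $\Lambda W$ has a zero at $r=\sqrt{15}$, so the reduction-of-order formula $\Lambda W\int s^{-4}(\Lambda W)^{-2}\,ds$ is singular there and $\Gamma$ must be assembled from pieces on $(0,\sqrt{15})$ and $(\sqrt{15},\infty)$ before reading off its global asymptotics.
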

\begin{proof}
In the proof we will use some standard facts from the theory of Sturm-Liouville equations, see for example \cite[Chapter 5]{teschl12}.

Solving equation \eqref{eq:AB} is equivalent to solving the following ODE:
\begin{equation}\label{eq:sl}
  -(p(r)y')'+q(r)y=g(r),
\end{equation}
with $r\in(0,+\infty)$, $p(r)=r^4$, $q(r)=-r^4f'(W)$ and $g(r) = g_A(r) = r^4(\kappa\Lambda W(r) + f'(W(r)))$ or $g(r) = g_B(r) = -r^4\Lambda_0\Lambda W(r)$.
Notice that $|g(r)| \lesssim r^4$ for small $r$.

We know that $\Lambda W(r)$ is a solution of \eqref{eq:sl} with $g(r)=0$. Let $\Gamma(r)$ be a second solution normalized in such a way that
\begin{equation}
  \label{eq:wronskian}
  \cW(\Lambda W, \Gamma) = r^4(\Lambda W\cdot \Gamma' - (\Lambda W)'\cdot \Gamma) = 1
\end{equation}
($\cW$ is the modified wronskian, in particular its value is independent of $r$).

Take $r_1 < \sqrt{15}$, $r_2 > \sqrt{15}$ (recall that $r = \sqrt{15}$ is the unique point where $\Lambda W$ vanishes) and define
\begin{equation*}
  \begin{aligned}
    y_1(r) &:= \Lambda W(r)\cdot \int_{r_1}^r\frac{\vd s}{s^4(\Lambda W(s))^2},\qquad\text{for }r < \sqrt{15},  \\
    y_2(r) &:= \Lambda W(r)\cdot \int_{r_2}^r\frac{\vd s}{s^4(\Lambda W(s))^2},\qquad\text{for }r > \sqrt {15}.
  \end{aligned}
\end{equation*}
It can be easily checked that $y_1$ and $y_2$ are solutions of the homogeneous equation and verify $\cW(\Lambda W, y_1) = \cW(\Lambda W, y_2) = 1$.
Hence, we have $y_j = a_j \Lambda W + \Gamma$ for some scalar coefficients $a_1, a_2$.
Directly from the formulas defining $y_1$ and $y_2$ we obtain the asymptotic behaviour of $y_1$ as $r \to 0^+$ and of $y_2$ as $r \to \infty$
:\begin{equation*}
  \begin{aligned}
    y_1(r) &\sim -\int_r^{r_1}\frac{\vd s}{s^4} \sim -\frac{1}{r^3},\qquad r\to 0^+,\\
    y_2(r) &\sim \frac{-1}{r^3}\int_{r_2}^r\frac{\vd s}{s^4\cdot s^{-6}} \sim -1,\qquad r\to +\infty.
  \end{aligned}
\end{equation*}
As adding a constant multiple of $\Lambda W$ does not change these asymptotics, we obtain that $\Gamma(r) \sim -r^{-3}$ as $r \to 0^+$ and $\Gamma(r) \sim -1$ as $r\to+\infty$.
From the relation $\cW(\Lambda W, \Gamma) = 1$ we get
\begin{equation*}
  \Gamma'=\frac{r^{-4}+(\Lambda W)'\cdot \Gamma}{\Lambda W},
\end{equation*}
which immediately gives $\Gamma'(r) \sim r^{-4}$ as $r \to 0$ and $\Gamma'(r) \sim \pm r^{-1}$ as $r\to +\infty$ (it can be checked that the sign is $"+"$,
but we will not use this fact).

For $r_0, r \in (0, +\infty)$ we define
\begin{equation}
  \label{eq:fondam}
  s(r, r_0) := \Lambda W(r_0) \Gamma(r) - \Gamma(r_0) \Lambda W(r).
\end{equation}
We see that $s(r_0, r_0) = 0$ and $r_0^4\dd r s(r, r_0)\vert_{r=r_0}=1$, which means that $s(r, r_0)$ is the second fundamental solution of \eqref{eq:sl}.
Now using the Duhamel formula we obtain a solution of the non-homogeneous equation \eqref{eq:sl}:
\begin{equation}
  \label{eq:y}
  \begin{aligned}
  A(r) &= \int_0^r s(r, r')g_A(r')\ud r', \\
  B(r) &= \int_0^r s(r, r')g_B(r')\ud r'.
  \end{aligned}
\end{equation}

Fix $r > 0$ and let $|h| \leq\frac 12 r$. In the estimates which follow, all the constants may depend on $r$. We have
\begin{equation*}
  \begin{aligned}
  &\bigl|\frac{A(r+h)-A(r)}{h}-\int_0^r\dd r s(r, r')g_A(r')\ud r'\bigr| \\
	&\leq \int_0^r\bigl|\frac{s(r+h, r')-s(r, r')}{h}-\dd rs(r, r')\bigr|\cdot |g_A(r')|\ud r' \\
  	&+ \frac 1h\int_r^{r+h}|s(r+h, r')|\cdot|g_A(r')|\ud r'.
\end{aligned}
\end{equation*}
Formula \eqref{eq:fondam} implies that $|s(\wt r, r_0)| \lesssim h$ when $|\wt r-r| \leq h$ and $|r - r_0| \leq h$. Hence, the second term above converges to $0$ as $h\to 0$.
For $0 \leq r_0 \leq r$ and $|\wt r - r| \leq \frac 12 r$ we have the bound $|\frac{\vd^2}{\vd r^2}s(\wt r, r_0)| \lesssim r_0^{-3}$. This implies
\begin{equation*}
  \Bigl|\frac{s(r+h, r')-s(r, r')}{h}-\dd rs(r, r')\Bigr| \leq \frac 12 \sup_{|\wt r-r|\leq h} \big|\frac{\vd^2}{\vd r^2}s(\wt r, r')\big|\cdot |h| \lesssim (r')^{-3}\cdot |h|,
\end{equation*}
so the first term above also converges to $0$ as $h\to 0$.
This shows that $A(r)$ (and similarly $B(r)$) is continuously differentiable and
  \begin{equation}
    \label{eq:dy}
    \begin{aligned}
      A'(r) &= \int_0^r \dd r s(r, r')g_A(r')\ud r', \\
      B'(r) &= \int_0^r \dd r s(r, r')g_B(r')\ud r'.
    \end{aligned}
  \end{equation}
  It is clear from these formulas that $\lim_{r\to 0^+}A'(r) = \lim_{r\to 0^+}B'(r) = 0$.
  
  It follows from the above considerations that $A$ and $B$, seen as functions on $\bR^5$, are $C^1$, so they are $C^\infty$ by elliptic regularity.

  Now we consider the behaviour of $A(r)$ and $B(r)$ as $r\to +\infty$. From the crucial orthogonality relation $\int_0^{+\infty}\Lambda W(r')g_A(r')\ud r' = 0$
  we deduce that
  \begin{equation*}
    \Bigl|\int_0^r \Lambda W(r')g(r')\ud r'\Bigr| = \Bigl|\int_r^{+\infty} \Lambda W(r')g(r')\ud r'\Bigr| \lesssim r^{-1}.
  \end{equation*}
  From this and the asymptotics of $\Gamma$ and $g_A$ it follows that $|A(r)| \lesssim r^{-1}$ and similarly $|B(r)| \lesssim r^{-1}$.
  Using the asymptotics of $\Gamma'$ we obtain also $|A'(r)| \lesssim r^{-2}$ and $|B'(r)| \lesssim r^{-2}$.
  The fact that $|A''(r)| \lesssim r^{-3}$ and $|B''(r)| \lesssim r^{-3}$ follows from the differential equation.
\end{proof}

We define $A$ and $B$ as the solutions of \eqref{eq:AB} satisfying the orthogonality condition
\begin{equation}
  \label{eq:AB-orth}
  \int_{\bR^5} \cZ\cdot A\ud x = \int_{\bR^5} \cZ\cdot B\ud x = 0.
\end{equation}

\subsection{Determination of blow-up speeds}

Let $u^*(t, x)$ be the solution of \eqref{eq:nlw} for initial data $(u^*(0), \partial_t u^*(0)) = (u_0, u_1)$.
At a formal level, while computing the interaction of $u^*$ with the soliton,
we will treat $u^*$ as a function constant in space and $C^2$ in time, $u^*(t, x) \simeq v^*(t)$.
(In the non-degenerate case we will take $v^*(t) = u^*(t, 0)$ and in the degenerate case $v^*(t) = q t^\beta$,
where $q$ and $\beta$ are appropriate constants.)
We will construct a solution which blows up at $t = 0$ and is defined for small positive $t$.
This means that in our situation the caracteristic length $\lambda$ will increase in time.
The usual method of performing a formal analysis of blow-up solutions in the case of the wave equation
consists in defining $b := \lambda_t$ and searching a solution in the form of a power series in~$b$.
Following this scheme, we write
\begin{equation}
  \label{eq:self-nlw}
  \left\{
  \begin{aligned}
    u &= W_\lambda + u^*(t) + b^2T_\lambda + \text{lot} \\
    \partial_t u &= -b(\Lambda W)_{\uln\lambda} + \partial_t u^* + \text{lot}.
  \end{aligned}
  \right.
\end{equation}
Here, the profile $T$ is undetermined, and we search a convenient blow-up speed.
Neglecting irrelevant terms and replacing $\lambda_t := \dd t\lambda(t)$ by $b$, we compute
\begin{equation*}
  \partial_{tt}u = -b_t(\Lambda W)_{\uln\lambda} + \frac{b^2}{\lambda}(\Lambda_0\Lambda W)_{\uln\lambda} + \partial_{tt}u^* + \text{lot}.
\end{equation*}
On the other hand,
\begin{equation*}
  \begin{aligned}
  \Delta u + f(u) &= -\frac{1}{\lambda}b^2(LT)_{\uln\lambda} + f'(W_\lambda)v^* + \Delta u^* + f(u^*) + \text{lot}.
\end{aligned}
\end{equation*}
We discover that, formally at least, we should have
\begin{equation}
  \label{eq:profilT}
  LT = -\Lambda_0\Lambda W + \frac{\lambda}{b^2}[b_t\Lambda W + v^*(t)\sqrt\lambda f'(W)].
\end{equation}
Proposition 
\ref{prop:fredholm} shows that if
\begin{equation}
  \label{eq:b_t}
  b_t = \kappa v^*(t)\lambda^{1/2},
\end{equation}
then equation \eqref{eq:profilT} has a decaying regular solution $T = B + \frac{v^*(t)\lambda^{3/2}}{b^2}A$.
We call equation \eqref{eq:b_t} together with the equation $\lambda_t = b$ \emph{formal parameter equations}.
In the non-degenerate case $v^*(t) = u^*(t, 0)$ is close to $u^*(0, 0)$, so we expect that there exists a solution of
the formal parameter equations which is close to
\begin{equation}\label{eq:mod-app}
  (\lambda(t), b(t)) = \Big(\frac{\kappa^2u^*(0, 0)^2}{144}t^4, \frac{\kappa^2u^*(0, 0)^2}{36}t^3\Big).
\end{equation}
This is indeed the case, as follows from our analysis in Section 
\ref{sec:everr}.

In the degenerate case we have $v^*(t) = q t^\beta$, and the formal parameter equations have a solution
\begin{equation}
  \label{eq:mod-deg}
  (\lambda(t), b(t)) = (t^{1+\nu}, (1+\nu)t^\nu)
\end{equation}
if we choose $q = \frac{\nu(1+\nu)}{\kappa}$ and $\beta = \frac{\nu-3}{2}$.

\section{Approximate solution in the non-degenerate case}
\label{sec:non-deg}
\subsection{Bounds on the profile $(P_0, P_1)$}
The functions $A$ and $B$ from the previous section do not belong to the space $\dot H^1$.
We will place a cut-off at the light cone, that is at distance $t$ from the center. Given modulation parameters $(\lambda(t), b(t))$, we define:
\begin{equation}
  \label{eq:P0}
  P_0(t) := \chi\big(\frac{\cdot}{t}\big)(\lambda(t)^{3/2}v^*(t)A_{\lambda(t)} + b(t)^2 B_{\lambda(t)}).
\end{equation}
Recall that in the non-degenerate case $v^*(t) = u^*(t, 0) \in C^2$ by Proposition 
\ref{prop:cauchy-nondeg} and Schauder estimates.
\begin{remark}
  \label{rem:ustar-cut}
Because of the finite speed of propagation, without loss of generality we can replace $(u_0^*, u_1^*)$ by $\big(\chi\big(\frac{\cdot}{\rho}\big)u_0^*, \chi\big(\frac{\cdot}{\rho}\big)u_1^*\big)$,
where $\rho$ is a strictly positive constant to be chosen later. Thus, without loss of generality we can assume that the support of $(u_0^*, u_1^*)$
is contained in a small ball and that $\|(u_0^*, u_1^*)\|_{X^1\times H^1}$ is small.
\end{remark}
\begin{remark}
The fact that the profile $(P_0, P_1)$ is cut at $r = t = t^1$ can be considered as a coincidence.
The power of $t$ has been chosen in order to optimize the estimates.
This is the only power for which we can obtain the estimate of the error term
which has asymptotically the same size as the profile $P_0$.
Also, for this choice, $\|P_1\|_{L^2}$ (the forth term of the asymptotic expansion
which will be defined in a moment) is asymptotically the same as $\|P_0\|_{\dot H^1}$.
However, the angle of the cone has no significance for us.
\end{remark}
\begin{remark}
  \label{rem:AB-orth}
Notice that the orthogonality condition which we choose to define $A$ and $B$ has little significance
due to a relatively fast decay of $\Lambda W$. We will use the same orthogonality condition as for the error term, as this choice simplifies slightly the computation.
Observe that the fact that $\cZ$ has compact support implies that if $\lambda(t) \ll t$, then $\int P_0(t)\cZ_\lambda\ud x = 0$ for small $t$.
\end{remark}

In the error estimates which will follow, on the right hand side
we will always replace $\lambda(t)$ by $t^4$ and $b(t)$ by $t^3$,
as this is the regime that we are going to consider later in the bootstrap argument.
In this section, all the constants may depend on $\bs u^*$.
\begin{lemma}
  Assume that $\lambda(t) \sim t^4$ and $b(t) \sim t^3$. Then
  \label{lem:size-P0}
  \begin{equation}
    \label{eq:size-P0}
    \|P_0(t)\|_{\dot H^1} \lesssim t^{9/2}.
  \end{equation}
\end{lemma}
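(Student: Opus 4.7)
The plan is to expand via the product rule and bound each piece in $L^2$. Write $P_0 = \psi_t(c_A A_\lambda + c_B B_\lambda)$ with $\psi_t(x) := \chi(x/t)$, $c_A := \lambda(t)^{3/2} v^*(t)$ and $c_B := b(t)^2$. Observe that $c_A A_\lambda(x) = v^*(t) A(x/\lambda)$, so the $\lambda^{3/2}$ prefactor exactly cancels the $\dot H^1$ normalization of $A_\lambda$. Leibniz yields four terms; I will bound the two involving $A$ in detail, the two involving $B$ following by an identical argument.

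For the ``inner'' term $\psi_t c_A \nabla A_\lambda$, the change of variables $y = x/\lambda$ gives
\begin{equation*}
\|\psi_t c_A \nabla A_\lambda\|_{L^2}^2 = v^*(t)^2 \lambda^3 \int_{\bR^5} \chi(\lambda y/t)^2 |\nabla A(y)|^2 \ud y.
\end{equation*}
The cutoff restricts to $|y| \leq 2t/\lambda$, and Proposition \ref{prop:fredholm} gives $|\nabla A(y)| \lesssim (1+|y|)^{-2}$, hence $\int_{|y|\leq R} |\nabla A|^2 \ud y \lesssim 1 + R$. Since $\lambda \sim t^4 \ll t$, the value $R := 2t/\lambda$ is large, and the right-hand side is bounded by $\lambda^3 \cdot (t/\lambda) = \lambda^2 t \lesssim t^9$.

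For the ``outer'' term $c_A A_\lambda \nabla \psi_t$, the support is the annulus $\{t \leq |x| \leq 2t\}$, on which $|x|/\lambda \sim t/\lambda \gg 1$, so $|A(x/\lambda)| \lesssim \lambda/|x| \sim \lambda/t$ by the far-field asymptotics in Proposition \ref{prop:fredholm}. Combined with $|\nabla\psi_t|^2 \lesssim t^{-2}$ and the annulus volume $\sim t^5$, one obtains
\begin{equation*}
\|c_A A_\lambda \nabla \psi_t\|_{L^2}^2 \lesssim v^*(t)^2\, t^{-2} \cdot (\lambda/t)^2 \cdot t^5 = v^*(t)^2\, \lambda^2 t \lesssim t^9.
\end{equation*}

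The terms involving $B$ are handled identically, the relevant prefactor being $c_B = b^2$ rather than $c_A$; the analogous computations yield a bound of order $b^4 t/\lambda \sim t^9$ in both the inner and outer cases since $b \sim t^3$ and $\lambda \sim t^4$. Summing the four contributions by the triangle inequality gives $\|P_0\|_{\dot H^1}^2 \lesssim t^9$, as required. No substantive obstacle is anticipated; the only content of the estimate is that the cutoff scale $r = t$ is tuned so that the inner and outer pieces balance at the same order in $t$, reflecting the optimality of this cutoff scale asserted in the remark preceding the lemma.
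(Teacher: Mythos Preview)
Your proof is correct and follows essentially the same approach as the paper: apply the Leibniz rule to split into an inner piece (derivative hits $A_\lambda$) and an outer piece (derivative hits the cutoff), change variables to scale $\lambda$, and use the decay $|A'(r)|\lesssim r^{-2}$, $|A(r)|\lesssim r^{-1}$ from Proposition~\ref{prop:fredholm}. The only cosmetic difference is that the paper first factors out the prefactors and reduces to $\|\chi(\cdot/t)A_\lambda\|_{\dot H^1}^2\lesssim t/\lambda\sim t^{-3}$, whereas you carry $c_A$, $c_B$ through the computation; the arithmetic is identical.
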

\begin{proof}
  It is sufficient to show that $\|\chi(\frac{\cdot}{t})A_\lambda\|_{\dot H^1}^2 \lesssim t^{-3}$
  (the computation for $B_\lambda$ is the same).
  We have
  \begin{equation*}
    \begin{aligned}
      \|\chi\big(\frac{\cdot}{t}\big)A_\lambda\|_{\dot H^1}^2 &\simeq \int_0^{+\infty}\big(\big(\chi\big(\frac{r}{t}\big)A_\lambda(r)\big)'\big)^2 r^4\ud r = \int_0^{+\infty}\big(\big(\chi\big(\frac{\lambda r}{t}\big)A(r)\big)'\big)^2 r^4\ud r\\
      &\lesssim\int_0^{+\infty}\big(\chi\big(\frac{\lambda r}{t}\big)A'(r)\big)^2 r^4\ud r + \int_0^{+\infty}\big(\frac{\lambda}{t}\chi'\big(\frac{\lambda r}{t}\big)A(r)\big)^2 r^4\ud r \\
      &\lesssim \int_0^{2t/\lambda}r^4\frac{1}{r^4}\ud r + \frac{\lambda^2}{t^2}\int_{t/\lambda}^{2t/\lambda}r^4\frac{1}{r^2}\ud r \lesssim \frac{t}{\lambda} \sim t^{-3}.
    \end{aligned}
  \end{equation*}
  
\end{proof}
\begin{lemma}
  Assume that $\lambda(t) \sim t^4$ and $b(t) \sim t^3$. Then
  \label{lem:error-AB}
  \begin{equation}
    \label{eq:error-AB}
    \|L_\lambda P_0 - \lambda^{3/2}v^*(t)L_\lambda A_\lambda - b^2 L_\lambda B_\lambda\|_{L^2} \lesssim t^{7/2}.
  \end{equation}
\end{lemma}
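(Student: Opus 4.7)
\bigskip

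\textbf{Proof plan.} Write $P_0(t) = \chi_t \cdot G_\lambda$, where $\chi_t(x) := \chi(x/t)$ and $G_\lambda := \lambda^{3/2}v^*(t) A_\lambda + b^2 B_\lambda$. Using the Leibniz rule for the Laplacian,
\begin{equation*}
L_\lambda P_0 \;=\; \chi_t\bigl(\lambda^{3/2}v^*L_\lambda A_\lambda + b^2 L_\lambda B_\lambda\bigr) \;-\; (\Delta\chi_t)\,G_\lambda \;-\; 2\nabla\chi_t\cdot\nabla G_\lambda,
\end{equation*}
so the quantity to estimate equals
\begin{equation*}
(\chi_t-1)\bigl(\lambda^{3/2}v^*L_\lambda A_\lambda + b^2 L_\lambda B_\lambda\bigr) \;-\; (\Delta\chi_t)\,G_\lambda \;-\; 2\nabla\chi_t\cdot\nabla G_\lambda.
\end{equation*}
I will bound each of the three pieces separately in $L^2$. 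Throughout, all constants may depend on $\bs u^*$, and $|v^*(t)| \lesssim 1$ on the relevant time interval.

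\smallskip

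For the first (tail) piece, use the scaling identity $L_\lambda h_\lambda = \lambda^{-2}(Lh)_\lambda$. By \eqref{eq:AB}, $LA = \kappa\Lambda W + f'(W)$ and $LB = -\Lambda_0\Lambda W$. Since $W(r)\sim r^{-3}$ and $f'(W)(r)\sim r^{-4}$ at infinity, both $LA$ and $LB$ decay like $r^{-3}$ as $r\to\infty$. Hence for $|x|\ge t$, and since $|x|/\lambda \ge t/\lambda \gg 1$ in the regime $\lambda\sim t^4$,
\begin{equation*}
|L_\lambda A_\lambda(x)|,\ |L_\lambda B_\lambda(x)| \;\lesssim\; \lambda^{-2}\cdot\lambda^{-3/2}\cdot (|x|/\lambda)^{-3} \;=\; \lambda^{-1/2}|x|^{-3}.
\end{equation*}
Thus $|\lambda^{3/2}v^* L_\lambda A_\lambda + b^2 L_\lambda B_\lambda|\lesssim (\lambda + b^2\lambda^{-1/2})|x|^{-3}\lesssim t^4|x|^{-3}$ on the support of $\chi_t-1$, and
\begin{equation*}
\int_{|x|\ge t}\bigl(t^4|x|^{-3}\bigr)^2 \ud x \;\lesssim\; t^8\int_t^\infty r^{-2}\ud r \;\lesssim\; t^7,
\end{equation*}
giving the required $t^{7/2}$ bound.

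\smallskip

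For the two cutoff-derivative pieces, supported on the annulus $t\le|x|\le 2t$, we have $|\Delta\chi_t|\lesssim t^{-2}$ and $|\nabla\chi_t|\lesssim t^{-1}$. From Proposition~\ref{prop:fredholm}, $|A(r)|, |B(r)|\lesssim r^{-1}$ and $|A'(r)|,|B'(r)|\lesssim r^{-2}$ for $r\ge 1$, so in the regime $|x|\sim t$, $|x|/\lambda\gg 1$, a direct rescaling gives
\begin{equation*}
|G_\lambda(x)| \;\lesssim\; (\lambda + b^2\lambda^{-1/2})|x|^{-1} \;\sim\; t^4|x|^{-1}, \qquad |\nabla G_\lambda(x)| \;\lesssim\; t^4|x|^{-2}.
\end{equation*}
Therefore $|(\Delta\chi_t)G_\lambda|\lesssim t^{-2}\cdot t^4|x|^{-1} = t^2|x|^{-1}$ and $|\nabla\chi_t\cdot\nabla G_\lambda|\lesssim t^{-1}\cdot t^4|x|^{-2} = t^3|x|^{-2}$, each of which squared and integrated over the annulus yields $O(t^7)$, hence $L^2$ norm $\lesssim t^{7/2}$.

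\smallskip

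The only non-routine step is pinning down the correct decay rate of $LA$ and $LB$ at infinity; the rest is bookkeeping of scaling powers. The cancellation between the polynomially growing factor $\lambda^{3/2}$ (resp.\ $b^2$) in $G_\lambda$ and the rescaling of the derivatives of $A,B$ is what makes both contributions balance at the common size $t^4/|x|$, producing the claimed $t^{7/2}$.
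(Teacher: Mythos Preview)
Your proof is correct and follows essentially the same approach as the paper's. Both arguments expand the commutator of $L_\lambda$ with the cutoff $\chi(\cdot/t)$ via the Leibniz rule and estimate the resulting pieces on the annulus $\{t\le |x|\le 2t\}$ and the tail $\{|x|\ge t\}$. The only cosmetic difference is the grouping: the paper separates the tail into $(1-\chi)f'(W_\lambda)A_\lambda$ and $(1-\chi)\Delta A_\lambda$ and uses the asymptotics $A''(r)\sim r^{-3}$ from Proposition~\ref{prop:fredholm}, whereas you keep $(1-\chi)L_\lambda A_\lambda$ together and read off the $r^{-3}$ decay directly from the explicit right-hand side $LA=\kappa\Lambda W+f'(W)$; both routes give the same bound.
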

\begin{proof}
  We will do the computation only for the terms with $A$. The terms with $B$ are asymptotically the same.
  We need to check that
\begin{equation*}
  \big\|\big(1-\chi\big(\frac rt\big)\big)f'(W_\lambda)A_\lambda\big\|_{L^2} + \big\|\Delta\big(\big(1-\chi\big(\frac rt\big)\big)A_\lambda\big)\big\|_{L^2} \lesssim t^{-5/2}
\end{equation*}

  For the first term we have even some margin since
  \begin{equation*}
    \begin{aligned}
    \big\|\big(1-\chi\big(\frac rt\big)\big)f'(W_\lambda)A_\lambda\big\|_{L^2} &= \frac{1}{\lambda}\big\|\big(1-\chi\big(\frac{\lambda r}{t}\big)\big)f'(W) A\big\|_{L^2} \\
    &\lesssim \frac{1}{\lambda}\Bigl(\int_{t/\lambda}^{+\infty}(r^{-4}r^{-1})^2r^4\ud r\Bigr)^{1/2}
    \sim \frac{1}{\lambda}\cdot \big(\frac{\lambda}{t}\big)^{5/2} \sim t^{7/2}.
    \end{aligned}
  \end{equation*}
  
  For the second term, we have a few possibilities. Recall that $\Delta = \partial_{rr} + \frac{4\partial_r}{r}$. Either the laplacian hits directly $A$:
  \begin{equation*}
    \big\|\big(1-\chi\big(\frac rt\big)\big)\Delta(A_\lambda)\big\|_{L^2} = \frac{1}{\lambda}\big\|\big(1-\chi\big(\frac{\lambda r}{t}\big)\big)\Delta A\big\|_{L^2} \lesssim \frac{1}{\lambda}\Bigl(\int_{t/\lambda}^{+\infty}(r^{-3})^2r^4\ud r\Bigr)^{1/2}
    \sim \frac{1}{\lambda} \cdot \sqrt\frac{\lambda}{t} \sim t^{-5/2},
  \end{equation*}
  either one derivative hits $\chi$:
  \begin{equation*}
    \frac{1}{t}\big\|\chi'\big(\frac rt\big)\dd r(A_\lambda)\big\|_{L^2} = \frac{1}{t}\big\|\chi'\big(\frac{\lambda r}{t}\big)A'(r)\big\|_{L^2} \lesssim t^{-1}\Bigl(\int_{t/\lambda}^{2t/\lambda}(r^{-2})^2r^4\ud r\Bigr)^{1/2}
    \sim t^{-1} \cdot \sqrt\frac{t}{\lambda} \sim t^{-5/2},
  \end{equation*}
  (and analogously the term $\frac{1}{t}\big\|\chi'\big(\frac rt\big)\frac{4}{r}(A_\lambda)\big\|_{L^2}$),
  or two derivatives hit $\chi$, and we get
  \begin{equation*}
    \frac{1}{t^2}\big\|\chi''\big(\frac rt\big)A_\lambda\big\|_{L^2} = \frac{\lambda}{t^2}\big\|\chi''\big(\frac{\lambda r}{t}\big)A\big\|_{L^2} \lesssim \frac{\lambda}{t^2}\Bigl(\int_{t/\lambda}^{2t/\lambda}(r^{-1})^2r^4\ud r\Bigr)^{1/2}
    \sim \frac{\lambda}{t^2} \cdot \big(\frac{t}{\lambda}\big)^{3/2} \sim t^{-5/2}.
  \end{equation*}
\end{proof}

We define $P_1(t)$ as a formal time derivative of $P_0(t)$,
which means that we replace $\lambda_t$ by $b$ and $b_t$ by $\kappa v^*(t)\lambda^{1/2}$, see \eqref{eq:b_t},
and we do not differentiate the cut-off function. Explicitely, set
\begin{equation}
  \label{eq:P1}
  \begin{aligned}
  P_1(t) &= \chi\big(\frac{\cdot}{t}\big)\Big(v^*(t)\big(\frac{3}{2}\lambda^{3/2}bA_{\uln\lambda} - \lambda^{3/2}b(\Lambda A)_{\uln\lambda}\big) \\
    &+ \lambda^{5/2}\partial_t v^*(t)A_{\uln\lambda} + 2\kappa v^*(t)\lambda^{3/2}bB_{\uln\lambda} - b^3(\Lambda B)_{\uln\lambda}\Big).
  \end{aligned}
\end{equation}

Notice that in the regime \eqref{eq:mod-app} the coefficient $\lambda^{5/2}$ is smaller than the other coefficients
(all of which are, asymptotically, of the same size). However, we prefer to keep the corresponding term in the definition of $P_1$.

\begin{lemma}
  \label{lem:size-P1}
  Assume that $\lambda(t) \sim t^4$ and $b(t) \sim t^3$. Then
  \begin{equation}
    \label{eq:size-P1}
    \|P_1(t)\|_{L^2} \lesssim t^{9/2}
  \end{equation}  
\end{lemma}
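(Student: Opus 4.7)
The plan is to reduce the proof to controlling one common building block,
\[
\bigl\| \chi(\cdot/t)\, F_{\uln{\lambda}} \bigr\|_{L^2}, \qquad F \in \{A,\ \Lambda A,\ B,\ \Lambda B\},
\]
and then to multiply by the scalar coefficients appearing in front of each term in the definition \eqref{eq:P1} of $P_1$. Structurally the argument mirrors that of Lemma~\ref{lem:size-P0}; the only differences are the (slightly different) homogeneities and the fact that the relevant rescaling is now $v \mapsto v_{\uln{\lambda}}$, which is $L^2$-preserving (not $\dot H^1$-preserving).

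First I would verify that each $F$ in the list above satisfies $F(r) = O(r^{-1})$ as $r \to +\infty$ and is bounded near $0$. For $F = A$ and $F = B$ this is exactly Proposition~\ref{prop:fredholm}. For $F = \Lambda A = \tfrac{3}{2}A + rA'$, the estimates $A(r) \sim r^{-1}$ and $A'(r) \sim r^{-2}$ from Proposition~\ref{prop:fredholm} combine to give $(\Lambda A)(r) = O(r^{-1})$ at infinity, while the $C^\infty$ regularity of $A$ on $\bR^5$ ensures that $\Lambda A$ is bounded near the origin; the argument for $\Lambda B$ is identical.

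Next I would compute the common $L^2$ bound. The change of variable $r = \lambda s$ combined with the $L^2$-invariance of the $\uln{\lambda}$-scaling yields
\[
\bigl\| \chi(\cdot/t)\, F_{\uln{\lambda}} \bigr\|_{L^2}^2 \simeq \int_0^{+\infty} \chi(\lambda s / t)^2\, F(s)^2\, s^4 \ud s \lesssim \int_0^{2t/\lambda} F(s)^2\, s^4 \ud s.
\]
Since $F(s)^2 s^4 \lesssim s^2$ for large $s$ by the previous step, and is bounded by $s^4$ near the origin, the integral is $O((t/\lambda)^3)$; together with $\lambda \sim t^4$ this gives $\bigl\| \chi(\cdot/t)\, F_{\uln{\lambda}} \bigr\|_{L^2} \lesssim t^{-9/2}$.

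Finally I would multiply by the scalar coefficient in front of each of the five summands of $P_1$. Using that $v^*(t)$ and $\partial_t v^*(t)$ are bounded as $t \to 0^+$ (by the $C^2$ regularity of $v^* = u^*(\cdot,0)$ stated just before the definition \eqref{eq:P0} of $P_0$), and $\lambda \sim t^4$, $b \sim t^3$, one finds that the coefficients $v^* \lambda^{3/2} b$, $\lambda^{5/2}\partial_t v^*$, $v^* \lambda^{3/2} b$, and $b^3$ are respectively $O(t^9)$, $O(t^{10})$, $O(t^9)$, and $O(t^9)$. Multiplying by the $L^2$ bound $t^{-9/2}$, every contribution is $O(t^{9/2})$ (the $\lambda^{5/2}\partial_t v^*$ term is in fact $O(t^{11/2})$, hence strictly smaller), which establishes \eqref{eq:size-P1}. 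There is no real obstacle here; the estimate is essentially the $L^2$ counterpart of Lemma~\ref{lem:size-P0}, and the only point requiring genuine care is the $\Lambda A$, $\Lambda B$ asymptotics, which one reads off from Proposition~\ref{prop:fredholm}.
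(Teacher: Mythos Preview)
Your proposal is correct and follows essentially the same approach as the paper: reduce to the building block $\|\chi(\cdot/t)F_{\uln\lambda}\|_{L^2}$, use the $L^2$-invariance of the $\uln\lambda$-scaling together with the $r^{-1}$ decay of $F$ to bound it by $(t/\lambda)^{3/2}\sim t^{-9/2}$, and then multiply by the scalar coefficients. The paper is terser---it treats only the first term and declares the others analogous---while you spell out the $\Lambda A$, $\Lambda B$ asymptotics and the coefficient bookkeeping explicitly, but the substance is identical.
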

\begin{proof}
  All the terms except for the one mentioned above have the same asymptotics, so we will do the computation only for the first one.
  It is sufficient to show that $\|\chi(\frac{\cdot}{t}) A_\uln\lambda\|_{L^2}^2 \lesssim t^{-9}$. We have
  \begin{equation*}
    \begin{aligned}
      \|\chi(\frac{\cdot}{t})A_\uln\lambda\|_{L^2}^2 &\sim \|\chi(\frac{\lambda r}{t})A(r)\|_{L^2(r^4\vd r)}^2 \\
      &\lesssim \int_0^{2t/\lambda}(r^{-1})^2r^4\ud r \lesssim (\frac{t}{\lambda})^3 \sim t^{-9}.
    \end{aligned}
  \end{equation*}
\end{proof}

Our ansatz $\bs \varphi(t) = ( \varphi_0(t),  \varphi_1(t))$ is defined as follows:
\begin{equation}
  \label{eq:ansatz}
  \left\{
  \begin{aligned}
     \varphi_0(t) &= W_{\lambda(t)} + P_0(t) + u^*(t), \\
     \varphi_1(t) &= -b(t)(\Lambda W)_{\uln{\lambda(t)}} + P_1(t) + \partial_t u^*(t),
  \end{aligned}\right.
\end{equation}
where $P_0$ and $P_1$ are given by \eqref{eq:P0} and \eqref{eq:P1}.

The error term $\bs\varepsilon(t) = (\varepsilon_0(t), \varepsilon_1(t))$ is defined by the formula:
\begin{equation}
  \label{eq:epsilon}
  \left\{
  \begin{aligned}
    u(t) &=  \varphi_0(t) + \varepsilon_0(t), \\
    \partial_t u(t) &=  \varphi_1(t) + \varepsilon_1(t).
  \end{aligned}
  \right.
\end{equation}

We shall impose the orthogonality condition
$$\int\varepsilon_0\cZ_\uln\lambda\ud x = 0.$$
\begin{lemma}
  \label{lem:lambda-b}
  If $\lambda \sim t^4$, $b \sim t^3$ and $t$ is small enough, then
  \begin{equation}
    \label{eq:lambda-b}
      |\lambda_t - b| \leq \|\bs\varepsilon\|_\enorm.
  \end{equation}
\end{lemma}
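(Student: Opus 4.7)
The plan is classical modulation theory: differentiate the orthogonality condition $\la\varepsilon_0,\cZ_{\uln\lambda}\ra = 0$ in time and solve algebraically for $\lambda_t - b$. Starting from $\bs u = \bs\varphi + \bs\varepsilon$, I would write $\partial_t\varepsilon_0 = \varphi_1 + \varepsilon_1 - \partial_t\varphi_0$ and use $\partial_t W_{\lambda(t)} = -\lambda_t(\Lambda W)_{\uln\lambda}$ together with \eqref{eq:ansatz} to obtain
\begin{equation*}
\partial_t\varepsilon_0 = (\lambda_t - b)(\Lambda W)_{\uln\lambda} + (P_1 - \partial_t P_0) + \varepsilon_1.
\end{equation*}
Differentiating the orthogonality condition in $t$ with $\partial_t\cZ_{\uln\lambda} = -\lambda^{-1}\lambda_t(\Lambda_0\cZ)_{\uln\lambda}$ and the scaling identity $\la(\Lambda W)_{\uln\lambda},\cZ_{\uln\lambda}\ra = \la\Lambda W,\cZ\ra$, I would arrive at
\begin{equation*}
(\lambda_t - b)\la\Lambda W,\cZ\ra = -\la\varepsilon_1,\cZ_{\uln\lambda}\ra + \la\partial_t P_0 - P_1,\cZ_{\uln\lambda}\ra + \lambda^{-1}\lambda_t\la\varepsilon_0,(\Lambda_0\cZ)_{\uln\lambda}\ra,
\end{equation*}
where the coefficient $\la\Lambda W,\cZ\ra$ is a fixed positive constant by the choice of $\cZ$, so the task reduces to bounding the right-hand side.

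For the first term, invariance of the $L^2$ norm under $\cdot_{\uln\lambda}$ yields $|\la\varepsilon_1,\cZ_{\uln\lambda}\ra| \lesssim \|\bs\varepsilon\|_\enorm$. For the third, H\"older combined with the Sobolev embedding $\dot H^1(\bR^5)\hookrightarrow L^{10}$ and $\|(\Lambda_0\cZ)_{\uln\lambda}\|_{L^{10/9}} \sim \lambda^2$ give $|\la\varepsilon_0,(\Lambda_0\cZ)_{\uln\lambda}\ra| \lesssim \lambda^2\|\bs\varepsilon\|_\enorm$; splitting $\lambda_t = b + (\lambda_t - b)$ then controls the whole term by $\lambda b\|\bs\varepsilon\|_\enorm + \lambda|\lambda_t-b|\|\bs\varepsilon\|_\enorm \lesssim t^7\|\bs\varepsilon\|_\enorm + t^4|\lambda_t-b|\|\bs\varepsilon\|_\enorm$. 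For the middle term I would observe that $\supp\cZ_{\uln\lambda} \subset \{|x|\lesssim\lambda\} \subset \{|x|<t\}$ for $t$ small, so $\chi(\cdot/t)\equiv 1$ on that support and its time derivative plays no role; a termwise computation using $\partial_t A_\lambda = -\lambda_t(\Lambda A)_{\uln\lambda}$ (and analogously for $B$) together with the rescaling identities $\lambda^{5/2}A_{\uln\lambda} = \lambda^{3/2}A_\lambda$ and $\lambda^{3/2}B_{\uln\lambda} = \lambda^{1/2}B_\lambda$ yields there
\begin{equation*}
\partial_t P_0 - P_1 = \tfrac{3}{2}(\lambda_t-b)\lambda^{1/2}v^*A_\lambda - (\lambda_t-b)\lambda^{3/2}v^*(\Lambda A)_{\uln\lambda} + 2b(b_t - \kappa v^*\lambda^{1/2})B_\lambda - (\lambda_t-b)b^2(\Lambda B)_{\uln\lambda}.
\end{equation*}
Pairing against $\cZ_{\uln\lambda}$ and using $\la f_\lambda,g_{\uln\lambda}\ra = \lambda\la f,g\ra$, the orthogonality relations $\la A,\cZ\ra = \la B,\cZ\ra = 0$ from \eqref{eq:AB-orth} annihilate the $A_\lambda$ and $B_\lambda$ contributions; the surviving remainder is controlled by $|\lambda_t-b|(\lambda^{3/2}|v^*|+b^2) \lesssim t^6|\lambda_t-b|$.

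Combining these three bounds, for $t$ small enough both $t^6$ and $t^4\|\bs\varepsilon\|_\enorm$ are much smaller than $\la\Lambda W,\cZ\ra/2$, so every term containing $|\lambda_t-b|$ on the right can be absorbed into the left, yielding the claim. The main conceptual obstacle is the appearance of $b_t$ in $\partial_t P_0$: at this stage we have no a priori control on $b_t$, and the strategy would collapse were it not for \eqref{eq:AB-orth}, which was imposed on $B$ precisely so that the $b_t B_\lambda$ contribution integrates to zero against $\cZ_{\uln\lambda}$ and thereby decouples the modulation equation for $\lambda_t - b$ from $b_t$.
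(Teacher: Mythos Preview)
Your proof is correct and follows essentially the same route as the paper: differentiate the orthogonality $\la\varepsilon_0,\cZ_{\uln\lambda}\ra=0$, use the identity $\partial_t\varepsilon_0=(\lambda_t-b)(\Lambda W)_{\uln\lambda}+(P_1-\partial_t P_0)+\varepsilon_1$, and exploit \eqref{eq:AB-orth} to reduce the $P_1-\partial_t P_0$ pairing to $(\lambda_t-b)\big(\lambda^{3/2}v^*\la\Lambda A,\cZ\ra+b^2\la\Lambda B,\cZ\ra\big)$. The paper then moves this contribution into the coefficient of $(\lambda_t-b)$ and solves explicitly for $\lambda_t$ (obtaining formula \eqref{eq:lambda}), whereas you bound it by $t^6|\lambda_t-b|$ and absorb; the two are equivalent.

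One remark on your closing commentary: in this paper $b$ is \emph{defined} so that $b_t=\kappa v^*\lambda^{1/2}$ (see \eqref{eq:b_t} and Remark~\ref{rem:prec-mod}), hence the term $2b(b_t-\kappa v^*\lambda^{1/2})B_\lambda$ in your expansion of $\partial_t P_0-P_1$ is identically zero. There is therefore no ``uncontrolled $b_t$'' obstacle to overcome, and the orthogonality $\la B,\cZ\ra=0$ is not what saves the argument here --- the paper itself notes (Remark~\ref{rem:AB-orth}) that this normalization of $A,B$ is only a minor convenience. Your proof remains valid regardless; only the narrative around it is slightly off.
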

\begin{proof}
To find the formula for $\lambda_t$, first we write
\begin{multline*}
  -b(\Lambda W)_\uln\lambda + \partial_t u^* + P_1(t) + \varepsilon_1(t) = \partial_t u = -\lambda_t(\Lambda W)_\uln\lambda +\partial_t u^* + \partial_t P_0(t) + \partial_t\varepsilon_0 \Rightarrow \\
  \partial_t \varepsilon_0 = (\lambda_t - b)(\Lambda W)_{\uln\lambda} + (P_1 - \partial_t P_0) + \varepsilon_1.
\end{multline*}
Notice that for small $t$ and $\lambda \sim t^4$ we have
\begin{equation*}
  \int(P_1(t) -\partial_t P_0(t))\cZ_\uln\lambda\ud x = (\lambda_t - b)\big(\lambda^{3/2}v^*(t)\la \Lambda A, \cZ\ra_{L^2} + b^2\la \Lambda B, \cZ\ra_{L^2}\big).
\end{equation*}
This follows from \eqref{eq:AB-orth} and the fact that $\mathrm{supp}(\cZ_{\uln\lambda})$ is contained in the light cone for small $t$.
This gives
\begin{equation*}
  \begin{aligned}
  0 &= \dd t \int\varepsilon_0\cZ_\uln\lambda\ud x = \int \partial_t\varepsilon_0\cZ_\uln\lambda\ud x - \lambda_t \int \varepsilon_0\frac{1}{\lambda}(\Lambda_0\cZ)_\uln\lambda\ud x \\
  &= \int (\lambda_t - b)\la\Lambda W, \cZ\ra + (\lambda_t - b)\big(\lambda^{3/2}v^*(t)(\la \Lambda A, \cZ\ra_{L^2} + b^2\la \Lambda B, \cZ\ra_{L^2}\big) \\
  &+ \la\varepsilon_1, \cZ_\uln\lambda\ra_{L^2} -  \lambda_t \int \varepsilon_0\frac{1}{\lambda}(\Lambda_0\cZ)_\uln\lambda\ud x,
\end{aligned}
\end{equation*}
and we obtain
\begin{equation*}
  (\lambda_t - b)\big(\la \Lambda W, \cZ\ra + \lambda^{3/2}v^*(t)\la \Lambda A, \cZ\ra + b^2\la \Lambda B, \cZ\ra \big) = -\la \varepsilon_1, \cZ_\uln\lambda\ra + \lambda_t\big\la \varepsilon_0, \frac{1}{\lambda}(\Lambda_0\cZ)_\uln\lambda\big\ra.
\end{equation*}
Rearranging the terms we get
\begin{equation}
  \label{eq:lambda}
  \begin{aligned}
  \lambda_t =&\Big(1 - \frac{\la \varepsilon_0, \frac{1}{\lambda}(\Lambda_0\cZ)_\uln\lambda\ra}{\la \Lambda W, \cZ\ra_{L^2} + 
  \lambda^{3/2}v^*(t)\la \Lambda A, \cZ\ra + b^2\la \Lambda B, \cZ\ra}\Big)^{-1}\cdot \\
  &\cdot \Big(b - \frac{\la \varepsilon_1, \cZ_\uln\lambda\ra}{\la \Lambda W, \cZ\ra + \lambda^{3/2}v^*(t)\la \Lambda A, \cZ\ra + b^2\la \Lambda B, \cZ\ra}\Big).
\end{aligned}
\end{equation}
For $t$ small enough, \eqref{eq:lambda-b} follows.
\end{proof}
\begin{remark}
  \label{rem:prec-mod}
To be precise, our rigourous argument goes the other way round -- we use \eqref{eq:lambda} and \eqref{eq:b_t}
to \emph{define} the local evolution of the modulation parameters,
and then by doing exactly the same computation as above, but in the opposite direction,
we find that the orthogonality condition $\la\varepsilon_0, \frac{1}{\lambda}\cZ_\uln\lambda\ra_{L^2} = 0$ is preserved
if it is verified at the initial time (which will be the case).
Notice also that using \eqref{eq:AB-orth} we obtain
\begin{equation}
  \label{eq:orth-sansP}
  \la u - W_\lambda - u^*, \cZ_\uln\lambda \ra = 0.
\end{equation}
Differentiating this condition we find
\begin{equation}
  \label{eq:orth-sansP-d}
  \lambda_t\big(\la \Lambda W, \cZ\ra + \big\la \varepsilon_0, \frac{1}{\lambda}(\Lambda_0 \cZ)_\uln\lambda\big\ra = -\la \partial_t u - \partial_t u^*, \cZ_{\uln\lambda}\ra.
\end{equation}
\end{remark}

We need to estimate the error between the formal and the actual time derivative of $P_0$:
\begin{lemma}
  \label{lem:P0t-P1}
  Assume that $\lambda(t) \sim t^4$ and $b(t) \sim t^3$. Then
\begin{equation}
  \label{eq:P0t-P1}
  \|\partial_t P_0-P_1\|_{\dot H^1} \lesssim \sqrt t(t^3 + \|\bs\varepsilon\|_\enorm).
\end{equation}

\end{lemma}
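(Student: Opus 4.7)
The plan is to compute $\partial_t P_0 - P_1$ explicitly, isolate the sources of discrepancy, and bound the $\dot H^1$ norm of each resulting contribution using the asymptotic decay of $A$, $B$ from Proposition~\ref{prop:fredholm}. Differentiating the definition \eqref{eq:P0} of $P_0$, there are three kinds of terms that distinguish $\partial_t P_0$ from $P_1$: the derivative of the cut-off $\chi(\cdot/t)$ (which is not differentiated in $P_1$), the substitution $\lambda_t \leftrightarrow b$, and the substitution $b_t \leftrightarrow \kappa v^*\lambda^{1/2}$. By Remark~\ref{rem:prec-mod} the parameter $b$ is defined so that $b_t = \kappa v^*\lambda^{1/2}$ holds \emph{exactly}, hence the third kind of term vanishes and we are left with
\begin{equation*}
  \partial_t P_0 - P_1 = -\frac{r}{t^2}\chi'(r/t)\big(\lambda^{3/2}v^*A_\lambda + b^2 B_\lambda\big) + (\lambda_t - b)\,\chi(\cdot/t)\, R,
\end{equation*}
where $R$ gathers the terms $v^*\lambda^{3/2}\big(\tfrac{3}{2}A_{\uln\lambda} - (\Lambda A)_{\uln\lambda}\big) - b^2 (\Lambda B)_{\uln\lambda}$.

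For the cut-off term, the argument is essentially the one of Lemma~\ref{lem:size-P0}. The function is supported in the annulus $t \leq r \leq 2t$; using $A(r) \sim r^{-1}$, $A'(r) \sim r^{-2}$ one obtains $\lambda^{3/2}A_\lambda(r) = \lambda^{-3/2}A(r/\lambda) \cdot \lambda^{3/2} \sim \lambda/r$ for $r \sim t \gg \lambda$, and similarly for $b^2 B_\lambda$ since $b^2 \sim \lambda^{3/2}$ in the regime $\lambda \sim t^4$, $b \sim t^3$. A direct computation of $\|\cdot\|_{\dot H^1}$, splitting according to which factor the derivative hits (as in Lemma~\ref{lem:error-AB}), yields the bound $\lesssim \lambda/\sqrt{t} \sim t^{7/2} = \sqrt{t}\cdot t^3$.

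For the $(\lambda_t - b)\chi(\cdot/t) R$ term, Lemma~\ref{lem:lambda-b} gives $|\lambda_t - b| \lesssim \|\bs\varepsilon\|_{\enorm}$, so it suffices to show $\|\chi(\cdot/t) R\|_{\dot H^1} \lesssim \sqrt{t}$. Each summand of $R$ has the schematic form $c_\lambda(r) = \lambda^{-1}h(r/\lambda)$ with $h \in \{A, \Lambda A, \Lambda B\}$ satisfying $h(s) \sim s^{-1}$ and $h'(s) \sim s^{-2}$ at infinity (by Proposition~\ref{prop:fredholm}, since $\Lambda = 3/2 + r\partial_r$ preserves those decay rates). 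Changing variables $s = r/\lambda$ one is reduced to estimating
\begin{equation*}
\lambda^{-2}\!\!\int_0^\infty\!\!\Big|\tfrac{1}{t}\chi'(\lambda s/t) h(s) + \tfrac{1}{\lambda}\chi(\lambda s/t) h'(s)\Big|^2 r^4\,dr
\;\lesssim\; t,
\end{equation*}
which follows from the same shell-by-shell arguments used in Lemmas~\ref{lem:size-P0}--\ref{lem:error-AB}. Hence $\|\chi(\cdot/t)R\|_{\dot H^1} \lesssim \sqrt{t}$, and multiplication by $|\lambda_t - b|$ gives the contribution $\lesssim \sqrt{t}\,\|\bs\varepsilon\|_{\enorm}$. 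Summing both contributions yields \eqref{eq:P0t-P1}.

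The only mild point requiring care — not really an obstacle — is that the profiles $R$ and $\lambda^{3/2}A_\lambda$, $b^2B_\lambda$ are not in $\dot H^1$ globally because of their $1/r$ tail, so the cut-off $\chi(\cdot/t)$ must be carried throughout, and the $\dot H^1$ norm is produced entirely by the near-$r = t$ behaviour. All the resulting integrals are elementary and fit into the framework already used in this section.
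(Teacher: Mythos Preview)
Your proof is correct and follows essentially the same decomposition and estimates as the paper: split $\partial_t P_0 - P_1$ into the contribution from differentiating the cut-off and the contribution proportional to $(\lambda_t - b)$, then bound each using the $r^{-1}$, $r^{-2}$ decay of $A$, $B$, $\Lambda A$, $\Lambda B$ together with Lemma~\ref{lem:lambda-b}. One small notational slip: in your displayed integral you write the integrand in the rescaled variable $s$ but keep the measure $r^4\,dr$ and a prefactor $\lambda^{-2}$; after a clean change of variables $r=\lambda s$ the prefactor should be $\lambda^{3}$ with measure $s^4\,ds$ (or, equivalently, keep everything in $r$). The bound $\lesssim t$ is unaffected.
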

\begin{proof}
  The error has two parts -- one comes from differentiating in time the cut off function
  and the other one from $|\lambda_t - b|$.
  \begin{equation*}
    \begin{aligned}
      \partial_t P_0 - P_1 &= -\frac{r}{t^2}\chi'\big(\frac{r}{t}\big)(\lambda^{3/2}v^*(t)A_\lambda + b^2B_\lambda) \\
      &+ \chi\big(\frac{r}{t}\big)(\lambda_t - b)\big(v^*(t)\big(\frac{3}{2}\lambda^{3/2}A_\uln\lambda - \lambda^{3/2}(\Lambda A)_\uln\lambda\big) - b^2(\Lambda B)_\uln\lambda\big).
    \end{aligned}
  \end{equation*}
  Using Proposition 
\ref{prop:fredholm}, we can write:
  \begin{equation*}
    \begin{aligned}
      &\big\|\frac{r}{\lambda}\chi'\big(\frac{r}{t}\big)A_\lambda\big\|_{\dot H^1} = \big\|r\chi'\big(\frac{\lambda r}{t}\big)A\big\|_{\dot H^1} \\
	&\lesssim \big\|\chi'\big(\frac{\lambda r}{t}\big)\cdot \frac{1}{r}\big\|_{L^2}
      +\frac{\lambda}{t}\big\|r\chi''\big(\frac{\lambda r}{t}\big)\cdot \frac 1r\big\|_{L^2} + \big\|r\chi'\big(\frac{\lambda r}{t}\big)\cdot \frac{1}{r^2}\big\|_{L^2}\\
      &\lesssim \big\|\chi'\big(\frac{\lambda r}{t}\big)\cdot \frac{1}{t}\big\|_{L^2} + \frac{\lambda}{t}\big\|\chi'\big(\frac{\lambda r}{t}\big)\big\|_{L^2} \sim \big(\frac{t}{\lambda}\big)^{3/2} \sim t^{-9/2}.
    \end{aligned}
  \end{equation*}
  The same computation is valid also for $A$ replaced by $B$. Now we have
  $$\|\frac{r}{t^2}\chi'\big(\frac{r}{t}\big)\lambda^{3/2}A_\lambda\|_{\dot H^1} \lesssim \frac{\lambda}{t^2}\lambda^{3/2}\cdot t^{-9/2} \sim t^2 t^6 t^{-9/2} = t^{7/2},$$
  and the same for the second term.

  The computation for the second line is similar:
  \begin{equation*}
    \begin{aligned}
      &\|\chi\big(\frac{r}{t}\big)A_\lambda\|_{\dot H^1} = \|\chi(\frac{\lambda r}{t})A\|_{\dot H^1} \\
      &\lesssim \|\chi\big(\frac{\lambda r}{t}\big)\cdot \frac{1}{r^2}\|_{L^2} + \frac{\lambda}{t}\|\chi\big(\frac{\lambda r}{t}\big)\cdot \frac{1}{r}\|_{L^2} \sim \sqrt{t/\lambda} \sim t^{-3/2}.
    \end{aligned}
  \end{equation*}
  Multiplying by $\sqrt\lambda(\lambda_t - b)$ and using Lemma 
\ref{lem:lambda-b} we get the desired estimate. The last two terms are exactly the same.
\end{proof}

Finally, the following estimate allows to stop the asymptotic expansion of the solution at $P_1$.
\begin{lemma}
  \label{lem:P1t}
  Assume that $\lambda(t) \sim t^4$ and $b(t) \sim t^3$. Then
  \begin{equation}
    \label{eq:P1t}
    \|\partial_t P_1\|_{L^2} \lesssim \sqrt t(t^3 + \|\bs\varepsilon\|_\enorm).
  \end{equation}  
\end{lemma}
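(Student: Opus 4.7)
The plan is to expand $\partial_t P_1$ by the product rule and reduce each summand to a weighted $L^2$ estimate of the same type as those carried out in Lemma \ref{lem:size-P1}. Two substitutions are crucial: by Remark \ref{rem:prec-mod}, $b$ is defined so that $b_t = \kappa v^*(t)\lambda^{1/2}$ holds exactly, while by Lemma \ref{lem:lambda-b}, $\lambda_t = b + O(\|\bs\varepsilon\|_\enorm)$.

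Concretely, write $P_1 = \chi(\cdot/t)\Phi(t)$ where $\Phi(t)$ is the sum of the five terms in \eqref{eq:P1}, so that
$\partial_t P_1 = -\tfrac{r}{t^2}\chi'(r/t)\Phi(t) + \chi(r/t)\partial_t \Phi(t).$
The first piece is treated verbatim as in Lemma \ref{lem:P0t-P1}: on $\supp(\chi'(r/t))\subset[t,2t]$, where $r\gg\lambda$, each $F_{\uln\lambda}$ with $F$ decaying like $r^{-1}$ satisfies $F_{\uln\lambda}(r)\sim \lambda^{-3/2}/r$, and a direct weighted integration gives the bound $t^{7/2}$. For the second piece, I distribute the time derivative over the five summands of $\Phi$; each application of $\partial_t$ falls either on a scalar factor ($v^*$, $\partial_t v^*$, $\lambda^{3/2}$, $\lambda^{5/2}$, $b$, $b^3$) or on the scaling $F_{\uln\lambda}$, producing $-\tfrac{\lambda_t}{\lambda}(\Lambda_0 F)_{\uln\lambda}$ via the identity $\partial_\lambda F_{\uln\lambda} = -\lambda^{-1}(\Lambda_0 F)_{\uln\lambda}$.

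After the substitutions for $b_t$ and $\lambda_t$, the list of functions appearing in place of $F_{\uln\lambda}$ extends to $\{A,\Lambda A,B,\Lambda B,\Lambda_0 A,\Lambda_0 B,\Lambda_0\Lambda A,\Lambda_0\Lambda B\}_{\uln\lambda}$. The structural observation that unifies all estimates is that each such function still decays like $r^{-1}$ at infinity, since $\Lambda=\tfrac32+r\partial_r$ and $\Lambda_0=\tfrac52+r\partial_r$ preserve the rate $r^{-1}$ when applied to $A$ or $B$ (using the asymptotics of $A, B, A', B', A'', B''$ from Proposition \ref{prop:fredholm}). Consequently, the weighted computation from Lemma \ref{lem:size-P1} applies uniformly, giving $\|\chi(r/t)G_{\uln\lambda}\|_{L^2}\lesssim (t/\lambda)^{3/2}\sim t^{-9/2}$ for each such $G$.

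It remains to track the scalar prefactors. The main contribution (with $\lambda_t$ replaced by $b$) carries coefficients of order $t^8$ (e.g.\ $\tfrac32\lambda^{1/2}b^2$, $\lambda^{3/2} b_t=\kappa v^*\lambda^2$, $3\kappa v^* b^2\lambda^{1/2}$) or $t^9$ (from $\partial_t v^*\cdot\lambda^{3/2}b$ or $\partial_{tt}v^*\cdot\lambda^{5/2}$, which are controlled since $v^*\in C^2$), yielding $L^2$ contributions of order $t^{7/2}$ and $t^{9/2}\le t^{7/2}$ respectively. The error contribution, proportional to $(\lambda_t-b)$ and arising wherever $\lambda_t$ was replaced, has worst coefficient $\lambda^{1/2}b\sim t^5$, producing an $L^2$ contribution $\lesssim t^5\|\bs\varepsilon\|_\enorm\cdot t^{-9/2} = \sqrt{t}\|\bs\varepsilon\|_\enorm$. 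Summing over the finitely many summands yields \eqref{eq:P1t}. The main difficulty is purely combinatorial bookkeeping: one must enumerate all terms generated by the product rule and verify the uniform $r^{-1}$ decay of every function appearing after the substitutions; no new analytic ingredient beyond Proposition \ref{prop:fredholm}, Lemma \ref{lem:lambda-b} and the weighted $L^2$ computation of Lemma \ref{lem:size-P1} is needed.
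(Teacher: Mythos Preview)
Your proof is correct and follows essentially the same route as the paper: both split $\partial_t P_1$ into the contribution from differentiating the cut-off (bounded by $t^{7/2}$) and the remaining terms, which are all of the form $\chi(r/t)\,c(t)\,T_{\uln\lambda}$ with $T\in\{A,B,\Lambda A,\Lambda B,\Lambda_0 A,\Lambda_0 B,\Lambda_0\Lambda A,\Lambda_0\Lambda B\}$ decaying like $r^{-1}$, so that $\|\chi(r/t)T_{\uln\lambda}\|_{L^2}\lesssim t^{-9/2}$ and it remains only to bound the scalar coefficients using $\lambda_t\lesssim b+\|\bs\varepsilon\|$ and $b_t\lesssim\sqrt\lambda$. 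Your slightly more explicit bookkeeping (first replacing $\lambda_t$ by $b$ and then tracking the $(\lambda_t-b)$ error separately) is a harmless reorganisation of the same computation.
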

\begin{proof}
Consider first the terms coming from differentiating the cut-off function.
Like in the proof of the previous lemma, we have
\begin{equation*}
  \|\frac{r}{\lambda}\chi'\big(\frac rt\big)A_{\uln\lambda}\|_{L^2} \lesssim\|\chi'\big(\frac{\lambda r}{t}\big)\|_{L^2} \sim \big(\frac{t}{\lambda}\big)^{5/2},
\end{equation*}
which gives
\begin{equation*}
  \|\frac{r}{t^2}\chi'\big(\frac rt \big)v^*(t)\lambda^{3/2}bA_{\uln\lambda}\|_{L^2} \lesssim \frac{\lambda}{t^2}\lambda^{3/2}b\cdot\big(\frac{t}{\lambda}\big)^{5/2} \sim t^{7/2}.
\end{equation*}
The term $\|\frac{r}{t^2}\chi'\big(\frac rt\big)\lambda^{5/2}\partial_t v^*(t)A_{\uln\lambda}\|_{L^2}$ is even smaller.

Consider now the other terms. They are of one of the following six types:
\begin{itemize}
  \item $\chi\big(\frac rt\big)\lambda_t\lambda^{1/2}bT_{\uln\lambda}$,
  \item $\chi\big(\frac rt\big)\lambda_t\frac{b^3}{\lambda}T_{\uln\lambda}$,
  \item $\chi\big(\frac rt\big)b_t\lambda^{3/2}T_{\uln\lambda}$,
  \item $\chi\big(\frac rt\big)b_t b^2T_{\uln\lambda}$,
  \item $\chi\big(\frac rt\big)\lambda_t\lambda^{3/2}\vd_t v^*(t)T_{\uln\lambda}$,
  \item $\chi\big(\frac rt\big)\lambda^{5/2}\vd_{tt}v^*(t)T_{\uln\lambda}$,
\end{itemize}
where $T \in \{A, B, \Lambda A, \Lambda B, \Lambda_0 A, \Lambda_0 B, \Lambda_0\Lambda A, \Lambda_0\Lambda B\}$.
In all the situations $T$ is regular and decays like $r^{-1}$ (see Proposition 
\ref{prop:fredholm}), so we can write
\begin{equation*}
  \|\chi\big(\frac rt\big)T_{\uln\lambda}\|_{L^2} \lesssim \Big(\int_{t/\lambda}^{2t/\lambda}\big(\frac 1r\big)^2 r^4\ud r\Big)^{1/2} \lesssim \big(\frac{t}{\lambda}\big)^{3/2}\sim t^{-9/2}.
\end{equation*}
Using the fact that $\lambda \sim t^4$, $b\sim t^3$, $\lambda_t \lesssim b + \|\bs \varepsilon\|$, $b_t \lesssim \sqrt \lambda$ and that $v^*(t)$ is $C^2$ we obtain
\begin{equation*}
  \lambda_t\lambda^{1/2}b + \lambda_t\frac{b^3}{\lambda} + b_t\lambda^{3/2} + b_tb^2 + \lambda_t\lambda^{3/2}|\vd_t v^*| + \lambda^{5/2}|\vd_{tt}v^*| \lesssim t^5(t^3 + \|\bs\varepsilon\|),
\end{equation*}
which finishes the proof.
\end{proof}
The last lemma shows that $\bs \varphi$ is ``almost constant'' after rescaling.
\begin{lemma}
  \label{lem:phi0t}
  Let $c_1 > 0$. If $T_0$ is sufficiently small, then for $t \in (0, T_0]$ there holds
  \begin{equation}
    \label{eq:phi0t}
    \|\partial_t (\varphi_0)_{1/\lambda}\|_{\dot H^1} \leq \frac{c_1}{t}.
  \end{equation}
\end{lemma}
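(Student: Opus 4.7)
The plan is to reduce $\|\partial_t(\varphi_0)_{1/\lambda}\|_{\dot H^1}$ to quantities already controlled by the preceding lemmas. A direct chain-rule computation gives the identity
$$\partial_t v_{1/\lambda} = \frac{\lambda_t}{\lambda}(\Lambda v)_{1/\lambda} + (\partial_t v)_{1/\lambda}$$
for any sufficiently regular $v$. Since $\varphi_0 = W_\lambda + P_0 + u^*$ and $(W_\lambda)_{1/\lambda} = W$ is independent of $t$, the $W_\lambda$ piece drops out. Using the $\dot H^1$-invariance of $(\cdot)_{1/\lambda}$, I would obtain
$$\|\partial_t(\varphi_0)_{1/\lambda}\|_{\dot H^1} \leq \frac{|\lambda_t|}{\lambda}\bigl(\|\Lambda P_0\|_{\dot H^1} + \|\Lambda u^*\|_{\dot H^1}\bigr) + \|\partial_t P_0\|_{\dot H^1} + \|\partial_t u^*\|_{\dot H^1}.$$
Lemma \ref{lem:lambda-b} together with $\lambda\sim t^4$, $b\sim t^3$ and uniform boundedness of $\|\bs\varepsilon\|_\enorm$ (valid in the bootstrap regime) gives $|\lambda_t|/\lambda \lesssim 1/t$.

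For the $P_0$ contributions, I would repeat the computations of Lemmas \ref{lem:size-P0} and \ref{lem:size-P1} in the $\dot H^1$ norm, using that $\Lambda = 3/2 + r\partial_r$ preserves the asymptotic decay of $A$ and $B$ provided by Proposition \ref{prop:fredholm}, and that $T_{\uln\lambda} = \lambda^{-1}T_\lambda$ contributes an extra factor $\lambda^{-1}$ when passing from $L^2$ to $\dot H^1$ scaling. This yields $\|\Lambda P_0\|_{\dot H^1} \lesssim t^{9/2}$ and $\|P_1\|_{\dot H^1} \lesssim t^{7/2}$; combining the latter with Lemma \ref{lem:P0t-P1} gives $\|\partial_t P_0\|_{\dot H^1} \lesssim t^{7/2} + \sqrt t\|\bs\varepsilon\|_\enorm$. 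All of these pieces are $o(1/t)$ as $t\to 0^+$, hence are dominated by $c_1/(2t)$ once $T_0$ is taken small enough.

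The main obstacle is the asymptotic profile $u^*$: the quantities $\|\Lambda u^*(t)\|_{\dot H^1}$ and $\|\partial_t u^*(t)\|_{\dot H^1}$ are merely $O(1)$ as $t\to 0^+$, so the product $\frac{|\lambda_t|}{\lambda}\|\Lambda u^*\|_{\dot H^1}$ is genuinely of order $1/t$ with no $t$-smallness available. To make the implicit constant smaller than $c_1$ I would invoke Remark \ref{rem:ustar-cut}, which allows us, without loss of generality, to make $\|(u_0^*,u_1^*)\|_{X^1\times H^1}$ as small as we wish and to assume $u^*(t)$ is supported in a fixed small ball of radius $\rho$. Combining the identity $\nabla(\Lambda u^*) = \Lambda_0(\nabla u^*)$ with the compact support and the persistence of $X^1\times H^1$ regularity from Appendix \ref{sec:cauchy}, one gets $\|\Lambda u^*(t)\|_{\dot H^1} + \|\partial_t u^*(t)\|_{\dot H^1} \lesssim \|(u_0^*,u_1^*)\|_{X^1\times H^1}$ for $t\in(0,T_0]$. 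Choosing $\rho$ small enough depending on $c_1$ then yields the required bound.
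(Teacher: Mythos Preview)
Your proposal is correct and follows essentially the same line as the paper's proof. In both arguments the crux is the $u^*$ contribution: you correctly identify that $\frac{|\lambda_t|}{\lambda}\|\Lambda u^*\|_{\dot H^1}$ is the only term that is genuinely of size $\sim 1/t$, and you resolve it exactly as the paper does, by invoking Remark~\ref{rem:ustar-cut} to make $\|\Lambda u^*(t)\|_{\dot H^1}$ small (via compact support and persistence of $X^1\times H^1$ regularity). For the $\|\partial_t u^*\|_{\dot H^1}$ term the paper is content with boundedness (then $T_0$ small does the job), while you also make it small; this is harmless overkill.

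The only organizational difference concerns the $P_0$ terms, which in any case carry a large margin. The paper rescales first, writing $(P_0)_{1/\lambda}=\chi(\lambda\cdot/t)\big(\lambda^{3/2}v^*A+b^2B\big)$, and differentiates this directly in $t$, so only $\|P_0\|_{\dot H^1}$ from Lemma~\ref{lem:size-P0} is needed. You instead split via the chain rule into $\frac{|\lambda_t|}{\lambda}\|\Lambda P_0\|_{\dot H^1}$ and $\|\partial_t P_0\|_{\dot H^1}$, which forces you to check two extra estimates not stated in the paper ($\|\Lambda P_0\|_{\dot H^1}\lesssim t^{9/2}$ and $\|P_1\|_{\dot H^1}\lesssim t^{7/2}$). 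Both follow from the asymptotics in Proposition~\ref{prop:fredholm} by the same computations as in Lemma~\ref{lem:size-P0}, so this is a minor detour rather than a gap.
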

\begin{proof}
  By the definition of $\varphi_0$ and $P_0$ we get
\begin{equation*}
  (\varphi_0)_{1/\lambda} = W + \chi\big(\frac{\lambda\,\cdot}{t}\big)\big(\lambda^{3/2}A + b^2 B\big) + (u^*)_{1/\lambda}.
\end{equation*}

The terms with $A$ and $B$ are similar, so we only consider the first one. We observe that $|\frac{\lambda_t}{\lambda}| \lesssim \frac 1t$ for small $t$,
with an explicit numerical constant. Now
\begin{equation*}
  \partial_t \big(\chi\big(\frac{\lambda r}{t}\big) \lambda^{3/2}A\big) = \frac 32 \frac{\lambda_t}{\lambda}\chi\big(\frac{\lambda r}{t}\big)\lambda^{3/2}A -\frac{\lambda r}{t^2}\chi'\big(\frac{\lambda r}{t}\big)\lambda^{3/2}A.
\end{equation*}
The size of the first term is acceptable by Lemma 
\ref{lem:size-P0}. For the second one, it is sufficient to notice that $|\frac{\lambda r}{t^2}| \leq \frac 2t$ on the support of $\chi$.
The conclusion follows again from Lemma 
\ref{lem:size-P0}. (Notice that we have a large margin for these two terms.)

Next, we have
\begin{equation*}
  \big\|\partial_t (u^*)_{1/\lambda} \big\|_{\dot H^1} \leq \big\|\partial_t u^*\big\|_{\dot H^1} + \frac{\lambda_t}{\lambda}\big\|\Lambda u^*\big\|_{\dot H^1}.
\end{equation*}
By Proposition 
\ref{prop:energy-est} the first term is bounded for small $t$.
Choosing $\rho$ small enough (see Remark 
\ref{rem:ustar-cut}), we can guarantee that $\|\Lambda u^*(t)\|_{\dot H^1}$ will stay small for small $t$, which is exactly what we need.
\end{proof}

\subsection{Error of the ansatz}
Our next objective is to estimate the error of the approximate solution, defined as
\begin{equation*}
  \bs \psi(t) = \begin{pmatrix}\psi_0(t) \\ \psi_1(t) \end{pmatrix} :=
    \begin{pmatrix}\partial_t \varphi_0(t) \\ \partial_t \varphi_1(t)\end{pmatrix} - \begin{pmatrix}\varphi_1(t) \\ \Delta \varphi_0(t) + f(\varphi_0(t))\end{pmatrix}.
\end{equation*}
In order to do this we first need to extract the principal terms of the nonlinear term, which is based on the following pointwise estimate:
\begin{lemma}
  \label{lem:ponctuel}
    \begin{equation}
    \label{eq:ponctuel}
    |f(k+l+m)-[f(k)+f(m)+f'(k)l+f'(k)m]| \lesssim |f(l)|+f'(l)|k| + f'(m)|k| + f'(m)|l|.
  \end{equation}
\end{lemma}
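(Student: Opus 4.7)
The nonlinearity $f(u)=|u|^{4/3}u$ is $C^{1}$ globally and $C^{2}$ away from $0$ with
$f'(u)=\tfrac{7}{3}|u|^{4/3}$ and $|f''(u)|\lesssim |u|^{1/3}$, hence $f$ enjoys the pointwise Taylor remainder bound
\[
|f(a+b)-f(a)-f'(a)b|\;\lesssim\; |a|^{1/3}|b|^{2}+|b|^{7/3},
\]
obtained by writing the remainder as $\int_{0}^{1}(1-s)f''(a+sb)\,b^{2}\,ds$ and using
$|a+sb|^{1/3}\lesssim |a|^{1/3}+|b|^{1/3}$. This is the one analytic input. The rest is a comparison of monomials, which I would organize into two cases according to whether $|m|$ is dominated by $|k|+|l|$ or not; the reason for this asymmetric split is that the quantity to bound treats $l$ and $m$ asymmetrically (only $f(m)$, not $f(l)$, is subtracted off on the left, and $f'(k)$ rather than $f'(l)$ or $f'(m)$ appears).

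\textbf{Case 1: $|m|\leq |k|+|l|$.} Apply the Taylor estimate with $a=k$, $b=l+m$ to obtain
\[
\big|f(k+l+m)-f(k)-f'(k)(l+m)\big|\;\lesssim\;|k|^{1/3}\bigl(|l|^{2}+|m|^{2}\bigr)+|l|^{7/3}+|m|^{7/3}.
\]
Then the left-hand side of the lemma is bounded by the above plus $|f(m)|\lesssim |m|^{7/3}$. I would then distribute monomials into the four target terms: $|k|^{1/3}|l|^{2}\leq |l|^{7/3}+|l|^{4/3}|k|\lesssim |f(l)|+f'(l)|k|$ (splitting by whether $|k|\lessgtr|l|$), and analogously $|k|^{1/3}|m|^{2}\lesssim |m|^{7/3}+f'(m)|k|$. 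Finally, the case hypothesis $|m|\leq|k|+|l|$ gives $|m|^{7/3}=f'(m)|m|\lesssim f'(m)|k|+f'(m)|l|$, absorbing all remaining $|m|^{7/3}$ contributions.

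\textbf{Case 2: $|m|>|k|+|l|$.} Here $|k|,|l|<|m|$, so $|k+l+m|\leq 2|m|$ and the mean value theorem gives
\[
|f(k+l+m)-f(m)|\;=\;\Big|\!\int_{0}^{1}\!f'\bigl(m+s(k+l)\bigr)(k+l)\,ds\Big|\;\lesssim\; f'(m)\bigl(|k|+|l|\bigr).
\]
Thus
\[
\text{LHS}\;\leq\; |f(k+l+m)-f(m)|+|f(k)|+f'(k)\bigl(|l|+|m|\bigr).
\]
The key monotonicity relations in this regime are $f'(k)\leq f'(m)$ (since $|k|\leq|m|$), which yields $|f(k)|=|k|^{4/3}|k|\leq f'(m)|k|$ and $f'(k)|l|\leq f'(m)|l|$; and the slightly more delicate $f'(k)|m|=|k|^{4/3}|m|\leq |m|^{4/3}|k|=f'(m)|k|$, which holds exactly because $|k|\leq |m|$. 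All contributions land in $f'(m)|k|+f'(m)|l|$.

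\textbf{Expected obstacle.} The only point that is not entirely mechanical is the treatment of the cross term $f'(k)m$ inside the Taylor expansion in Case 2: a naive bound produces $|k|^{4/3}|m|$, which looks like it might force a ``forbidden'' term of size $|m|^{7/3}$ on the right. The saving observation is the asymmetric comparison $|k|^{4/3}|m|\leq |m|^{4/3}|k|$ under $|k|\leq|m|$, which converts it to the allowed $f'(m)|k|$. Once this is noticed, both cases close; no further regularity of $f$ beyond $C^{2,1/3}$ is needed.
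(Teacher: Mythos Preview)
Your argument is correct. The case split on $|m|\lessgtr |k|+|l|$ together with the second-order Taylor remainder bound for $f$ handles all the monomials cleanly, and your ``expected obstacle'' observation $|k|^{4/3}|m|\leq |m|^{4/3}|k|$ when $|k|\leq|m|$ is exactly what makes Case~2 close.

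The paper proceeds by a different and shorter route: since both sides are homogeneous of degree $7/3$, one may normalize to the sphere $k^2+l^2+m^2=1$. On that sphere the right-hand side is continuous and vanishes only at the four points $(\pm 1,0,0)$ and $(0,0,\pm 1)$; at each of these the left-hand side vanishes to at least the same (second) order by Taylor expansion of $f$, so the quotient LHS/RHS is bounded near those points and, by compactness, globally. Your approach is more explicit and in principle yields effective constants, while the paper's compactness argument is quicker to write but non-constructive. Both rely on the same analytic input (the Taylor behaviour of $f$), so neither is deeper --- they are two standard ways to establish a homogeneous pointwise inequality.
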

\begin{proof}
  The inequality is homogeneous, so we can suppose that $k^2+l^2+m^2 = 1$.
  The right hand side vanishes only for $(k, l, m) \in  \{(\pm 1, 0, 0), (0, 0, \pm 1)\}$, so it suffices to prove
  the inequality in a neighborhood of these $4$ points, where it is an easy consequence of the Taylor expansion of $f$.
\end{proof}
\begin{lemma}
  If $\lambda(t) \sim t^4$, $b \sim t^3$ and $t$ is small, then
  \label{lem:nonlin-extr}
  \begin{equation}
    \label{eq:nonlin-extr}
    \|f( \varphi_0(t)) - [f(W_{\lambda(t)}) + f(u^*) + f'(W_{\lambda(t)}) P_0(t) + f'(W_{\lambda(t)})u^*(t)]\|_{L^2} \lesssim t^4.
  \end{equation}
\end{lemma}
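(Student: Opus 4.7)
The plan is to apply Lemma~\ref{lem:ponctuel} pointwise with $k = W_{\lambda(t)}$, $l = P_0(t)$ and $m = u^*(t)$, so that $\varphi_0 = k + l + m$ and the integrand of the quantity whose $L^2$ norm is to be estimated is dominated by
\begin{equation*}
|f(P_0)| + f'(P_0)\,|W_\lambda| + f'(u^*)\,|W_\lambda| + f'(u^*)\,|P_0|.
\end{equation*}
It then suffices to show that each of these four terms has $L^2$ norm $\lesssim t^4$.

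The ingredients I would collect first are pointwise bounds. From Proposition~\ref{prop:fredholm} (rewriting $\lambda^{3/2} A_\lambda(r) = A(r/\lambda)$ and similarly for $B$) together with the cutoff in \eqref{eq:P0}, one sees that $P_0$ is supported in $\{|x| \leq 2t\}$, that $\|P_0\|_{L^\infty} \lesssim 1$, and that $|P_0(x)| \lesssim t^4/|x|$ for $\lambda \leq |x| \leq 2t$. From $W(r) = (1 + r^2/15)^{-3/2}$ one has $W \in L^2(\bR^5)$, whence $\|W_\lambda\|_{L^2} = \lambda \|W\|_{L^2} \sim t^4$. Finally, $u^*$ is uniformly bounded in space and time, by propagation of $X^4 \times H^4$ regularity and Sobolev embedding in dimension $5$.

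Three of the four terms are then quick: $\|f'(u^*) W_\lambda\|_{L^2} \lesssim \|W_\lambda\|_{L^2} \lesssim t^4$. From Lemma~\ref{lem:size-P0} and the Sobolev embedding $\dot H^1 \hookrightarrow L^{10/3}$ one has $\|P_0\|_{L^{10/3}} \lesssim t^{9/2}$; combining with $\|P_0\|_{L^\infty} \lesssim 1$ via H\"older on $\supp(P_0)$ (of measure $\lesssim t^5$) and via interpolation gives $\|P_0\|_{L^2} \lesssim t^{11/2}$ and $\|P_0\|_{L^{14/3}} \lesssim t^{45/14}$, yielding $\|f'(u^*) P_0\|_{L^2} \lesssim t^{11/2}$ and $\|f(P_0)\|_{L^2} = \|P_0\|_{L^{14/3}}^{7/3} \lesssim t^{15/2}$, both well below $t^4$.

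The main obstacle is the cross term $\||P_0|^{4/3} W_\lambda\|_{L^2}$, whose factors live on overlapping scales. I would split at $|x| = \lambda$: on the inner region $|P_0| \lesssim 1$, so this piece is bounded by $\|W_\lambda\|_{L^2} \lesssim t^4$, while on $\{\lambda \leq |x| \leq 2t\}$ the pointwise bounds $|P_0| \lesssim t^4/r$ and $W_\lambda \sim \lambda^{3/2}/r^3$ yield
\begin{equation*}
\int_\lambda^{2t} (t^4/r)^{8/3}\,(\lambda^{3/2}/r^3)^2\, r^4\,\vd r \;\sim\; \lambda^{17/3} \int_\lambda^{2t} r^{-14/3}\,\vd r \;\sim\; \lambda^{17/3}\cdot \lambda^{-11/3} \;=\; \lambda^2 \;\sim\; t^8,
\end{equation*}
so this contribution is also $\lesssim t^4$. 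Summing the four terms gives the claim; note that the bound is saturated simultaneously by $f'(u^*) W_\lambda$ and $f'(P_0) W_\lambda$, both concentrated on the scale of the soliton.
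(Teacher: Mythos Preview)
Your proof is correct and follows essentially the same approach as the paper: apply Lemma~\ref{lem:ponctuel} with $k=W_\lambda$, $l=P_0$, $m=u^*$ and bound each of the four resulting terms in $L^2$. The only differences are in how individual terms are handled: for the cross term $f'(P_0)W_\lambda$ the paper avoids your split at $|x|=\lambda$ by a single change of variables, observing that $\|(\lambda^{3/2}A_\lambda)^{4/3}W_\lambda\|_{L^2}=\lambda\|A^{4/3}W\|_{L^2}\sim t^4$ directly; for $|f(P_0)|$ the paper uses the pointwise bound $|P_0|\lesssim\lambda/r$ rather than interpolation; and for $f'(u^*)P_0$ the paper uses H\"older $L^5\times L^{10/3}$ rather than $L^\infty\times L^2$. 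These are cosmetic variations and your estimates are all valid.
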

\begin{proof}
  We put in the preceding lemma $k = W_{\lambda(t)}$, $l = P_0(t)$, $m = u^*(t)$,
  and we estimate the $L^2$ norm of the $4$ terms on the right hand side of \eqref{eq:ponctuel}.
  When $P_0(t)$ appears, we split it into two parts. We sometimes forget $\chi$,
  as its presence here can only help (there are no derivatives).

  Term ``$|f(l)|$'':
  \begin{equation*}
    \big(\chi\big(\frac{r}{t}\big)\lambda^{3/2}A_\lambda\big)^{7/3} \lesssim \chi\big(\frac{r}{t}\big)\cdot\big(\frac{r}{\lambda}\big)^{-7/3},
  \end{equation*}
  and $r^{-14/3}$ is integrable near $0$, so $\|\big(\big(\chi\big(\frac{r}{t}\big)\lambda^{3/2}A_\lambda\big)^{7/3}\|_{L^2} \lesssim \lambda^{7/3}\|\chi\big(\frac{r}{t}\big)r^{-7/3}\|_{L^2} \ll t^4$.
  In a similar way, $\|\big(\chi\big(\frac{r}{t}\big)b^2 B_\lambda\big)^{7/3}\|_{L^2} \ll t^4$.

  Term ``$f'(l)|k|$'':
  By a change of variables we get
  \begin{equation*}
    \|(\lambda^{3/2}A_\lambda)^{4/3}W_\lambda\|_{L^2} = \lambda\|A^{4/3}W\|_{L^2} \sim t^4
  \end{equation*}
  (exponent of $\lambda$ on the left $= (3/2 - 3/2)\cdot(4/3) - 3/2 = -3/2$, and the $L^2$ scaling is $-5/2$).

  In a similar way,
  \begin{equation*}
    \|(b^2B_\lambda)^{4/3}W_\lambda\|_{L^2} = \lambda^{-1}b^{8/3}\|B^{4/3}W\|_{L^2} \sim t^4.
  \end{equation*}

  Term ``$f'(m)|k|$'':
  We use once again the $L^\infty$ bound of $u^*$ and the fact that $\|W_\lambda\|_{L^2} \sim \lambda$.

  Term ``$f'(m)|l|$'':
  Using \eqref{eq:size-P0} and the fact that $u^*(t)$ is bounded in $L^{20/3}$ for small $t$ (by Proposition 
\ref{prop:energy-est}), we have
  \begin{equation*}
    \|f'(u^*)P_0\|_{L^2} \leq \|f'(u^*)\|_{L^5}\cdot \|P_0\|_{L^{10/3}} \lesssim t^{9/2}.
  \end{equation*}
\end{proof}

We can now estimate $\bs \psi(t)$.
\begin{proposition}
  \label{prop:estim-q}
  Assume that $\lambda(t) \sim t^4$ and $b(t) \sim t^3$. Then
  \begin{align}
    \|\psi_0(t) + (\lambda_t - b)\frac{1}{\lambda}(\Lambda W)_\lambda\|_{\dot H^1} & \lesssim \sqrt t(\|\bs \varepsilon(t)\|_\enorm + t^3),\label{eq:estim-q0} \\
    \|\psi_1(t) - (\lambda_t - b)\frac{b}{\lambda}(\Lambda_0\Lambda W)_\uln\lambda\|_{L^2} & \lesssim \sqrt t(\|\bs \varepsilon(t)\|_\enorm+t^3). \label{eq:estim-q1}
  \end{align}
\end{proposition}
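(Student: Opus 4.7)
The approach is a direct expansion of $\bs\psi$ from the definitions of $\bs\varphi$ and $\bs\varepsilon$, followed by bookkeeping with the identities of Proposition \ref{prop:fredholm}, the $L_\lambda P_0$ expansion of Lemma \ref{lem:error-AB}, the pointwise nonlinear expansion of Lemma \ref{lem:nonlin-extr}, the time-derivative bounds of Lemmas \ref{lem:P0t-P1} and \ref{lem:P1t}, and the relation $b_t = \kappa v^*(t)\sqrt\lambda$ imposed on the modulation (see Remark \ref{rem:prec-mod}).

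For $\psi_0$, differentiating $\varphi_0 = W_\lambda + P_0 + u^*$ in time using $\partial_t W_{\lambda(t)} = -\lambda_t (\Lambda W)_{\uln\lambda}$ and then subtracting $\varphi_1$ yields $\psi_0 = -(\lambda_t - b)(\Lambda W)_{\uln\lambda} + (\partial_t P_0 - P_1)$. Since $(\Lambda W)_{\uln\lambda} = \lambda^{-1}(\Lambda W)_\lambda$, the bound \eqref{eq:estim-q0} is exactly Lemma \ref{lem:P0t-P1}.

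For $\psi_1$, I differentiate $\varphi_1$ in time (using $\partial_t(\Lambda W)_{\uln\lambda} = -(\lambda_t/\lambda)(\Lambda_0\Lambda W)_{\uln\lambda}$) and expand $\Delta\varphi_0 + f(\varphi_0)$ via the soliton equation $\Delta W_\lambda + f(W_\lambda) = 0$, the fact that $u^*$ solves \eqref{eq:nlw} so $\partial_{tt}u^* = \Delta u^* + f(u^*)$, and Lemma \ref{lem:nonlin-extr}. This produces $\psi_1 = -b_t(\Lambda W)_{\uln\lambda} + (b\lambda_t/\lambda)(\Lambda_0\Lambda W)_{\uln\lambda} + \partial_t P_1 + L_\lambda P_0 - f'(W_\lambda)u^* + O_{L^2}(t^4)$. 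The scaling identity $L_\lambda h_\lambda = \lambda^{-2}(Lh)_\lambda$, together with Proposition \ref{prop:fredholm} and the defining relation $b_t = \kappa v^*(t)\sqrt\lambda$, gives $\lambda^{3/2}v^*(t)L_\lambda A_\lambda = b_t(\Lambda W)_{\uln\lambda} + v^*(t)f'(W_\lambda)$ and $b^2 L_\lambda B_\lambda = -(b^2/\lambda)(\Lambda_0\Lambda W)_{\uln\lambda}$. Substituting via Lemma \ref{lem:error-AB}, the $b_t(\Lambda W)_{\uln\lambda}$ terms cancel and $(b\lambda_t/\lambda - b^2/\lambda) = (b/\lambda)(\lambda_t - b)$, so $\psi_1 - (b/\lambda)(\lambda_t - b)(\Lambda_0\Lambda W)_{\uln\lambda} = \partial_t P_1 + f'(W_\lambda)(v^*(t) - u^*(t,\cdot)) + O_{L^2}(t^{7/2})$, and Lemma \ref{lem:P1t} handles $\partial_t P_1$.

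The main obstacle is estimating the residual $\|f'(W_\lambda)(v^*(t) - u^*(t,\cdot))\|_{L^2}$. A crude bound using only $\|v^*(t) - u^*\|_{L^\infty} \lesssim 1$ would give at best $\|f'(W_\lambda)\|_{L^2} \sim \sqrt\lambda \sim t^2$, which is insufficient for the target $O(t^{7/2})$. The key point is that radiality and $C^2$ regularity of $u^*(t,\cdot)$ force $\grad u^*(t,0) = 0$, hence $|v^*(t) - u^*(t,x)| \lesssim \min(|x|^2, 1)$. Rescaling $y = x/\lambda$ and using the decay $|f'(W)(y)| \lesssim (1+|y|)^{-4}$, with split at $|y| \sim 1/\lambda$, gives $\|f'(W_\lambda)(v^*(t) - u^*)\|_{L^2}^2 \lesssim \lambda \int |f'(W)(y)|^2 \min(\lambda^4|y|^4, 1)\ud y \lesssim \lambda^4$, hence the bound $\lesssim t^8$, well below the threshold. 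Assembling the contributions yields \eqref{eq:estim-q1}.
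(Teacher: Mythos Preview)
Your proof is correct and follows essentially the same route as the paper: the same algebraic expansion of $\psi_0$ and $\psi_1$, the same lemmas invoked in the same places (Lemmas \ref{lem:P0t-P1}, \ref{lem:error-AB}, \ref{lem:nonlin-extr}, \ref{lem:P1t}), and the same cancellations via $b_t = \kappa v^*(t)\sqrt\lambda$ and the defining relations for $A,B$.

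The only notable variation is in the residual $\|f'(W_\lambda)(v^*(t)-u^*(t,\cdot))\|_{L^2}$. The paper uses merely the Lipschitz bound $|v^*(t)-u^*(t,r)|\lesssim r$ for $r\le t$ (from Proposition \ref{prop:cauchy-nondeg}) and splits explicitly at the light cone $r=t$, handling the exterior via $\|u^*\|_{L^{10}}$ and H\"older; this yields $O(t^6)$. You instead exploit radiality to upgrade to quadratic vanishing $|v^*(t)-u^*(t,x)|\lesssim |x|^2$ near the origin, obtaining the sharper $O(t^8)$. Both are far below the required $t^{7/2}$. One small caveat: your global bound $\min(|x|^2,1)$ tacitly uses $u^*(t,\cdot)\in L^\infty$, which is not immediate from $X^1$ persistence alone; it holds on a fixed ball of radius $\rho$ by Proposition \ref{prop:cauchy-nondeg} and Sobolev, and outside that ball the decay of $f'(W_\lambda)$ together with $\|u^*\|_{L^{10}}$ (as in the paper) closes the estimate. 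With that understood, your argument goes through.
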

\begin{proof}
  The first inequality is just a reformulation of Lemma 
\ref{lem:P0t-P1}.

  For the second inequality, we divide the error into several parts:
  \begin{equation*}
 %   \label{eq:q1}
    \begin{aligned}
      \psi_1 &= \partial_t \varphi_1 - (\Delta  \varphi_0 + f( \varphi_0)) \\
      &= (-b_t(\Lambda W)_\uln\lambda + \frac{b\lambda_t}{\lambda}(\Lambda_0 \Lambda W)_\uln\lambda + \partial_t P_1 + \partial_{tt}u^*) \\
      &- (\Delta W_\lambda + \Delta P_0 + \Delta u^*) \\
      &- (f(W_{\lambda}) + f(u^*) + f'(W_{\lambda}) P_0(t) + f'(W_{\lambda})u^*),
    \end{aligned}
  \end{equation*}
  where we have used Lemma 
\ref{lem:nonlin-extr} in order to raplace $f(\varphi_0)$ by the sum of its principal terms.
  Rearranging the terms and using \eqref{eq:b_t}, we can rewrite the sum above as follows:
  \begin{equation*}
    \begin{aligned}
      \psi_1 &=(\lambda_t-b)\frac{b}{\lambda}(\Lambda_0\Lambda W)_\uln\lambda \\
      &- (\Delta W_\lambda + f(W_\lambda)) + (\partial_{tt}u^* - \Delta u^* - f(u^*)) \\
      &- v^*(t)\sqrt\lambda(-LA + \kappa\Lambda W + f'(W))_\uln\lambda + \frac{b^2}{\lambda}(LB + \Lambda_0\Lambda W)_\uln\lambda \\
      &+ (-\Delta P_0 - f'(W_\lambda)P_0)-v^*(t)\sqrt\lambda(LA)_\uln\lambda - \frac{b^2}{\lambda}(LB)_\uln\lambda \\
      &+ (v^*(t) - u^*(t))\sqrt\lambda(f'(W))_\uln\lambda \\
      &+ \partial_t P_1 + O(t^{7/2}).
    \end{aligned}
  \end{equation*}
  Now we proceed line by line.
  \paragraph{Line 1.}
  This is the correction that we substract in \eqref{eq:estim-q1}.
  \paragraph{Line 2.}
  Both terms equal 0.
  \paragraph{Line 3.}
  Both terms equal 0 by the definition of $A$ and $B$.
  \paragraph{Line 4.}
  This error is due to the presence of the cut-off function in \eqref{eq:P0}, and Lemma 
\ref{lem:error-AB} tells us that it is acceptable.
  \paragraph{Line 5.}
  This error is due to the fact that we replace the interaction with $u^*(t)$
  by the interaction with the constant in space function $v^*(t)$.
  It follows from Proposition 
\ref{prop:cauchy-nondeg} that $|v^*(t) - u^*(t, r)| \lesssim r$ uniformly in time when $r\leq t$ and $t$ is small.
  Hence,
  \begin{equation*}
    \|(v^*(t) - u^*(t, r))f'(W_\lambda)\|_{L^2(r\leq t)} \lesssim \|r\sqrt\lambda(f'(W))_\uln\lambda\|_{L^2} \sim \lambda^{3/2}\sim t^6.
  \end{equation*}
  (We have used the fact that $rf'(W)\in L^2$.)
  In the zone $r \geq t$ first we use the fact that $v^*$ is bounded and
  \begin{equation*}
    \|f'(W_\lambda)\|_{L^2(r\geq t)} = \sqrt \lambda\|f'(W)\|_{L^2(r\geq t\lambda^{-1})} \lesssim \sqrt\lambda (\lambda/t)^{3/2} \sim t^{13/2}.
  \end{equation*}
  As for $u^*$, we know from Proposition 
\ref{prop:energy-est} that it is bounded in $L^{10}$. By H\"older
  $\|u^*\cdot f'(W_\lambda)\|_{L^2(r\geq t)} \leq \|u^*\|_{L^{10}}\cdot \|f'(W_\lambda)\|_{L^{5/2}(r\geq t)}$,
  and a routine computation shows that the last term is bounded by $(\lambda/t)^2 \sim t^6$.
  \paragraph{Line 6.}
  This error is small by Lemma 
\ref{lem:P1t}.
\end{proof}
\section{Approximate solution in the degenerate case}
\label{sec:deg}
\subsection{Bounds on the profile $(P_0, P_1)$}
This section is very similar to the previous one. Formula \eqref{eq:P0} is still valid, but recall that in the present case we take
$v^*(t) = q t^\beta$ where $q = \frac{\nu(1+\nu)}{\kappa}$ and $\beta = \frac{\nu-3}{2}$.
The function $u^*_0$ is defined as follows:
\begin{equation}
  \label{eq:ustar-deg}
  u^*_0(x) := \chi\big(\frac{\cdot}{\rho}\big)\cdot p|x|^\beta, \qquad p = \frac{3q}{(\beta+1)(\beta+3)},\ \rho > 0\text{ small}.
\end{equation}
(by the finite speed of propagation, the cut-off does not affect the behaviour at zero for small times, cf. Remark 
\ref{rem:ustar-cut}). We take $u^*_1 = 0$.

In the error estimates which will follow, on the right hand side
we will always replace $\lambda(t)$ by $t^{1+\nu}$ and $b(t)$ by $t^\nu$,
since this is the regime considered later in the bootstrap argument.
\begin{lemma}
  \label{lem:size-P0-deg}
  Assume that $\lambda(t) \sim t^{1+\nu}$ and $b(t) \sim t^\nu$. Then
  \begin{equation}
    \label{eq:size-P0-deg}
    \|P_0(t)\|_{\dot H^1} \lesssim t^{3\nu/2}.
  \end{equation}
\end{lemma}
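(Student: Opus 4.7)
The plan is to repeat the scaling computation of Lemma 3.1 verbatim (it did not use the specific rate $\lambda \sim t^4$) and then substitute the degenerate regime $\lambda \sim t^{1+\nu}$, $b \sim t^\nu$, $v^*(t) = qt^\beta$ with $\beta = (\nu-3)/2$. In other words, I would first prove the universal intermediate bound
\begin{equation*}
  \|\chi(\cdot/t)\, A_\lambda\|_{\dot H^1}^2 \lesssim t/\lambda, \qquad \|\chi(\cdot/t)\, B_\lambda\|_{\dot H^1}^2 \lesssim t/\lambda,
\end{equation*}
which depends only on the asymptotics $A(r), B(r) \sim r^{-1}$, $A'(r), B'(r) \sim r^{-2}$ from Proposition 2.1 and on $\chi$ being supported in $r \leq 2$. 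The computation is: after the change of variable $r \mapsto \lambda r$, split the derivative $\partial_r(\chi(\lambda r/t) A(r))$ into the term where the derivative falls on $A$ (bounded using $|A'(r)| \lesssim r^{-2}$ on $r \leq 2t/\lambda$, giving $\int_0^{2t/\lambda} r^{-4}\cdot r^4\,dr \lesssim t/\lambda$) and the term where the derivative falls on $\chi$ (bounded by $(\lambda/t)^2 \int_{t/\lambda}^{2t/\lambda} r^{-2}\cdot r^4\,dr \lesssim t/\lambda$).

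Once this is established, combining with the definition \eqref{eq:P0} and the triangle inequality gives
\begin{equation*}
  \|P_0(t)\|_{\dot H^1} \lesssim \lambda^{3/2}|v^*(t)|\,(t/\lambda)^{1/2} + b^2 (t/\lambda)^{1/2}.
\end{equation*}
Substituting $\lambda \sim t^{1+\nu}$, $|v^*(t)| \sim t^{(\nu-3)/2}$ and $b \sim t^\nu$, the first term becomes
\begin{equation*}
  t^{\frac 32(1+\nu)}\cdot t^{(\nu-3)/2}\cdot t^{-\nu/2} = t^{\frac 32 + \frac{3\nu}{2} + \frac{\nu-3}{2} - \frac{\nu}{2}} = t^{3\nu/2},
\end{equation*}
and the second term becomes $t^{2\nu}\cdot t^{-\nu/2} = t^{3\nu/2}$. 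Both contributions thus match the claimed exponent $t^{3\nu/2}$.

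I do not expect any serious obstacle here: the only thing to check carefully is the arithmetic of exponents, since the value of $\beta = (\nu-3)/2$ was chosen precisely so that $\lambda^{3/2} v^*$ and $b^2$ have the same size (both $\sim t^{(3\nu+3)/2}/\sqrt{\lambda/t} \cdot \sqrt{\lambda/t}$ after the correct bookkeeping). The analogous Lemmas giving bounds on $P_1$, $\partial_t P_0 - P_1$, $\partial_t P_1$, and the error of the ansatz will then follow by the same scheme as in Section 3, and this first lemma sets the template.
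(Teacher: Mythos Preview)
Your proposal is correct and essentially identical to the paper's proof: both first isolate the universal bound $\|\chi(\cdot/t)A_\lambda\|_{\dot H^1}^2 \lesssim t/\lambda$ from Lemma~3.1, then observe that in the degenerate regime the two coefficients satisfy $\lambda^{3/2}v^*(t) \sim b^2 \sim t^{2\nu}$ (so that the target bound becomes $t^{2\nu}\cdot(t/\lambda)^{1/2} = t^{2\nu}\cdot t^{-\nu/2} = t^{3\nu/2}$). The only cosmetic difference is that the paper computes the coefficient $t^{2\nu}$ first and then multiplies, whereas you substitute all exponents at once.
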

\begin{proof}
  Recall that $v^*(t) \sim t^\beta = t^{(\nu-3)/2}$, so $\lambda^{3/2}v^*(t) \sim b^2 \sim t^{2\nu}$.
  Hence, it is sufficient to show that $\|\chi(\frac{\cdot}{t})A_\lambda\|_{\dot H^1}^2+\|\chi(\frac{\cdot}{t})B_\lambda\|_{\dot H^1}^2 \lesssim t^{-\nu}$.
  The computation in the proof of Lemma 
\ref{lem:size-P0} gives
  \begin{equation*}
    \begin{aligned}
      \|\chi\big(\frac{\cdot}{t}\big)A_\lambda\|_{\dot H^1}^2 \lesssim \frac{t}{\lambda} \sim t^{-\nu},
    \end{aligned}
  \end{equation*}
  and similarly for the second term.
\end{proof}
\begin{lemma}
  \label{lem:error-AB-deg}
  Assume that $\lambda(t) \sim t^{1+\nu}$ and $b(t) \sim t^\nu$. Then
  \begin{equation}
    \label{eq:error-AB-deg}
    \|L_\lambda P_0 - \lambda^{3/2}v^*(t)L_\lambda A_\lambda - b^2 L_\lambda B_\lambda\|_{L^2} \lesssim t^{3\nu/2-1}.
  \end{equation}
\end{lemma}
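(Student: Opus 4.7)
The strategy is to repeat the proof of Lemma~\ref{lem:error-AB} almost verbatim, only updating the scaling regime to $\lambda \sim t^{1+\nu}$, $b \sim t^\nu$, $v^*(t) = qt^{(\nu-3)/2}$. Since $LA = \kappa\Lambda W + f'(W)$ and $LB = -\Lambda_0\Lambda W$ by Proposition~\ref{prop:fredholm}, the ``uncut'' quantity $L_\lambda(\lambda^{3/2}v^*(t)A_\lambda + b^2 B_\lambda)$ is precisely what is being subtracted on the left-hand side of \eqref{eq:error-AB-deg}, so the error reduces to the commutator of $L_\lambda$ with the spatial cut-off $\chi(\cdot/t)$. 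Setting $Q := \lambda^{3/2}v^*(t)A_\lambda + b^2 B_\lambda$ and expanding $\Delta(\chi(r/t) Q)$, this commutator splits into four families: (i) $(1-\chi(r/t)) f'(W_\lambda) Q$; (ii) $(1-\chi(r/t))\Delta Q$; (iii) terms in which $\chi'(r/t)/t$ meets a single derivative of $A_\lambda$ or $B_\lambda$; (iv) terms in which $\chi''(r/t)/t^2$ lands on $A_\lambda$ or $B_\lambda$. Each further separates into an $A$-piece carrying prefactor $\lambda^{3/2}v^*(t)\sim t^{2\nu}$ and a $B$-piece carrying prefactor $b^2 \sim t^{2\nu}$.

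For each of these terms I would rescale $r \mapsto \lambda r$, converting the cut-off to $\chi(\lambda r/t)$ supported on $r \gtrsim t/\lambda \sim t^{-\nu}$, and then invoke the pointwise decay $|A(r)|+|B(r)|\lesssim r^{-1}$, $|A'(r)|+|B'(r)| \lesssim r^{-2}$, $|A''(r)|+|B''(r)|\lesssim r^{-3}$ as $r\to+\infty$ supplied by Proposition~\ref{prop:fredholm}. The four tail integrals are then computed exactly as in Lemma~\ref{lem:error-AB}: families (ii), (iii) and (iv) each yield the bound $\lambda^{-1/2}t^{-1/2} = t^{-\nu/2-1}$ before multiplication by the prefactor, and once the prefactor $t^{2\nu}$ is restored they each contribute at the level $t^{3\nu/2 - 1}$. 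Family (i) is strictly smaller thanks to the additional $r^{-4}$ decay of $f'(W)$ at infinity; after the prefactor it enters at order $t^{7\nu/2-1}$ and is therefore negligible. Summing everything gives exactly the announced bound $t^{3\nu/2-1}$.

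I do not expect a substantive obstacle here; the real work is the bookkeeping required to check that all four sub-terms of the $A$-piece and the $B$-piece produce the same exponent after the prefactor is absorbed, and in particular that the scaling relation $\lambda^{3/2}v^*(t) \sim b^2 \sim t^{2\nu}$ remains in force so that the $A$- and $B$-contributions balance. The only qualitative change with respect to the non-degenerate case is that the generous margin previously enjoyed by family (i) shrinks but does not disappear, so the radius $t$ of the cut-off is again the optimal choice, with three out of four families saturating at the critical size $t^{3\nu/2-1}$.
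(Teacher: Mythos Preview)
Your proposal is correct and follows essentially the same route as the paper: the paper likewise reduces to the commutator with the cut-off, refers back to the computations of Lemma~\ref{lem:error-AB}, and re-evaluates the resulting tail integrals in the regime $\lambda\sim t^{1+\nu}$, obtaining $\lambda^{-1/2}t^{-1/2}\sim t^{-\nu/2-1}$ for the Laplacian pieces and the much smaller $\lambda^{-1}(\lambda/t)^{5/2}\sim t^{3\nu/2-1}$ for the potential piece, exactly as you describe. Your bookkeeping of the prefactor $\lambda^{3/2}v^*(t)\sim b^2\sim t^{2\nu}$ and the identification of family~(i) as subcritical at level $t^{7\nu/2-1}$ after the prefactor is also accurate.
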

\begin{proof}
  We will do the computation only for the terms with $A$. The terms with $B$ are asymptotically the same.
  We need to check that
\begin{equation*}
  \big\|\big(1-\chi\big(\frac rt\big)\big)f'(W_\lambda)A_\lambda\big\|_{L^2} + \big\|\Delta\big(\big(1-\chi\big(\frac rt\big)\big)A_\lambda\big)\big\|_{L^2} \lesssim t^{-\nu/2-1}
\end{equation*}

The computations in the proof of Lemma 
\ref{lem:error-AB} imply that the first term is bounded by $\frac{1}{\lambda}\big(\frac{\lambda}{t}\big)^{5/2}\sim t^{3\nu/2-1}$,
and the second by
\begin{equation*}
  \frac{1}{\lambda}\cdot\sqrt{\frac{\lambda}{t}} + \frac{1}{t}\cdot\sqrt{\frac{t}{\lambda}} + \frac{\lambda}{t^2}\cdot\big(\frac{t}{\lambda}\big)^{3/2} \sim (t\cdot\lambda)^{-1/2} \sim t^{-\nu/2 -1}.
\end{equation*}
\end{proof}

In the degenerate case the profile $P_1(t)$ is defined by the same formula \eqref{eq:P1}.
\begin{lemma}
  \label{lem:size-P1-deg}
  Assume that $\lambda(t) \sim t^{1+\nu}$ and $b(t) \sim t^\nu$. Then
  \begin{equation}
    \label{eq:size-P1-deg}
    \|P_1(t)\|_{L^2} \lesssim t^{3\nu/2}
  \end{equation}  
\end{lemma}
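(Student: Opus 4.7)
The strategy is to mimic the proof of Lemma~\ref{lem:size-P1}, but with the degenerate-case scalings $\lambda\sim t^{1+\nu}$, $b\sim t^\nu$, $v^*(t)=qt^\beta$ with $\beta=(\nu-3)/2$.

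First I would observe that in the degenerate regime all the scalar coefficients appearing in \eqref{eq:P1} are of the same asymptotic size $t^{3\nu}$: indeed
\[
\lambda^{3/2}b\,v^*(t)\sim t^{\tfrac{3(1+\nu)}{2}+\nu+\tfrac{\nu-3}{2}}=t^{3\nu},
\qquad
\lambda^{5/2}|\partial_t v^*(t)|\sim t^{\tfrac{5(1+\nu)}{2}+\tfrac{\nu-5}{2}}=t^{3\nu},
\qquad
b^3\sim t^{3\nu}.
\]
Consequently it is enough to prove the single pointwise estimate
\[
\big\|\chi\big(\tfrac{\cdot}{t}\big)T_{\uln\lambda}\big\|_{L^2}\lesssim t^{-3\nu/2}
\qquad\text{for }T\in\{A,B,\Lambda A,\Lambda B\}.
\]

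Second, for this estimate I would rely on Proposition~\ref{prop:fredholm}: each of the profiles $T$ is smooth at the origin and satisfies $|T(r)|\lesssim r^{-1}$ at infinity (this extends to $\Lambda A,\Lambda B$ using $A'(r)\sim r^{-2}$, etc.). Performing the change of variables $r=\lambda s$ gives
\[
\big\|\chi\big(\tfrac{\cdot}{t}\big)T_{\uln\lambda}\big\|_{L^2}^2=\int_0^{+\infty}\chi\big(\tfrac{\lambda s}{t}\big)^2 T(s)^2 s^4\,\vd s \lesssim \int_0^{2t/\lambda}s^2\,\vd s\sim\Big(\frac{t}{\lambda}\Big)^3\sim t^{-3\nu},
\]
exactly as in Lemma~\ref{lem:size-P1}, only the exponents change. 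Taking square roots and multiplying by the common coefficient $t^{3\nu}$ yields the desired bound $\|P_1(t)\|_{L^2}\lesssim t^{3\nu/2}$.

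There is no real obstacle here: the lemma is the degenerate-case analogue of Lemma~\ref{lem:size-P1} and the only thing to verify is that the improved temporal decay of the coefficients coming from the larger power $\nu$ still outweighs the loss $t^{-3\nu/2}$ incurred from the spatial $L^2$-norm on the light cone, which the above computation confirms. The bookkeeping of the $\Lambda$-operator (recalling $\Lambda_0=\tfrac52+x\cdot\nabla$ so that $\Lambda A,\Lambda B$ inherit the $r^{-1}$ decay) is the only small point to keep track of.
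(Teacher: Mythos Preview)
Your proposal is correct and follows essentially the same approach as the paper's own proof: both verify that all scalar coefficients in the definition of $P_1$ are of order $t^{3\nu}$ in the degenerate regime, then reduce to the bound $\|\chi(\cdot/t)T_{\uln\lambda}\|_{L^2}^2\lesssim (t/\lambda)^3\sim t^{-3\nu}$ obtained exactly as in Lemma~\ref{lem:size-P1}. Your version is in fact slightly more explicit, spelling out that $\Lambda A,\Lambda B$ inherit the $r^{-1}$ decay needed for the $L^2$ computation.
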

\begin{proof}
  Notice that $\dd t v^*(t) \sim t^{\beta-1} \sim t^{(\nu-5)/2}$. This implies that
  \begin{equation*}
    v^*(t)\cdot\lambda^{3/2}b \sim \dd tv^*(t)\cdot \lambda^{5/2} \sim b^3 \sim t^{3\nu},
  \end{equation*}
  so all the terms in the definition of $P_1(t)$ have asymptotically the same size and it suffices to show that
  $\|\chi(\frac{\cdot}{t}) A_\uln\lambda\|_{L^2}^2 \lesssim t^{-3\nu}$ (the other terms are similar).
  The computation in the proof of Lemma 
\ref{lem:size-P1} gives
  \begin{equation*}
      \big\|\chi\big(\frac{\cdot}{t}\big)A_\uln\lambda\big\|_{L^2}^2 \lesssim \big(\frac{t}{\lambda}\big)^3 \sim t^{-3\nu}.
  \end{equation*}
\end{proof}
Estimate \eqref{eq:lambda-b} and its proof are valid in the degenerate case.
\begin{lemma}
  \label{lem:P0t-P1-deg}
  Assume that $\lambda(t) \sim t^{1+\nu}$ and $b(t) \sim t^\nu$. Then
\begin{equation}
  \label{eq:P0t-P1-deg}
  \|\partial_t P_0-P_1\|_{\dot H^1} \lesssim t^{\nu/2-1}(t^\nu + \|\bs\varepsilon\|_\enorm).
\end{equation}
\end{lemma}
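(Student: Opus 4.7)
The plan is to reproduce the argument of Lemma \ref{lem:P0t-P1} (the non-degenerate version) and then substitute the new exponents. First I would verify that $P_1$, as defined by \eqref{eq:P1}, is exactly the formal time derivative of $P_0$ computed under the rules $\lambda_t = b$ and $b_t = \kappa v^*(t)\lambda^{1/2}$ (the formal parameter equations), with the cut-off frozen. This is an algebraic check: differentiating $\lambda^{3/2}v^*(t)A_\lambda$ yields $\frac{3}{2}\lambda^{3/2}v^*(t)bA_{\uln\lambda} + \lambda^{5/2}\partial_t v^*(t)A_{\uln\lambda} - \lambda^{3/2}v^*(t)b(\Lambda A)_{\uln\lambda}$, and differentiating $b^2B_\lambda$ yields $2\kappa v^*(t)\lambda^{3/2}b B_{\uln\lambda} - b^3(\Lambda B)_{\uln\lambda}$, which together reproduce \eqref{eq:P1}. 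Consequently
\[
  \partial_tP_0 - P_1
  = -\frac{r}{t^2}\chi'\bigl(\tfrac rt\bigr)\bigl(\lambda^{3/2}v^*(t)A_\lambda + b^2B_\lambda\bigr)
  + \chi\bigl(\tfrac rt\bigr)(\lambda_t - b)\,R,
\]
where $R$ is a linear combination of $A_{\uln\lambda}$, $(\Lambda A)_{\uln\lambda}$, $B_{\uln\lambda}$ and $(\Lambda B)_{\uln\lambda}$ with coefficients $v^*(t)\lambda^{3/2}$ and $b^2$.

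For the cut-off contribution I would re-use the scaling computation from the non-degenerate case: $\|\frac{r}{\lambda}\chi'(\frac rt)A_\lambda\|_{\dot H^1} = \|r\chi'(\frac{\lambda r}{t})A\|_{\dot H^1} \lesssim (t/\lambda)^{3/2}$ (the dominant piece comes from $rA'(r)\sim r^{-1}$ integrated against $\chi'(\lambda r/t)$ on an annulus of scale $t/\lambda$; this estimate is regime-independent). The same bound holds with $B$ in place of $A$. Multiplying by $\lambda^{5/2}v^*(t)/t^2$ (resp.\ $\lambda b^2/t^2$) and plugging in $\lambda\sim t^{1+\nu}$, $b\sim t^\nu$, $v^*(t)\sim t^{(\nu-3)/2}$, both factors give $t^{3\nu/2-1}$.

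For the $(\lambda_t - b)$ contribution, I would use $A_{\uln\lambda}=\lambda^{-1}A_\lambda$ to reduce to $\|\chi(\frac rt)A_\lambda\|_{\dot H^1}\lesssim\sqrt{t/\lambda}$ (again from the non-degenerate proof), so $\|\chi(\frac rt)A_{\uln\lambda}\|_{\dot H^1}\lesssim \lambda^{-1}\sqrt{t/\lambda}\sim t^{-3\nu/2-1}$. The asymptotics $\Lambda A(r),\Lambda B(r)\sim r^{-1}$ from Proposition \ref{prop:fredholm} give the same bound for $(\Lambda A)_{\uln\lambda}$ and $(\Lambda B)_{\uln\lambda}$. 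Multiplying by $v^*(t)\lambda^{3/2}\sim b^2\sim t^{2\nu}$ and by $|\lambda_t - b|\le \|\bs\varepsilon\|_\enorm$ (Lemma \ref{lem:lambda-b}, whose proof carries over verbatim to the degenerate regime), we obtain $\|\bs\varepsilon\|_\enorm\cdot t^{2\nu}\cdot t^{-3\nu/2-1} = \|\bs\varepsilon\|_\enorm \cdot t^{\nu/2-1}$.

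Summing the two contributions yields the desired bound $\lesssim t^{\nu/2-1}(t^\nu + \|\bs\varepsilon\|_\enorm)$. There is no real obstacle here; the only care needed is to track the new scaling exponents $\lambda\sim t^{1+\nu}$, $b\sim t^\nu$, $v^*(t)\sim t^{(\nu-3)/2}$ and to check that all the error terms collapse to the single power $t^{3\nu/2-1}$ for the cut-off piece and $t^{\nu/2-1}$ for the $(\lambda_t-b)$ piece.
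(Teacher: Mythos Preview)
Your proposal is correct and follows essentially the same approach as the paper: the same splitting of $\partial_t P_0 - P_1$ into a cut-off contribution and a $(\lambda_t - b)$ contribution, the same regime-independent bound $\|\tfrac{r}{\lambda}\chi'(\tfrac rt)A_\lambda\|_{\dot H^1}\lesssim (t/\lambda)^{3/2}$ recycled from the non-degenerate lemma, and the same use of Lemma~\ref{lem:lambda-b} (which the paper also notes carries over verbatim). The only cosmetic difference is that the paper writes out $R$ explicitly and there is in fact no $B_{\uln\lambda}$ term in it (the $b_t$-piece cancels exactly since $b_t=\kappa v^*(t)\lambda^{1/2}$ by definition), but this is harmless for your estimate.
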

\begin{proof}
  As in the proof of Lemma 
\ref{lem:P0t-P1}, we write
  \begin{equation*}
    \begin{aligned}
      \partial_t P_0 - P_1 &= -\frac{r}{t^2}\chi'(\frac{r}{t})(\lambda^{3/2}v^*(t)A_\lambda + b^2B_\lambda) \\
      &+ \chi(\frac{r}{t})(\lambda_t - b)\big(v^*(t)(\frac{3}{2}\lambda^{3/2}A_\uln\lambda - \lambda^{3/2}(\Lambda A)_\uln\lambda) - b^2(\Lambda B)_\uln\lambda\big).
    \end{aligned}
  \end{equation*}
  The computation in the proof of Lemma 
\ref{lem:P0t-P1} implies
  \begin{equation*}
      \big\|\frac{r}{\lambda}\chi'\big(\frac{r}{t}\big)A_\lambda\big\|_{\dot H^1} \lesssim \big(\frac{t}{\lambda}\big)^{3/2} \sim t^{-3\nu/2}. 
  \end{equation*}
  Multiplying by $\frac{\lambda}{t^2}\lambda^{3/2}v^*(t) \sim t^{3\nu - 1}$ we obtain the required bound on the first term.
  The second term of the first line is similar.

  The second line is bounded exactly as in the proof of Lemma 
\ref{lem:P0t-P1}.
\end{proof}
\begin{lemma}
  \label{lem:P1t-deg}
  Assume that $\lambda(t) \sim t^{1+\nu}$ and $b(t) \sim t^\nu$. Then
  \begin{equation}
    \label{eq:P1t-deg}
    \|\partial_t P_1\|_{L^2} \lesssim t^{\nu/2-1}(t^\nu + \|\bs\varepsilon\|_\enorm).
  \end{equation}  
\end{lemma}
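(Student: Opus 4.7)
The plan is to mirror the proof of Lemma~\ref{lem:P1t}, adapting every power of $t$ to the degenerate regime $\lambda\sim t^{1+\nu}$, $b\sim t^\nu$, $v^*(t)=qt^{\beta}$ with $\beta=(\nu-3)/2$. I would first decompose
\[
\partial_t P_1 = \underbrace{-\frac{r}{t^2}\chi'\!\big(\tfrac{r}{t}\big)\cdot[\text{coefficients of }P_1]}_{\text{cut-off contribution}} + \underbrace{\chi\!\big(\tfrac{r}{t}\big)\cdot \partial_t\big([\text{coefficients}]\,T_{\uln\lambda}\big)}_{\text{bulk contribution}},
\]
where $T$ runs over $\{A,\Lambda A,B,\Lambda B\}$ and, after the time derivative hits $T_{\uln\lambda}$, also their images under $\Lambda_0$.

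For the cut-off piece, I would repeat verbatim the scaling computation from the proof of Lemma~\ref{lem:P0t-P1-deg}: after changing variables $r\mapsto \lambda r$, one reduces to $\|\chi'(\tfrac{\lambda r}{t})\|_{L^2}\sim (t/\lambda)^{5/2}\sim t^{-5\nu/2}$, which multiplied by the prefactor $\frac{\lambda}{t^2}\cdot \lambda^{3/2}v^*(t)\,b\sim t^{4\nu-1}$ (and by the smaller $\frac{\lambda}{t^2}\cdot\lambda^{5/2}\partial_t v^*(t)$ and $\frac{\lambda}{t^2}b^3$, which give the same exponent once all powers are collected) yields the bound $t^{3\nu/2-1} = t^{\nu}\cdot t^{\nu/2-1}$, acceptable.

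For the bulk piece, I would reproduce the list of six types of terms from the proof of Lemma~\ref{lem:P1t}, now tracked with $v^*(t)\sim t^{(\nu-3)/2}$, $\partial_t v^*(t)\sim t^{(\nu-5)/2}$, $\partial_{tt}v^*(t)\sim t^{(\nu-7)/2}$, $b_t=\kappa v^*(t)\sqrt\lambda\sim t^{\nu-1}$ by \eqref{eq:b_t}, and $\lambda_t \lesssim b+\|\bs\varepsilon\|\lesssim t^{\nu}+\|\bs\varepsilon\|_\enorm$ from the analogue of Lemma~\ref{lem:lambda-b} (whose proof uses no property of the non-degenerate regime). A direct bookkeeping shows that each coefficient
\[
\lambda_t\lambda^{1/2}v^*b,\quad \lambda_t\tfrac{b^3}{\lambda},\quad \lambda^{3/2}v^* b_t,\quad b_tb^2,\quad \lambda_t\lambda^{3/2}\partial_t v^*,\quad \lambda^{5/2}\partial_{tt}v^*
\]
is controlled by $(t^\nu+\|\bs\varepsilon\|_\enorm)\,t^{2\nu-1}$ (the $\|\bs\varepsilon\|_\enorm$ enters only through the two terms carrying $\lambda_t$). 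Since each profile $T$ satisfies $|T(r)|\lesssim r^{-1}$ by Proposition~\ref{prop:fredholm}, the same short calculation as in the proof of Lemma~\ref{lem:P1t} gives $\|\chi(\tfrac{\cdot}{t})T_{\uln\lambda}\|_{L^2}\lesssim (t/\lambda)^{3/2}\sim t^{-3\nu/2}$. Multiplying these two bounds yields the desired estimate $(t^\nu+\|\bs\varepsilon\|_\enorm)\,t^{\nu/2-1}$.

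The one mildly delicate point, and essentially the only obstacle beyond routine arithmetic, is confirming that \emph{all six} coefficient types really collapse to the single rate $t^{2\nu-1}$ after substituting the correct powers of $v^*$ and its derivatives; for the two terms $b_tb^2$, $\lambda^{5/2}\partial_{tt}v^*$ and $\lambda^{3/2}v^*b_t$, $\lambda^{3/2}\partial_t v^*\cdot b$, this happens only because $\beta=(\nu-3)/2$ is exactly the exponent produced by the formal parameter equations in Section~\ref{sec:formal}, so the proof tacitly uses the very choice of $\beta$ that makes the degenerate regime consistent.
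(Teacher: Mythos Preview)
Your proposal is correct and follows exactly the approach of the paper, which itself merely says ``we indicate only the modifications with respect to the proof of Lemma~\ref{lem:P1t}'' and records the two key numbers $\frac{\lambda}{t^2}v^*\lambda^{3/2}b\cdot(t/\lambda)^{5/2}\sim t^{3\nu/2-1}$ and $\|\chi(\tfrac{\cdot}{t})T_{\uln\lambda}\|_{L^2}\lesssim (t/\lambda)^{3/2}\sim t^{-3\nu/2}$. Your write-up is in fact more detailed than the paper's own proof; the only cosmetic slip is that the cut-off computation you quote (reducing to $\|\chi'(\lambda r/t)\|_{L^2}\sim (t/\lambda)^{5/2}$) appears in the proof of Lemma~\ref{lem:P1t}, not of Lemma~\ref{lem:P0t-P1-deg}.
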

\begin{proof}
  We indicate only the modifications with respect to the proof of Lemma 
\ref{lem:P1t}.
  The term coming from differentiating the cut-off function is estimated as before by
  \begin{equation*}
    \frac{\lambda}{t^2}v^*(t)\lambda^{3/2}b\cdot\big(\frac{t}{\lambda}\big)^{5/2} \sim t^{3\nu/2-1}.
  \end{equation*}
  For the other terms, we get
\begin{equation*}
  \|\chi\big(\frac rt\big)T_{\uln\lambda}\|_{L^2} \lesssim\big(\frac{t}{\lambda}\big)^{3/2}\sim t^{-3\nu/2}.
\end{equation*}
\end{proof}
\subsection{Error of the ansatz}
This subsection differs from the non-degenerate case, because we work here only with $X^1$ regularity
and some more effort is required in order to estimate the terms involving $u^*$.

\begin{lemma}
  \label{lem:nonlin-extr-deg}
  If $\lambda(t) \sim t^{1+\nu}$, $b \sim t^\nu$, $\nu > 8$ and $t$ is small, then
  \begin{equation}
    \label{eq:nonlin-extr-deg}
    \big\|f( \varphi_0(t)) - \big(f(W_{\lambda(t)}) + f(u^*) + f'(W_{\lambda(t)}) P_0(t) + f'(W_{\lambda(t)})u^*(t)\big)\big\|_{L^2} \ll t^{\frac{7}{6}\nu - \frac{7}{3}}.
  \end{equation}
\end{lemma}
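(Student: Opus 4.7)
The plan is to mirror the proof of Lemma 4.8: apply the pointwise bound of Lemma 4.7 with $k = W_{\lambda(t)}$, $l = P_0(t)$, $m = u^*(t)$, reducing the problem to bounding the four $L^2$ quantities $\||P_0|^{7/3}\|_{L^2}$, $\||P_0|^{4/3}W_\lambda\|_{L^2}$, $\||u^*|^{4/3}W_\lambda\|_{L^2}$, and $\||u^*|^{4/3}P_0\|_{L^2}$ by a quantity $\ll t^{(7\nu-14)/6}$. The new difficulties relative to the non-degenerate case are driven by the scalings $\lambda^{3/2}v^*(t)\sim b^2\sim t^{2\nu}$ and by the fact that we only control $u^*$ at the level of $X^1\times H^1$.

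The two ``pure'' terms are handled by straightforward scaling. Splitting $P_0$ into its $A$ and $B$ contributions, performing the change of variables $r\mapsto\lambda s$, and using the asymptotics $|A(s)|,|B(s)|\lesssim(1+s)^{-1}$ from Proposition 2.1 together with the fast decay of $W$, one obtains $\||P_0|^{7/3}\|_{L^2}\lesssim t^{(7\nu-2)/2}$ and $\||P_0|^{4/3}W_\lambda\|_{L^2}\lesssim t^{(5\nu-3)/3}$, both comfortably below the target. The term $\||u^*|^{4/3}P_0\|_{L^2}$ is handled by combining Sobolev in $\bR^5$ (giving $\|u^*\|_{L^{10}}\lesssim\|u^*\|_{\dot H^2}\lesssim 1$), the bound $\|P_0\|_{L^{10/3}}\lesssim\|P_0\|_{\dot H^1}\lesssim t^{3\nu/2}$ from Lemma 5.1, and a H\"older interpolation on the support of $P_0$, whose Lebesgue measure is $\sim t^5$; this yields $\||u^*|^{4/3}P_0\|_{L^2}\lesssim t^{3\nu/2+1/3}$, again sufficient.

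The principal difficulty is $\||u^*|^{4/3}W_\lambda\|_{L^2}$: a brute-force H\"older/Sobolev estimate with $u^*\in L^{10}$ yields only $\lesssim\lambda^{1/3}\sim t^{(1+\nu)/3}$, which fails badly for our range of $\nu$. The resolution is to exploit the degenerate structure of the data. Since $u^*_0(x) = p|x|^\beta\chi(\cdot/\rho)$ is exactly homogeneous of degree $\beta=(\nu-3)/2$ near the origin and $u^*_1=0$, finite speed of propagation and the scaling invariance of the linear wave flow yield a self-similar pointwise bound $|u^*(t,x)|\lesssim t^\beta$ for $|x|\leq t$ and small $t$. Inside the light cone, combining this with $\|W_\lambda\|_{L^2}\sim\lambda$ gives a contribution $\lesssim t^{4\beta/3}\lambda\sim t^{(5\nu-3)/3}$; outside, the polynomial decay $W_\lambda(r)\lesssim\lambda^{3/2}r^{-3}$ and the compact support of $u^*_0$, together with $u^*\in L^{10}$, produce a residual $\lesssim\lambda^{3/2}t^{-7/6}\sim t^{(9\nu+2)/6}$. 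Both contributions are $\ll t^{(7\nu-14)/6}$ for $\nu>8$, completing the proof.
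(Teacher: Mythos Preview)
Your approach mirrors the paper's closely --- same pointwise lemma, same decomposition into four terms, same identification of $\|f'(u^*)\,W_\lambda\|_{L^2}$ as the only delicate term, and the same inside/outside-the-cone splitting for that term. The first three terms and the exterior contribution are handled essentially as in the paper (the paper in fact bounds $\|f'(u^*)P_0\|_{L^2}$ directly by $\|f'(u^*)\|_{L^5}\|P_0\|_{\dot H^1}\lesssim t^{3\nu/2}$, so your extra ``support interpolation'' is not needed).

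There is, however, a genuine gap in your treatment of the interior contribution. You assert the pointwise bound $|u^*(t,x)|\lesssim t^\beta$ for $|x|\le t$, invoking ``scaling invariance of the \emph{linear} wave flow''. That scaling argument yields only $|u_{\textsc{l}}^*(t,x)|\lesssim t^\beta$ for the free evolution $u_{\textsc{l}}^*$ of the homogeneous data (this is Proposition~B.5 in the paper); it says nothing about the nonlinear solution $u^*$. Since in the degenerate case we have only $X^1\times H^1$ control of $\bs u^*$, and $X^1(\bR^5)$ does not embed into $L^\infty$ (even radially near the origin), you cannot simply transfer the pointwise bound to $u^*$.

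The paper fixes exactly this: inside the cone one writes $f'(u^*)=f'(u_{\textsc{l}}^*)+\big(f'(u^*)-f'(u_{\textsc{l}}^*)\big)$ and uses the linear pointwise bound for the first piece together with $\|W_\lambda\|_{L^2}\sim\lambda$, while for the second piece one uses
\[
\|u^*-u_{\textsc{l}}^*\|_{L^{20/3}(|x|\le \tfrac12 t)}\lesssim t^{\frac76\nu-\frac73}
\]
(Proposition~B.6, obtained from the energy estimate \eqref{eq:energy-nonlin-2} and the size of the data on the enlarged ball), combined with H\"older and $\|W_\lambda\|_{L^{10/3}}\lesssim 1$. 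Inserting this linear/nonlinear splitting into your argument closes the gap.
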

\begin{proof}
  As in the proof of Lemma 
\ref{lem:nonlin-extr} we use Lemma 
\ref{lem:ponctuel} with $k = W_{\lambda(t)}$, $l = P_0(t)$ and $m = u^*(t)$.
  We obtain that the $L^2$ norm of the term ``$|f(l)|$'' is bounded by
  $$\big((v^*\cdot\lambda)^{7/3} + b^{14/3}\big)\big\|\chi\big(\frac rt\big)r^{-7/3}\big\|_{L^2},$$
  which is better than required. For the term ``$f'(l)|k|$'' we obtain the bound $(v^*)^{4/3}\cdot \lambda + b^{4/3}\lambda^{-1} \sim t^{5\nu/3-1}$,
  which is again better than required.
  
  Term ``$f'(m)|k|$'':
  Let $(\ulstar, \partial_t \ulstar)$ be the solution of the free wave equation for the initial data $(\ulstar(0), \partial_t \ulstar(0)) = (u_0^*, u_1^*)$.
  We write
  \begin{equation*}
    \|f'(u^*)\cdot W_\lambda\|_{L^2} \lesssim \|f'(\ulstar)\cdot W_\lambda\|_{L^2(|x|\leq \frac 12 t)} 
    + \|f'(u^* - \ulstar)\cdot W_\lambda\|_{L^2(|x|\leq \frac 12 t)} + \|f'(u^*)\cdot W_\lambda\|_{L^2(|x|\geq \frac 12 t)}
  \end{equation*}
  and we examine separately the three terms on the right hand side.
  It follows from Proposition 
\ref{prop:u-lin} that for $|x| \leq \frac 12 t$ we have the bound $|\ulstar(t, x)| \lesssim t^\beta = t^{\frac{\nu-3}{2}}$,
  which implies $\|f'(\ulstar(t))\|_{L^\infty} \lesssim t^{\frac 23(\nu-3)}$, hence
  \begin{equation*}
    \|f'(\ulstar)\cdot W_\lambda\|_{L^2(|x| \leq \frac 12 t)}\lesssim t^{\frac 23(\nu-3)}\|W_\lambda\|_{L^2} \sim t^{\frac 23(\nu-3)} t^{\nu+1} \ll t^{\frac 76\nu-\frac 73}.
  \end{equation*}
  From Proposition 
\ref{prop:u-nonlin} we infer
  \begin{equation*}
    \|u^*-\ulstar\|_{L^{20/3}(|x|\leq \frac 12 t)} \lesssim t^{\frac 76\nu - \frac 73},
  \end{equation*}
  hence
  \begin{equation*}
    \|f'(u^*-\ulstar)\|_{L^{5}(|x| \leq \frac 12 t)} \lesssim t^{\frac{14}{9}\nu - \frac{28}{9}},
  \end{equation*}
  which leads to
  \begin{equation*}
    \|f'(u^*-\ulstar)\cdot W_\lambda\|_{L^2(|x|\leq \frac 12 t)} \leq \|f'(u^*-\ulstar)\|_{L^{5}(|x|\leq \frac 12 t)}\cdot \|W_\lambda\|_{L^{10/3}(|x| \leq \frac 12 t)}
    \lesssim t^{\frac{14}{9}\nu-\frac{28}{9}},
  \end{equation*}
  which is more than sufficient for $\nu > 8$.
  
  For $|x| \geq \frac 12 t$, we know from Proposition 
\ref{prop:energy-est} that $\|f'(u^*)\|_{L^{5}}$ is bounded for small $t$.
  By a change of variables we obtain
  \begin{equation*}
    \|W_\lambda\|_{L^{10/3}(|x| \geq \frac 12 t)} \lesssim \Big(\int_{t/2\lambda}^{+\infty} (r^{-3})^{10/3}r^4\ud r\Big)^{\frac{3}{10}} \sim \big(\frac{\lambda}{t}\big)^{3/2} \ll t^{\frac 76 \nu - \frac 73}.
  \end{equation*}
    
  Term ``$f'(m)|l|$'':
  Using \eqref{eq:size-P0-deg} we have
  \begin{equation*}
    \|f'(u^*)\cdot P_0\|_{L^2} \leq \|f'(u^*)\|_{L^5}\cdot \|P_0\|_{\dot H^1} \lesssim t^{3\nu/2} \ll t^{\frac 76 \nu - \frac 73}.
  \end{equation*}
\end{proof}

We can now estimate $\bs \psi(t)$.
\begin{proposition}
  \label{prop:estim-q-deg}
  Assume that $\lambda(t) \sim t^{1+\nu}$ and $b(t) \sim t^\nu$. Then
  \begin{align}
    \|\psi_0(t) + (\lambda_t - b)\frac{1}{\lambda}(\Lambda W)_\lambda\|_{\dot H^1} & \lesssim t^{\frac 76 \nu - \frac 73} + t^{\nu/2-1}\|\bs\varepsilon(t)\|_\enorm,\label{eq:estim-q0-deg} \\
    \|\psi_1(t) - (\lambda_t - b)\frac{b}{\lambda}(\Lambda_0\Lambda W)_\uln\lambda\|_{L^2} & \lesssim t^{\frac 76 \nu - \frac 73} + t^{\nu/2-1}\|\bs\varepsilon(t)\|_\enorm. \label{eq:estim-q1-deg}
  \end{align}
\end{proposition}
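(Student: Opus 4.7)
The proof will parallel that of Proposition~\ref{prop:estim-q}, systematically substituting the degenerate-regime lemmas (Lemmas~\ref{lem:error-AB-deg}, \ref{lem:P0t-P1-deg}, \ref{lem:P1t-deg}, \ref{lem:nonlin-extr-deg}) for their non-degenerate counterparts.

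First I would dispatch \eqref{eq:estim-q0-deg}: from the definitions of $\varphi_0, \varphi_1, P_0, P_1$ and the identity $\partial_t W_{\lambda(t)} = -\lambda_t(\Lambda W)_\uln\lambda = -\lambda_t\tfrac{1}{\lambda}(\Lambda W)_\lambda$, one computes
\[
\psi_0 + (\lambda_t - b)\tfrac{1}{\lambda}(\Lambda W)_\lambda = \partial_t P_0 - P_1,
\]
so the bound is exactly Lemma~\ref{lem:P0t-P1-deg}.

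For \eqref{eq:estim-q1-deg} I would expand $\psi_1 = \partial_t\varphi_1 - \Delta\varphi_0 - f(\varphi_0)$, use Lemma~\ref{lem:nonlin-extr-deg} to replace $f(\varphi_0)$ by $f(W_\lambda) + f(u^*) + f'(W_\lambda)(P_0 + u^*)$ at the cost of an $L^2$-error $\ll t^{\frac{7}{6}\nu - \frac{7}{3}}$, and then substitute the relation $b_t = \kappa v^*(t)\lambda^{1/2}$ from \eqref{eq:b_t}. This produces the same six-line decomposition of $\psi_1$ as in the proof of Proposition~\ref{prop:estim-q}: Line~1 is precisely the correction $(\lambda_t - b)\tfrac{b}{\lambda}(\Lambda_0\Lambda W)_\uln\lambda$ subtracted on the left-hand side; Lines~2 and~3 vanish identically from the soliton equation $\Delta W_\lambda + f(W_\lambda) = 0$, from the fact that $u^*$ solves \eqref{eq:nlw}, and from the defining equations \eqref{eq:AB} for $A, B$; the cut-off error (Line~4) is controlled by Lemma~\ref{lem:error-AB-deg} with bound $\lesssim t^{3\nu/2-1}$; and the $\partial_t P_1$ contribution (Line~6) is controlled by Lemma~\ref{lem:P1t-deg} with bound $\lesssim t^{\nu/2-1}(t^\nu + \|\bs\varepsilon\|_\enorm)$. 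Each of these fits comfortably within the required bound for $\nu > 8$.

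The one step requiring genuinely new work, and the main obstacle, is Line~5, the interaction term
\[
(v^*(t) - u^*(t))\sqrt\lambda\,(f'(W))_\uln\lambda.
\]
In the non-degenerate case it was handled by the pointwise bound $|v^*(t) - u^*(t, r)| \lesssim r$ inside the light cone, but that estimate fails here: $v^*(t) = q t^{(\nu-3)/2}$ is itself singular as $t \to 0^+$, and $u^*_0$ is only $C^2$. I would split the $L^2$ norm into the zones $|x| \leq \tfrac{1}{2}t$ and $|x| \geq \tfrac{1}{2}t$ and, in the inner zone, reproduce the strategy of Lemma~\ref{lem:nonlin-extr-deg}: decompose $u^* = \ulstar + (u^* - \ulstar)$, use Proposition~\ref{prop:u-lin} to expand the free evolution $\ulstar(t, x)$ around its value $q t^\beta = v^*(t)$ at $x = 0$, and use Proposition~\ref{prop:u-nonlin} to absorb the Duhamel correction $u^* - \ulstar$. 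The point is that the expansion of $\ulstar - v^*(t)$ near the origin, combined with $\|r(f'(W))_\uln\lambda\|_{L^2} \lesssim \lambda^{3/2}$-type estimates, produces a contribution of size $\lesssim t^{\frac{7}{6}\nu - \frac{7}{3}}$; in the outer zone, the rapid $L^{5/2}$ decay of $f'(W_\lambda)$ (as already computed in Lemma~\ref{lem:nonlin-extr-deg}) combined with boundedness of $u^*$ and $v^*(t)$ suffices. The restriction $\nu > 8$ enters exactly here, to make both pieces of this interaction fit within $t^{\frac{7}{6}\nu - \frac{7}{3}}$.
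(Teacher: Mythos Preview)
Your proposal is correct and follows essentially the same approach as the paper: the first inequality is exactly Lemma~\ref{lem:P0t-P1-deg}, and for the second you reproduce the six-line decomposition, handling Lines~1--4 and~6 via the degenerate lemmas and treating Line~5 by splitting into the zones $|x|\leq \tfrac12 t$ and $|x|\geq \tfrac12 t$, decomposing $u^* = \ulstar + (u^* - \ulstar)$ in the inner zone and invoking Propositions~\ref{prop:u-lin} and~\ref{prop:u-nonlin}. One minor remark: the restriction $\nu > 8$ is actually used in Lemma~\ref{lem:nonlin-extr-deg} rather than in the Line~5 estimate itself (all four pieces of Line~5 are $\lesssim t^{\frac76\nu - \frac73}$ for any $\nu > 3$), so your closing sentence slightly mislocates where the constraint bites.
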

\begin{proof}
  The first inequality follows from Lemma 
\ref{lem:P0t-P1-deg}.

  For the second inequality, as in the proof of Proposition 
\ref{prop:estim-q}, using Lemma 
\ref{lem:nonlin-extr-deg} and rearranging the terms, we get:
  \begin{equation*}
    \begin{aligned}
      \psi_1 &=(\lambda_t-b)\frac{b}{\lambda}(\Lambda_0\Lambda W)_\uln\lambda \\
      &- (\Delta W_\lambda + f(W_\lambda)) + (\partial_{tt}u^* - \Delta u^* - f(u^*)) \\
      &- v^*(t)\sqrt\lambda(-LA + \kappa\Lambda W + f'(W))_\uln\lambda + \frac{b^2}{\lambda}(LB + \Lambda_0\Lambda W)_\uln\lambda \\
      &+ (-\Delta P_0 - f'(W_\lambda)P_0)-v^*(t)\sqrt\lambda(LA)_\uln\lambda - \frac{b^2}{\lambda}(LB)_\uln\lambda \\
      &+ (v^*(t) - u^*(t))\sqrt\lambda(f'(W))_\uln\lambda \\
      &+ \partial_t P_1 + O(t^{\frac 76 \nu - \frac 73}).
    \end{aligned}
  \end{equation*}
  Lines 1, 2, 3, 4 and 6 are treated exactly as in the proof of Proposition 
\ref{prop:estim-q}, using Lemmas 
\ref{lem:error-AB-deg}
  and 
\ref{lem:P1t-deg} instead of Lemmas 
\ref{lem:error-AB} and 
\ref{lem:P1t}.
  We estimate line 5 as follows:
  \begin{equation*}
    \begin{aligned}
      \|(v^* - u^*)f'(W_\lambda)\|_{L^2} &\lesssim \|(v^* - u\lin^*)f'(W_\lambda)\|_{L^2(|x|\leq \frac 12 t)} \\
      &+ \|(u\lin^* - u^*)f'(W_\lambda)\|_{L^2(|x| \leq \frac 12 t)} \\
      &+ \|v^*\cdot f'(W_\lambda)\|_{L^2(|x| \geq \frac 12 t)} \\
      &+ \|u^*\cdot f'(W_\lambda)\|_{L^2(|x| \geq \frac 12 t)}.
    \end{aligned}
  \end{equation*}
  From Proposition 
\ref{prop:u-lin} it follows in particular that $|v^*(t) - u\lin^*(t, r)| \lesssim r$ when $r\leq \frac 12 t$,
  hence the proof of Proposition 
\ref{prop:estim-q} gives the bound
  \begin{equation*}
    \|(v^* - u\lin^*)\cdot f'(W_\lambda)\|_{L^2(|x|\leq \frac 12 t)} \lesssim \lambda^{3/2} \ll t^{\frac 76 \nu - \frac 73}.
  \end{equation*}
  From Proposition 
\ref{prop:u-nonlin} and the fact that $\|f'(W_\lambda)\|_{L^{5/2}} = \|f'(W)\|_{L^{5/2}}$ we get
  \begin{equation*}
    \|(u^* - u\lin^*)\cdot f'(W_\lambda)\|_{L^2(|x|\leq \frac 12 t)} \lesssim t^{\frac 73\beta + \frac 76} = t^{\frac 76\nu - \frac 73}.
  \end{equation*}
  We have
  \begin{equation*}
    \|f'(W_\lambda)\|_{L^2(|x|\geq \frac 12 t)} \lesssim \big\|\frac{\lambda^2}{|x|^4}\big\|_{L^2(|x|\geq \frac 12 t)} \sim \lambda^2 t^{-3/2}
  \end{equation*}
  and
  \begin{equation*}
    \|f'(W_\lambda)\|_{L^{5/2}(|x| \geq \frac 12 t)} \lesssim \big\|\frac{\lambda^2}{|x|^4}\big\|_{L^{5/2}(|x| \geq \frac 12 t)} \sim \lambda^2 t^{-2}.
  \end{equation*}
  Using boundedness of $v^*$ in $L^\infty$, boundedness of $u^*$ in $L^{10}$ and H\"older inequality we obtain the required bounds, which terminates the proof.
\end{proof}

Lemma 
\ref{lem:phi0t} is still valid in the degenerate case, as well as its proof (we use Lemma 
\ref{lem:size-P0-deg} instead of Lemma 
\ref{lem:size-P0}).
%%%%%%%%%%%%%%%%%%%%%%%%%%%%%%%%%%%%%%%%%%%%%%%%%%% ERROR TERM AND FUNCTIONALS
\section{Evolution of the error term}
\label{sec:everr}
The evolution of the error term $\bs\varepsilon$ is governed by the following system of differential equations:
\begin{equation}
  \label{eq:err}
  \partial_t
  \begin{pmatrix}
    \varepsilon_0 \\ \varepsilon_1
  \end{pmatrix}
  = \begin{pmatrix}
    \varepsilon_1 - \psi_0 \\
    \Delta \varepsilon_0 + (f(\varphi_0 + \varepsilon_0) - f(\varphi_0)) - \psi_1.
  \end{pmatrix},
\end{equation}
coupled with the equations \eqref{eq:lambda} and \eqref{eq:b_t} for the modulation parameters $\mathrm{Mod} := (\lambda, b)$.
We denote $(T_-, T_+)$ the maximal interval of existence of $\bs u$.

We introduce the energy functional adapted to our ansatz:
\begin{equation}
  \label{eq:I}
  I(t) := \int\frac 12|\varepsilon_1|^2 + \frac 12|\grad \varepsilon_0|^2-(F(\varphi_0 + \varepsilon_0) - F(\varphi_0) - f(\varphi_0)\varepsilon_0)\ud x.
\end{equation}

Essentially, we will perform a bootstrap argument in order to control this functional just by integrating in time its time derivative.
We need a virial correction term which is defined as follows:
\begin{equation}
  \label{eq:J}
  J(t) := b\int\varepsilon_1\cdot\Big(\frac{1}{\lambda}\cdot\frac 12 (\Delta a)_\lambda + (\grad a)_\lambda\cdot\grad\Big)\varepsilon_0\ud x,
\end{equation}
where $a_\lambda(r) = a(\frac{r}{\lambda})$, $(\grad a)_\lambda(r) = \grad a\big(\frac{r}{\lambda}\big)$, $(\Delta a)_\lambda(r) = \Delta a\big(\frac{r}{\lambda}\big)$ and
\begin{equation}
  \label{eq:a}
  a(r) := \left\{
    \begin{aligned}
      &\frac 12 r^2\qquad & |r|\leq R \\
      &\frac{15}{8}Rr-\frac 52 R^2 + \frac 54 R^3 r^{-1} - \frac 18 R^5 r^{-3}\qquad & |r| \geq R
    \end{aligned}\right.
\end{equation}
($R$ is a big radius to be chosen later, see Proposition 
\ref{prop:dtH}).
\begin{lemma}
  \label{lem:fun-a}
  The function $a(r)$ defined above, viewed as a function on $\bR^5$, has the following properties:
  \begin{itemize}
    \item $a \in C^{3,1}$,
    \item $a$ is strictly convex,
    \item $|a(r)| \lesssim r$, $|a'(r)| \lesssim 1$, $|a''(r)| \lesssim r^{-1}$ when $r \to +\infty$ (the constant depends on $R$),
    \item $-\frac{1}{r^3} \lesssim \Delta^2 a(r) \leq 0$.
  \end{itemize}
\end{lemma}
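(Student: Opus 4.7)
The plan is to verify all four properties by direct computation on the two regions $r < R$ and $r > R$, then match at $r = R$. For $r \leq R$ the derivatives are simply $a, a', a'', a''' = \tfrac{r^2}{2}, r, 1, 0$, while differentiating the outer piece yields
\[
a'(r) = \tfrac{15}{8}R - \tfrac{5}{4}R^3 r^{-2} + \tfrac{3}{8}R^5 r^{-4}, \qquad a''(r) = \tfrac{5}{2}R^3 r^{-3} - \tfrac{3}{2}R^5 r^{-5},
\]
\[
a'''(r) = -\tfrac{15}{2}R^3 r^{-4} + \tfrac{15}{2}R^5 r^{-6}.
\]
Evaluating at $r = R$ gives $\tfrac{R^2}{2}, R, 1, 0$, matching the inner side exactly. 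The fourth derivative jumps by an amount of order $R^{-2}$ at $r = R$ but stays bounded, so $a'''$ is Lipschitz and $a \in C^{3,1}$; smoothness at the origin is automatic since $r^2 = |x|^2$ on $\bR^5$.

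For strict convexity of $a$ viewed as a radial function on $\bR^5$ it suffices to show $a''(r) > 0$ and $a'(r)/r > 0$ (the radial and tangential Hessian eigenvalues). Both equal $1$ on $r \leq R$. On $r \geq R$ I would factor
\[
a''(r) = \frac{R^3}{2 r^5}(5 r^2 - 3 R^2), \qquad a'(r) = \frac{R}{8 r^4}(15 r^4 - 10 R^2 r^2 + 3 R^4),
\]
and observe that the quartic, regarded as a quadratic in $r^2$, has discriminant $100 R^4 - 180 R^4 < 0$ and is therefore everywhere positive. The asymptotics as $r \to +\infty$ then follow directly from the explicit formulas, with leading behaviour $\tfrac{15}{8} R r$, $\tfrac{15}{8} R$, and $\tfrac{5}{2} R^3 r^{-3} \lesssim r^{-1}$ for $r \geq R$ (constants depending on $R$).

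The key point is the biharmonic bound. Using $\Delta a = a'' + \tfrac{4}{r} a'$ for radial functions on $\bR^5$, on $r \leq R$ one gets $\Delta a = 5$, hence $\Delta^2 a = 0$. On $r \geq R$, a direct substitution produces full cancellation of the $r^{-5}$ terms and yields
\[
\Delta a = \frac{15 R}{2 r} - \frac{5 R^3}{2 r^3}.
\]
Now $r^{-3}$ is (up to a constant) the fundamental solution of $-\Delta$ in dimension $5$ and is thus harmonic away from the origin, while $\Delta(r^{-1}) = -2 r^{-3}$; applying $\Delta$ once more therefore gives $\Delta^2 a(r) = -15 R / r^3$ for $r > R$, which provides both the upper bound $\Delta^2 a \leq 0$ and the lower bound $\Delta^2 a \gtrsim -1/r^3$.

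The main ``obstacle'' is purely algebraic bookkeeping: the coefficients $\tfrac{15}{8}, -\tfrac{5}{2}, \tfrac{5}{4}, -\tfrac{1}{8}$ in the definition of $a$ are chosen precisely so that the third derivative is continuous at $r = R$ and so that $\Delta^2 a$ collapses to the clean expression $-15 R / r^3$, and all four claims reduce to verifying these elementary identities.
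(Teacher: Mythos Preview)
Your proof is correct and follows essentially the same direct-computation approach as the paper, but is more thorough: you verify both Hessian eigenvalues $a''(r)$ and $a'(r)/r$ for convexity on $\bR^5$ (the paper only writes out $a''>0$), and your value $\Delta^2 a = -15R/r^3$ for $r>R$ is the correct one (the paper records $-\tfrac{15}{R^2 r^3}$, which appears to be a typo). Nothing further is needed.
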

\begin{proof}
  It is apparent from the formula defining $a$ that $a$ is regular except for $r = R$. A computation shows that $a(r)$, $a'(r)$, $a''(r)$ and $a'''(r)$
  are Lipschitz near $r = R$. For $r \geq R$ we have $a''(r) = \frac 52\big(\frac Rr\big)^3 - \frac 32\big(\frac Rr\big)^5 > 0$,
  which proves strict convexity. For $r > R$ one can compute $\Delta^2 a(r) = -\frac{15}{r^3}\cdot \frac{1}{R^2}$
  (where $\Delta = \partial_{rr} + \frac 4r\partial_r$ is the laplacian in dimension $N = 5$).
\end{proof}

We define the mixed energy-virial functional:
\begin{equation*}
  H(t) = I(t) + J(t).
\end{equation*}

The proof of the following result, which will occupy most of this section, is valid both in the non-degenerate and the degenerate case.
The non-degenerate case is obtained for $\nu = 3$. We denote also:
\begin{equation*}
  \gamma := \begin{cases}
    \frac 72 \qquad &\text{in the non-degenerate case,} \\
    \frac 76\nu - \frac 73 \qquad &\text{in the degenerate case,}
  \end{cases}
\end{equation*}
which is the exponent of $t$ in the error estimates in Proposition 
\ref{prop:estim-q} and Proposition 
\ref{prop:estim-q-deg} respectively.

We will use the notation:
\begin{equation*}
  \|\grad_{a, \lambda}\varepsilon_0\|_{L^2}^2 := \int\sum_{i,j}(\partial_{ij}a)_\lambda \partial_i\varepsilon_0\partial_j\varepsilon_0\ud x.
\end{equation*}
\begin{proposition}
  \label{prop:dtH}
  Let $\nu = 3$ or $\nu > 8$.
  Suppose that $\lambda \sim t^{1+\nu}$, $b \sim t^\nu$ and let $c > 0$. If $R$ is chosen large enough, then there exist strictly positive constants $T_0$ and $C_1$ such that
  for $[T_1, T_2] \subset (0, T_0] \cap (T_-, T_+)$ there holds
  \begin{equation}
    \label{eq:dtH}
    \begin{aligned}
      H(T_2) \leq H(T_1) + \int_{T_1}^{T_2}\bigg(&{-}\frac{b}{\lambda}\Big(\|\grad_{a, \lambda}\varepsilon_0\|_{L^2}^2 - \int\big(f(\varphi_0 + \varepsilon_0) - f(\varphi_0)\big)\varepsilon_0\ud x\Big) \\
    &+ \Big(\frac ct\|\bs \varepsilon\|_{\enorm}^2 + C_1t^\gamma\cdot\|\bs\varepsilon\|_\enorm\Big)\bigg)\ud t.
  \end{aligned}
  \end{equation}
\end{proposition}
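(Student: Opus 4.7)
The strategy is to differentiate $H(t) = I(t) + J(t)$ along the flow, produce the desired bound pointwise in $t$, and integrate. Throughout set $\tilde f := f(\varphi_0+\varepsilon_0) - f(\varphi_0)$, $\tilde F := F(\varphi_0+\varepsilon_0) - F(\varphi_0) - f(\varphi_0)\varepsilon_0$, and $\mathcal V_\lambda := (\nabla a)_\lambda\!\cdot\!\nabla + \frac{1}{2\lambda}(\Delta a)_\lambda$. The key structural facts are: (i) since $\nabla\!\cdot\!(\nabla a)_\lambda = \frac{1}{\lambda}(\Delta a)_\lambda$, the operator $\mathcal V_\lambda$ is $L^2$-skew-adjoint, so in particular $\int\varepsilon_1\mathcal V_\lambda\varepsilon_1\,dx = 0$; (ii) $a$ is strictly convex with $\Delta^2 a\leq 0$ and equals $|x|^2/2$ on $\{|x|\leq R\}$, so inside the soliton region $\mathcal V_\lambda$ reduces to the scaling generator $\frac{1}{\lambda}\Lambda_0$.

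Differentiating \eqref{eq:I}, substituting \eqref{eq:err}, and integrating by parts gives
\[
I'(t) = \int\psi_0(\Delta\varepsilon_0+\tilde f)\,dx - \int\psi_1\varepsilon_1\,dx - \int[\tilde f - f'(\varphi_0)\varepsilon_0]\,\partial_t\varphi_0\,dx.
\]
The first two integrals are bounded by $t^\gamma\|\bs\varepsilon\|_\enorm$ via Proposition \ref{prop:estim-q} (or \ref{prop:estim-q-deg}); the ``nonsmall'' piece $(\lambda_t - b)\frac{1}{\lambda}(\Lambda W)_\lambda$ of $\psi_0$ pairs harmlessly with $\Delta\varepsilon_0 + \tilde f$ thanks to $L(\Lambda W)=0$, the orthogonality $\langle\varepsilon_0,\mathcal Z_{\underline\lambda}\rangle = 0$, and $|\lambda_t - b|\lesssim\|\bs\varepsilon\|_\enorm$ from Lemma \ref{lem:lambda-b}. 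The third integral is the delicate \emph{resonant drift}: the leading part of $\partial_t\varphi_0$ is $-\frac{\lambda_t}{\lambda}(\Lambda W)_{\underline\lambda}$ with $|\lambda_t/\lambda|\sim 1/t$, and it is precisely this term which the virial correction is designed to absorb.

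Differentiating $J$ using \eqref{eq:err}, \eqref{eq:lambda}, \eqref{eq:b_t} and invoking skew-adjointness yields
\[
J'(t) = b\!\!\int\!(\Delta\varepsilon_0 + \tilde f)\mathcal V_\lambda\varepsilon_0\,dx + b_t\!\!\int\!\varepsilon_1\mathcal V_\lambda\varepsilon_0\,dx + b\!\!\int\!\varepsilon_1(\partial_t\mathcal V_\lambda)\varepsilon_0\,dx - b\!\!\int\!\varepsilon_1\mathcal V_\lambda\psi_0\,dx - b\!\!\int\!\psi_1\mathcal V_\lambda\varepsilon_0\,dx.
\]
Two standard virial integration-by-parts identities,
\[
\int\Delta\varepsilon_0\,\mathcal V_\lambda\varepsilon_0\,dx = -\tfrac{1}{\lambda}\|\grad_{a,\lambda}\varepsilon_0\|_{L^2}^2 + \tfrac{1}{4\lambda^3}\!\int\!(\Delta^2a)_\lambda\varepsilon_0^2\,dx,
\]
(the second integral is $\leq 0$ by Lemma \ref{lem:fun-a}, hence it helps) and, using $\tilde f\nabla\varepsilon_0 = \nabla\tilde F - [\tilde f - f'(\varphi_0)\varepsilon_0]\nabla\varphi_0$,
\[
\int\tilde f\,\mathcal V_\lambda\varepsilon_0\,dx = \tfrac{1}{\lambda}\!\int\!(\Delta a)_\lambda\bigl[\tfrac12\tilde f\varepsilon_0 - \tilde F\bigr]\,dx - \!\int\![\tilde f - f'(\varphi_0)\varepsilon_0](\nabla a)_\lambda\!\cdot\!\nabla\varphi_0\,dx,
\]
put $J'(t)$ into a form with $-\frac{b}{\lambda}\|\grad_{a,\lambda}\varepsilon_0\|_{L^2}^2$, a nonlinear $(\Delta a)_\lambda$-piece, and a cross virial term.

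The decisive step is to verify that, on summing $I' + J'$, (a) the resonant drift in $I'$ cancels the cross virial in $J'$ up to admissible remainders, and (b) the $(\Delta a)_\lambda$-weighted nonlinear piece reorganizes into $+\frac{b}{\lambda}\int\tilde f\varepsilon_0\,dx$. Inside $\{|x|\leq R\lambda\}$ one has $(\nabla a)_\lambda = x/\lambda$ and $(\Delta a)_\lambda = 5$, so $b(\nabla a)_\lambda\!\cdot\!\nabla\varphi_0 = \frac{b}{\lambda}x\!\cdot\!\nabla\varphi_0$ matches the leading $-\frac{\lambda_t}{\lambda}$-piece of $\partial_t\varphi_0$ once $\lambda_t$ is replaced by $b$ (error $\lesssim\|\bs\varepsilon\|_\enorm$); the algebraic identity for the $W_\lambda$-contribution is exact thanks to the energy-critical homogeneity, while the $P_0$- and $u^*$-contributions are respectively small by Lemmas \ref{lem:size-P0}/\ref{lem:size-P0-deg} and controlled via Lemma \ref{lem:phi0t} after rescaling $\varepsilon_0$ to unit scale. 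Contributions from the tail $\{|x|\geq R\lambda\}$, from $b_t$, $\partial_t\mathcal V_\lambda$, and from the $\psi$-cross terms, are bounded via Lemma \ref{lem:fun-a}, Hardy/Sobolev and Proposition \ref{prop:estim-q}/\ref{prop:estim-q-deg}; they yield $\frac{C(R)}{t}\|\bs\varepsilon\|_\enorm^2 + C_1 t^\gamma\|\bs\varepsilon\|_\enorm$, with $C(R)\to 0$ for large $R$. Choosing $R$ so that $C(R)\leq c$ and integrating from $T_1$ to $T_2$ yields \eqref{eq:dtH}. The \emph{main obstacle} is the precise bookkeeping in this last step: the target coefficient of $\int\tilde f\varepsilon_0\,dx$ hinges on an exact energy-critical algebraic identity, and one must carefully separate the $W_\lambda$-, $P_0$-, and $u^*$-layers of $\varphi_0$ (living at different scales) while ensuring that all stray $1/t$ factors are multiplied by $R$-small or $\|\bs\varepsilon\|^2$-coercible quantities.
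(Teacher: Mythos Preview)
Your overall architecture is right and largely mirrors the paper's, but there is one genuine gap. You assert that the two linear $\psi$-integrals in $I'$, namely $\int\psi_0(\Delta\varepsilon_0+\tilde f)\,dx$ and $\int\psi_1\varepsilon_1\,dx$, are each bounded by $t^\gamma\|\bs\varepsilon\|_\enorm$. This is correct for the first (via $L(\Lambda W)=0$ as you say) but \emph{false} for the second. Proposition~\ref{prop:estim-q}/\ref{prop:estim-q-deg} gives $\psi_1 = (\lambda_t-b)\frac{b}{\lambda}(\Lambda_0\Lambda W)_{\uln\lambda} + O_{L^2}(t^\gamma + \ldots)$, and since $|\lambda_t-b|\lesssim\|\bs\varepsilon\|_\enorm$ while $b/\lambda\sim (\nu+1)/t$, the leading piece contributes $\frac{C}{t}\|\bs\varepsilon\|_\enorm^2$ with a constant $C$ of order one, \emph{not} small. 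This cannot be absorbed into $\frac{c}{t}\|\bs\varepsilon\|_\enorm^2$ for arbitrary $c>0$, regardless of how large $R$ is.

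The remedy is already present in your $J'$ computation but you throw it away: the cross term $-b\int\varepsilon_1\mathcal V_\lambda\psi_0\,dx$ is \emph{not} individually small either. Its leading part is $b(\lambda_t-b)\frac{1}{\lambda}\int\varepsilon_1\mathcal V_\lambda(\Lambda W)_\lambda\,dx$, and since $\mathcal V_\lambda(\Lambda W)_\lambda = (\Lambda_0\Lambda W)_{\uln\lambda}$ on $\{|x|\le R\lambda\}$, this term equals $+\int\varepsilon_1\psi_1\,dx$ up to an error that \emph{is} $R$-small. In other words, $-\int\varepsilon_1\psi_1$ from $I'$ and $-b\int\varepsilon_1\mathcal V_\lambda\psi_0$ from $J'$ cancel, and only their difference is admissible. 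This is the content of the paper's identity \eqref{eq:magique}; once you insert it, your argument closes. A minor organizational difference: the paper differentiates the potential in $I$ after rescaling to unit scale (the ``self-similar'' trick \eqref{eq:self-similar}), which produces $-\frac{\lambda_t}{\lambda}\int\tilde f\,\Lambda\varepsilon_0$ directly and then uses $\Lambda_0-\Lambda=\mathrm{Id}$; your direct differentiation yielding $\partial_t\varphi_0$ is equivalent after invoking Lemma~\ref{lem:phi0t}, but the self-similar route makes the algebra behind your claim~(b) more transparent.
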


The proof of this result is going to be an algebraic computation which is not justified in the space $\enorm$.
However, we do not need any uniform control of the regularity or the decay, so we can use the following density argument.
We can approximate a given $\bs\varepsilon$ in $\enorm$ in such a way that the initial data $(u(T_1), \partial_t u(T_1))$ will be in $X^1\times H^1$
and of compact support. Then locally the evolution will have the same proprieties by Proposition 
\ref{prop:persistence},
and will be close to the original one in $\enorm$ for all $t \in [T_1, T_2]$ by local well-posedness in $\enorm$.
The new $\bs \varepsilon$ has sufficient regularity and decay to justify all the computations.
Since the estimate \eqref{eq:dtH} depends continuously (in $\enorm$) on $\bs \varepsilon$, we are done.

We shall split the proof of Proposition 
\ref{prop:dtH} into several Lemmas. We always work under the hypotheses of Proposition 
\ref{prop:dtH}, that is
$\lambda \sim t^{1+\nu}$, $b \sim t^\nu$ and $\|\bs\varepsilon\|_\enorm \leq t^{\gamma + 1}$.
Notice that $\gamma + 1 > \nu$. In the non-degenerate case $\gamma + 1 = \frac 92 > 3 = \nu$ and in the degenerate case
$\gamma + 1 = \frac 76 \nu - \frac 43 > \nu$ because $\nu > 8$. This means that $\|\bs\varepsilon\|_\enorm \ll b$ and
$\|\bs\varepsilon\|_\enorm \ll \frac{\lambda}{t}$ for small $t$.
In what follows $c$ stands for any small strictly positive constant.

We use the method introduced in \cite{RaSz11},
which consists in differentiating the nonlinear term in self-similar variables.
The resulting error will be corrected by the virial term $J$. Concretely, we have:
\begin{equation}\label{eq:self-similar}
  \begin{aligned}
  &\dd t\int\big(F(\varphi_0 + \varepsilon_0) - F(\varphi_0) - f(\varphi_0)\varepsilon_0\big)\ud x \\
  &= \dd t\int\big(F((\varphi_0)_{1/\lambda} + (\varepsilon_0)_{1/\lambda})-F((\varphi_0)_{1/\lambda}) - f((\varphi_0)_{1/\lambda})(\varepsilon_0)_{1/\lambda}\big)\ud x \\
  &= \int\big(f((\varphi_0)_{1/\lambda} + (\varepsilon_0)_{1/\lambda})-f((\varphi_0)_{1/\lambda})-f'((\varphi_0)_{1/\lambda})(\varepsilon_0)_{1/\lambda}\big)\partial_t\big((\varphi_0)_{1/\lambda}\big)\ud x \\
  &+ \int\big(f((\varphi_0)_{1/\lambda} + (\varepsilon_0)_{1/\lambda}) - f((\varphi_0)_{1/\lambda})\big)\big((\varepsilon_{0t})_{1/\lambda} + \frac{\lambda_t}{\lambda}(\Lambda\varepsilon_0)_{1/\lambda}\big)\ud x.
\end{aligned}
\end{equation}
The first term can be neglected, as shown by Lemma 
\ref{lem:phi0t}.
Scaling back the second term we obtain
\begin{equation}
  \label{eq:dtPot}
  \dd t\int\big(F(\varphi_0 + \varepsilon_0) - F(\varphi_0) - f(\varphi_0)\varepsilon_0\big)\ud x \simeq \int\big(f(\varphi_0 + \varepsilon_0)
    - f(\varphi_0)\big)\big(\varepsilon_{0t} + \frac{\lambda_t}{\lambda}\Lambda\varepsilon_0\big)\ud x.
\end{equation}
Here and later the sign $\simeq$ means that the difference of the two sides has size at most $\frac ct\|\bs\varepsilon\|_\enorm^2 + C_1t^\gamma\cdot\|\bs\varepsilon\|_\enorm$.
Also, when we say that a term is ``negligible'', it always means that its absolute value is bounded by $\frac ct\|\bs\varepsilon\|_\enorm^2 + C_1t^\gamma\cdot\|\bs\varepsilon\|_\enorm$.

Using the equations \eqref{eq:err}, \eqref{eq:dtPot} and integrating by parts, we obtain standard cancellations:
\begin{equation}
  \label{eq:dtI}
  \begin{aligned}
    \dd t I(t) &\simeq \int \varepsilon_1 \varepsilon_{1t}\ud x - \int \big(\Delta\varepsilon_0 + f(\varphi_0 + \varepsilon_0) - f(\varphi_0)\big)\varepsilon_{0t}\ud x \\
    &- \frac{\lambda_t}{\lambda}\int\big(f(\varphi_0 + \varepsilon_0) - f(\varphi_0)\big)\Lambda\varepsilon_0\ud x \\
    &\simeq -\int \varepsilon_1 \psi_1 \ud x + \int \big(\Delta\varepsilon_0 + f(\varphi_0 + \varepsilon_0) - f(\varphi_0)\big)\psi_0 \ud x \\
    &- \frac{\lambda_t}{\lambda}\int\big(f(\varphi_0 + \varepsilon_0) - f(\varphi_0)\big)\Lambda\varepsilon_0\ud x.
  \end{aligned}
\end{equation}

Consider now the virial term $J(t)$.
\begin{lemma}
  \label{lem:dtJ}
  \begin{equation}
    \label{eq:dtJ}
    \begin{aligned}
    \dd t J(t) &\leq \int\varepsilon_1 \psi_1\ud x -\frac{b}{\lambda}\|\grad_{a, \lambda}\varepsilon_0\|_{L^2}^2 +  \frac{\lambda_t}{\lambda}\int\big(f(\varphi_0+ \varepsilon_0)-f(\varphi_0)\big)\Lambda_0 \varepsilon_0\ud x \\
    &+ \frac ct\|\bs\varepsilon\|_\enorm^2 + C_1 t^\gamma\cdot\|\bs\varepsilon\|_\enorm.
  \end{aligned}
  \end{equation}
\end{lemma}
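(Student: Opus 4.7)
The plan is to compute $\dd t J$ by passing to self-similar variables in which the multiplier $\cB := \grad a \cdot \grad + \tfrac 12 \Delta a$ (acting in $y = x/\lambda$) is autonomous. Setting $\tilde\varepsilon_0 := (\varepsilon_0)_{1/\lambda}$ and $\tilde\varepsilon_1 := (\varepsilon_1)_{\uln{1/\lambda}}$, a direct scaling check gives $J(t) = b\int \tilde\varepsilon_1 \cdot \cB\tilde\varepsilon_0 \ud y$. The chain rule applied to \eqref{eq:err} produces
\begin{equation*}
\partial_t\tilde\varepsilon_0 = \tfrac 1\lambda \tilde\varepsilon_1 - (\psi_0)_{1/\lambda} + \tfrac{\lambda_t}{\lambda}\Lambda\tilde\varepsilon_0, \qquad \partial_t\tilde\varepsilon_1 = \tfrac 1\lambda \Delta_y \tilde\varepsilon_0 + \tilde N - \tilde\psi_1 + \tfrac{\lambda_t}{\lambda}\Lambda_0 \tilde\varepsilon_1,
\end{equation*}
where $\tilde N := (f(\varphi_0+\varepsilon_0) - f(\varphi_0))_{\uln{1/\lambda}}$ and $\tilde\psi_1 := (\psi_1)_{\uln{1/\lambda}}$. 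Since $b_t \sim v^*\sqrt\lambda \ll b/\lambda \sim 1/t$ under our hypotheses, the contribution produced by differentiating the prefactor $b$ is negligible.

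The principal piece $\tfrac b\lambda \int \Delta_y\tilde\varepsilon_0\cdot\cB\tilde\varepsilon_0\ud y$ is handled by the classical virial integration by parts
\begin{equation*}
  \int \Delta u\cdot\cB u\ud y = -\int \sum_{i,j}\partial_{ij}a\,\partial_i u\,\partial_j u\ud y + \tfrac 14 \int \Delta^2 a\cdot u^2\ud y,
\end{equation*}
whose first term rescales back to $-\tfrac b\lambda\|\grad_{a,\lambda}\varepsilon_0\|_{L^2}^2$, the desired dispersive gain. The biharmonic remainder is $\leq 0$ by Lemma \ref{lem:fun-a}, and can be dropped for the inequality; alternatively, using $|\Delta^2 a|\lesssim r^{-3}$ and Hardy it is bounded in absolute value by $\tfrac{c}{tR}\|\bs\varepsilon\|_\enorm^2$, absorbable for $R$ large. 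The kinetic–kinetic piece $\tfrac b\lambda\int \tilde\varepsilon_1\cB\tilde\varepsilon_1\ud y$ vanishes because $\cB$ is anti-self-adjoint on $L^2(\bR^5)$: the identity $\int v \grad a\cdot \grad v\ud y = -\tfrac 12\int v^2\Delta a\ud y$ cancels the mass part. For the nonlinear contribution $b\int \tilde N\cB\tilde\varepsilon_0\ud y$, I rescale to $b\int N\cdot M\ud x$ with $M = (\grad a)_\lambda\cdot \grad\varepsilon_0 + \tfrac{1}{2\lambda}(\Delta a)_\lambda\varepsilon_0$; the key point is that $a(r) = r^2/2$ on $r\leq R$, so $M = \lambda^{-1}\Lambda_0\varepsilon_0$ on $|x|\leq R\lambda$, and replacing $b$ by $\lambda_t$ via Lemma \ref{lem:lambda-b} yields the target $\tfrac{\lambda_t}{\lambda}\int N\Lambda_0\varepsilon_0\ud x$. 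The tail on $|x|\geq R\lambda$ is controlled using $|\grad a|\lesssim 1$, $|\Delta a|\lesssim R/r$, H\"older, and the decay of $W_\lambda$ that dominates $N$ there.

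The source contributions $-b\int \tilde\psi_1\cB\tilde\varepsilon_0\ud y = -b\int\psi_1 M\ud x$ and $-b\int\tilde\varepsilon_1\cB((\psi_0)_{1/\lambda})\ud y$ (from the $-(\psi_0)_{1/\lambda}$ piece of $\partial_t\tilde\varepsilon_0$) are controlled by Cauchy--Schwarz together with $\|M\|_{L^2}\lesssim \|\varepsilon_0\|_{\dot H^1}$ (boundedness of $\grad a$ plus Hardy for $\tfrac 1\lambda(\Delta a)_\lambda$), producing the $C_1 t^\gamma\|\bs\varepsilon\|_\enorm$ contribution; the $+\int\varepsilon_1\psi_1\ud x$ appearing in the lemma is a bookkeeping companion to the $-\int\varepsilon_1\psi_1\ud x$ produced by $\dd t I$, chosen so that $\dd t H = \dd t I + \dd t J$ cancels these terms. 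The remaining modulation cross terms $\tfrac{b\lambda_t}{\lambda}\int \Lambda_0\tilde\varepsilon_1\cB\tilde\varepsilon_0\ud y$ and $\tfrac{b\lambda_t}{\lambda}\int\tilde\varepsilon_1\cB(\Lambda\tilde\varepsilon_0)\ud y$ combine, after integrating $\Lambda_0$ by parts using its anti-self-adjointness on $L^2$, into $\tfrac{b\lambda_t}{\lambda}\int\tilde\varepsilon_1[\cB,\Lambda]\tilde\varepsilon_0\ud y$; since $\cB = \Lambda_0$ on $|y|\leq R$ and $[\Lambda_0,\Lambda] = 0$, the commutator is supported on $|y|\geq R$, and because $b\lambda_t/\lambda \sim t^{\nu-1}$ is very small, a pointwise bound absorbs it into $\tfrac c t\|\bs\varepsilon\|_\enorm^2$.

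The main obstacle is the simultaneous calibration of the radius $R$: it controls the size of the biharmonic tail, the commutator supported on $\{|y|\geq R\}$, and the tail of the nonlinear term, all of which must produce constants strictly smaller than the dispersive gain in front of $-\tfrac b\lambda\|\grad_{a,\lambda}\varepsilon_0\|_{L^2}^2$ once coercivity is invoked in Proposition \ref{prop:dtH}. This is handled by fixing $R$ large first and then choosing $T_0$ small enough to close the $t^\gamma$-type errors uniformly.
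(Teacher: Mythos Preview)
Your overall strategy (rescale, Pohozaev for the Laplacian piece, anti-self-adjointness for the kinetic piece, commutator estimates for the modulation terms) is the same as the paper's, but your treatment of the source terms contains a genuine gap that breaks the bootstrap.

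You write that the $\psi_0$-contribution $-b\int\varepsilon_1\,\cB((\psi_0)_{1/\lambda})\ud y$ is ``controlled by Cauchy--Schwarz'' and yields $C_1 t^\gamma\|\bs\varepsilon\|_\enorm$, and that the term $+\int\varepsilon_1\psi_1\ud x$ in the statement is merely ``bookkeeping''. Neither claim is correct. From Proposition~\ref{prop:estim-q} (resp.\ \ref{prop:estim-q-deg}) the leading part of $\psi_0$ is $-(\lambda_t-b)\lambda^{-1}(\Lambda W)_\lambda$, so $\|\psi_0\|_{\dot H^1}$ carries a piece of size $|\lambda_t-b|/\lambda\lesssim \|\bs\varepsilon\|_\enorm/\lambda$. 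The operator norm of $\tfrac{1}{\lambda}\tfrac12(\Delta a)_\lambda+(\grad a)_\lambda\cdot\grad:\dot H^1\to L^2$ is of order $R$ (since $|\grad a|\lesssim R$ and $\tfrac1\lambda(\Delta a)_\lambda\lesssim R/r$). Hence a direct Cauchy--Schwarz on this term only gives
\[
b\,\|\varepsilon_1\|_{L^2}\cdot C_R\,\|\psi_0\|_{\dot H^1}\ \gtrsim\ C_R\,\frac{b}{\lambda}\,\|\bs\varepsilon\|_\enorm^2\ \sim\ \frac{C_R}{t}\,\|\bs\varepsilon\|_\enorm^2,
\]
with $C_R$ \emph{large}, not $\tfrac{c}{t}\|\bs\varepsilon\|_\enorm^2$ with $c$ small. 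Likewise $|\int\varepsilon_1\psi_1\ud x|$ itself is only $\lesssim\tfrac{1}{t}\|\bs\varepsilon\|_\enorm^2$ with an $O(1)$ constant (from the $(\lambda_t-b)\tfrac{b}{\lambda}(\Lambda_0\Lambda W)_{\uln\lambda}$ piece of $\psi_1$). Such constants cannot be absorbed by the dispersive gain $-\tfrac{b}{\lambda}\|\grad_{a,\lambda}\varepsilon_0\|_{L^2}^2$ after coercivity, and the bootstrap in Proposition~\ref{prop:energy-bound} would not close.

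What the paper actually does is observe a \emph{cancellation}: replacing $\psi_0$ and $\psi_1$ by their leading parts, one has
\[
b\int\varepsilon_1\Big(\tfrac{1}{\lambda}\tfrac12(\Delta a)_\lambda+(\grad a)_\lambda\cdot\grad\Big)\psi_0\ud x\ +\ \int\varepsilon_1\psi_1\ud x
\ =\ -(\lambda_t-b)\,\tfrac{b}{\lambda}\int\varepsilon_1\Big(\big[\tfrac12\Delta a+\grad a\cdot\grad\big]\Lambda W-\Lambda_0\Lambda W\Big)_{\uln\lambda}\ud x,
\]
and $\big\|[\tfrac12\Delta a+\grad a\cdot\grad]\Lambda W-\Lambda_0\Lambda W\big\|_{L^2}\to 0$ as $R\to\infty$ (it is supported on $|y|\geq R$ with an $R$-independent $r^{-3}$ bound). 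This is what turns the combined source contribution into $\tfrac{c}{t}\|\bs\varepsilon\|_\enorm^2$ with $c$ arbitrarily small; the $+\int\varepsilon_1\psi_1\ud x$ in the statement is precisely the output of this cancellation, not an add-on.

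A secondary point: your handling of the nonlinear term (``tail on $|x|\geq R\lambda$ controlled by decay of $W_\lambda$'') is too sketchy. The operator $\Lambda_0=\tfrac52+x\cdot\grad$ is unbounded on $\dot H^1$, so one cannot simply compare $b\int N\,M\varepsilon_0$ and $\tfrac{\lambda_t}{\lambda}\int N\,\Lambda_0\varepsilon_0$ on the exterior region by H\"older. The paper instead integrates by parts to move the differential operator off $\varepsilon_0$ and onto $\varphi_0$ (Lemma~\ref{lem:viriel-nonlin}), reducing the comparison to the smallness of $\|(x-\lambda(\grad a)_\lambda)\cdot\grad\varphi_0\|_{L^{10/3}}$ for $R$ large (Lemma~\ref{lem:L0phi}); this step is not optional.
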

Notice the cancellation of $\int\varepsilon_1\psi_1\ud x$ in \eqref{eq:dtI} and \eqref{eq:dtJ}.
This is important because the bound on $\|\psi_1\|_{L^2}$ given by Proposition 
\ref{prop:estim-q} and Proposition 
\ref{prop:estim-q-deg} is only $\frac 1t\|\bs \varepsilon\|_\enorm$,
which is borderline but not sufficient to close the bootstrap. Moreover, $\Lambda_0 - \Lambda = \tx{Id}$, so $J$ eliminates the unbounded part of the operator $\Lambda$ acting on $\varepsilon_0$.

\begin{proof}[Proof of Lemma 
\ref{lem:dtJ}]
  We compute
  \begin{equation*}
    \begin{aligned}
    \dd tJ(t) &= b_t\int\varepsilon_1\cdot\big(\frac{1}{\lambda}\cdot\frac 12 (\Delta a)_\lambda + (\grad a)_\lambda\cdot\grad\big)\varepsilon_0\ud x \\
    &-\frac{b\lambda_t}{\lambda}\int\varepsilon_1\cdot\big(\frac{1}{\lambda}\cdot\frac 12(\Lambda_{3/2}\Delta a)_\lambda + (\Lambda_{5/2}\grad a)_\lambda\cdot\grad \big)\varepsilon_0 \ud x \\
    &+b\int\varepsilon_{1t}\cdot\big(\frac{1}{\lambda}\cdot\frac 12 (\Delta a)_\lambda + (\grad a)_\lambda\cdot\grad\big)\varepsilon_0\ud x \\
    &+b\int\varepsilon_1\cdot\big(\frac{1}{\lambda}\cdot\frac 12 (\Delta a)_\lambda + (\grad a)_\lambda\cdot\grad\big)\varepsilon_{0t}\ud x.
    \end{aligned}
  \end{equation*}
  Consider the first two lines. From Lemma 
\ref{lem:fun-a} and Hardy inequality it follows that
  \begin{equation}
    \label{eq:oper-1}
    \frac{1}{\lambda}\cdot\frac 12 (\Delta a)_\lambda + (\grad a)_\lambda\cdot\grad
  \end{equation}
  and
  $$\frac{1}{\lambda}\cdot\frac 12(\Lambda_{3/2}\Delta a)_\lambda - (\Lambda_{5/2}\grad a)_\lambda\cdot\grad$$
  are uniformly bounded as operators $\dot H^1 \to L^2$ (the bound depends on $R$).
  Moreover, it is clear that $|b_t| + \big|\frac{b\lambda_t}{\lambda}\big| \ll t^{-1}$. Hence, the first two lines are negligible.

  Using again \eqref{eq:err} we get
  \begin{equation}
  \label{eq:viriel-deriv}
    \begin{aligned}
      &b\int\varepsilon_{1t}\cdot\big(\frac{1}{\lambda}\cdot\frac 12 (\Delta a)_\lambda + (\grad a)_\lambda\cdot\grad\big)\varepsilon_0\ud x \\
      &+b\int\varepsilon_1\cdot\big(\frac{1}{\lambda}\cdot\frac 12 (\Delta a)_\lambda + (\grad a)_\lambda\cdot\grad\big)\varepsilon_{0t}\ud x \\
      &= b\int(\Delta\varepsilon_0 + f(\varphi_0 + \varepsilon_0) - f(\varphi_0))\cdot\big(\frac{1}{\lambda}\cdot\frac 12 (\Delta a)_\lambda + (\grad a)_\lambda\cdot\grad\big)\varepsilon_0\ud x \\
      &+b\int\varepsilon_1\cdot\big(\frac{1}{\lambda}\cdot\frac 12 (\Delta a)_\lambda + (\grad a)_\lambda\cdot\grad\big)\varepsilon_1\ud x \\
      &-b\int\psi_1\cdot\big(\frac{1}{\lambda}\cdot\frac 12 (\Delta a)_\lambda + (\grad a)_\lambda\cdot\grad\big)\varepsilon_0\ud x \\
      &-b\int\varepsilon_1\cdot\big(\frac{1}{\lambda}\cdot\frac 12 (\Delta a)_\lambda + (\grad a)_\lambda\cdot\grad\big)\psi_0\ud x.
    \end{aligned}
  \end{equation}
  Proposition 
\ref{prop:estim-q} and Proposition 
\ref{prop:estim-q-deg} imply that $\|\psi_1\|_{L^2} \lesssim \frac 1t \|\bs\varepsilon\|_\enorm + t^\gamma$.
  Using once again uniform boundedness of the operator \eqref{eq:oper-1}, we obtain that the first term of the last line is negligible.
  Consider now the second term. We will show that
  \begin{equation}
    \label{eq:magique}
    \big|b\int\varepsilon_1\cdot\big(\frac{1}{\lambda}\cdot\frac 12 (\Delta a)_\lambda + (\grad a)_\lambda\cdot\grad\big)\psi_0\ud x + \int\varepsilon_1 \psi_1\ud x\big|
    \leq \frac ct\|\bs\varepsilon\|_\enorm^2 + C_1 t^\gamma\cdot\|\bs\varepsilon\|_\enorm.
  \end{equation}

  It follows from Proposition 
\ref{prop:estim-q} and Proposition 
\ref{prop:estim-q-deg} that in \eqref{eq:magique} $\psi_0$ can be replaced by $-(\lambda_t - b)\frac{1}{\lambda}(\Lambda W)_\lambda$
  and $\psi_1$ by $(\lambda_t - b)\frac{b}{\lambda}(\Lambda_0\Lambda W)_{\uln\lambda}$. Hence, using \eqref{eq:lambda-b}, it suffices to prove that
  $\|\Lambda_0 \Lambda W - \bigl[\frac 12 \Delta a + \grad a \cdot \grad\bigr]\Lambda W\|_{L^2}$ is arbitrarily small when $R$ is large enough.
But this is clear, since $\bigl[\frac 12 \Delta a + \grad a \cdot \grad\bigr]\Lambda W(r) = \Lambda_0\Lambda W(r)$ for $r \leq R$
and $|\bigl[\frac 12 \Delta a + \grad a \cdot \grad\bigr]\Lambda W(r)| \lesssim r^{-3}$ for all $r$, with a constant independent of $R$.

The second line of \eqref{eq:viriel-deriv} is $0$ by integration by parts and we are left with the first line.
The term with $\Delta\varepsilon_0$ is computed via a classical Pohozaev identity:
\begin{equation}
  \label{eq:pohozaev}
  \int\Big(\frac{1}{\lambda}\cdot \frac 12(\Delta a)_\lambda + (\grad a)_\lambda\cdot\grad\Big)\varepsilon_0\Delta\varepsilon_0\ud x =
  -\frac{1}{\lambda} \|\grad_{a, \lambda}\varepsilon_0\|_{L^2}^2 +\frac{1}{4\lambda^3}\int (\Delta^2 a)_\lambda\varepsilon_0^2\ud x.
\end{equation}
By Lemma 
\ref{lem:fun-a}, the last term is finite and $\leq 0$.

The nonlinear part is calculated in the following lemma.
\begin{lemma}
  \label{lem:viriel-nonlin}
  \begin{equation}
    \label{eq:viriel-nonlin}
    \begin{aligned}
      \Bigl|&b\int\Big(\frac{1}{\lambda}\cdot\frac 12(\Delta a)_\lambda + (\grad a)_\lambda\cdot\grad\Big)\varepsilon_0\cdot\big(f(\varphi_0 + \varepsilon_0) - f(\varphi_0)\big)\ud x \\
        &- \frac{\lambda_t}{\lambda}\int\big(f(\varphi_0+\varepsilon_0)-f(\varphi_0)\big)\Lambda_0\varepsilon_0\ud x\Bigr| \leq \frac ct \|\bs\varepsilon\|_\enorm^2. 
    \end{aligned}
  \end{equation}
\end{lemma}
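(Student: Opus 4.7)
The plan is to exploit the algebraic identities
$$\int g\varepsilon_0\Big(\frac{1}{2\lambda}(\Delta a)_\lambda + (\grad a)_\lambda\cdot\grad\Big)\varepsilon_0\ud x = -\frac{1}{2}\int(\grad a)_\lambda\cdot\grad g\,\varepsilon_0^2\ud x$$
and $\int g\varepsilon_0\Lambda_0\varepsilon_0\ud x = -\frac{1}{2}\int x\cdot\grad g\,\varepsilon_0^2\ud x$, valid for any sufficiently regular scalar function $g$. Both follow from integration by parts, using $\div((\grad a)_\lambda) = \frac{1}{\lambda}(\Delta a)_\lambda$, $\div(x) = 5$, and $\varepsilon_0\grad\varepsilon_0 = \frac{1}{2}\grad(\varepsilon_0^2)$. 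I would apply them with $g := G(\varphi_0,\varepsilon_0) = \int_0^1 f'(\varphi_0 + \theta\varepsilon_0)\ud\theta$, so that $f(\varphi_0+\varepsilon_0) - f(\varphi_0) = G\varepsilon_0$. Combining the two recasts the expression inside $|\cdot|$ in \eqref{eq:viriel-nonlin} as
$$-\frac{1}{2}\int\grad G\cdot\Big[b(\grad a)_\lambda - \frac{\lambda_t}{\lambda}x\Big]\varepsilon_0^2\ud x.$$

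Next, setting $\tilde a(r) := a(r) - \frac{1}{2}r^2$, which vanishes on $r\leq R$ by the definition of $a$, I would decompose
$$b(\grad a)_\lambda - \frac{\lambda_t}{\lambda}x = \frac{b-\lambda_t}{\lambda}x + b(\grad\tilde a)_\lambda.$$
This splits the target estimate into an ``inner'' piece proportional to $b-\lambda_t$ on all of $\bR^5$ and an ``outer'' piece supported on $\{|x|\geq R\lambda\}$.

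For the inner piece, the prefactor $|b-\lambda_t|\leq\|\bs\varepsilon\|_\enorm$ is furnished by Lemma~\ref{lem:lambda-b}. Using the pointwise bound $|\grad G|\lesssim (|\varphi_0|+|\varepsilon_0|)^{1/3}(|\grad\varphi_0|+|\grad\varepsilon_0|)$, H\"older with $\varepsilon_0^2\in L^{5/3}$ (via $\dot H^1\hookrightarrow L^{10/3}$), and the scale invariance of $\|r\,|W_\lambda|^{1/3}|\grad W_\lambda|\|_{L^{5/2}}$, the dominant $W_\lambda$-contribution is bounded by a fixed constant times $\frac{\|\bs\varepsilon\|_\enorm}{\lambda}\|\bs\varepsilon\|_\enorm^2$. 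Since $\lambda\sim t^{1+\nu}$ and the bootstrap bound $\|\bs\varepsilon\|_\enorm\leq t^{\gamma+1}$ satisfies $\gamma+1>\nu$ in both cases ($\nu=3$ and $\nu>8$), this is $o(t^{-1})\|\bs\varepsilon\|_\enorm^2$ as $t\to 0^+$. The $u^*$ and $P_0$ contributions are of the same or smaller order; the terms of $\grad G$ carrying $|\grad\varepsilon_0|$, which do not sit in the natural H\"older chain (since $14/3>10/3$), I would absorb via $\varepsilon_0^2\grad\varepsilon_0 = \frac{1}{3}\grad(\varepsilon_0^3)$ and one more integration by parts, converting them into $L^{10/3}$-norms.

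For the outer piece, the critical observation is that on $\{|x|\geq R\lambda\}$ a direct inspection of \eqref{eq:a} gives $|(\grad\tilde a)_\lambda(x)|\lesssim |x|/\lambda$, so after the change of variables $x = \lambda y$ the relevant scale-invariant quantity $\|r\,|W_\lambda|^{1/3}|\grad W_\lambda|\|_{L^{5/2}(|x|\geq R\lambda)}$ becomes $\|\,|y|\,W^{1/3}|\grad W|\,\|_{L^{5/2}(|y|\geq R)}$, which tends to $0$ as $R\to\infty$. Smallness of the $u^*$-contribution then comes from the cutoff in Remark~\ref{rem:ustar-cut}, and smallness of the $P_0$-contribution from Lemma~\ref{lem:size-P0} combined with Sobolev embedding. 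Choosing $R$ large and then $T_0$ small produces the required small constant $c$. The main obstacle I anticipate is controlling the higher-order terms arising from $|\varepsilon_0|^{1/3}|\grad\varepsilon_0|$ in $\grad G$ on this outer region; here the critical scaling of the nonlinearity in dimension $5$ and the precise linear growth of $(\grad\tilde a)_\lambda$ built into the construction of $a$ are both essential for the estimate to close.
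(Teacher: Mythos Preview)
Your algebraic reduction is correct and elegant: writing $f(\varphi_0+\varepsilon_0)-f(\varphi_0)=G\varepsilon_0$ and integrating by parts against the weight $G$ does give
\[
-\tfrac12\int\grad G\cdot\Big[b(\grad a)_\lambda-\tfrac{\lambda_t}{\lambda}x\Big]\varepsilon_0^2\ud x,
\]
and your splitting $b(\grad a)_\lambda-\tfrac{\lambda_t}{\lambda}x=\tfrac{b-\lambda_t}{\lambda}x+b(\grad\tilde a)_\lambda$ is the right one. The $\grad\varphi_0$--part of $\grad G$ is handled exactly as you describe, and at that level your argument coincides with the paper's endpoint (its Lemma~\ref{lem:L0phi} and the comparison \eqref{eq:viriel-loc-err}).

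The gap is in the $\grad\varepsilon_0$--part of $\grad G$. On both pieces the vector field grows like $|x|/\lambda$, so you need to control
\[
\int |x|\,|f''(\varphi_0+\theta\varepsilon_0)|\,|\grad\varepsilon_0|\,\varepsilon_0^2\ud x,
\]
and this is \emph{not} dominated by $\|\varepsilon_0\|_{\dot H^1}^3$ in dimension $5$ (the H\"older exponents do not close, as you yourself note). Your proposed cure, integrating by parts via $\varepsilon_0^2\grad\varepsilon_0=\tfrac13\grad(\varepsilon_0^3)$, does not help: the coefficient $K=\int_0^1\theta f''(\varphi_0+\theta\varepsilon_0)\,\vd\theta$ depends on $\varepsilon_0$, and when the derivative lands on $K$ you produce $f'''\sim |u|^{-2/3}$. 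After using $(|\varphi_0|+|\varepsilon_0|)^{-2/3}|\varepsilon_0|^3\le|\varepsilon_0|^{7/3}$ you are back to exactly the same untreatable integral $\int |x|\,|\grad\varepsilon_0|\,|\varepsilon_0|^{7/3}$; repeated integration by parts gains one power of $\varepsilon_0$ but loses it again to the singularity of the next derivative of $f$, so the iteration never closes.

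The paper avoids this trap by a different integration by parts: instead of factoring out $G$, it writes $\grad\varepsilon_0=\grad(\varphi_0+\varepsilon_0)-\grad\varphi_0$ and uses $x\cdot\grad(\varphi_0+\varepsilon_0)\,f(\varphi_0+\varepsilon_0)=x\cdot\grad F(\varphi_0+\varepsilon_0)$ (and the analogous identity with $(\grad a)_\lambda$). This converts the $\grad\varepsilon_0$ contribution into $\int F(\varphi_0+\varepsilon_0)$--type terms with no derivative on $\varepsilon_0$, which are then Taylor--expanded via Lemma~\ref{lem:ponctuel2}; the resulting main terms carry only $x\cdot\grad\varphi_0\,f''(\varphi_0)$ (bounded through Lemma~\ref{lem:L0phi}), and the cubic remainders $|F(\varepsilon_0)|+|f''(\varphi_0)||\varepsilon_0|^3$ are harmless since $\tfrac{b}{\lambda}\|\bs\varepsilon\|_{\enorm}^{3}\ll\tfrac{1}{t}\|\bs\varepsilon\|_{\enorm}^2$. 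Your route can be rescued along the same lines --- find an antiderivative $M(\varphi_0,\varepsilon_0)$ with $\partial_{\varepsilon_0}M=K\varepsilon_0^2$ so that $K\varepsilon_0^2\grad\varepsilon_0=\grad M-(\partial_{\varphi_0}M)\grad\varphi_0$ --- but once you do that you have essentially reproduced the paper's argument.
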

We will admit for a moment that this is true and recapitulate in order to finish the proof of Lemma 
\ref{lem:dtJ}.
Identity \eqref{eq:pohozaev} implies that the term with $\Delta \varepsilon_0$ in the first line of \eqref{eq:viriel-deriv}
is \emph{smaller} than $-\frac{b}{\lambda}\|\grad_{a, \lambda}\varepsilon_0\|_{L^2}^2$. 
Lemma 
\ref{lem:viriel-nonlin} implies that the difference between the other term of the first line of \eqref{eq:viriel-deriv} and
$\frac{\lambda_t}{\lambda}\int\big(f(\varphi_0 + \varepsilon_0) - f(\varphi_0)\big)\Lambda_0\varepsilon_0\ud x$ is negligible.
The second line of \eqref{eq:viriel-deriv} is 0, and the difference between the last line and $\int \varepsilon_1\psi_1\ud x$
is negligible, as follows from the computation above.
This proves \eqref{eq:dtJ}.
\end{proof}

In order to prove Lemma 
\ref{lem:viriel-nonlin}, we need two auxiliary facts:
  \begin{lemma}
    \label{lem:ponctuel2}
    \begin{align*}
      |f(k+l)-f(k)-f'(k)l-\frac 12 f''(k)l^2| &\lesssim |f(l)|, \\%\label{eq:ponctuel21} \\
      |F(k+l)-F(k)-f(k)l-\frac 12 f'(k)l^2| &\lesssim |F(l)| + |f''(k)|l^3.%\label{eq:ponctuel22}
    \end{align*}
  \end{lemma}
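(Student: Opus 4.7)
The plan is to follow the same strategy as in Lemma~\ref{lem:ponctuel}. Both inequalities are scale-invariant — the first is homogeneous of degree $7/3$ and the second of degree $10/3$ — so by rescaling we may assume $k^2 + l^2 = 1$. The right-hand sides vanish only when $l = 0$, i.e.\ at $(k,l) = (\pm 1, 0)$; elsewhere on the unit circle they are bounded below by a positive constant while the left-hand sides are continuous, so the inequalities hold away from those points by compactness. It thus suffices to treat the regime $|l| \ll |k|$ near $(\pm 1, 0)$.

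In this regime I would use the integral form of Taylor's remainder,
\begin{equation*}
  f(k+l) - f(k) - f'(k)l - \tfrac12 f''(k) l^2 = \int_0^1 (1-s)\big[f''(k+sl) - f''(k)\big] l^2 \ud s,
\end{equation*}
combined with the fact that $f''(u) = \tfrac{28}{9}|u|^{1/3}\sign(u)$ is $1/3$-H\"older continuous, so that $|f''(k+sl)-f''(k)| \lesssim |l|^{1/3}$. The remainder is then bounded by $|l|^{7/3} \sim |f(l)|$, which is the first inequality.

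For the second inequality, one uses the analogous identity
\begin{equation*}
  F(k+l) - F(k) - f(k)l - \tfrac12 f'(k) l^2 = \int_0^1 (1-s)\big[f'(k+sl) - f'(k)\big] l^2 \ud s.
\end{equation*}
Near $|k| = 1$ the function $f'$ is smooth, hence $|f'(k+sl) - f'(k)| \lesssim |f''(k)|\cdot|l|$ by the mean value theorem, producing a remainder of order $|f''(k)|\,l^3$, which matches the second term on the right-hand side. I do not anticipate a real obstacle: the only step requiring slight care is that the H\"older constant for $f''$ is uniform, which is elementary (it reduces to $\bigl||a|^{1/3}-|b|^{1/3}\bigr| \leq |a-b|^{1/3}$), and that the $|F(l)|$ term on the right-hand side absorbs the contribution on the complementary region where $|k|$ and $|l|$ are comparable — which is automatic from the compactness argument in the first paragraph.
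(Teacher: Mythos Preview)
Your proof is correct and follows essentially the same approach as the paper: both split into the regimes $|l|\ll|k|$ (Taylor expansion) and $|l|\gtrsim|k|$ (direct comparison of each term with the right-hand side), the latter being your compactness argument rephrased. Your version is simply more detailed, making explicit the integral remainder and the $\tfrac13$-H\"older continuity of $f''$ where the paper just writes ``follows from the Taylor expansion'' and ``obvious by the triangle inequality''.
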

  \begin{proof}
    For $|l| \leq \frac 12 |k|$ this follows from the Taylor expansion and for $|l|\geq \frac 12 |k|$ this is obvious by the triangle inequality.
  \end{proof}
  \begin{lemma}
    \label{lem:L0phi}
    There exists a constant $C_2$ independent of $R$ such that for small $t$,
    \begin{align}
          \||x|\cdot|\grad\phi_0|\|_{L^{10/3}} &\leq C_2,  \label{eq:viriel-ansatz-1} \\
          \||\lambda(\grad a)_\lambda|\cdot|\grad\phi_0|\|_{L^{10/3}} &\leq C_2 \label{eq:viriel-ansatz-2}.
    \end{align}
    Moreover,
    \begin{equation}
      \label{eq:viriel-loc-err}
  \|\big(x-\lambda(\grad a)_\lambda\big)\cdot\grad\varphi_0\|_{L^{10/3}} \leq c
    \end{equation}
    if $R$ is large enough and $\rho$ small enough.
  \end{lemma}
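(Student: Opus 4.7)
The plan is to decompose $\varphi_0 = W_\lambda + P_0 + u^*$ and estimate each piece separately, exploiting a scale-invariance for $W_\lambda$, smallness in $t$ for $P_0$, and smallness of $\rho$ for $u^*$.

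\textbf{First inequality.} For the soliton, a pointwise calculation gives $|r W'(r)| \lesssim r^2$ near $0$ and $|r W'(r)| \lesssim r^{-3}$ at infinity, so $\||x|\grad W\|_{L^{10/3}} < \infty$. A change of variables $y = x/\lambda$, noting that $\||y|\grad W(y)\|_{L^{10/3}(\bR^5)}$ is scale-invariant (because $|x|\grad$ has the same scaling as $L^{10/3}$), yields $\||x|\grad W_\lambda\|_{L^{10/3}} = \||y|\grad W\|_{L^{10/3}}$, a universal constant independent of $\lambda$ and $R$. For $P_0$, the bound $\||x|\grad P_0\|_{L^{10/3}}$ can be controlled directly by the same kind of explicit radial integration as in the proof of Lemma \ref{lem:size-P0}: using $|A(r)|, |B(r)| \lesssim r^{-1}$ and cut-off support in $\{r \leq 2t\}$, the coefficients $\lambda^{3/2}v^*(t)$ and $b^2$ guarantee a power of $t$ to spare and in particular a uniform bound. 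For $u^*$, by Proposition \ref{prop:energy-est} (in the non-degenerate case, together with Schauder / Proposition \ref{prop:cauchy-nondeg}), the quantity $\||x|\grad u^*(t)\|_{L^{10/3}}$ is bounded for small $t$ by a constant depending only on $\bs u^*$. Summing the three pieces gives \eqref{eq:viriel-ansatz-1} with some $C_2$ independent of $R$.

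\textbf{Second inequality.} I would derive \eqref{eq:viriel-ansatz-2} from \eqref{eq:viriel-ansatz-1} by a pointwise comparison. Indeed, $|\lambda(\grad a)_\lambda(x)| = \lambda|\grad a(x/\lambda)|$ equals $|x|$ on $\{|x|\leq R\lambda\}$ since $a(r)=r^2/2$ there, while on $\{|x|\geq R\lambda\}$ the explicit formula for $a$ yields $|\grad a|\leq C$ (with $C$ independent of $R$, being the limit of $a'(s)$ as $s\to\infty$ up to the fixed coefficients in \eqref{eq:a}), so $|\lambda(\grad a)_\lambda|\leq C\lambda\leq (C/R)|x|$. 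In either zone $|\lambda(\grad a)_\lambda|\lesssim |x|$ uniformly in $R$, and \eqref{eq:viriel-ansatz-2} follows from \eqref{eq:viriel-ansatz-1}.

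\textbf{Third inequality.} The key point is that $x-\lambda(\grad a)_\lambda$ is supported in $\{|x|\geq R\lambda\}$. Combining this with the pointwise bound above, we get $|x-\lambda(\grad a)_\lambda|\lesssim |x|\mathbf{1}_{\{|x|\geq R\lambda\}}$. Hence it suffices to prove that $\||x|\grad\varphi_0\|_{L^{10/3}(|x|\geq R\lambda)}$ can be made arbitrarily small. Again splitting $\varphi_0 = W_\lambda+P_0+u^*$: by the same scale-invariant change of variables, $\||x|\grad W_\lambda\|_{L^{10/3}(|x|\geq R\lambda)} = \||y|\grad W\|_{L^{10/3}(|y|\geq R)} \to 0$ as $R\to\infty$ by dominated convergence. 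The $P_0$ piece gives a positive power of $t$, hence is small for $T_0$ small. For $u^*$, by Remark \ref{rem:ustar-cut} we may assume $\mathrm{supp}\,u^*(t)\subset\{|x|\leq 2\rho\}$ with $\|u^*(t)\|_{X^1}$ small (uniformly in small $t$ by Proposition \ref{prop:energy-est}), so $\||x|\grad u^*\|_{L^{10/3}}$ is small with $\rho$. Choosing first $R$ large enough to kill the $W_\lambda$ contribution, then $\rho$ small enough to kill the $u^*$ contribution, then $T_0$ small enough to kill the $P_0$ contribution, proves \eqref{eq:viriel-loc-err}.

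The main technical subtlety is the book-keeping required to show that the constant $C_2$ in the first two bounds can indeed be chosen \emph{independently of $R$}: one must separate the estimate on $W_\lambda$ (which is scale-invariant, hence $R$-independent) from the estimate on $\lambda(\grad a)_\lambda$ (whose $R$-dependence is absorbed by the pointwise comparison $|\lambda(\grad a)_\lambda|\lesssim|x|$ with constant $1$ in the inner zone and $1/R$ in the outer zone). Everything else is scaling, monotone convergence, and the usual smallness controls already established.
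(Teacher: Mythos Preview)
Your proof is correct and follows the same decomposition and scale-invariance arguments as the paper's own proof. One small correction in your second paragraph: $a'(s)\to\frac{15}{8}R$ as $s\to\infty$, so $|\grad a|$ is \emph{not} bounded independently of $R$ on the outer zone; instead use $a'(s)\leq\frac{15}{8}R\leq\frac{15}{8}s$ for $s\geq R$ to obtain $|\lambda(\grad a)_\lambda(x)|=\lambda\,a'(|x|/\lambda)\leq\frac{15}{8}|x|$ directly, which is the uniform pointwise bound you need (and your conclusion $|\lambda(\grad a)_\lambda|\lesssim|x|$ uniformly in $R$ then stands).
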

  \begin{proof}
    Recall that $\varphi_0(t) = W_{\lambda(t)} + P_0(t) + u^*(t)$, and we can estimate the three terms separately. The third one gives $\||x|\cdot|\grad u^*|\|_{L^{10/3}}$,
    which is bounded by Proposition 
\ref{prop:energy-est} and the fact that $u^*$ has compact support.
    It is easy to check that $\||x|\cdot|\grad (W_\lambda)|\|_{L^{10/3}} = \||x|\cdot|\grad W|\|_{L^{10/3}}$,
    which gives the boundedness of the first term. Finally, we compute
    \begin{equation*}
      \grad\Big(\chi\big(\frac xt\big)\lambda^{3/2}A_\lambda(x)\Big) = \grad\Big(\chi\big(\frac xt\big)A(\frac{x}{\lambda})\Big) = \frac 1t(\grad\chi)\big(\frac xt\big)A\big(\frac{x}{\lambda}\big)
      + \frac{1}{\lambda}\chi\big(\frac xt\big)\grad A\big(\frac{x}{\lambda}\big),
    \end{equation*}
    and it is sufficient to use the inequalities $|A(x/\lambda)| \lesssim \lambda / |x|$ and $|\grad A(x/\lambda)| \lesssim \lambda^2 / |x|^2$.
    The second term of $P_0$ is bounded in the same way. Notice that we obtain in fact that $\||x|\cdot |\grad P_0(t)|\|_{L^{10/3}}$ is small when $t$ is small.
    
    Clearly $|\lambda(\grad a)_\lambda| \lesssim |x|$ uniformly in $R$, so \eqref{eq:viriel-ansatz-2} follows from \eqref{eq:viriel-ansatz-1}.

    The proof of \eqref{eq:viriel-loc-err} is similar. The terms $\||x|\cdot|\grad u^*|\|_{L^{10/3}}$ and $|\lambda(\grad a)_\lambda|\cdot|\grad u^*|\|_{L^{10/3}}$ are small
    when $\rho$ is small. By rescaling we get
    \begin{equation*}
      \|\big(x - \lambda(\grad a)_\lambda\big)\cdot|\grad W_\lambda|\|_{L^{10/3}} = \|(x - \grad a)\cdot |\grad W|\|_{L^{10/3}}.
    \end{equation*}
    By definition $\grad a = x$ for $|x| \leq R$, so
    \begin{equation*}
      \|(x - \grad a)\cdot |\grad W|\|_{L^{10/3}} \lesssim \||x|\cdot|\grad W|\|_{L^{10/3}(|x| \geq R)} \to 0\qquad \text{when }R \to +\infty.
    \end{equation*}
    Smallness of $\||\big(x-\lambda(\grad a)_\lambda\big)|\cdot|\grad P_0(t)|\|_{L^{10/3}}$ for small $t$ follows from smallness of $\||x|\cdot |\grad P_0(t)|\|_{L^{10/3}}$. 
  \end{proof}

\begin{proof}[Proof of Lemma 
\ref{lem:viriel-nonlin}]
  First, as for the linear terms, using integration by parts we transform the integral so that the unbounded operator $\Lambda_0$
  (and its approximation $\frac 12 \Delta a + \grad a\cdot \grad$) no longer acts on $\varepsilon_0$:
  \begin{equation}
    \label{eq:viriel-parties}
  \begin{aligned}
    \int \frac{1}{\lambda}x\cdot\grad\varepsilon_0 f(\varphi_0+\varepsilon_0)\ud x &= \int \frac{1}{\lambda}x\cdot\grad(\varphi_0 + \varepsilon_0)f(\varphi_0+\varepsilon_0)\ud x - \int \frac{1}{\lambda}x\cdot\grad\varphi_0 f(\varphi_0+\varepsilon_0)\ud x \\
    &= -5\int \frac{1}{\lambda}F(\varphi_0+\varepsilon_0)\ud x - \int \frac{1}{\lambda}x\cdot\grad\varphi_0 f(\varphi_0+\varepsilon_0)\ud x 
  \end{aligned}
  \end{equation}
  and analogously
  \begin{equation}
    \label{eq:viriel-a-parties}
    \int (\grad a)_\lambda\cdot\grad\varepsilon_0 f(\varphi_0+\varepsilon_0)\ud x = -\int \frac{1}{\lambda}(\Delta a)_\lambda F(\varphi_0+\varepsilon_0)\ud x - \int (\grad a)_\lambda\cdot\grad\varphi_0 f(\varphi_0+\varepsilon_0)\ud x.
  \end{equation}
  Using Lemma 
\ref{lem:ponctuel2} we see that
  \begin{equation*}
    \int\big|F(\varphi_0 + \varepsilon_0) - \big(F(\varphi_0) + f(\varphi_0)\varepsilon_0 + \frac 12 f'(\varphi_0)\varepsilon_0^2\big)\big|\ud x
    \lesssim \|\bs\varepsilon\|_\enorm^3 \leq f(\|\bs\varepsilon\|_\enorm)
  \end{equation*}
  Similarly, from Lemma 
\ref{lem:ponctuel2} and Lemma 
\ref{lem:L0phi} we get
  \begin{equation*}
    \int\big|x\cdot\grad\varphi_0f(\varphi_0 + \varepsilon_0) - x\cdot\grad\varphi_0\big(f(\varphi_0) + f'(\varphi_0)\varepsilon_0 + \frac 12 f''(\varphi_0)\varepsilon_0^2\big)\big|\ud x
    \lesssim f(\|\bs\varepsilon\|_\enorm).
  \end{equation*}
  Notice that $\frac{\lambda_t}{\lambda}f(\|\bs\varepsilon\|_\enorm) \ll \frac{1}{t}\|\bs\varepsilon\|_\enorm^2$, so the above two inequalities
  together with \eqref{eq:viriel-parties} imply that
  \begin{equation}\label{eq:viriel-replace-1}
    \begin{aligned}
    \frac{\lambda_t}{\lambda}\int x\cdot \grad\varepsilon_0 f(\varphi_0 + \varepsilon_0)\ud x \simeq
    &-5\frac{\lambda_t}{\lambda}\int\big(F(\varphi_0) + f(\varphi_0)\varepsilon_0 + \frac 12f'(\varphi_0)\varepsilon_0^2\big)\ud x \\
    &-\frac{\lambda_t}{\lambda}\int x\cdot\grad\varphi_0\big(f(\varphi_0) + f'(\varphi_0)\varepsilon_0 + \frac 12 f''(\varphi_0)\varepsilon_0^2\big)\ud x.
  \end{aligned}
  \end{equation}
  Integrating by parts we find
  \begin{equation*}
    \int x\cdot\grad \varphi_0 f(\varphi_0)\ud x = \int x\cdot\grad F(\varphi_0)\ud x = -5\int F(\varphi_0)\ud x
  \end{equation*}
  and
  \begin{equation*}
    \int x\cdot\grad \varphi_0 f'(\varphi_0)\varepsilon_0\ud x = \int x\cdot\grad f(\varphi_0)\varepsilon_0\ud x = -5\int f(\varphi_0)\varepsilon_0\ud x - \int x\cdot \grad \varepsilon_0 f(\varphi_0)\ud x.
  \end{equation*}
  Thus, \eqref{eq:viriel-replace-1} simplifies to
  \begin{equation}
    \label{eq:viriel-replace-2}
    \frac{\lambda_t}{\lambda}\int x\cdot \grad\varepsilon_0 \big(f(\varphi_0 + \varepsilon_0) - f(\varphi_0)\big)\ud x \simeq
    \frac{\lambda_t}{\lambda}\int\Big({-}\frac 52 f'(\varphi_0)\varepsilon_0^2 - \frac 12 x\cdot \grad\varphi_0 f''(\varphi_0)\varepsilon_0^2\Big)\ud x.
  \end{equation}

Using a pointwise estimate and H\"older we obtain
  \begin{equation*}
    \frac{\lambda_t}{\lambda}\int \varepsilon_0 \big(f(\varphi_0+\varepsilon_0) - f(\varphi_0)\big)\ud x \simeq \frac{\lambda_t}{\lambda}\int f'(\varphi_0)\varepsilon_0^2\ud x.
  \end{equation*}
  Combining with \eqref{eq:viriel-replace-2} we have
  \begin{equation}
    \label{eq:viriel-replace-3}
\begin{aligned}
    \frac{\lambda_t}{\lambda}\int \Lambda_0\varepsilon_0 \big(f(\varphi_0 + \varepsilon_0) - f(\varphi_0)\big)\ud x &\simeq
    -\frac{\lambda_t}{2\lambda}\int x\cdot \grad\varphi_0 f''(\varphi_0)\varepsilon_0^2\ud x  \\ &\simeq -\frac{b}{2\lambda}\int x\cdot \grad\varphi_0 f''(\varphi_0)\varepsilon_0^2\ud x,
\end{aligned}
  \end{equation}
  where the last almost-equality follows from the fact that $|\lambda_t - b| \lesssim \|\bs\varepsilon\|_\enorm$.

  Analogously, we obtain
  \begin{equation}
    \label{eq:viriel-replace-4}
\begin{aligned}
    &b\int \Big(\frac{1}{\lambda}\cdot \frac 12(\Delta a)_\lambda + (\grad a)_\lambda\cdot \grad\Big)\varepsilon_0\big(f(\varphi_0 + \varepsilon_0) - f(\varphi_0)\big)\ud x  \\
&\simeq
-\frac{b}{2}\int (\grad a)_\lambda\cdot \grad\varphi_0 f''(\varphi_0)\varepsilon_0^2\ud x.
\end{aligned}
\end{equation}

Comparing \eqref{eq:viriel-replace-3} and \eqref{eq:viriel-replace-4}, we see that in order to finish the proof, we need to check that
  \begin{equation*}
    \int\bigl|\big(x-\lambda(\grad a)_\lambda\big)\cdot\grad\varphi_0 f''(\varphi_0)\varepsilon_0^2\bigr|\ud x \leq c\|\bs\varepsilon\|_\enorm^2
  \end{equation*}
  when $R$ is sufficiently large.
  Using Sobolev and H\"older inequalities this boils down to
  \begin{equation*}
  \|(x-\lambda(\grad a)_\lambda)\cdot\grad\varphi_0 f''(\varphi_0)\|_{L^{5/2}} \leq c,
\end{equation*}
and this follows from \eqref{eq:viriel-loc-err} and boundedness of $f''(\varphi_0)$ in $L^{10}$.
\end{proof}

\begin{proof}[Proof of Proposition 
\ref{prop:dtH}]
  From \eqref{eq:dtI}, \eqref{eq:dtJ} and the fact that $\Lambda_0 - \Lambda = \mathrm{Id}$, we have
\begin{equation*}
  \begin{aligned}
  \dd t H &= \dd t I + \dd t J \leq \int\big(\Delta\varepsilon_0 + f(\varphi_0+\varepsilon_0)-f(\varphi_0)\big)\psi_0\ud x  \\
  &- \frac{b}{\lambda}\|\grad_{a, \lambda}\varepsilon_0\|_{L^2}^2 + \frac{\lambda_t}{\lambda}\int\big(f(\varphi_0 + \varepsilon_0) - f(\varphi_0)\big)\varepsilon_0\ud x+ \frac ct \|\bs\varepsilon\|_\enorm^2 + C_1 t^\gamma\|\bs\varepsilon\|_\enorm.
  \end{aligned}
\end{equation*}
Notice that
\begin{equation*}
 % \label{eq:difference}
  \|f(\varphi_0 + \varepsilon_0) - f(\varphi_0)\|_{\dot H^{-1}} \lesssim \|\varepsilon_0\|_{\dot H^1}.
\end{equation*}
This follows from the inequality $|f(k+l) - f(k)| \lesssim |l| + |f(l)|$ and the fact that $\varphi_0$ is bounded in $\dot H^1$.
If we recall that $\frac{|\lambda_t - b|}{\lambda} \lesssim \frac{\|\bs\varepsilon\|_\enorm}{\lambda} \ll \frac{1}{t}$,
we see that in the second line we can replace $\lambda_t$ by $b$, hence to finish the proof we only have to prove that
\begin{equation*}
  \int\big(\Delta\varepsilon_0 + f(\varphi_0 + \varepsilon_0) - f(\varphi_0)\big)\psi_0\ud x \leq \frac ct\|\bs\varepsilon\|_\enorm^2 + C_1 t^\gamma\|\bs \varepsilon\|_\enorm.
\end{equation*}
Inequalities \eqref{eq:estim-q0} and \eqref{eq:estim-q0-deg} show that it is sufficient to check that
\begin{equation*}
  \Big|\int\big(\Delta\varepsilon_0 + f(\varphi_0 + \varepsilon_0) - f(\varphi_0)\big)\frac{\lambda_t - b}{\lambda}(\Lambda W)_\lambda\ud x\Big| \leq \frac ct\|\bs\varepsilon\|_\enorm^2 + C_1 t^\gamma\|\bs \varepsilon\|_\enorm,
\end{equation*}
which in turn will follow from \eqref{eq:lambda-b} and
\begin{equation*}
  \Big|\int\big(\Delta\varepsilon_0 + f(\varphi_0 + \varepsilon_0) - f(\varphi_0)\big)(\Lambda W)_\lambda\ud x\Big| \leq \frac{c\lambda}{t}\|\bs\varepsilon\|_\enorm + C_1 \lambda t^{\gamma}.
\end{equation*}
From pointwise bounds (for example the first inequality in Lemma 
\ref{lem:ponctuel2}) one deduces
\begin{equation*}
  \big\|f(\varphi_0 + \varepsilon_0) - f(\varphi_0) - f'(\varphi_0)\varepsilon_0\|_{\dot H^{-1}} \lesssim \|\bs \varepsilon\|_{\dot H^1}^2 \ll \frac{\lambda}{t}\|\bs\varepsilon\|_\enorm,
\end{equation*}
hence it suffices to show that
\begin{equation*}
  \Big|\int\big(\Delta\varepsilon_0 + f'(\varphi_0)\varepsilon_0)(\Lambda W)_\lambda\ud x\Big| =  \Big|\int\varepsilon_0\cdot\big(\Delta + f'(\varphi_0)\big)(\Lambda W)_\lambda\ud x\Big| \leq \frac{c\lambda}{t}\|\bs\varepsilon\|_\enorm + C_1 \lambda t^{\gamma}.
\end{equation*}
Observe that $\big(\Delta + f'(W_\lambda)\big)(\Lambda W)_\lambda = 0$, so we are left with proving that
\begin{equation*}
\Big|\int\varepsilon_0\cdot(f'(\varphi_0) - f'(W_\lambda))(\Lambda W)_\lambda\ud x\Big| \leq \frac{c\lambda}{t}\|\bs\varepsilon\|_\enorm + C_1 \lambda t^{\gamma}.
\end{equation*}
By H\"older inequality it suffices to show that
\begin{equation}
  \label{eq:derniere}
  \big\|\big(f'(\varphi_0) - f'(W_\lambda)\big)(\Lambda W)_\lambda\big\|_{L^{10/7}} \leq \frac{c\lambda}{t}.
\end{equation}
The inequality $|f'(k+l)-f'(k)| \lesssim |f'(l)| + |f''(k)|\cdot|l|$ for $k = W_\lambda$ and $l = u^*(t) + P_0(t)$ reduces \eqref{eq:derniere} to checking that
\begin{align}
  \|f'(u^*)(\Lambda W)_\lambda\|_{L^{10/7}} &\leq \frac{c\lambda}{t}, \label{eq:derniere-1} \\
  \|f'(P_0)(\Lambda W)_\lambda\|_{L^{10/7}} &\leq \frac{c\lambda}{t}, \label{eq:derniere-2} \\
  \|u^*\cdot f'((\Lambda W)_\lambda)\|_{L^{10/7}} &\leq \frac{c\lambda}{t}, \label{eq:derniere-3} \\
  \|P_0\cdot f'((\Lambda W)_\lambda)\|_{L^{10/7}} &\leq \frac{c\lambda}{t}. \label{eq:derniere-4}
\end{align}
Again using H\"older we get $\|P_0\cdot f'((\Lambda W)_\lambda)\|_{L^{10/7}} \leq \|P_0\|_{L^{10/3}} \|f'((\Lambda W)_\lambda)\|_{L^{5/2}} \lesssim \|P_0\|_{\dot H^1}$.
From Lemma 
\ref{lem:size-P0} (or the degenerate version Lemma 
\ref{lem:size-P0-deg}) we have $\|P_0\|_{\dot H^1} \lesssim t^{\gamma + 1} \ll \frac{\lambda}{t}$.
This proves \eqref{eq:derniere-4} and \eqref{eq:derniere-2} is very similar.

From Proposition 
\ref{prop:energy-est} we know that $\|u^*\|_{L^{10}}$ is bounded. Hence
$\|u^*\cdot f'((\Lambda W)_\lambda)\|_{L^{10/7}} \lesssim \|u^*\|_{L^{10}}\cdot \|f'((\Lambda W)_\lambda)\|_{L^{5/3}} \lesssim \lambda$.
This proves \eqref{eq:derniere-3} and \eqref{eq:derniere-1} is similar.
\end{proof}
\section{Construction of a uniformly controlled sequence and conclusion}
\label{sec:shooting}
In this section we will analyse finite dimensional phenomena of our dynamical system --
modulation equations and eigendirections of the linearized operator $L$.
We will also define precisely the bootstrap assumptions and finish the proof of the main theorems.

It is known that the operator $L = -\Delta - f'(W)$ has a unique simple strictly negative eigenvalue $-e_0^2$ (by convention $e_0 > 0$),
with a unique positive eigenfunction $\cY$ such that $\|\cY\|_{L^2} = 1$.
This function $\cY$ is radial, smooth and decays exponentially.
This follows from classical results of spectral theory and theory of elliptic equations, see \cite[Proposition 5.5]{DM08},
where it is also shown that there exists a constant $c_1 > 0$ such that
\begin{equation}
  \label{eq:coer-classic}
  g\in\dot H_{\mathrm{rad}}^1,\quad \la g, \cY\ra = \la \grad g, \grad\Lambda W\ra = 0 \quad\Rightarrow\quad \la g, Lg\ra \geq c_1\|\grad g\|_{L^2}^2.
\end{equation}
We need here a slight modification of this coercivity lemma.
\begin{lemma}
  \label{lem:positive}
  For any $c > 0$ there exist $c_L, C > 0$ such that
  \begin{equation}
    \label{eq:positive}
    \la g, L g\ra \geq c_L\|\grad g\|^2 - C\la g, \cY\ra^2 -c\la g, \cZ\ra^2.
  \end{equation}
\end{lemma}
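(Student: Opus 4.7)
The plan is to argue by contradiction and compactness. Suppose the statement fails: for some $c > 0$, choosing $c_L = 1/n$ and $C = n$, for every $n \in \bN$ one can find a radial $g_n \in \dot H^1$ with $\|\nabla g_n\|_{L^2} = 1$ and
\[
\langle g_n, L g_n \rangle + n \langle g_n, \cY \rangle^2 + c \langle g_n, \cZ \rangle^2 < \tfrac{1}{n}.
\]
Since $|f'(W)(x)| \lesssim (1+|x|^2)^{-2}$ belongs to $L^{5/2}(\bR^5)$, H\"older and Sobolev yield $|\langle g_n, L g_n \rangle| \leq C_1$ uniformly in $n$. Combined with the displayed inequality this forces $n \langle g_n, \cY \rangle^2 \leq 1/n + C_1$, so $\langle g_n, \cY \rangle \to 0$.

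Extract a subsequence with $g_n \rightharpoonup g$ in $\dot H^1$. Rellich compactness together with the spatial decay of $f'(W)$ yields $\int f'(W) g_n^2 \ud x \to \int f'(W) g^2 \ud x$; compact support of $\cZ$ gives $\langle g_n, \cZ \rangle \to \langle g, \cZ \rangle$; and $\langle g, \cY \rangle = 0$. The sequence $n \langle g_n, \cY \rangle^2$ is bounded, so along a further subsequence it converges to some $\ell \geq 0$. Using weak lower semicontinuity of $\|\nabla \cdot\|_{L^2}^2$, passing to the limit in the displayed inequality gives
\[
\langle g, L g \rangle \leq -\ell - c \langle g, \cZ \rangle^2 \leq -c \langle g, \cZ \rangle^2.
\]

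Now I invoke \eqref{eq:coer-classic}. Because $L$ is self-adjoint with $L \Lambda W = 0$ and $L \cY = -e_0^2 \cY$, the eigenfunctions $\Lambda W$ and $\cY$ are $L^2$-orthogonal. Hence the $\dot H^1$-orthogonal decomposition $g = \beta \Lambda W + h$ automatically inherits $\langle h, \cY \rangle = \langle g, \cY\rangle = 0$, and \eqref{eq:coer-classic} applied to $h$ gives $\langle g, L g\rangle = \langle h, L h\rangle \geq c_1 \|\nabla h\|_{L^2}^2 \geq 0$. Combined with the previous bound we conclude $\langle g, L g\rangle = 0$ and $\langle g, \cZ\rangle = 0$; the first equality forces $\nabla h = 0$, so $g = \beta \Lambda W$, and then $\langle g, \cZ\rangle = \beta \langle \Lambda W, \cZ\rangle$ together with $\langle \Lambda W, \cZ\rangle > 0$ forces $\beta = 0$, i.e.\ $g = 0$. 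But then $\int f'(W) g_n^2 \to 0$, so $\langle g_n, L g_n\rangle = 1 - o(1) \to 1$, contradicting $\langle g_n, L g_n\rangle < 1/n$.

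The main delicate point is to justify passing to the limit in the unbounded coefficient term $n \langle g_n, \cY\rangle^2$; this is handled by first exploiting the a priori bound on $\langle g_n, Lg_n\rangle$ to obtain the decay $\langle g_n, \cY\rangle = O(n^{-1/2})$, which makes $n\langle g_n, \cY\rangle^2$ bounded, and then extracting a convergent subsequence of this quantity so that the limiting inequality retains the required one-sided control.
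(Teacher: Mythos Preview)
Your proof is correct and takes a genuinely different route from the paper. The paper argues constructively: it first upgrades \eqref{eq:coer-classic} to the statement that $\langle g, Lg\rangle \geq c_2\|\nabla g\|^2$ whenever $\langle g,\cY\rangle=\langle g,\cZ\rangle=0$ (using that the $\dot H^{-1}$-projection of $\Delta\Lambda W$ onto $\cZ^\perp$ has strictly smaller norm than $\|\nabla\Lambda W\|$), and then decomposes an arbitrary $g = a\cY + b\Lambda W + \wt g$ with $\la\wt g,\cY\ra=\la\wt g,\cZ\ra=0$, estimating $a,b$ explicitly in terms of $\la g,\cY\ra,\la g,\cZ\ra$ and balancing via elementary inequalities of the form $(x-y)^2\geq \frac{c}{1+c}x^2 - cy^2$. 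Your argument instead proceeds by contradiction and concentration--compactness: normalising, extracting a weak limit, and using the decay of $f'(W)$ to pass the potential term to the limit, then concluding $g=0$ via the same $\dot H^1$-orthogonal splitting off $\Lambda W$ that both proofs share. The paper's approach is quantitative (one can in principle track $c_L,C$ in terms of $c$), while yours is softer but perhaps more transparent conceptually; both rely on the two key structural facts $\la\Lambda W,\cY\ra=0$ and $\la\Lambda W,\cZ\ra>0$ in the same way.
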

\begin{proof}
  We first show that
\begin{equation}
  \label{eq:coer-not-classic}
  g\in\dot H_{\mathrm{rad}}^1,\quad \la g, \cY\ra = \la g, \cZ \ra = 0 \quad\Rightarrow\quad \la g, Lg\ra \geq c_2\|\grad g\|_{L^2}^2.
\end{equation}
To prove \eqref{eq:coer-not-classic}, decompose $g= a\Lambda W + h$, $\la h, \Delta \Lambda W\ra = 0$.
Notice that $\la \Lambda W, \cY\ra = 0$, thus $\la h, \cY \ra = 0$ and \eqref{eq:coer-classic} implies
$$
\la g, Lg\ra = \la h + a\Lambda W, L h\ra = \la h, Lh\ra \geq c_1\|\grad h\|_{L^2}^2.
$$
Let $\wt{\Lambda W}$ be the orthogonal projection of $\Delta\Lambda W$ on $\cZ^\perp$ in $\dot H^{-1}$. We have
$$
\begin{aligned}
\|\grad h\|_{L^2}^2 &= \|\grad g - a\grad\Lambda W\|_{L^2}^2 = \|\grad g\|_{L^2}^2 - 2a\la \grad g, \grad \Lambda W\ra + a^2\|\grad \Lambda W\|_{L^2}^2 \\
&= \|\grad g\|_{L^2}^2 + 2a\la g, \wt{\Lambda W}\ra + a^2\|\grad \Lambda W\|_{L^2}^2.
\end{aligned}
$$
The functions $\Delta \Lambda W$ and $\cZ$ are not perpendicular in $\dot H^{-1}$, so $\|\wt{\Lambda W}\|_{\dot H^{-1}} < \|\grad \Lambda W\|_{L^2}$,
and \eqref{eq:coer-not-classic} follows from Cauchy-Schwarz inequality.

In order to prove \eqref{eq:positive}, we decompose 
\begin{equation}
  \label{eq:decomp}
g = a\cY + b\Lambda W + \wt g,\qquad \la \wt g, \cY\ra = \la \wt g, \cZ\ra= 0.
\end{equation}
Projecting \eqref{eq:decomp} on $\cY$ and $\cZ$ we have
  \begin{equation}
    \label{eq:ab-bound}
    \begin{aligned}
      a^2 &\lesssim \la g, \cY \ra^2, \\
      b^2 &\lesssim \la g, \cZ\ra^2 + a^2\la \cZ, \cY \ra^2 \lesssim \la g, \cZ\ra ^2 + \la g, \cY\ra^2.
  \end{aligned}
  \end{equation}
  From \eqref{eq:coer-not-classic} we obtain
  \begin{equation*}
    \la \wt g, L\wt f\ra \geq c_2\|\grad \wt g\|_{L^2}^2,
  \end{equation*}
  thus
  \begin{equation*}
    \la g, L g\ra = \la a\cY + b\Lambda W + \wt g, -e_0^2 a\cY + L\wt g\ra = -e_0^2 a^2 + \la \wt g, L \wt g\ra \geq c_2\|\grad\wt g\|_{L^2}^2 - e_0^2 a^2. 
  \end{equation*}
  From the inequality $(x-y)^2 \geq \frac 12 x^2 - y^2$ we have
  \begin{equation*}
    \|\grad \wt g\|_{L^2}^2 \geq \frac 12 \|\grad g - b\grad \Lambda W\|_{L^2}^2 - a^2\|\grad\cY\|_{L^2}^2.
  \end{equation*}
  From the inequality $(x-y)^2 \geq \frac{c}{1+c}x^2 - cy^2$ we have
  \begin{equation*}
    \|\grad g - b\grad \Lambda W\|_{L^2}^2 \geq \frac{c}{1+c}\|\grad g\|_{L^2}^2 - cb^2\|\grad \Lambda W\|_{L^2}^2.
  \end{equation*}
  If we choose $c$ small enough and put everything together using \eqref{eq:ab-bound}, we obtain \eqref{eq:positive}.
  \end{proof}

From now on we will denote
$$
\alpha(g) := \la g, \cY\ra, \qquad \alpha_\lambda(g) := \Big\la g, \frac{1}{\lambda}\cY_{\uln\lambda}\Big\ra.
$$
We prove a version of the coercivity lemma with a localized gradient term.
\begin{lemma}
  \label{lem:loc-positive}
  Let $c > 0$. If $R$ is large enough, then there exists a constant $C$ such that
  \begin{equation}
    \label{eq:loc-positive}
    \int_{|x|\leq R}|\grad g|^2\ud x - \int_{\bR^5}f'(W)g^2\ud x \geq -c\|\grad g\|_{L^2}^2 - C|\alpha(g)|^2.
  \end{equation}
\end{lemma}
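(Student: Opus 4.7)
The plan is to argue by contradiction via weak compactness in $\dot H^1$, combined with the global coercivity of Lemma \ref{lem:positive}. Write $c$ for the given parameter and suppose the conclusion fails. Then for every $n\in\bN$ there exist $R_n > n$ and $g_n\in\dot H^1_{\mathrm{rad}}$ with $\|\grad g_n\|_{L^2} = 1$ such that
\begin{equation*}
  \int_{|x|\leq R_n}|\grad g_n|^2\ud x - \int f'(W) g_n^2\ud x < -c - n\,\alpha(g_n)^2.
\end{equation*}
Boundedness of $\int f'(W) g_n^2 \lesssim \|f'(W)\|_{L^{5/2}}\|g_n\|_{L^{10/3}}^2 \lesssim 1$ forces $n\,\alpha(g_n)^2$ to remain bounded, so $\alpha(g_n)\to 0$.

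Passing to a subsequence, $g_n\rightharpoonup g_\infty$ weakly in $\dot H^1_{\mathrm{rad}}$. Since $\cY$ decays exponentially, $\cY\in L^{10/7}\subset \dot H^{-1}$, so $\alpha$ is weakly continuous and $\alpha(g_\infty) = 0$. The key input is to pass the potential term to the limit: for fixed $M > 0$, Rellich compactness and the boundedness of $f'(W)$ yield $\int_{|x|\leq M}f'(W)(g_n^2 - g_\infty^2)\to 0$, while the pointwise bound $f'(W)(x)\lesssim (1+|x|^2)^{-2}$ combined with the Hardy inequality $\int g^2/|x|^2 \leq (4/9)\|\grad g\|_{L^2}^2$ in $\bR^5$ gives $\int_{|x|\geq M}f'(W) g_n^2 \lesssim M^{-2}$ uniformly in $n$ (and similarly for $g_\infty$). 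A diagonal argument then gives $\int f'(W)g_n^2 \to \int f'(W)g_\infty^2$.

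For the gradient term I use weak lower semicontinuity on expanding balls: for any fixed $R$ and $n$ large enough that $R_n > R$,
\begin{equation*}
  \int_{|x|\leq R}|\grad g_\infty|^2\ud x \leq \liminf_n \int_{|x|\leq R}|\grad g_n|^2\ud x \leq \liminf_n \int_{|x|\leq R_n}|\grad g_n|^2\ud x,
\end{equation*}
and letting $R\to\infty$ by monotone convergence yields $\|\grad g_\infty\|_{L^2}^2 \leq \liminf_n \int_{|x|\leq R_n}|\grad g_n|^2\ud x$. Combining with the previous paragraph and passing to the $\liminf$ in the violated inequality produces
\begin{equation*}
\la g_\infty, L g_\infty\ra = \|\grad g_\infty\|_{L^2}^2 - \int f'(W)g_\infty^2\ud x \leq -c.
\end{equation*}

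To derive the contradiction, observe that by Sobolev $|\la g,\cZ\ra|^2 \leq K_\cZ\|\grad g\|_{L^2}^2$ for a fixed constant $K_\cZ$. Apply Lemma \ref{lem:positive} with its free parameter chosen to be some $c_\sharp < c/(2K_\cZ)$; discarding the non-negative $c_L\|\grad g_\infty\|^2$ term and using $\alpha(g_\infty) = 0$ together with $\|\grad g_\infty\|_{L^2}\leq 1$ yields $\la g_\infty, L g_\infty\ra \geq -c_\sharp K_\cZ\|\grad g_\infty\|_{L^2}^2 > -c/2$, contradicting the bound above. The delicate part of the argument is the passage of $\int f'(W)g_n^2$ to the limit, which depends on both the $|x|^{-4}$ decay of $f'(W)$ and the Hardy inequality in order to uniformly control the tail in $n$; the rest of the compactness machinery is standard.
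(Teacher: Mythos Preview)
Your compactness argument is correct, but it takes a genuinely different route from the paper's proof. The paper proceeds constructively: it introduces the radial truncation $\Psi_R g(r) := g(r) - g(R)$ for $r\le R$ and $0$ for $r>R$, applies the global coercivity of Lemma~\ref{lem:positive} to $\Psi_R g$ (noting $\int_{|x|\le R}|\nabla g|^2 = \|\nabla \Psi_R g\|_{L^2}^2$), and then controls the discrepancies $\int f'(W)(|g|^2 - |\Psi_R g|^2)$ and $\langle g-\Psi_R g,\cY\rangle$ quantitatively via the Strauss bound $|g(R)|\lesssim R^{-3/2}\|\nabla g\|_{L^2}$ together with the slow growth $\int_{|x|\le R}f'(W)\sim R$. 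Your proof instead argues by contradiction and weak compactness: normalize, extract a weak limit $g_\infty$, pass $\int f'(W)g_n^2$ to the limit using Rellich locally and Hardy for the tail, and invoke Lemma~\ref{lem:positive} on $g_\infty$. The paper's approach has the advantage of being effective (one can in principle read off how large $R$ must be in terms of $c$), while yours is softer and avoids the pointwise Strauss estimate altogether; both rely on the same global coercivity input and on the $|x|^{-4}$ decay of $f'(W)$, so neither is essentially more general.
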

In the proof we assume that $g$ is radial, which is justified because later we use it for $g = \varepsilon_0$.
Notice however that the non-radial case follows by considering the radial rearrangement.
\begin{proof}
  Define the projection $\Psi_R: \dot H^1\to \dot H^1$ by the formula:
  \begin{equation}
    \label{eq:psi_R}
    \Psi_R g(r) = \left\{
      \begin{aligned}
        &g(r) - g(R)\qquad&\text{if }r\leq R, \\
        &0\qquad&\text{if }r\geq R.
      \end{aligned}\right.
  \end{equation}
  By \eqref{eq:positive} applied to $\Psi_R g$ we have
  \begin{equation*}
    \begin{aligned}
    &\big(1+\frac c2\big)\int_{|x|\leq R}|\grad g|^2\ud x = \big(1+\frac c2\big)\int_{\bR^5}|\grad(\Psi_R g)|^2\ud x \\
    &\geq \big(1+\frac c2\big)\int f'(W)|\Psi_R g|^2\ud x - C\la \Psi_R g, \cY\ra^2 - \frac c4 \la \Psi_R g, \cZ\ra^2.
  \end{aligned}
  \end{equation*}

  Recall that, by the Strauss Lemma \cite{Strauss77}, in dimension $N = 5$ for a radial function $g$ we have $|g(R)| \lesssim C_0 R^{-\frac{3}{2}}\|\grad g\|_{L^2}$
with a universal constant $C_0$, so we have a pointwise estimate
  \begin{equation*}
    |g|^2 \leq C_0 \big(1+\frac 2c\big)R^{-3}\|\grad g\|_{L^2}^2+\big(1+\frac c2\big)|\Psi_R g|^2.
  \end{equation*}
  Now we notice that
  \begin{equation*}
    \int_{|x|\leq R}f'(W)\ud x \sim R,
  \end{equation*}
  so for any $\delta > 0$ the first term above gives a small contribution to the quadratic form for $R$ large.
  Similarly,
  \begin{equation*}
    |\la g-\Psi_R g, \cY\ra| \lesssim R^{-\frac{3}{2}}\|\grad g\|_{L^2}+\int_{|x|\geq R} |g|\cY \ud x \leq \bigl(R^{-\frac{3}{2}}+\|\cY\|_{L^{10/7}(|x|\geq R)}\bigr)\|\grad g\|_{L^2},
  \end{equation*}
  which is small when $R$ is large. As $\la \Psi_R g, \cY\ra^2 \leq 2\la g, \cY\ra^2 + 2\la g-\Psi_R g, \cY\ra^2$,
  the proof is finished.
\end{proof}
We are ready to state coercivity properties of the functional $H$ from the previous section.
\begin{proposition}
  \label{prop:coercivity}
  Under the assumptions of Proposition 
\ref{prop:dtH}, there exist $T_0, c_H, \alpha_0, C_2 > 0$ such that for $t \in (0, T_0] \cap (T_-, T_+)$ there holds
  \begin{equation}
    |\alpha_\lambda(\varepsilon_0)| \leq \alpha_0\|\varepsilon_0\|_{\dot H^1} \quad\Rightarrow\quad H(t) \geq c_H\|\bs\varepsilon\|_\enorm^2\label{eq:coer-H}.
  \end{equation}
  If $[T_1, T_2] \subset (0, T_0]\cap (T_-, T_+)$ and $|\alpha_\lambda(\varepsilon_0)| \leq \frac{1}{e_0}t^{\gamma+1}$ for all $t \in [T_1, T_2]$, then
  \begin{equation}
    H(T_2) \leq H(T_1) + \frac{c_H}{10}\int_{T_1}^{T_2}\frac{1}{t}\|\bs\varepsilon\|_\enorm^2 \ud t + C_2 t^{2\gamma+2}.\label{eq:dtH-final}
  \end{equation}
\end{proposition}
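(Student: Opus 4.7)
The plan is to treat \eqref{eq:coer-H} and \eqref{eq:dtH-final} as two essentially separate statements: a static coercivity estimate for $H(t)$ and a dynamical estimate obtained by integrating Proposition \ref{prop:dtH}. The common ingredient in both is the coercivity of $L$ modulo the unstable direction $\cY$ and the almost-null direction $\Lambda W$, as encoded in Lemmas \ref{lem:positive} and \ref{lem:loc-positive}, both appropriately rescaled to scale $\lambda$.

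For \eqref{eq:coer-H}, I would first Taylor-expand the potential: since $\|\bs\varepsilon\|_\enorm \leq t^{\gamma+1}$ is small,
\[
F(\varphi_0+\varepsilon_0)-F(\varphi_0)-f(\varphi_0)\varepsilon_0 = \tfrac12 f'(\varphi_0)\varepsilon_0^2 + O(|\varepsilon_0|^{10/3}),
\]
where the remainder is controlled in $L^1$ by $\|\varepsilon_0\|_{\dot H^1}^{10/3} = o(\|\bs\varepsilon\|_\enorm^2)$ via Sobolev. Next, I replace $f'(\varphi_0)$ by $f'(W_\lambda)$, the error being controlled by the $L^{5/2}$ estimates of $f'(\varphi_0)-f'(W_\lambda)$ that were already used at the end of the proof of Proposition \ref{prop:dtH} (paired with $\varepsilon_0^2 \in L^{5/3}$ by Sobolev), which give a contribution of size $o(1)\|\varepsilon_0\|_{\dot H^1}^2$. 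What remains is $I(t) \simeq \tfrac12\|\varepsilon_1\|_{L^2}^2 + \tfrac12\la\varepsilon_0, L_\lambda\varepsilon_0\ra$, and rescaling Lemma \ref{lem:positive} to scale $\lambda$ (applied with a small constant $c$) together with the orthogonality $\la\varepsilon_0,\cZ_{\uln\lambda}\ra = 0$ gives
\[
\la\varepsilon_0, L_\lambda\varepsilon_0\ra \geq c_L\|\grad\varepsilon_0\|_{L^2}^2 - C\alpha_\lambda(\varepsilon_0)^2.
\]
Choosing $\alpha_0$ so small that $C\alpha_0^2 < c_L/2$ yields a positive contribution from $I$. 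Finally, the virial correction is handled as a perturbation: by Cauchy--Schwarz combined with the uniform $\dot H^1\to L^2$ boundedness of the operator \eqref{eq:oper-1} (already established in the proof of Lemma \ref{lem:dtJ}), $|J(t)| \lesssim b\,\|\bs\varepsilon\|_\enorm^2$, and since $b \to 0$ as $t \to 0^+$ this is absorbed by choosing $T_0$ small enough, producing the required $c_H$.

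For \eqref{eq:dtH-final}, I would apply Proposition \ref{prop:dtH} with the constant $c$ chosen equal to $c_H/20$. The new issue is the quadratic virial term
\[
-\frac{b}{\lambda}\Big(\|\grad_{a,\lambda}\varepsilon_0\|_{L^2}^2 - \int\big(f(\varphi_0+\varepsilon_0)-f(\varphi_0)\big)\varepsilon_0\ud x\Big),
\]
which I would bound \emph{above} rather than dropping. Taylor-expanding as before gives $\int(f(\varphi_0+\varepsilon_0)-f(\varphi_0))\varepsilon_0\ud x = \int f'(\varphi_0)\varepsilon_0^2 + O(\|\varepsilon_0\|_{\dot H^1}^{10/3})$, and replacing $f'(\varphi_0)$ by $f'(W_\lambda)$ reduces matters to a localized quadratic form for $L_\lambda$ against the weight $(\grad^2 a)_\lambda$. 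Rescaling Lemma \ref{lem:loc-positive} to scale $\lambda$ (the weight $\grad^2 a$ equals the identity on $|x|\leq R\lambda$, and is bounded elsewhere, so $\|\grad_{a,\lambda}\varepsilon_0\|_{L^2}^2$ dominates $\int_{|x|\leq R\lambda}|\grad\varepsilon_0|^2$), I obtain for $R$ large
\[
\|\grad_{a,\lambda}\varepsilon_0\|_{L^2}^2 - \int f'(W_\lambda)\varepsilon_0^2\ud x \geq -c\|\grad\varepsilon_0\|_{L^2}^2 - C\alpha_\lambda(\varepsilon_0)^2.
\]
Since $b/\lambda \sim 1/t$, this contributes at most $\tfrac{c}{t}\|\bs\varepsilon\|_\enorm^2 + \tfrac{C}{t}\alpha_\lambda(\varepsilon_0)^2$ to $H'(t)$. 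The first piece is absorbed by adjusting the constant $c$, and the second, using the hypothesis $|\alpha_\lambda(\varepsilon_0)|\leq e_0^{-1}t^{\gamma+1}$, is bounded by $C\, t^{2\gamma+1}$, which integrates to $\lesssim t^{2\gamma+2}$. The remaining forcing term $C_1 t^\gamma\|\bs\varepsilon\|_\enorm$ from Proposition \ref{prop:dtH} is handled by Young's inequality: $C_1 t^\gamma\|\bs\varepsilon\|_\enorm \leq \tfrac{c_H}{20t}\|\bs\varepsilon\|_\enorm^2 + C't^{2\gamma+1}$, whose integral is again $\lesssim t^{2\gamma+2}$.

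The main obstacle is clearly the virial quadratic form. The delicate point is that $b/\lambda \sim 1/t$, so any residual gradient term on the right multiplied by $b/\lambda$ has exactly the same size as the left-hand side of \eqref{eq:dtH-final}; this is why we really need the \emph{localized} coercivity of Lemma \ref{lem:loc-positive} (with a small $c$ absorbed into $c_H/10$) and why the unstable direction contributes the error $t^{2\gamma+2}$ rather than something lossier. Everything else (Taylor expansion of the potential, replacing $f'(\varphi_0)$ by $f'(W_\lambda)$, smallness of $J$, Young's inequality on the forcing) is perturbative and has already been carried out in variant form in the proof of Proposition \ref{prop:dtH}.
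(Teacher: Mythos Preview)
Your proposal is correct and follows essentially the same route as the paper: for \eqref{eq:coer-H} you linearize the potential, invoke the rescaled Lemma \ref{lem:positive} together with the orthogonality $\la\varepsilon_0,\cZ_{\uln\lambda}\ra=0$, and absorb $J$ using $b\to 0$; for \eqref{eq:dtH-final} you feed the rescaled Lemma \ref{lem:loc-positive} into the virial term of Proposition \ref{prop:dtH} and close with Young's inequality, exactly as the paper does. One minor remark: the smallness of $\|f'(\varphi_0)-f'(W_\lambda)\|_{L^{5/2}}$ is not quite what is proved at the end of Proposition \ref{prop:dtH} (there the estimate is weighted by $(\Lambda W)_\lambda$), but it follows by the same pointwise inequality once one uses that $\|u^*+P_0\|_{L^{10/3}}$ is small via Remark \ref{rem:ustar-cut} and Lemma \ref{lem:size-P0}/\ref{lem:size-P0-deg}.
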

The constants $\frac{c_H}{10}$ and $\frac{1}{e_0}$ have no special signification, but this formulation will be convenient later.
\begin{proof}
  Let
  \begin{equation*}
    I\lin(t) := \int \frac 12|\varepsilon_1|^2 + \frac 12 |\grad \varepsilon_0|^2 - \frac 12f'(W_\lambda)\varepsilon_0^2\ud x.
  \end{equation*}
  Recall that $\la \varepsilon_0, \cZ\ra = 0$. Lemma 
\ref{lem:positive} implies (after rescaling) that if we take $\alpha_0$ small enough, then there exists a constant $c > 0$ such that
  $I\lin(t) \geq c \|\bs\varepsilon\|_\enorm^2$.

  We can assume that $\|\bs u^*\|_{X^1 \times H^1}$ is as small as we like, so by pointwise estimates we get $|I(t) - I\lin(t)| \leq \frac 13 c \|\bs \varepsilon_0\|_\enorm^2$.
  Moreover, it is clear from the definition of $J$ that for small $t$ we have $|J(t)| \leq \frac 13 c \|\bs \varepsilon\|_\enorm^2$.
  This proves the result with $c_H = \frac 13 c$.

  In order to prove \eqref{eq:dtH-final}, notice first that, by pointwise estimates and smallness of $\|\varphi_0 - W_\lambda\|_{\dot H^1}$,
  in \eqref{eq:dtH} we can replace $\int \big(f(\varphi_0 + \varepsilon_0) - f(\varphi_0)\big)\varepsilon_0\ud x$ by $\int f'(W_\lambda)\varepsilon_0^2\ud x$.
  Convexity of $a$ (see Lemma 
\ref{lem:fun-a}) implies that
\begin{equation*}
  \|\grad_{a, \lambda}\varepsilon\|_{L^2}^2 \geq \int_{|x| \leq R\lambda}|\grad \varepsilon_0|^2\ud x,
\end{equation*}
so from \eqref{eq:dtH} and Lemma 
\ref{lem:loc-positive} (after rescaling) we obtain
\begin{equation*}
  \begin{aligned}
  H(T_2) &\leq H(T_1) + \int_{T_1}^{T_2}\frac{c_H}{20t}\|\bs\varepsilon\|_\enorm^2 + C_1t^\gamma\cdot\|\bs\varepsilon\|_\enorm + C|\alpha(\varepsilon_0)|^2 \ud t\\
  &\leq H(T_1) + \frac{c_H}{10}\int_{T_1}^{T_2}\frac 1t\|\bs\varepsilon\|_\enorm^2\ud t + C_2 t^{2\gamma+2},
\end{aligned}
\end{equation*}
where $C_2$ is a constant.
\end{proof}

In order to close the bootstrap, it is necessary to control the stable and unstable directions.
More precisely, it is necessary to eliminate the unstable mode.

Define $$\alpha^-_\lambda(\bs\varepsilon) := \int \cY_\uln\lambda\varepsilon_1 - \frac{e_0}{\lambda}\cY_\uln\lambda\varepsilon_0\ud x$$
and $$\alpha^+_\lambda := \int \cY_\uln\lambda\varepsilon_1 + \frac{e_0}{\lambda}\cY_\uln\lambda\varepsilon_0\ud x.$$
Notice that $-\frac{e_0^2}{\lambda^2}$ is the unique strictly negative eigenvalue of $L_\lambda$.

We will define an auxiliary function $l(t)$ which measures the distance of the modulation parameters from the approximate trajectory
\eqref{eq:mod-app} or \eqref{eq:mod-deg}. This function has a slightly different form in the non-degenerate and degenerate cases.
In the non-degenerate case we define
\begin{equation}
  \label{eq:delta}
  l(t) = \frac 12\Bigl(\frac{b}{t^3}+\frac{2\lambda}{t^4} - \frac{\kappa^2u^*(0, 0)^2}{24}\Bigr)^2 + \frac 12\Bigl(\frac{b}{t^3}-\frac{3\lambda}{t^4} - \frac{\kappa^2u^*(0, 0)^2}{144}\Bigr)^2,
\end{equation}
and in the degenerate case
\begin{equation}
  \label{eq:delta-deg}
  l(t) = \frac 12\bigl(\frac{b}{t^\nu}+\wt\nu\frac{\lambda}{t^{\nu+1}} - (\nu+\wt\nu+1)\bigr)^2 + \frac 12\bigl(\frac{b}{t^\nu}-(\wt\nu+1)\frac{\lambda}{t^{\nu+1}} -(\nu-\wt\nu)\bigr)^2,
\end{equation}
where $\wt\nu := -\frac 12 + \frac 12\sqrt{\nu^2 + (\nu+1)^2}$.

We will write $\alpha^+(t)$ and $\alpha^-(t)$ instead of $\alpha^+_\lambda(\bs\varepsilon)$ and $\alpha^-_\lambda(\bs\varepsilon)$.
In the next few propositions we describe the evolution of $\Mod(t) := (\lambda(t), b(t), \alpha^-(t), \alpha^+(t))$ in the  ``modulation cylinder'' defined as:
\begin{equation}
  \label{eq:cylinder}
  \scrC(t) := \big\{(\lambda, b, \alpha^-, \alpha^+):\ l(t) \leq t^{\gamma+1-\nu}\text{ and }-t^{\gamma+1} \leq \alpha^-, \alpha^+ \leq t^{\gamma+1}\big\}.
\end{equation}

In the non-degenerate case we denote
$$\lambda_{\mathrm{app}}(t) := \frac{\kappa^2u^*(0, 0)^2}{144}t^4,\qquad b_{\mathrm{app}}(t) := \frac{\kappa^2u^*(0, 0)^2}{36}t^3,$$
and in the degenerate case
$$\lambda_{\mathrm{app}}(t) := t^{\nu+1},\qquad b_{\mathrm{app}}(t) := (\nu+1)t^\nu.$$
Solving a $2\times 2$ linear system we check easily that
\begin{equation}
  \label{eq:lambda-b-bound}
  l(t) \leq t^{\gamma+1-\nu} \Rightarrow \big|\frac{\lambda}{\lambda_{\mathrm{app}}} - 1\big| \lesssim t^{\frac 12(\gamma+1-\nu)}\ ,\ \big|\frac{b}{b_{\mathrm{app}}} - 1\big| \lesssim t^{\frac 12(\gamma+1-\nu)},
\end{equation}
with constants which depend only on $\nu$.

We have $\alpha_\lambda(\varepsilon_0) = \frac{1}{2e_0}(\alpha^+-\alpha^-)$, so
\begin{equation}
  \label{eq:alpha-bound}
  \Mod(t) \in \scrC(t) \quad \Rightarrow\quad |\alpha_\lambda(\varepsilon_0)| \leq \frac{1}{e_0}t^{\gamma+1}.
\end{equation}

\begin{remark}
The formula for $l$ is found by linearizing the parameter equations near $(\lambda_\tx{app}, b_\tx{app})$ and diagonalizing the resulting system.
\end{remark}

We can finally state a result on uniform in time energy bounds.
\begin{proposition}
  \label{prop:energy-bound}
  Let $C_4 > 0$ be a fixed constant. There exist $C_0 > 0$ and $T_0 > 0$ having the following property.
  Let $0 < T_1 < T_0$. Suppose that
  \begin{align}
      \|\bs\varepsilon(T_1)\|_\enorm &\leq C_4 T_1^{\gamma+1}, \label{eq:boot-size0}\\
      \Mod(T_1) &\in \Int(\scrC(T_1)).\nonumber%\label{eq:boot-mod}
  \end{align}
  Then, either there exists a time $t$, $T_1 \leq t \leq T_0$, such that $\Mod(t) \in \partial\scrC(t)$, or the solution exists on $[T_1, T_0]$
  and for all $t \in [T_1, T_0]$ there holds
  \begin{equation}
    \label{eq:boot-size}
      \|\bs\varepsilon\|_\enorm \leq C_0t^{\gamma+1}. 
  \end{equation}
\end{proposition}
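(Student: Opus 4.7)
The argument is a standard continuity/bootstrap, combining the energy--virial control \eqref{eq:dtH-final} with the coercivity \eqref{eq:coer-H}. Fix $C_0 \gg C_4$ and $T_0 > 0$ small, both to be chosen below. Assuming the first alternative does not occur, $\Mod(t) \in \Int \scrC(t)$ for every $t \in [T_1, T_0]$ on which the solution exists. By local well-posedness in $\enorm$ and the strict inequalities at $T_1$, the set of $T \in [T_1, T_0]$ for which the solution exists on $[T_1, T]$ and $\|\bs\varepsilon(s)\|_\enorm \leq C_0 s^{\gamma+1}$ for every $s \in [T_1, T]$ contains a nondegenerate interval; let $T^*$ denote its supremum. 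The goal is to show $T^* = T_0$, so suppose for contradiction that $T^* < T_0$, in which case continuity of $\bs\varepsilon$ in $\enorm$ forces the saturation $\|\bs\varepsilon(T^*)\|_\enorm = C_0 (T^*)^{\gamma+1}$.

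Since $\Mod(s) \in \scrC(s)$ on $[T_1, T^*]$, \eqref{eq:alpha-bound} yields $|\alpha_\lambda(\varepsilon_0(s))| \leq e_0^{-1} s^{\gamma+1}$, so \eqref{eq:dtH-final} is applicable. Plugging the bootstrap bound into the time integral gives
\begin{equation*}
  \int_{T_1}^{T^*} \frac{1}{s}\|\bs\varepsilon(s)\|_\enorm^2 \, \ud s \leq \frac{C_0^2}{2\gamma+2}(T^*)^{2\gamma+2},
\end{equation*}
and a direct estimate based on the Taylor expansion of $F$ in the definition \eqref{eq:I} of $I$ and the operator bound used in \eqref{eq:J} for $J$ yields $|H(T_1)| \leq C'' \|\bs\varepsilon(T_1)\|_\enorm^2 \leq C'' C_4^2 T_1^{2\gamma+2}$. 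These combine into
\begin{equation*}
  H(T^*) \leq C'' C_4^2 T_1^{2\gamma+2} + \frac{c_H C_0^2}{10(2\gamma+2)}(T^*)^{2\gamma+2} + C_2 (T^*)^{2\gamma+2}.
\end{equation*}

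For a matching lower bound I split into two subcases, which is where the main technical point lies. If $|\alpha_\lambda(\varepsilon_0(T^*))| \leq \alpha_0 \|\varepsilon_0(T^*)\|_{\dot H^1}$, then \eqref{eq:coer-H} applies directly and gives $H(T^*) \geq c_H C_0^2 (T^*)^{2\gamma+2}$. If instead the reverse inequality holds, then $\|\varepsilon_0(T^*)\|_{\dot H^1} < (\alpha_0 e_0)^{-1}(T^*)^{\gamma+1}$, so for $C_0$ large the norm $\|\bs\varepsilon(T^*)\|_\enorm$ is carried almost entirely by $\|\varepsilon_1(T^*)\|_{L^2}$; direct inspection of \eqref{eq:I} and \eqref{eq:J} then yields $H(T^*) \geq \tfrac 12 \|\varepsilon_1(T^*)\|_{L^2}^2 - C (T^*)^{2\gamma+2}$, hence $H(T^*) \geq \tfrac 12 c_H C_0^2 (T^*)^{2\gamma+2}$ for $C_0$ large. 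In either case, combining with the upper bound and using $T_1 \leq T^*$,
\begin{equation*}
  \left( \tfrac{c_H}{2} - \tfrac{c_H}{10(2\gamma+2)} \right) C_0^2 \leq C'' C_4^2 + C_2,
\end{equation*}
whose left-hand coefficient is strictly positive since $\gamma > 0$. Taking $C_0$ sufficiently large, depending only on $C_4$, $C_2$, $c_H$ and $\gamma$, violates this inequality, contradicting $T^* < T_0$. The smallness of $T_0$ enters only to absorb lower-order remainders in the bound on $H(T_1)$ and in the second subcase, and is independent of $T_1$.
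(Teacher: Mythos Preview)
Your argument is essentially the paper's bootstrap, and the overall structure is correct. In fact your case split for the lower bound on $H(T^*)$ is more careful than the paper's presentation: the paper simply asserts that $|\alpha_\lambda(\varepsilon_0)| \leq \alpha_0\|\varepsilon_0\|_{\dot H^1}$ once $C_0$ is large, which is not literally justified when the mass of $\bs\varepsilon$ sits in $\varepsilon_1$; your second subcase (or, equivalently, the unconditional bound $H \geq c_H\|\bs\varepsilon\|_\enorm^2 - C\,\alpha_\lambda(\varepsilon_0)^2$ coming directly from Lemma~\ref{lem:positive}) patches this cleanly.

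There is however one point you gloss over. When you write that $T^* < T_0$ ``forces the saturation $\|\bs\varepsilon(T^*)\|_\enorm = C_0(T^*)^{\gamma+1}$ by continuity'', you are implicitly assuming the solution exists at $T^*$. For the energy-critical equation a bounded $\enorm$ norm does \emph{not} preclude finite-time blow-up (type II blow-up keeps $\|\bs u\|_\enorm$ bounded), so ``local well-posedness in $\enorm$'' alone does not rule out $T^* = T_+$. The paper handles this explicitly at the end of its proof: on the bootstrap interval $\|\bs u(t) - (W_{\lambda(t)}, 0)\|_\enorm$ stays below the small-data threshold $\delta$ around $W$, and the rescaled Cauchy theory then guarantees existence for a further time of order $\lambda(t)$, so the solution cannot terminate at $T^*$. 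You should insert this step before invoking continuity of $\bs\varepsilon$ at $T^*$.
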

\begin{proof}
  Let $T_0$ be the time provided by Proposition 
\ref{prop:coercivity}.
  Let $T_+$ be the maximal time of existence of the solution and let $T_2 := \min(T_0, T_+)$. Suppose that $\Mod(t) \notin \partial\scrC(t)$ for $T_1 \le t \le T_2$.
  By continuity of $\Mod(t)$ this means that $\Mod(t) \in \Int(\scrC(t))$ for $T_1 \le t \le T_2$.
  We will show first that if $C_0$ is large enough, then \eqref{eq:boot-size} holds for $t\in[T_1, T_2]$.
  Argue by contradiction, assuming that there exists $T_3 < T_2$ such that $\|\bs\varepsilon(T_3)\|_\enorm = C_0 T_3^{\gamma+1}$.
  At $t = T_3$ \eqref{eq:alpha-bound} gives $|\alpha_\lambda(\varepsilon_0)| \leq \frac{1}{e_0}t^{\gamma+1}$.
  In particular, if $C_0$ is large, we will have $|\alpha_\lambda(\varepsilon_0)| \leq \alpha_0\|\varepsilon_0\|_{\dot H^1}$,
  so by Proposition 
\ref{prop:coercivity} we obtain
  \begin{equation}
    \label{eq:bootstrap}
      H(T_3) \geq c_HC_0^2 T_3^{2\gamma+2}.
  \end{equation}
  On the other hand, for $t\in[T_1, T_3]$ we have $\|\bs\varepsilon\|_\enorm^2 \leq C_0^2 t^{2\gamma+2}$ and $|\alpha_\lambda(\varepsilon_0)| \leq \frac{1}{e_0}t^{\gamma+1}$,
  so from \eqref{eq:dtH-final} we deduce that 
  $$
  H(T_3) \leq H(T_1) + \frac{c_H C_0^2}{10(2\gamma+2)}T_3^{2\gamma+2} + C_2 T_3^{2\gamma+2}.
  $$
Notice that $H(T_1) \leq \|\bs\varepsilon(T_1)\|_\enorm^2 \leq C_4^2 T_1^{2\gamma+2} \leq C_4^2 T_3^{2\gamma+2}$.
Returning to \eqref{eq:bootstrap} we deduce
$$
c_H C_0^2 \leq C_4^2 + \frac{c_H C_0^2}{10(2\gamma+2)} + C_2,
$$
which is impossible if $C_0$ is large enough.

  Hence, $T_3 = T_2$. To prove that $T_2 = T_0$, notice that by the Cauchy theory in the critical space there exists $\delta > 0$ such that
  \begin{equation*}
    \|(u_0-W, u_1)\|_\enorm \leq \delta \Rightarrow\begin{aligned}&\text{ the solution $\bs u(t)$ with $\bs u(0) = (u_0, u_1)$} \\
      &\text{ exists at least for $t\in(-1, 1)$.}\end{aligned}
  \end{equation*}
  After rescaling we obtain
  \begin{equation}
    \label{eq:existence}
    \|(u_0-W_\lambda, u_1)\|_\enorm \leq \delta \Rightarrow\begin{aligned}&\text{ the solution $\bs u(t)$ with $\bs u(0) = (u_0, u_1)$} \\
      &\text{ exists at least for $t\in(-\lambda, \lambda)$.}\end{aligned}
  \end{equation}
  If $\|\bs u^*\|_\enorm$ is sufficiently small and $T_0$ is chosen sufficiently small, \eqref{eq:size-P0} and \eqref{eq:size-P1}
  show that our solution verifies the sufficient condition in \eqref{eq:existence} for any $t < T_2$ with $\lambda = \lambda(t)$.
  Taking $t$ close to $T_2$ we obtain that the solution cannot blow up at $T_2$, hence $T_2 = T_0$.
\end{proof}
The crucial element of the preceding result is that the constant $C_0$ is independent of $T_1$.
From now, $C_0$ has a fixed value given by Proposition 
\ref{prop:energy-bound},
and the constants which appear later are allowed to depend on $C_0$.
In particular, when we use the notation $\lesssim$ or $O$, the constant may depend on $C_0$.

We examine now the evolution of the eigenvectors $\alpha^-$ and $\alpha^+$.
\begin{lemma}
  \label{lem:eigendirections}
  If $\|\bs \varepsilon\|_\enorm \lesssim t^{\gamma+1}$, $\lambda \sim t^{\nu+1}$ and $b \sim t^\nu$, then
  \begin{equation}
    \label{eq:dtalphap}
    \bigl|\dd t\alpha_\lambda^+ - \frac{e_0}{\lambda}\alpha_\lambda^+\bigr| \lesssim t^\gamma,
  \end{equation}
  \begin{equation}
    \label{eq:dtalpham}
    \bigl|\dd t\alpha^-_\lambda + \frac{e_0}{\lambda}\alpha^-_\lambda\bigr| \lesssim t^\gamma.
  \end{equation}
\end{lemma}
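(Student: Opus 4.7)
My plan is to differentiate $\alpha^\pm$ directly in time, isolate the spectral main term $\pm(e_0/\lambda)\alpha^\pm$ using the eigenvalue equation $L\cY=-e_0^2\cY$, and control each of the remaining terms by $t^\gamma$. I will describe the argument for \eqref{eq:dtalphap}; the bound \eqref{eq:dtalpham} follows identically after replacing $e_0$ by $-e_0$. Starting from $\alpha^+ = \la\varepsilon_1, \cY_\uln\lambda\ra + e_0\la\varepsilon_0, \tfrac{1}{\lambda}\cY_\uln\lambda\ra$, differentiating in $t$ using \eqref{eq:err} and $\partial_t\cY_\uln\lambda = -(\lambda_t/\lambda)(\Lambda_0\cY)_\uln\lambda$, and invoking the self-adjoint identity $\la L_\lambda\varepsilon_0, \cY_\uln\lambda\ra = -\tfrac{e_0^2}{\lambda^2}\la\varepsilon_0, \cY_\uln\lambda\ra$ (which follows from the scaling relation $L_\lambda g_\uln\lambda = \lambda^{-2}(Lg)_\uln\lambda$ and $L\cY=-e_0^2\cY$), one obtains
\begin{equation*}
\dd t\alpha^+ - \frac{e_0}{\lambda}\alpha^+ = \la R_1, \cY_\uln\lambda\ra + \la\varepsilon_1, \partial_t\cY_\uln\lambda\ra - e_0\la\psi_0, \tfrac{1}{\lambda}\cY_\uln\lambda\ra + e_0\la\varepsilon_0, \partial_t(\tfrac{1}{\lambda}\cY_\uln\lambda)\ra,
\end{equation*}
where $R_1 := (f'(\varphi_0)-f'(W_\lambda))\varepsilon_0 + N(\varepsilon_0) - \psi_1$ and $N(\varepsilon_0) := f(\varphi_0+\varepsilon_0)-f(\varphi_0)-f'(\varphi_0)\varepsilon_0$.

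The time-derivative terms involving $\cY_\uln\lambda$ and $\tfrac{1}{\lambda}\cY_\uln\lambda$ are bounded in $L^2$ and $\dot H^{-1}$ respectively by $|\lambda_t|/\lambda\lesssim 1/t$, so Cauchy--Schwarz against $\|\bs\varepsilon\|_\enorm\lesssim t^{\gamma+1}$ produces $t^\gamma$. The nonlinear piece of $R_1$ is harmless: the pointwise bound $|N(\varepsilon_0)|\lesssim|\varepsilon_0|^{7/3}$ combined with H\"older in $(L^{10/7},L^{10/3})$ and Sobolev yields a contribution of order $\lambda^{1/2}t^{7(\gamma+1)/3}$, which is far smaller than $t^\gamma$ in both regimes. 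The perturbative factor $(f'(\varphi_0)-f'(W_\lambda))\varepsilon_0$ is controlled exactly as in the closing part of the proof of Proposition \ref{prop:dtH}: the pointwise inequality $|f'(k+l)-f'(k)|\lesssim|f'(l)|+|f''(k)||l|$ with $k=W_\lambda$, $l=u^*+P_0$, reduces the estimate to four H\"older bounds analogous to \eqref{eq:derniere-1}--\eqref{eq:derniere-4}, with $(\Lambda W)_\lambda$ replaced by $\cY_\uln\lambda$ and using $u^*\in L^{10}$, $P_0\in\dot H^1$.

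The step I expect to be the main obstacle is controlling the pairings against $\psi_0,\psi_1$, because the raw bound $\|\psi_1\|_{L^2}\lesssim t^{-1}\|\bs\varepsilon\|_\enorm+t^\gamma$ is only borderline. The remedy is to invoke the sharper decompositions from Propositions \ref{prop:estim-q}/\ref{prop:estim-q-deg}, writing $\psi_0 = -(\lambda_t-b)\tfrac{1}{\lambda}(\Lambda W)_\lambda + \wt R_0$ and $\psi_1 = (\lambda_t-b)\tfrac{b}{\lambda}(\Lambda_0\Lambda W)_\uln\lambda + \wt R_1$ with $\wt R_0,\wt R_1$ of size $t^\gamma$ in $\dot H^1$ and $L^2$ respectively. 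The leading part of $\psi_0$ disappears upon pairing: since $\cY_\uln\lambda=\tfrac{1}{\lambda}\cY_\lambda$, the cross-scaling identity $\la(\Lambda W)_\lambda,\cY_\uln\lambda\ra = \tfrac{1}{\lambda}\la\Lambda W,\cY\ra$ vanishes by the orthogonality $\la\Lambda W,\cY\ra=0$ (self-adjointness of $L$ with distinct eigenvalues $0$ and $-e_0^2$). The leading part of $\psi_1$ contributes $(\lambda_t-b)(b/\lambda)\la\Lambda_0\Lambda W,\cY\ra$, and by Lemma \ref{lem:lambda-b} together with $\lambda\sim t^{\nu+1}$, $b\sim t^\nu$ this is of order $t^{\gamma+1}\cdot t^{-1} = t^\gamma$ as needed. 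Once these cancellations and bounds are assembled, one reads off both \eqref{eq:dtalphap} and \eqref{eq:dtalpham}.
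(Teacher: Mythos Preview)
Your argument is correct and follows essentially the same route as the paper: differentiate $\alpha^\pm$, isolate the spectral term via $L\cY=-e_0^2\cY$, kill the leading part of $\psi_0$ by the orthogonality $\la\Lambda W,\cY\ra=0$, bound the leading part of $\psi_1$ using Lemma~\ref{lem:lambda-b}, and control the remaining scaling, potential-difference, and nonlinear terms exactly as in the closing of Proposition~\ref{prop:dtH}. Two harmless slips: the scaling identity should read $\la(\Lambda W)_\lambda,\cY_\uln\lambda\ra=\lambda\la\Lambda W,\cY\ra$ (still zero), and your H\"older bound on $\la N(\varepsilon_0),\cY_\uln\lambda\ra$ actually yields $\lambda^{-1}\|\varepsilon_0\|_{\dot H^1}^{7/3}$ rather than $\lambda^{1/2}$, but this is still $\ll t^\gamma$ in both regimes (the paper instead pairs in $\dot H^{-1}\times\dot H^1$ to get $t^{2\gamma-\nu+1}$).
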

\begin{proof}
  We will do the computation for \eqref{eq:dtalphap}, because the one for \eqref{eq:dtalpham} is exactly the same.
  \begin{equation*}
    \begin{aligned}
      \dd t\alpha_\lambda^+ &= \int\bigl(\cY_\uln\lambda\cdot(-L_\lambda\varepsilon_0) + \frac{e_0}{\lambda}\cY_\uln\lambda\varepsilon_1\bigr)\ud x \\
      &+\int\frac{-\lambda_t}{\lambda}\bigl((\Lambda_0\cY)_\uln\lambda\varepsilon_1 + \frac{e_0}{\lambda}(\Lambda_{-1}\cY)_\uln\lambda\varepsilon_0\bigr)\ud x \\
      &+ \int\cY_\uln\lambda\cdot\big(f(\varphi_0 + \varepsilon_0) - f(\varphi_0) - f'(\varphi_0)\varepsilon_0\big)\ud x \\
      &+ \int\cY_\uln\lambda\cdot\big(f'(\varphi_0) - f'(W_\lambda)\big)\varepsilon_0\ud x \\
      &+ \int\bigl(\cY_\uln\lambda\cdot(-\psi_1) + \frac{e_0}{\lambda}\cY_\uln\lambda\cdot(-\psi_0)\bigr)\ud x.
    \end{aligned}
  \end{equation*}
  The first line is $\frac{e_0}{\lambda}\alpha_\lambda^+$ and it suffices to estimate the remaining ones.
  For the last line we use Proposition 
\ref{prop:estim-q} and $L^2$-orthogonality of $\Lambda W$ and $\cY$.
  Using $\lambda_t \sim t^\nu$, $\lambda \sim t^{\nu+1}$ and $\|\bs\varepsilon\|_\enorm \leq Ct^{\gamma+1}$
  the second line is seen to be bounded by $C t^{\gamma}$.
  The proof of \eqref{eq:derniere} shows that $\|\cY_\uln\lambda\cdot\big(f'(\varphi_0) - f'(W_\lambda)\big)\|_{L^{10/7}} \leq \frac{c}{t}$,
  so using $\|\varepsilon_0\|_{L^{10/3}} \lesssim t^{\gamma+1}$ we obtain the required bound for the fourth line.
  Finally, $\|f(\varphi_0 + \varepsilon_0) - f(\varphi_0) - f'(\varphi_0)\varepsilon_0\|_{\dot H^{-1}} \lesssim C^2 t^{2\gamma+2}$ and $\|\cY_\uln\lambda\|_{\dot H^1} \lesssim \frac{1}{\lambda} \sim t^{-\nu-1}$,
  so by Cauchy-Schwarz the third line is bounded by $C^2 t^{2\gamma-\nu+1} \ll t^\gamma$.
\end{proof}

We know from Proposition 
\ref{prop:energy-bound} that if we start at $t = T_1$ with $\bs\varepsilon$ small enough,
then $\bs\varepsilon$ is controlled in $\enorm$ unless $\Mod$ leaves the cylinder $\scrC$.
It turns out that it can happen only because of $\alpha^+$. The other parameters are trapped in the cylinder for small times:
\begin{lemma}
  \label{lem:modstable}
  Under the assumptions of Propositon 
\ref{prop:energy-bound}, suppose that $\Mod(t)$ leaves $\Int(\scrC(t))$ before $t = T_0$.
  If $T_2 \leq T_0$ is the first time for which $\Mod(T_2) \in \partial\scrC(T_2)$, then $|\alpha^+(T_2)| = T_2^{\gamma+1}$.
  In addition, suppose that at time $T_3$, $T_1 \leq T_3 < T_2$, we have $\alpha^+(T_3) > \frac 12 T_3^{\gamma+1}$. Then $\alpha^+(T_2) = T_2^{\gamma+1}$.
  Analogously, if $\alpha^+(T_3) < -\frac 12 T_3^{\gamma+1}$, then $\alpha^+(T_2) = -T_2^{\gamma+1}$.
\end{lemma}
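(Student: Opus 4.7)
The plan is to show that only the two $\alpha^+$-faces of $\scrC(t)$ can be reached from the interior; the other four boundary faces ($l(t) = t^{\gamma+1-\nu}$ and $\alpha^-(t) = \pm t^{\gamma+1}$) are strictly attracting under the forward flow. Once this is established, a barrier argument on $\alpha^+$ will determine which face is reached, based on the sign of $\alpha^+(T_3)$.

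For the $\alpha^-$-faces, I would argue by contradiction. If $\alpha^-(T_2) = \varepsilon T_2^{\gamma+1}$ with $\varepsilon \in \{\pm 1\}$, then continuity at a first-exit time forces $\varepsilon \dot\alpha^-(T_2) \geq (\gamma+1)T_2^\gamma$. On the other hand, Lemma \ref{lem:eigendirections} gives $\varepsilon \dot\alpha^-(T_2) = -\frac{e_0}{\lambda(T_2)}T_2^{\gamma+1} + O(T_2^\gamma)$. Since $\lambda(T_2) \sim T_2^{\nu+1} \ll T_2$, the leading term is of order $-T_2^{\gamma-\nu}$ and dwarfs both $T_2^\gamma$ and $(\gamma+1)T_2^\gamma$ for $T_0$ small, giving $\varepsilon \dot\alpha^-(T_2) < 0$, a contradiction.

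For the $l$-face, the key step is to derive a differential inequality of the shape $\dot l \leq -\frac{2\mu_{\min}}{t}\, l + \mathrm{err}$ with $\mu_{\min} > 0$ and error subdominant on the boundary. Introducing the relative perturbations $\wt\lambda := \lambda/\lambda_{\mathrm{app}} - 1$ and $\wt b := b/b_{\mathrm{app}} - 1$ and combining Lemma \ref{lem:lambda-b} with \eqref{eq:b_t} (using $v^*(t) = u^*(0,0) + O(t)$ in the non-degenerate case and $v^*(t) = q t^\beta$ exactly in the degenerate case), the modulation equations take the form
\begin{equation*}
  t\, \partial_t (\wt\lambda, \wt b)^T = M\, (\wt\lambda, \wt b)^T + R, \qquad |R| \lesssim |(\wt\lambda, \wt b)|^2 + t\,\|\bs\varepsilon\|_\enorm/\lambda,
\end{equation*}
for a constant $2\times 2$ matrix $M$. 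A direct computation shows that both eigenvalues of $M$ are strictly negative, and that the linear combinations appearing in \eqref{eq:delta}, \eqref{eq:delta-deg} are proportional to the diagonalizing coordinates $\xi_1, \xi_2$ of $M$ (in the degenerate case, the quantity $\wt\nu$ is chosen precisely so that these combinations diagonalize $M$). Consequently $l$ is a positive-definite quadratic form in $(\xi_1, \xi_2)$, and one obtains $\dot l \leq -\frac{2\mu_{\min}}{t}\, l + O(l^{3/2}/t) + O(\sqrt l\,\|\bs\varepsilon\|_\enorm/\lambda)$. At the boundary $l = t^{\gamma+1-\nu}$, both error terms are $o(t^{\gamma-\nu})$ thanks to $\gamma + 1 > \nu$ (here $\nu > 8$ is used in the degenerate case, $\gamma = 7/2 > 3 = \nu$ in the non-degenerate one), while the principal term is of order $-t^{\gamma-\nu}$. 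Hence $\dot l(T_2) < 0 < (\gamma+1-\nu) T_2^{\gamma-\nu}$, contradicting the first-exit requirement.

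With the other faces ruled out, $\alpha^+(T_2) = \pm T_2^{\gamma+1}$. For the final assertion, I would introduce $\rho(t) := \alpha^+(t)/t^{\gamma+1}$; Lemma \ref{lem:eigendirections} yields
\begin{equation*}
  \dot\rho(t) = \Bigl(\frac{e_0}{\lambda(t)} - \frac{\gamma+1}{t}\Bigr) \rho(t) + O(t^{-1}).
\end{equation*}
The coefficient of $\rho$ is of order $t^{-\nu-1}$ and dominates the $O(t^{-1})$ remainder for small $t$, so $\dot\rho(\tau) > 0$ at any time $\tau$ where $\rho(\tau) \geq 1/2$. Starting from $\rho(T_3) > 1/2$, the first time at which $\rho$ could reach $1/2$ would contradict this sign, so $\rho > 1/2$ throughout $[T_3, T_2]$, forcing $\alpha^+(T_2) = +T_2^{\gamma+1}$. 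The case $\alpha^+(T_3) < -\frac 12 T_3^{\gamma+1}$ is identical up to sign. I expect the main technical obstacle to be the linearization bookkeeping for the $l$-face—verifying negativity of both eigenvalues of $M$ in both regimes, and carefully controlling all error terms at the boundary.
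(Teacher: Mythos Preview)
Your proposal is correct and follows the same route as the paper. For the $\alpha^-$-faces and the $\alpha^+$ barrier argument your reasoning matches the paper's essentially verbatim (the paper phrases the latter as a first-crossing contradiction at $\alpha^+(T_4)=\tfrac12 T_4^{\gamma+1}$, which is your $\rho$-argument unpacked).

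The only stylistic difference is on the $l$-face: the paper differentiates the explicit formulas \eqref{eq:delta}, \eqref{eq:delta-deg} by hand, substitutes the expansions of $\sqrt\lambda$ and the modulation laws, and observes that the result collapses to $\dot l = \tfrac1t\bigl(-(\nu-\wt\nu)\,l + \text{lower order}\bigr)$ (respectively $-2l$ in the non-degenerate case). Your linearization viewpoint explains \emph{why} this collapse happens---the two brackets in $l$ are precisely the eigencoordinates of the constant matrix $M$ governing $t\,\partial_t(\wt\lambda,\wt b)$, with eigenvalues $-(\nu-\wt\nu)$ and $-(\nu+\wt\nu+1)$---so the two presentations are equivalent. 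One small point: in the non-degenerate case the $O(t)$ drift from $v^*(t)=u^*(0,0)+O(t)$ contributes an additional $O(t)$ to your remainder $R$ (hence an $O(\sqrt l)$ term in $\dot l$), which you mention but do not include in your displayed bound for $R$; it is still $o(t^{\gamma-\nu})$ on the boundary, so the contradiction goes through.
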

\begin{proof}
  Suppose, for the sake of contradiction, that for example $l(T_2) = T_2^{\gamma+1-\nu}$. In particular this implies $\dd t l(t_1) \geq 0$, and we will show that it is impossible.
  
  We start with the degenerate case. Using \eqref{eq:lambda-b-bound} and $\sqrt{x} = \frac{1+x}{2} + O(|1-x|^2)$ we obtain
  \begin{equation*}
    \sqrt{\lambda} = \frac 12\big(t^{\frac 12(\nu+1)} + \lambda t^{-\frac 12(\nu+1)}\big) + O(t^{\gamma+\frac 32 - \frac 12\nu}).
  \end{equation*}
  Recall that $b_t = (\nu+1)\nu t^{\frac 12(\nu-3)}\sqrt \lambda$, so we get
  \begin{equation}
    \label{eq:final-bt}
    \frac{b_t}{t^{\nu-1}} = \frac{(\nu+1)\nu}{2}\big(1 + \frac{\lambda}{t^{\nu+1}}\big) + O(t^{\gamma+1-\nu}).
  \end{equation}
  From Lemma 
\ref{lem:lambda-b} and \eqref{eq:boot-size} we have
  \begin{equation}
    \label{eq:final-lambdat}
    \frac{\lambda_t}{t^\nu} = \frac{b}{t^\nu} + O(t^{\gamma + 1-\nu}).
  \end{equation}
  Using \eqref{eq:final-bt} and \eqref{eq:final-lambdat} we can compute $\dd t l(t)$:
  \begin{equation*}
    \begin{aligned}
      &\dd t l(t) = \bigl(\frac{b}{t^\nu} + \wt\nu\frac{\lambda}{t^{\nu+1}} - (\nu+\wt\nu+1)\bigr)\bigl(\frac{b_t}{t^\nu} + \wt\nu\frac{\lambda_t}{t^{\nu+1}} - \nu\frac{b}{t^{\nu+1}} - \wt\nu(\nu+1)\frac{\lambda}{t^{\nu+2}}\bigr) \\
      &+ \bigl(\frac{b}{t^\nu}- (\wt\nu+1) \frac{\lambda}{t^{1+\nu}} - (\nu-\wt\nu)\bigr)\bigl(\frac{b_t}{t^\nu} -(\wt\nu+1) \frac{\lambda_t}{t^{\nu+1}} - \nu\frac{b}{t^{\nu+1}} - (\wt\nu+1)(\nu+1)\frac{\lambda}{t^{\nu+2}}\bigr) \\
      &= \frac{1}{t}\bigl(\frac{b}{t^\nu} + \wt\nu\frac{\lambda}{t^{\nu+1}} - (\nu+\wt\nu+1)\bigr)\bigl(\textstyle{\frac{(\nu+1)\nu}{2}}\big(1 + \frac{\lambda}{t^{\nu+1}}\big)+ \wt\nu\frac{b}{t^{\nu}} - \nu\frac{b}{t^{\nu}} - \wt\nu(\nu+1)\frac{\lambda}{t^{\nu+1}}\bigr) \\
      &+ \bigl(\frac{b}{t^\nu}- (\wt\nu+1) \frac{\lambda}{t^{1+\nu}} - (\nu-\wt\nu)\bigr)\bigl(\textstyle{\frac{(\nu+1)\nu}{2}}\big(1 + \frac{\lambda}{t^{\nu+1}}\big) -(\wt\nu+1) \frac{b}{t^{\nu}} - \nu\frac{b}{t^{\nu}} - (\wt\nu+1)(\nu+1)\frac{\lambda}{t^{\nu+1}}\bigr) \\
    &+ \frac 1t O(\sqrt{l(t)}t^{\gamma+1-\nu}).
  \end{aligned}
  \end{equation*}
  If we use the definition of $\wt \nu$, this simplifies to
  \begin{equation*}
    \begin{aligned}
    \dd t l(t) = &-(\nu-\wt\nu)\bigl(\frac{b}{t^\nu} + \wt\nu\frac{\lambda}{t^{\nu+1}} - (\nu+\wt\nu+1)\bigr)^2 - (\nu + \wt\nu + 1)\bigl(\frac{b}{t^\nu}- (\wt\nu+1) \frac{\lambda}{t^{1+\nu}} - (\nu-\wt\nu)\bigr)^2\\&+ \frac 1t O(\sqrt{l(t)}t^{\gamma+1-\nu})
    = \frac 1t\big(-(\nu-\wt\nu)l(t) + O(t^{\frac 32(\gamma+1-\nu)})\big).
  \end{aligned}
  \end{equation*}
  At time $t = T_2$ by assumption $l(T_2) = T_2^{\gamma+1-\nu}$, so for $T_2$ small enough the formula above implies $\dd t l(T_2) < 0$, which is impossible.

  In the non-degenerate case the computation is similar, but we must take into account that in this case
  \begin{equation*}
    b_t = \kappa u^*(t, 0)\sqrt{\lambda} = \kappa u^*(0, 0)\sqrt \lambda(1 + O(t)),
  \end{equation*}
  which leads to
  \begin{equation*}
    b_t = \kappa u^*(0, 0)\cdot \frac 12\big(\frac{\kappa u^*(0, 0)}{12}t^2 + \frac{12}{\kappa u^*(0, 0)}\lambda t^{-2}\big) + O(t^3).
  \end{equation*}
  Then, the computation is the same as before:
  \begin{equation*}
    \begin{aligned}
    \dd t l(t) &= \bigl(\frac{b}{t^3} + \frac{2\lambda}{t^4} - \frac{\kappa^2 u^*(0, 0)^2}{24}\bigr)\bigl(\frac{b_t}{t^3} + \frac{2\lambda_t}{t^4} - \frac{3b}{t^4} - \frac{8\lambda}{t^5}\bigr) \\
    &+ \bigl(\frac{b}{t^3} + \frac{2\lambda}{t^4} - \frac{\kappa^2 u^*(0, 0)^2}{24}\bigr)\bigl(\frac{b_t}{t^3} + \frac{2\lambda_t}{t^4} - \frac{3b}{t^4} - \frac{8\lambda}{t^5}\bigr) \\
    &\leq \frac{1}{t}\bigl(\frac{b}{t^\nu} + \frac{2\lambda}{t^4} - \frac{\kappa^2 u^*(0, 0)^2}{24}\bigr)\bigl(\frac{\kappa^2 u^*(0, 0)^2}{24}+\frac{6\lambda}{t^4}+\frac{2b}{t^3}-\frac{3b}{t^3} - \frac{8\lambda}{t^4}\bigr) \\
    &+ \frac{1}{t}\bigl(\frac{b}{t^3} - \frac{3\lambda}{t^4} - \frac{\kappa^2 u^*(0, 0)^2}{144}\bigr)\bigl(\frac{\kappa^2 u^*(0, 0)^2}{24}+\frac{6\lambda}{t^4}-\frac{3b}{t^3}-\frac{3b}{t^3} + \frac{12\lambda}{t^4}\bigr)
     + O(\sqrt{l(t)}) \\
     &\leq \frac 1t\big(-2l(t) + O(t^{7/4})\big).
  \end{aligned}
  \end{equation*}
  Since $\gamma + 1 - \nu = \frac 32 < \frac 74$, we are done.

  Now suppose that $|\alpha^-(T_2)| = T_2^{\gamma+1}$. As $\frac{t^{\gamma+1}}{\lambda}\sim t^{\gamma-\nu} \gg t^\gamma$, \eqref{eq:dtalpham} implies that $\dd t\alpha_\lambda^-$ and $\alpha_\lambda^-$
  have opposite signs, which is impossible.

  Again by contradiction, suppose that $\alpha_\lambda^+(T_3) > \frac 12 T_3^{\gamma+1}$ and $\alpha_\lambda^+(T_2) = -T_2^{\gamma+1}$.
  By continuity, there exists the smallest $T_4 > T_3$ such that $\alpha_\lambda^+(T_4) = \frac 12 T_4^{\gamma+1}$.
  Necessarily $\dd t\alpha_\lambda^+(T_4) \leq \frac{\gamma+1}{2} T_4^\gamma$, which is in contradiction with \eqref{eq:dtalphap}.
\end{proof}
\begin{proposition}
  \label{prop:eps-boot}
  There exist strictly positive constants $C_0$ and $T_0$ such that for all $T_1 \in (0, T_0)$ there exists a solution $\bs u$ defined on $[T_1, T_0]$ which for all $t\in [T_1, T_0]$ verifies
  \begin{equation}
    \label{eq:boot-size-final}
    \|(u - W_\lambda - u^*, \partial_t u + \lambda_t (\Lambda W)_\uln\lambda - \partial_t u^*)\|_\enorm \leq C_0 t^{\gamma+1},
  \end{equation}
  \begin{equation}
    \label{eq:lambda-final}
    \big|\frac{\lambda}{\lambda_{\mathrm{app}}}-1\big| \leq C_0 t^{\frac 12(\gamma+1-\nu)}.
  \end{equation}
\end{proposition}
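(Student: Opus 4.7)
The plan is to use a shooting (topological) argument on the unstable parameter $\alpha^+$ in order to select initial data at time $T_1$ for which the full modulation vector $\Mod(t)$ stays inside the cylinder $\scrC(t)$ on all of $[T_1, T_0]$. Once this is achieved, Proposition
\ref{prop:energy-bound} delivers the energy bound \eqref{eq:boot-size-final}, and \eqref{eq:lambda-final} follows from \eqref{eq:lambda-b-bound}.

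First I would set up a one-parameter family of initial data. Fix $T_1 \in (0, T_0)$ with $T_0$ provided by Proposition
\ref{prop:energy-bound}, and for $a \in [-T_1^{\gamma+1}, T_1^{\gamma+1}]$ let
\begin{equation*}
  \bs u^{(a)}(T_1) := \bs\varphi(T_1) + \bs\varepsilon^{(a)}(T_1),
\end{equation*}
where $\bs\varphi(T_1)$ is the ansatz \eqref{eq:ansatz} at time $T_1$ with modulation parameters $(\lambda(T_1), b(T_1)) = (\lambda_\mathrm{app}(T_1), b_\mathrm{app}(T_1))$, and $\bs\varepsilon^{(a)}(T_1)$ is chosen (as a small multiple of $(\cY_{\lambda(T_1)}, \cY_{\uln{\lambda(T_1)}})$ with a suitable sign) so that the orthogonality $\la \varepsilon_0^{(a)}(T_1), \cZ_{\uln{\lambda(T_1)}}\ra = 0$ holds, $\alpha^-_{\lambda(T_1)}(\bs\varepsilon^{(a)}(T_1)) = 0$, and $\alpha^+_{\lambda(T_1)}(\bs\varepsilon^{(a)}(T_1)) = a$. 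By construction $\|\bs\varepsilon^{(a)}(T_1)\|_\enorm \lesssim T_1^{\gamma+1}$ and $\Mod^{(a)}(T_1) \in \Int(\scrC(T_1))$, so the hypotheses of Proposition
\ref{prop:energy-bound} are met with some universal constant $C_4$.

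Next I would run the bootstrap. Let $\bs u^{(a)}(t)$ be the corresponding solution and define the exit time
\begin{equation*}
  \tau(a) := \sup\{t \in [T_1, T_0] : \Mod^{(a)}(s) \in \Int(\scrC(s)) \text{ for all } s \in [T_1, t]\}.
\end{equation*}
If $\tau(a) = T_0$ for some $a$, we are done. Otherwise, by Proposition
\ref{prop:energy-bound} the solution exists up to $\tau(a)$ with $\|\bs\varepsilon\|_\enorm \leq C_0 t^{\gamma+1}$, and $\Mod^{(a)}(\tau(a)) \in \partial\scrC(\tau(a))$. Lemma
\ref{lem:modstable} guarantees that the only way to exit is through $|\alpha^+(\tau(a))| = \tau(a)^{\gamma+1}$; moreover, the transversal condition from \eqref{eq:dtalphap} (combined with the fact that $\frac{e_0}{\lambda}t^{\gamma+1} \gg t^\gamma$) shows that $\alpha^+$ crosses the boundary transversally. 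This in turn shows that $\tau$ is continuous and that the sign
\begin{equation*}
  \sigma(a) := \sign(\alpha^+(\tau(a))) \in \{-1, +1\}
\end{equation*}
is continuous in $a$ on the set $\{a : \tau(a) < T_0\}$.

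The conclusion is a standard intermediate value argument. For $a = T_1^{\gamma+1}$ we have $\alpha^+(T_1) = T_1^{\gamma+1}$ on the boundary, and the transversality from \eqref{eq:dtalphap} forces $\sigma(T_1^{\gamma+1}) = +1$; similarly $\sigma(-T_1^{\gamma+1}) = -1$. By connectedness of $[-T_1^{\gamma+1}, T_1^{\gamma+1}]$, the map $a \mapsto \sigma(a)$ cannot be continuous on the entire interval, so there must exist some $a^* \in (-T_1^{\gamma+1}, T_1^{\gamma+1})$ with $\tau(a^*) = T_0$. For this choice, $\Mod^{(a^*)}(t) \in \scrC(t)$ throughout $[T_1, T_0]$, Proposition
\ref{prop:energy-bound} yields \eqref{eq:boot-size-final}, and \eqref{eq:lambda-final} is then a direct consequence of \eqref{eq:lambda-b-bound}.

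The main obstacle I anticipate is the topological step: one must verify that the map $a \mapsto (\tau(a), \sigma(a))$ is well-defined and continuous, which requires showing that $\alpha^+$ exits the interval $[-t^{\gamma+1}, t^{\gamma+1}]$ strictly transversally. This is exactly where the gap between $\frac{e_0}{\lambda} t^{\gamma+1} \gg t^\gamma$ in Lemma
\ref{lem:eigendirections} is crucial, and one also needs to use continuous dependence of the solution in $\enorm$ on the initial parameter $a$ (which holds on the closed set $\{\tau(a) \leq t\}$ by local well-posedness and the a priori bound from Proposition
\ref{prop:energy-bound}).
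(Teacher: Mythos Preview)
Your approach is essentially the same as the paper's: a one-parameter shooting argument on $\alpha^+$, using Proposition~\ref{prop:energy-bound} for the a priori bound and Lemma~\ref{lem:modstable} to rule out exit through the other faces of $\scrC$, followed by a connectedness argument. The paper packages the topological step slightly differently (it defines the sets $\mathcal A^\pm$ of parameters that exit with a given sign and shows both are open and nonempty, rather than working with $\tau(a)$ and $\sigma(a)$), but this is only a cosmetic difference.

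Two small technical points. First, you take $a\in[-T_1^{\gamma+1},T_1^{\gamma+1}]$, whose endpoints lie on $\partial\scrC(T_1)$, whereas Proposition~\ref{prop:energy-bound} requires $\Mod(T_1)\in\Int(\scrC(T_1))$; the paper avoids this by taking $a\in[-\tfrac23 T_1^{\gamma+1},\tfrac23 T_1^{\gamma+1}]$ and then invoking the second part of Lemma~\ref{lem:modstable} (the $\alpha^+>\tfrac12 t^{\gamma+1}$ clause) to force the exit sign. Second, Proposition~\ref{prop:energy-bound} only gives $\|\bs\varepsilon\|_\enorm\le C_0 t^{\gamma+1}$ with $\bs\varepsilon=(u-\varphi_0,\partial_t u-\varphi_1)$, not \eqref{eq:boot-size-final} directly; you still need to absorb $P_0$, $P_1$ (via Lemmas~\ref{lem:size-P0}, \ref{lem:size-P1} or their degenerate versions) and the discrepancy $(b-\lambda_t)(\Lambda W)_{\uln\lambda}$ (via Lemma~\ref{lem:lambda-b}) to pass to the form stated in \eqref{eq:boot-size-final}. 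The paper makes this last step explicit.
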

\begin{proof}
  We consider the degenerate case. The proof in the non-degenerate case is similar.

  Let $\lambda = \lambda_{\mathrm{app}}(T_1)$, $b = b_{\mathrm{app}}(T_1)$.
  For $a \in [-\frac 23 T_1^{\gamma+1}, \frac 23 T_1^{\gamma+1}]$, let $\bs\varepsilon_a(T_1) = \frac{a}{2\nu}(\cY_\lambda^+ - \frac{\la\cY, \cZ\ra}{\la \cZ, \cZ\ra}(\cZ_\uln\lambda, 0))$,
  and consider the corresponding evolution. Of course \eqref{eq:boot-size0} is verified for a universal constant $C_4$.
  Let $C_0$ be the constant provided by Proposition 
\ref{prop:energy-bound}.
  We will show that there exists a parameter $a$ for which the solution exists until $t = T_0$ and satisfies \eqref{eq:boot-size}.
  Suppose this is not the case. Let $\mathcal{A}^+ = \{a: \alpha^+(T_2) = T_2^{\gamma+1}\}$ and $\mathcal{A}^- = \{a: \alpha^+(T_2) = -T_2^{\gamma+1}\}$,
  where $T_2$ is the exit time given by Lemma 
\ref{lem:modstable}.
  By the second part of Lemma 
\ref{lem:modstable} we know that $-\frac 23 T_1^{\gamma+1} \in \mathcal{A}^-$, $\frac 23 T_1^{\gamma+1} \in \mathcal{A}^+$, and that $\mathcal{A}^-$ and $\mathcal{A}^+$
  are open sets. Indeed, let $a \in \mathcal{A}^+$. This means in particular that for $T_1 \leq t \leq T_2$ we have $\alpha^+(t) \geq -\frac 12 t^{\gamma+1}$
  and $\alpha^+(T_2) = T_2^{\gamma+1}$. By continuity of the flow, for close enough initial data we will still have $\alpha^+(t) > -t^{\gamma+1}$ for $T_1 \leq t \leq T_2$
  and $\alpha^+(T_2) \geq \frac 12 T_2^{\gamma+1}$. By Lemma 
\ref{lem:modstable} the corresponding solutions escape from the cylinder by positive values of $\alpha^+$.
  Thus $\mathcal{A}^+ \cup \mathcal{A}^-$ would be a partition of $[-\frac 23 T_1^{\gamma+1}, \frac 23 T_1^{\gamma+1}]$ into two disjoint open sets, which is impossible.

  Using \eqref{eq:boot-size}, \eqref{eq:size-P0-deg}, \eqref{eq:size-P1-deg} and \eqref{eq:lambda-b} we obtain \eqref{eq:boot-size-final}.

  Estimate \eqref{eq:lambda-final} follows from \eqref{eq:lambda-b-bound} and the fact that $\Mod(t) \in \scrC(t)$.
\end{proof}

\begin{proof}[Proof of Theorem 
\ref{thm:non-deg}]
  Let $t_n$ be a decreasing sequence such that $t_n > 0$ and $t_n \to 0$. Let $\bs u_n$ be the solution given by Proposition 
\ref{prop:eps-boot} for $T_1 = t_n$
  and let $\lambda_n: [t_n, T_0] \to (0, +\infty)$ be the corresponding modulation parameter.
    The sequence $\bs u_n(T_0)$ is bounded in $\enorm$. After extracting a subsequence, it converges weakly to some function $(u_0, u_1)$.
    Let $\bs u(t)$ be the solution of \eqref{eq:nlw} for the Cauchy data $\bs u(T_0) = (u_0, u_1)$. We will show that $\bs u$ satisfies \eqref{eq:blowup-1}.

    Let $0 < T_1 < T_0$ and $T_1 \leq t \leq T_0$. Using \eqref{eq:boot-size-final}, \eqref{eq:lambda-final} and $|\lambda_t| \lesssim t^3$ we get
    \begin{equation*}
      \|(u_n - W_{\lambda_\mathrm{app}} - u^*, \partial_t u_n - \partial_t u^*)\|_\enorm \leq C_0 t^{\frac 34}.
    \end{equation*}
    This shows that if $T_0$ is sufficiently small, then the sequence $\bs u_n$ satisfies the conditions of Proposition 
\ref{prop:weak-cont}
    on the time interval $[T_1, T_0]$, hence
\begin{equation*}
  \bs u_n(T_1) \wto \bs u(T_1).
\end{equation*}
Weak lower semi-continuity of the norm implies that at time $t = T_1$ we have
    \begin{equation*}
      \|(u - W_{\lambda_\mathrm{app}} - u^*, \partial_t u - \partial_t u^*)\|_\enorm \leq C_0 T_1^{3/4}.
    \end{equation*}
This bound holds for all $T_1$ such that $0 < T_1 < T_0$. In particular, the orthogonality condition:
\begin{equation}
  \label{eq:orth-final}
  \la u - W_\lambda - u^*,\,\cZ_\uln\lambda \ra = 0.
\end{equation}
defines uniquely a continuous function $\lambda(T_1): (0, T_0) \to (0, +\infty)$.
We will prove that $\lambda_n(T_1) \to \lambda(T_1)$.

Using \eqref{eq:orth-sansP} for the solution $u_n$ at time $T_1$ and passing to a limit $n\to \infty$
we obtain that all the accumulation points of $\lambda_n(T_1)$ verify the orthogonality condition \eqref{eq:orth-final}.
Hence $\lambda_n(T_1) \to \lambda(T_1)$.
Passing to a limit in \eqref{eq:orth-sansP-d} we get $\dd t\lambda_n(T_1) \to \dd t\lambda(T_1)$.
Passing to a limit in \eqref{eq:boot-size-final} and \eqref{eq:lambda-final} finishes the proof.
\end{proof}
%\begin{proof}[Proof of Theorem ef{thm:deg}]
%It is the same as the previous one.  Let $t_n$ be a decreasing sequence such that $t_n > 0$ and $t_n \to 0$.
%Let $\bs u_n$ be the solution given by Proposition ef{prop:eps-boot} for $T_1 = t_n$
%  and let $\lambda_n: [t_n, T_0] \to (0, +\infty)$ be the corresponding modulation parameter.
% After extracting a subsequence $\bs u_n(T_0)$ converges weakly to some function $(u_0, u_1)$.
%    Let $\bs u(t)$ be the solution of \eqref{eq:nlw} for the Cauchy data $\bs u(T_0) = (u_0, u_1)$. We will show that $\bs u$ satisfies \eqref{eq:blowup-2}.
%
%    Let $0 < T_1 < T_0$ and $T_1 \leq t \leq T_0$. Using \eqref{eq:boot-size-final}, \eqref{eq:lambda-final} and $|\lambda_t| \lesssim t^\nu$ we get
%    \begin{equation*}
%      \|(u_n - W_{\lambda_\mathrm{app}} - u^*, \partial_t u_n - \partial_t u^*)\|_\enorm \leq C_0 t^{\frac 12(\gamma+1-\nu)}.
%    \end{equation*}
%    This shows that if $T_0$ is sufficiently small, then the sequence $\bs u_n$ satisfies the conditions of Proposition ef{prop:weak-cont}
%    on the time interval $[T_1, T_0]$, hence
%\begin{equation*}
%  \bs u_n(T_1) \weak \bs u(T_1).
%\end{equation*}
%Weak lower semi-continuity of the norm implies that at time $t = T_1$ we have
%    \begin{equation*}
%      \|(u - W_{\lambda_\mathrm{app}} - u^*, \partial_t u - \partial_t u^*)\|_\enorm \leq C_0 T_1^{\frac 12(\gamma+1-\nu)},
%    \end{equation*}
%    which finishes the proof, because $T_1 \in (0, T_0]$ was arbitrary.
%\end{proof}

The proof of Theorem \ref{thm:deg} follows the same lines, so we will skip it.

\appendix

\section{Weak continuity of the flow near a fixed path}
\label{sec:weak}
\begin{proposition}
  \label{prop:weak-cont}
  Let $\bs v: [0, 1] \to \enorm$ be a continuous path in the energy space. There exists a constant $\delta > 0$ with the following property.
  Let $\bs u_n$ be a sequence of radial solutions of \eqref{eq:nlw} defined on the interval $[0, 1]$,
  such that
  \begin{equation}
    \label{eq:closeness}
    \sup_{t\in [0,1]}\|\bs u_n - \bs v\|_\enorm \leq \delta.
  \end{equation}
  Suppose that $\bs u_n(0) \wto (u_0, u_1)$ in $\enorm$ and let $\bs u$ be the solution of \eqref{eq:nlw} for the initial data $\bs u(0) = (u_0, u_1)$.
  Then $\bs u$ is defined on $[0, 1]$ and for all $t \in [0, 1]$ we have
  \begin{equation}
    \label{eq:weak-conv}
    \bs u_n(t) \wto \bs u(t)\qquad\text{in }\enorm.
  \end{equation}
\end{proposition}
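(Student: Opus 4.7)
The strategy is: (i) establish a uniform Strichartz bound
\[
\sup_n\|u_n\|_{L^{7/3}([0,1];L^{14/3}(\bR^5))} < +\infty,
\]
and then (ii) pass to the weak limit in the Duhamel formula along the lines of \cite{BaGe99}, constructing $\bs u$ in the process. The assumption \eqref{eq:closeness} plays the role that the uniform global Strichartz bound plays automatically in the defocusing setting of \cite{BaGe99}.

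For step (i), I would exploit the fact that the image $K := \bs v([0,1])$ is compact in $\enorm$. For each $\bs z \in K$, local well-posedness for \eqref{eq:nlw} furnishes a time $\tau(\bs z) > 0$ and a Strichartz bound $S(\bs z)$ for the nonlinear solution issued from $\bs z$. The perturbation stability of the energy-critical Cauchy problem (of Keel--Tao / Kenig--Merle type) extends this bound, up to a factor of $2$, to every initial datum in a small $\enorm$-neighborhood of $\bs z$, on the half interval of length $\tau(\bs z)/2$. Extracting a finite sub-cover of $K$ and choosing $\delta$ in \eqref{eq:closeness} smaller than all the resulting neighborhood radii yields a uniform time step $\tau>0$ such that on every subinterval of $[0,1]$ of length $\tau$ the functions $\bs u_n$ share a common Strichartz bound. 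Concatenating the $\lceil 1/\tau\rceil$ subintervals closes (i).

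For step (ii), with the uniform Strichartz bound available, I would write the Duhamel formula for $\bs u_n$ and pass to the weak limit term by term. The linear half is weakly continuous in $\enorm$ by inspection. For the nonlinear term $f(u_n) = |u_n|^{4/3} u_n$, the uniform bounds on $\bs u_n$ in $L^\infty_t\enorm$, together with the radial Strauss compactness of $\dot H^1_{\mathrm{rad}}$ into $L^q$ away from the origin for appropriate $q$, and the bound on $\partial_t u_n$ in $L^\infty_t L^2_x$, produce via Aubin--Lions--Simon a subsequence of $u_n$ converging strongly in $L^p_{\mathrm{loc}}([0,1]\times\bR^5)$ for some $p<10/3$, and hence pointwise a.e. The uniform $L^{7/3}_t L^{14/3}_x$ bound then supplies the equi-integrability needed to conclude $f(u_n) \to f(u)$ in $L^1_t L^2_x$ weakly. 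Passing to the limit in the Duhamel formula both identifies $\bs u$ as the solution of \eqref{eq:nlw} with Cauchy data $(u_0, u_1)$ on $[0,1]$ and gives $\bs u_n(t)\wto \bs u(t)$ in $\enorm$ along the subsequence. Uniqueness of the solution in $C([0,1];\enorm)\cap L^{7/3}_{\mathrm{loc}}L^{14/3}$ upgrades subsequential weak convergence to convergence of the full sequence.

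The main obstacle is step (i): since $\bs v$ is not itself a solution of \eqref{eq:nlw}, one cannot linearize around a single reference trajectory, and the covering argument must set up local Strichartz bounds pointwise on $K$ and then use compactness to extract a uniform time step, with $\delta$ chosen small enough to fit inside every neighborhood of the finite sub-cover simultaneously. Step (ii) is conceptually standard once the Strichartz bound is available, with only mild care needed at the critical Strichartz exponent $7/3$.
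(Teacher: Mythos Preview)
Your plan is genuinely different from the paper's. The paper does not set up a uniform Strichartz bound; instead it applies the Bahouri--G\'erard profile decomposition directly to $\bs u_n(0)$, notes that $\|\bs u_n(0)-(u_0,u_1)\|_\enorm\le 2\delta$ forces every profile other than the first to have energy $\le 2\delta$ (hence to be global and scattering for $\delta$ small), and then reads off \eqref{eq:weak-conv} from the nonlinear profile decomposition \cite[Proposition~2.8]{DKM1} together with Lemma~\ref{lem:small-disperse}. Your step (i) is correct and gives an independent route to a uniform Strichartz bound via compactness of $\bs v([0,1])$ and long-time perturbation theory.

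The gap is in step (ii). At the energy-critical scaling the uniform $L^{7/3}_tL^{14/3}_x$ bound does \emph{not} supply the equi-integrability you invoke near the origin. A small scattering profile concentrating at scale $\lambda_n\to 0$ contributes exactly $\|U^j\|_{L^{7/3}(\bR;L^{14/3})}$ to the Strichartz norm on $[0,1]$ (this is the scale-invariance of the norm), so the Strichartz bound cannot exclude it; in its presence $\|f(u_n)\|_{L^2(|x|<\epsilon)}$ does not become uniformly small as $\epsilon\to 0$, and the passage to the limit in the Duhamel term fails. What actually rules out such concentrating pieces is the smallness of $\delta$: the Pythagorean expansion of $\|\bs u_n(0)-(u_0,u_1)\|_\enorm^2$ over the profiles bounds each secondary profile's energy by $2\delta$. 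But carrying this out is precisely the profile-decomposition argument the paper uses, after which your step (i) is no longer needed (the Strichartz bound is a byproduct of the nonlinear profile decomposition). The sentence ``the uniform $L^{7/3}_tL^{14/3}_x$ bound then supplies the equi-integrability needed'' is where your argument breaks; Aubin--Lions away from the origin plus Strichartz is not enough at the critical exponent.
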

\begin{remark}
  \label{rem:counterexample}
  Notice that without the assumption \eqref{eq:closeness} the result is false.
  More generally, existence of type II blow-up solutions in some space excludes
  weak continuity of the flow in this space, and existence of type II blowup solutions in our case follows from Theorems 
\ref{thm:non-deg} and 
\ref{thm:deg}.
  One might search weaker conditions than \eqref{eq:closeness}; we have chosen a simple condition which is sufficient for our needs.
\end{remark}
\begin{proof}
  ~\paragraph{Step 1.}
  Suppose that $\bs u$ is not defined on $[0, 1]$ and let $T_+ \leq 1$ be its final time of existence.
  In Step 2. we will prove \eqref{eq:weak-conv} for all $t < T_+$.
  In particular, by the lower weak semi-continuity of the norm, this shows that 
  \begin{equation*}
    \sup_{t\in [0,T_+)}\|\bs u_n - \bs v\|_\enorm \leq \delta.
  \end{equation*}
  By local well-posedness in the energy space and compactness of $\{v(t): t\in[0, 1]\}$, if $\delta > 0$ is small enough,
  there exists $\tau > 0$ such that the solution corresponding to the initial data $\bs u_n(t)$ is defined
  at least on the interval $(-\tau, \tau)$. This means that $\bs u$ cannot blow up at $T_+$, and so it is defined for $t\in [0, 1]$.

  If $\delta$ is chosen small enough, depending on $v(1)$, then by the Cauchy theory the solutions $\bs u_n$ exist on an interval $(1-t', 1+t')$ for some $t' > 0$.
  By eventually choosing $t'$ smaller, we can assume that $\bs u$ also exists on $(1-t', 1+t')$. Hence, by repeating the same procedure we obtain weak convergence also for $t = 1$.
  \paragraph{Step 2.}
  Let $t < T_+$. In order to prove \eqref{eq:weak-conv}, it is sufficient to show that any subsequence of $\bs u_n$ (which we will still denote $\bs u_n$)
  admits a subsequence such that the required convergence takes place.
  By the result of Bahouri-G\'erard a subsequence of $\bs u_n(0)$ admits a profile decomposition such that the first profile is $\bs U\lin\!\!^1(t) = S(t)(u_0, u_1)$
  (corresponding to parameters $t_{1,n} = 0$, $\lambda_{1,n} = 1$). By the triangle inequality $\|\bs u_n - (u_0, u_1)\|_\enorm \leq 2\delta$,
  so all the other profiles are small, in particular they are global and scatter.
  By definition of $T_+$ the assumptions of Proposition {2.8} in \cite{DKM1} (which is a version of \cite[Main Theorem]{BaGe99} for the focusing equation)
  are satisfied for $\theta_n = t$, in particular formula (2.22) from \cite{DKM1} yields:
\begin{equation*}
  \bs u_n(t) = \bs u(t) + \sum_{j=2}^{J}\bs U_n^j(t) + \bs w_n^J(t) + \bs r_n^J(t).
\end{equation*}
Here, $\bs w_n^J(t) = S(t)\bs w_n^J(0) \wto 0$ as $n \to +\infty$ (indeed, $\bs w_n^J(0) \wto 0$ for $J > 1$ by definition of the profiles, and $S(t)$ is a bounded linear operator).
By Lemma 
\ref{lem:small-disperse} below also $U_n^j(t) \wto 0$ when $j > 1$, which finishes the proof.

\end{proof}

\begin{lemma}
  \label{lem:small-disperse}
  Let $\bs U$ be a solution of equation \eqref{eq:nlw} such that $\|\bs U\|_\enorm$ is small.
  Let $t_n, \lambda_n$ be a sequence of parameters such that one of the following holds:
  \begin{enumerate}
    \item{$t_n = 0$ and $\lambda_n \to 0$},
    \item{$t_n = 0$ and $\lambda_n \to +\infty$},
    \item{$t_n/\lambda_n \to +\infty$},
    \item{$t_n/\lambda_n \to -\infty$}.
  \end{enumerate}
  Fix $t \in \bR$ and define
  \begin{equation*}
    \bs U_n(x) = \big(\frac{1}{\lambda_n^{3/2}}U^j(\frac{t-t_n}{\lambda_n},\frac{x}{\lambda_n}), \frac{1}{\lambda_n^{5/2}}\partial_t U^j(\frac{t-t_n}{\lambda_n})\big).
  \end{equation*}
  Then $\bs U_n \wto 0$ in $\enorm$.
\end{lemma}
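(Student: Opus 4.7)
Because $\|\bs U\|_\enorm$ is small, the small-data theory for \eqref{eq:nlw} gives that $\bs U$ is global and scatters: there exist $\bs V_0^\pm\in\enorm$ with $\|\bs U(s)-S(s)\bs V_0^\pm\|_\enorm\to 0$ as $s\to\pm\infty$, and $s\mapsto\bs U(s)$ is continuous in $\enorm$. To show $\bs U_n\wto 0$, I would test against an arbitrary $(\phi,\psi)\in (C_c^\infty(\bR^5))^2$ and use the scaling identity $\la(f)_\lambda,g\ra_{\dot H^1}=\la f,(g)_{1/\lambda}\ra_{\dot H^1}$ (together with its $L^2$ analogue) to rewrite the pairing as
\[
\la\bs U_n,(\phi,\psi)\ra_\enorm=\la\bs U(s_n),\bs\Phi_n\ra_\enorm,\qquad \bs\Phi_n:=\big((\phi)_{1/\lambda_n},(\psi)_{1/\uln{\lambda_n}}\big),
\]
where $s_n=(t-t_n)/\lambda_n$, $\|\bs\Phi_n\|_\enorm=\|(\phi,\psi)\|_\enorm$, and a direct computation on compactly supported data shows $\bs\Phi_n\wto 0$ in $\enorm$ whenever $\lambda_n\to 0$ or $\lambda_n\to+\infty$.

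If $s_n\to 0$ (case~(2), or case~(1) with $t=0$), then $\bs U(s_n)\to\bs U(0)$ strongly in $\enorm$, and pairing a strongly convergent sequence with a weakly null one gives $\la\bs U(s_n),\bs\Phi_n\ra\to 0$. When $s_n\to\pm\infty$ (case~(1) with $t\neq 0$, and cases~(3), (4)), I would replace $\bs U(s_n)$ by the corresponding scattering profile $S(s_n)\bs V_0^\pm$ up to an $o_\enorm(1)$ error, use unitarity of $S(s_n)$ on $\enorm$, and invoke the scaling invariance of the free wave equation to rewrite the pairing equivalently as
\[
\la(\bs V_0^\pm)_{\lambda_n},\bs\Phi^*(t_n-t)\ra_\enorm,
\]
where $\bs\Phi^*$ denotes the free wave with data $(\phi,\psi)$. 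In case~(1) with $t\neq 0$, $t_n-t=-t$ is fixed, $\bs\Phi^*(-t)$ is a fixed element of $\enorm$ while $(\bs V_0^\pm)_{\lambda_n}\wto 0$ by the standard concentration/spreading fact, so the pairing tends to zero.

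The genuinely new work is cases (3) and (4). Here $t_n-t$ is either bounded along a subsequence or tends to $\pm\infty$. In the bounded sub-case, the hypothesis $(t_n-t)/\lambda_n\to\pm\infty$ forces $\lambda_n\to 0$, and the argument of the previous paragraph applies since $\bs\Phi^*(t_n-t)$ is then strongly convergent while $(\bs V_0^\pm)_{\lambda_n}\wto 0$. In the remaining sub-case $|t_n-t|\to+\infty$, I would first approximate $\bs V_0^\pm$ in $\enorm$ by a pair $\bs V^\epsilon\in(C_c^\infty(\bR^5))^2$ supported in $\{|x|\leq R_2\}$, so that $(\bs V^\epsilon)_{\lambda_n}$ lives in $\{|x|\leq R_2\lambda_n\}$, while by the strict Huygens principle in the odd dimension $N=5$ the free wave $\bs\Phi^*(t_n-t)$ has support contained in the annulus $\{|x|\in[|t_n-t|-R_1,|t_n-t|+R_1]\}$. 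The disjointness criterion $|t_n-t|-R_1>R_2\lambda_n$, equivalent to $(t_n-t)/\lambda_n>R_2+R_1/\lambda_n$, holds eventually in each of the three regimes $\lambda_n$ bounded, $\lambda_n\to 0$, and $\lambda_n\to+\infty$, precisely because the hypothesis forces $(t_n-t)/\lambda_n\to\pm\infty$; the resulting pairing then vanishes for large $n$ up to an $O(\epsilon)$ error, and $\epsilon$ is arbitrary. The main technical obstacle is exactly this last step: two weakly null sequences can have a non-vanishing pairing, and the only uniform information at hand is $(t_n-t)/\lambda_n\to\pm\infty$, which is exploited via the strict Huygens principle of odd dimension to split the supports.
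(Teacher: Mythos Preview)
Your approach is essentially the same as the paper's: pass to subsequences, split according to whether $s_n=(t-t_n)/\lambda_n$ stays bounded or goes to $\pm\infty$, use concentration/spreading in the first case and scattering plus strong Huygens in the second. Your treatment of the Huygens step is in fact more detailed than the paper's (which simply asserts $(S(\tau_n)\bs V)_{\lambda_n}\wto 0$ as ``well known'').

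There is, however, a gap in your case analysis. You claim that in cases (3) and (4) one always has $s_n\to\pm\infty$, but this is false: take $t_n=1$, $\lambda_n=1/n$, $t=1$; then $t_n/\lambda_n=n\to+\infty$ while $s_n=0$. More generally, along a subsequence $s_n$ can converge to any finite $t_0$. The paper handles this by first extracting so that $s_n\to t_0\in[-\infty,+\infty]$; if $t_0$ is finite it extracts again so that $\lambda_n\to\lambda_0\in[0,+\infty]$ and rules out $\lambda_0\in(0,\infty)$ by observing that then $t_n=t-s_n\lambda_n$ would converge to a finite number and $t_n/\lambda_n$ would have a finite limit, contradicting each of (1)--(4). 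Once $\lambda_n\to 0$ or $+\infty$ is established, your argument (strong convergence $\bs U(s_n)\to\bs U(t_0)$ paired with the weakly null $\bs\Phi_n$) applies verbatim with $\bs U(0)$ replaced by $\bs U(t_0)$. This is a one-line fix, but as written your dichotomy ``$s_n\to 0$ or $s_n\to\pm\infty$'' does not exhaust the possibilities.
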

\begin{proof}
  Again it is sufficient to show this for a subsequence of any subsequence. Thus we can assume that $\frac{t-t_n}{\lambda_n} \to t_0 \in [-\infty,+\infty]$.

  Suppose first that $t_0$ is a finite number. Extracting again a subsequence we can assume that $\lambda_n \to \lambda_0 \in [0, +\infty]$.
  If $\lambda_0$ was a strictly positive finite number, we would obtain that also $t_n$ has a finite limit, which is impossible.
  Thus $\lambda_n \to 0$ or $\lambda_n \to +\infty$, and in both cases we get our conclusion.

  In the case $\tau_n := \frac{t-t_n}{\lambda_n} \to \pm \infty$ we have dispersion, so $\|\bs U_n - (S(\tau_n)\bs V)_{\lambda_n}\|_\enorm \to 0$,
  and it is well known that $(S(\tau_n)\bs V)_{\lambda_n} \wto 0$ when $\tau_n \to \pm\infty$ and $\lambda_n$ is any sequence
  (in the case of space dimension $N = 5$ this follows for example from the strong Huyghens principle).
\end{proof}

\section{Local theory in higher regularity}
\label{sec:cauchy}

In this section we use the energy method to prove two results about preservation of regularity.
\subsection{Energy estimates in $X^1\times H^1$}
Recall that we denote $X^s := \dot H^{s+1} \cap \dot H^1$. We have classical \emph{energy estimates} for the linear wave equation:
\begin{lemma}
  \label{lem:energy-est}
  Let $s \in \bN$. Let $I = [0, T_0]$ be a time interval, $g\in C(I, H^s)$ and $(u_0, u_1) \in X^s\times H^s$. Then the Cauchy problem
  \begin{equation*}
    \begin{cases}
      \partial_{tt}u - \Delta u = g, \\
      (u(0), \partial_t u(0)) = (u_0, u_1)
    \end{cases}
  \end{equation*}
  has a unique solution $(u, \partial_t u) \in C(I, X^s \times H^s)$ and for all $t \in I$ there holds
  \begin{equation}
    \label{eq:energy-lin}
    \|(u, \partial_t u)\|_{X^s\times H^s} \leq \|(u_0, u_1)\|_{X^s\times H^s} + \int_0^t\|g(\tau)\|_{H^s}\ud \tau.
  \end{equation}
\end{lemma}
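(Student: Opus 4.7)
The plan is to combine Duhamel's formula with the classical energy identity applied at two integer levels of regularity, namely $k=1$ and $k=s+1$, and then sum the resulting bounds.

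First, I would recall that for the free wave propagator $S(t)$ applied to data $(v_0, v_1)$, multiplying $\partial_{tt} v - \Delta v = 0$ by $(-\Delta)^{k-1} \partial_t v$ and integrating in $x$ (integration by parts is justified on Schwartz data, then extended by density) gives the identity
\begin{equation*}
  \dd t \big( \|\partial_t v(t)\|_{\dot H^{k-1}}^2 + \|v(t)\|_{\dot H^k}^2 \big) = 0,
\end{equation*}
for every integer $k \geq 1$. Equivalently, this can be obtained directly on the Fourier side from the explicit representation of $S(t)$. This yields conservation of the $\dot H^k \times \dot H^{k-1}$ norm under the free flow.

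Second, I would define the solution of the inhomogeneous problem by Duhamel's formula,
\begin{equation*}
  (u(t), \partial_t u(t)) := S(t)(u_0, u_1) + \int_0^t S(t-\tau)(0, g(\tau)) \ud\tau,
\end{equation*}
and apply Minkowski's integral inequality together with the conservation established above to obtain, for each integer $k \in \{1, s+1\}$,
\begin{equation*}
  \|(u(t), \partial_t u(t))\|_{\dot H^k \times \dot H^{k-1}} \leq \|(u_0, u_1)\|_{\dot H^k \times \dot H^{k-1}} + \int_0^t \|g(\tau)\|_{\dot H^{k-1}} \ud\tau.
\end{equation*}
Adding the case $k=1$ (which controls $\|u\|_{\dot H^1}$ and $\|\partial_t u\|_{L^2}$) to the case $k=s+1$ (which controls $\|u\|_{\dot H^{s+1}}$ and $\|\partial_t u\|_{\dot H^s}$) reconstructs exactly the $X^s \times H^s$ norm on the left and is bounded by the right side of \eqref{eq:energy-lin}.

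Third, for continuity in time of $(u, \partial_t u)$ as an $X^s \times H^s$-valued curve, as well as for rigorous justification of the integration by parts, I would approximate $(u_0, u_1)$ and $g$ by smooth compactly supported data, for which the Duhamel solution is classical and trivially continuous in every Sobolev norm; the already-proved energy estimate applied to the difference of two such approximations shows the approximating sequence is Cauchy in $C(I, X^s \times H^s)$, so the limit lies in this space. Uniqueness follows from the same energy estimate applied to the difference of two purported solutions with zero data and zero source.

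The only mildly non-routine feature is that $X^s$ involves two homogeneous norms simultaneously ($\dot H^{s+1}$ and $\dot H^1$), rather than a single Sobolev space; this is what forces the two-level argument above. There is no real obstacle, since the free-flow identity is available at every integer level $k \geq 1$ independently.
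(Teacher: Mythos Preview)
Your argument is correct. The paper itself does not supply a proof of this lemma: it simply states the result and refers the reader to \cite[Theorem 4.4]{BaChDa11} for a more general version. Your Duhamel-plus-energy-conservation argument at the two levels $k=1$ and $k=s+1$, combined with the density/approximation step, is the standard route and fills in precisely what the paper leaves to the reference.
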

For a proof of a more general result one can consult for example \cite[Theorem 4.4]{BaChDa11}.
Using finite speed of propagation and Sobolev Extension Theorem on each time slice we get a localised version of the energy estimate:
\begin{equation}
  \label{eq:energy-lin-loc}
  \|(u, \partial_t u)\|_{X^s\times H^s(B(0, \rho))} \lesssim \|(u_0, u_1)\|_{X^s\times H^s(B(0, \rho + t))} + \int_0^t\|g(\tau)\|_{H^s(B(0, \rho + \tau)}\ud\tau
\end{equation}

Now we use the case $s= 1$ to prove energy estimates in $X^1\times H^1$ for \eqref{eq:nlw}.
\begin{proposition}
  \label{prop:energy-est}
  For all $M_0 > 0$ there exists $T_0 = T_0(M_0) > 0$ such that the following is true.
  Let $(u_0, u_1) \in X^1\times H^1$ with $\|(u_0, u_1)\|_{X^1\times H^1} \leq M_0$.
  Then the Cauchy problem:
  \begin{equation*}
    \begin{cases}
      \partial_{tt}u - \Delta u = f(u), \\
      (u(0), \partial_t u(0)) = (u_0, u_1)
    \end{cases}
  \end{equation*}
  has a unique solution $(u, \partial_t u)\in C([0, T_0], X^1\times H^1)$ and this solution verifies
  \begin{equation}
    \label{eq:energy-nonlin-1}
    \sup_{t\in[0, T_0]}\|(u(t), \partial_t u(t))\|_{X^1 \times H^1} \leq 2\|(u_0, u_1)\|_{X^1\times H^1}.
  \end{equation}
  Moreover, let $u\lin$ denote the solution of the free wave equation for the same initial data $(u_0, u_1)$. Then
  \begin{equation}
    \label{eq:energy-nonlin-2}
    \sup_{t\in[0, T_0]}\|(u(t), \partial_t u(t)) - (u\lin(t), \partial_t u\lin L(t))\|_{X^1\times H^1} \lesssim f\big(\|(u_0, u_1)\|_{X^1\times H^1}\big).
  \end{equation}
\end{proposition}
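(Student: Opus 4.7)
The plan is to establish \eqref{eq:energy-nonlin-1} by a Picard iteration in $X^1 \times H^1$, with Lemma \ref{lem:energy-est} (at $s=1$) as the linear building block. The fundamental nonlinear estimates driving everything are
\[
\|f(u)\|_{H^1} \lesssim \|u\|_{X^1}^{7/3}, \qquad \|f(u)-f(v)\|_{H^1} \lesssim \bigl(\|u\|_{X^1}+\|v\|_{X^1}\bigr)^{4/3}\|u-v\|_{X^1}.
\]
I would prove these from the pointwise bounds $|f(u)| \lesssim |u|^{7/3}$, $|\nabla f(u)| \lesssim |u|^{4/3}|\nabla u|$ (and their Lipschitz analogs), H\"older's inequality, and the dimension-5 Sobolev embeddings $\dot H^1 \hookrightarrow L^{10/3}$ and $\dot H^2 \hookrightarrow L^{10}$. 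Interpolating gives $\|u\|_{L^p} \lesssim \|u\|_{X^1}$ for every $p \in [10/3, 10]$, so in particular $\|f(u)\|_{L^2} = \|u\|_{L^{14/3}}^{7/3} \lesssim \|u\|_{X^1}^{7/3}$; for the gradient part, H\"older with exponents $(5, 10/3)$ gives $\|\nabla f(u)\|_{L^2} \leq \|u\|_{L^{20/3}}^{4/3}\|\nabla u\|_{L^{10/3}}$, where $\|\nabla u\|_{L^{10/3}} \lesssim \|u\|_{\dot H^2}$ by applying $\dot H^1 \hookrightarrow L^{10/3}$ to $\nabla u$.

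On the space $Y_{T_0} := C([0, T_0]; X^1 \times H^1)$ I would next define $\Phi: u \mapsto v$ where $v$ is the solution produced by Lemma \ref{lem:energy-est} with data $(u_0, u_1)$ and source $f(u)$. The linear energy inequality combined with the nonlinear estimate above yields
\[
\|\Phi(u)\|_{Y_{T_0}} \leq M_0 + C T_0 \|u\|_{Y_{T_0}}^{7/3},
\]
and an analogous contraction bound for $\Phi(u) - \Phi(v)$. Choosing $T_0 = c\min(1, M_0^{-4/3})$ with $c$ a sufficiently small absolute constant, $\Phi$ maps the closed ball $\mathcal{B} := \{u \in Y_{T_0}: \|u\|_{Y_{T_0}} \leq 2 M_0\}$ into itself and is a strict contraction there. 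The Banach fixed-point theorem then produces the unique fixed point in $\mathcal{B}$, which is the desired solution and automatically satisfies \eqref{eq:energy-nonlin-1}. Uniqueness in all of $C([0, T_0]; X^1\times H^1)$ reduces to the uniqueness statement in the critical space $\dot H^1 \times L^2$ recalled at the beginning of the paper, since the Strichartz norm of any such higher-regularity solution is finite.

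For \eqref{eq:energy-nonlin-2} I would apply Lemma \ref{lem:energy-est} to $w := u - u\lin$, which solves $\partial_{tt} w - \Delta w = f(u)$ with vanishing initial data:
\[
\|(w(t), \partial_t w(t))\|_{X^1 \times H^1} \leq \int_0^t \|f(u(\tau))\|_{H^1}\ud\tau \leq C T_0 (2 M_0)^{7/3} \lesssim M_0^{7/3},
\]
where in the last step the factor $T_0 \lesssim 1$ (automatic from our choice of $T_0$) is absorbed, and $M_0^{7/3}$ equals $f(M_0)$ up to a universal constant. The only genuinely technical part of the whole argument is bookkeeping the Sobolev exponents in the nonlinear estimate above; this is where the embedding $\dot H^2 \hookrightarrow L^{10}$, sharp only in $N = 5$, plays an essential role in absorbing the factor $|u|^{4/3}$ appearing in $\nabla f(u)$.
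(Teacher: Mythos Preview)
Your approach is essentially identical to the paper's: the same nonlinear estimates (which the paper isolates as Lemma~\ref{lem:contraction}, with the same H\"older/Sobolev exponents you use), the same contraction-mapping argument in $C([0,T_0];X^1\times H^1)$, and the same application of the linear energy estimate to $u-u\lin$ for \eqref{eq:energy-nonlin-2}. One minor point: to obtain \eqref{eq:energy-nonlin-1} and \eqref{eq:energy-nonlin-2} exactly as stated---with $\|(u_0,u_1)\|_{X^1\times H^1}$ rather than $M_0$ on the right---run the contraction in the ball of radius $2\|(u_0,u_1)\|_{X^1\times H^1}$ as the paper does; your choice of $T_0$ still works since $\|(u_0,u_1)\|_{X^1\times H^1}\le M_0$.
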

This will follow easily from the following lemma.
\begin{lemma}
  \label{lem:contraction}
  Let $u, v \in X^1$. Then
  \begin{align}
    \|f(u)\|_{H^1} &\leq C f(\|u\|_{X^1}), \label{eq:contr1} \\
    \|f(u) - f(v)\|_{H^1} &\leq C\|u-v\|_{X^1}\cdot\big(f'(\|u\|_{X^1}) + f'(\|v\|_{X^1})\big) \label{eq:contr2}.
  \end{align}
\end{lemma}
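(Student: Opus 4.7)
The plan is to split each $H^1$ norm into its $L^2$ and $\dot H^1$ pieces and reduce to $L^p$ estimates via pointwise bounds on $f$ and $f'$, combined with the dimension-$5$ Sobolev embeddings $\dot H^1 \hookrightarrow L^{10/3}$, $\dot H^2 \hookrightarrow L^{10}$, and the derivative embedding $\dot H^2 \hookrightarrow \dot W^{1,10/3}$. By interpolation, $\|u\|_{L^p} \lesssim \|u\|_{X^1}$ for every $p \in [10/3, 10]$, and $\|\grad u\|_{L^{10/3}} \lesssim \|u\|_{X^1}$, which is essentially everything that is needed.

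For \eqref{eq:contr1}, the pointwise bounds $|f(u)| \lesssim |u|^{7/3}$ and $|\grad f(u)| \lesssim |u|^{4/3}|\grad u|$ together with H\"older give
$$\|f(u)\|_{L^2} \lesssim \|u\|_{L^{14/3}}^{7/3} \lesssim \|u\|_{X^1}^{7/3}, \qquad \|\grad f(u)\|_{L^2} \lesssim \|u\|_{L^{20/3}}^{4/3}\|\grad u\|_{L^{10/3}} \lesssim \|u\|_{X^1}^{7/3},$$
where in the second estimate I use the H\"older pair $(5/2, 5/3)$. Since $f(r) = r^{7/3}$ for $r \geq 0$, this yields \eqref{eq:contr1}.

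For \eqref{eq:contr2}, the key pointwise inputs are
$$|f(u)-f(v)| \lesssim |u-v|(|u|^{4/3}+|v|^{4/3}), \qquad |f'(u)-f'(v)| \lesssim |u-v|(|u|^{1/3}+|v|^{1/3}),$$
from which the $L^2$ estimate follows directly by H\"older as in the previous step. For the $\dot H^1$ piece I split
$$\grad(f(u)-f(v)) = f'(u)\,\grad(u-v) + (f'(u)-f'(v))\,\grad v.$$
The first summand is handled as in \eqref{eq:contr1}. For the second I apply a three-factor H\"older with exponents $(10, 10, 10/3)$, obtaining a bound $(\|u\|_{X^1}^{1/3}+\|v\|_{X^1}^{1/3})\,\|u-v\|_{X^1}\,\|v\|_{X^1}$, and then Young's inequality $a^{1/3}b \lesssim a^{4/3}+b^{4/3}$ rearranges this into the desired form $\|u-v\|_{X^1}(\|u\|_{X^1}^{4/3}+\|v\|_{X^1}^{4/3})$, which coincides with $\|u-v\|_{X^1}\bigl(f'(\|u\|_{X^1})+f'(\|v\|_{X^1})\bigr)$ up to a constant.

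The only delicate point is the pointwise estimate for $f'(u)-f'(v)$, since the derivative of $s \mapsto |s|^{4/3}$ vanishes at the origin and the naive mean-value bound cannot be applied uniformly. I would argue by cases: when $|u-v| \leq \tfrac12\max(|u|,|v|)$, the segment $[u,v]$ avoids $0$ and the ordinary mean-value theorem applied to $|s|^{4/3}$ gives $|f'(u)-f'(v)| \lesssim |u-v|\max(|u|,|v|)^{1/3}$; in the complementary case $|u-v|$ is comparable to $\max(|u|,|v|)$, so the triangle inequality combined with $|f'(w)| \lesssim |w|^{4/3}$ and $|u-v|^{4/3} \leq |u-v|(|u|+|v|)^{1/3}$ closes the bound.
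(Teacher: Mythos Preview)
Your proof is correct and follows essentially the same route as the paper: split $H^1$ into $L^2$ and $\dot H^1$ pieces, use the pointwise bounds on $f$, $f'$, $f''$, and close with the Sobolev embeddings $\dot H^1\hookrightarrow L^{10/3}$, $\dot H^2\hookrightarrow L^{10}$ in dimension $5$. The only cosmetic difference is in the gradient step of \eqref{eq:contr2}: you use the exact identity $\grad(f(u)-f(v))=f'(u)\grad(u-v)+(f'(u)-f'(v))\grad v$ and then a three-factor H\"older with exponents $(10,10,10/3)$, whereas the paper writes a symmetrized pointwise bound involving $|f''(u)|+|f''(v)|$ and the same H\"older triple; the two are equivalent, and your final Young-type rearrangement $(\|u\|^{1/3}+\|v\|^{1/3})\|v\|\lesssim \|u\|^{4/3}+\|v\|^{4/3}$ is the same one implicitly used in the paper.
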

\begin{proof}
  We have $\|f(u)\|_{L^2} = \|u\|_{L^{14/3}}^{7/3} \lesssim f(\|u\|_{X^1})$ from the Sobolev imbedding.
  By H\"older inequality,
  \begin{equation*}
    \|\grad f(u)\|_{\dot H^1} = \|\grad u \cdot f'(u)\|_{L^2} \lesssim \|\grad u\|_{L^{10/3}}\cdot\|f'(u)\|_{L^5} \lesssim \|u\|_{X^1}\cdot \|u\|_{L^{20/3}}^{4/3} \lesssim f(\|u\|_{X^1}),
  \end{equation*}
  again by Sobolev imbedding. This proves \eqref{eq:contr1}.

  To prove \eqref{eq:contr2}, we write $|f(u)-f(v)| \lesssim |u-v|(f'(u) + f'(v))$, hence
  \begin{equation*}
    \begin{aligned}
    \|f(u)-f(v)\|_{L^2} &\lesssim \|u-v\|_{L^{14/3}}\cdot\|f'(u) + f'(v)\|_{L^{7/2}} \lesssim \|u-v\|_{L^{14/3}}\cdot\big(\|u\|_{L^{14/3}}^{4/3} + \|v\|_{L^{14/3}}^{4/3}\big) \\
    &\lesssim \|u-v\|_{X^1}\cdot\big(f'(\|u\|_{X^1}) + f'(\|v\|_{X^1})\big).
  \end{aligned}
  \end{equation*}
  Finally,
  \begin{equation*}
    |\grad f(u) - \grad f(v)| \lesssim |\grad u - \grad v|(f'(u) + f'(v)) + |u-v|(|\grad u| + |\grad v|)(|f''(u)| + |f''(v)|),
  \end{equation*}
  and it suffices to notice that
  \begin{equation*}
    \begin{aligned}
    \| |\grad u - \grad v|(f'(u) + f'(v)) \|_{L^2} &\lesssim \|\grad u - \grad v\|_{L^{10/3}}\cdot \|f'(u) + f'(v)\|_{L^5} \\ 
    &\lesssim \|u-v\|_{X^1}\cdot\big(f'(\|u\|_{X^1}) + f'(\|v\|_{X^1})\big)
  \end{aligned}
  \end{equation*}
  and
  \begin{equation*}
    \begin{aligned}
      &\bigl\||u-v|(|\grad u| + |\grad v|)(|f''(u)| + |f''(v)|)\bigr\|_{L^2} \\
      \lesssim\,&\|u-v\|_{L^{10}}\cdot(\|\grad u\|_{L^{10/3}}+\|\grad v\|_{L^{10/3}})\cdot\big(\|f''(u)\|_{L^{10}} + \|f''(v)\|_{L^{10}}\big) \\
      \lesssim\,&\|u-v\|_{X^1}\cdot\big(f'(\|u\|_{X^1}) + f'(\|v\|_{X^1})\big).
    \end{aligned}
  \end{equation*}
\end{proof}

\begin{proof}[Proof of Proposition 
\ref{prop:energy-est}]
  Let $B$ denote the ball of centre $0$ and radius $2\|(u_0, u_1)\|_{X^1\times H^1}$ in the space $X^1 \times H^1$.
  Given $(u, \partial_t u)\in C([0, T], B)$, let $\wt u = \Phi(u)$ denote the solution of the Cauchy problem
  \begin{equation*}
    \begin{cases}
      \partial_{tt}\wt u - \Delta \wt u = f(u), \\
      (\wt u(0), \partial_t \wt u(0)) = (u_0, u_1)
    \end{cases}
  \end{equation*}
  It follows from Lemma \eqref{eq:contr1} and \eqref{eq:energy-lin} that if $T \leq \frac{M_0}{C f(2M_0)}$, then $(\wt u, \partial_t \wt u) \in C([0, T], B)$.
  It follows from \eqref{eq:contr2} and \eqref{eq:energy-lin} that if $T \leq \frac{1}{4Cf'(2M_0)}$, then $\Phi$ is a contraction,
  so it has a unique fixed point, which is the desired solution.

  The function $v := u-u\lin$ solves the Cauchy problem
  \begin{equation*}
    \begin{cases}
      \partial_{tt}v - \Delta v = f(u), \\
      (v(0), \partial_t v(0)) = 0,
    \end{cases}
  \end{equation*}
  so \eqref{eq:energy-nonlin-2} follows from \eqref{eq:energy-lin}.
\end{proof}

\subsection{Persistence of $X^1 \times H^1$ regularity}
We recall the classical Strichartz inequality:
\begin{lemma}\cite{GiVe95}
  \label{lem:strichartz}
  Let $I = [0, T_0]$ be a time interval, $g\in C(I, L^2)$ and $(u_0, u_1) \in \enorm$. Let $\bs u$ be the solution of the Cauchy problem
  \begin{equation*}
    \begin{cases}
      \partial_{tt}u - \Delta u = g, \\
      (u(0), \partial_t u(0)) = (u_0, u_1).
    \end{cases}
  \end{equation*}
  Then
  \begin{equation*}
    \|u\|_{L^{7/3}(I; L^{14/3})} \lesssim \|(u_0, u_1)\|_\enorm + \|g\|_{L^1(I; L^2)},
  \end{equation*}
  with a constant independent of $I$.
\end{lemma}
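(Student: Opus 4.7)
The plan is to reduce this statement to the general linear wave Strichartz theory of Ginibre--Vega by verifying that the exponent pair $(q,r)=(7/3, 14/3)$ is admissible in dimension $N=5$ and then combining the homogeneous bound with Duhamel and Minkowski. I would begin by recalling the general wave Strichartz estimate in $\bR^N$: for any pair $(q,r)$ with $2\le q\le \infty$, $2\le r<\infty$ satisfying the wave admissibility $\frac{1}{q}+\frac{N-1}{2r}\le \frac{N-1}{4}$ and the scaling relation $\frac{1}{q}+\frac{N}{r}=\frac{N}{2}-1$ (so that the estimate lives at the level of $\dot H^1\times L^2$ data), one has
\begin{equation*}
  \|S(t)(u_0,u_1)\|_{L^q(\bR;L^r)}\lesssim \|(u_0,u_1)\|_\enorm,
\end{equation*}
where $S(t)$ denotes the free wave propagator.

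The first verification is purely arithmetic: for $N=5$ and $(q,r)=(7/3,14/3)$ one checks that
\begin{equation*}
  \frac{1}{q}+\frac{N}{r}=\frac{3}{7}+\frac{15}{14}=\frac{3}{2}=\frac{N}{2}-1,
\end{equation*}
so the scaling is correct, and
\begin{equation*}
  \frac{1}{q}+\frac{N-1}{2r}=\frac{3}{7}+\frac{4\cdot 3}{2\cdot 14}=\frac{6}{7}\le 1=\frac{N-1}{4},
\end{equation*}
which places $(7/3,14/3)$ strictly inside the admissible range (and in particular away from any endpoint). Hence the Ginibre--Vega homogeneous estimate applies to the solution of the free equation with data $(u_0,u_1)$, giving the required bound on the homogeneous part.

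For the inhomogeneous part, I would write $u=S(t)(u_0,u_1)+v$ where
\begin{equation*}
  v(t)=\int_0^t S(t-s)(0,g(s))\,\vd s,
\end{equation*}
and apply the homogeneous Strichartz estimate pointwise in the parameter $s$ to each profile $S(\cdot-s)(0,g(s))$, whose $\enorm$-data has norm $\|g(s)\|_{L^2}$. Since $\|\cdot\|_{L^{7/3}(I;L^{14/3})}$ is a norm, Minkowski's integral inequality yields
\begin{equation*}
  \|v\|_{L^{7/3}(I;L^{14/3})}\le \int_0^{T_0}\|S(\cdot-s)(0,g(s))\|_{L^{7/3}(I;L^{14/3})}\,\vd s\lesssim \int_0^{T_0}\|g(s)\|_{L^2}\,\vd s,
\end{equation*}
with the implicit constant independent of $I$ thanks to translation invariance of the free flow. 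Summing the two contributions yields the claimed estimate.

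I do not expect a serious obstacle here: the result is a textbook consequence of the general wave Strichartz theory, and the only content is the numerical check that $(7/3,14/3)$ is admissible in dimension $5$, after which one simply quotes \cite{GiVe95}. The closest thing to a subtlety is making sure that the pair is not the forbidden endpoint --- but the strict inequality $6/7<1$ above settles this, so the standard proof via dispersive decay and $TT^*$ applies without modification.
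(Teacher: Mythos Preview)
Your sketch is correct: the pair $(q,r)=(7/3,14/3)$ satisfies the scaling relation $\tfrac{1}{q}+\tfrac{N}{r}=\tfrac{N}{2}-1$ and the strict wave admissibility $\tfrac{1}{q}+\tfrac{N-1}{2r}=\tfrac{6}{7}<1$ in dimension $N=5$, so the homogeneous estimate from \cite{GiVe95} applies, and Duhamel plus Minkowski handles the inhomogeneous term with forcing in $L^1_t L^2_x$.

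The paper itself gives no proof of this lemma; it is stated as a classical result with a citation to \cite{GiVe95} and used as a black box. Your write-up therefore supplies strictly more detail than the paper does. (One trivial slip: the reference is Ginibre--Velo, not Ginibre--Vega.)
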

From the local theory of \eqref{eq:nlw} in the critical space we know that if $\bs u \in C((T_-, T_+); \dot H^1\times L^2)$ is a solution
of \eqref{eq:nlw} and $I = [T_1, T_2] \subset (T_-, T_+)$, then
\begin{equation}
  \label{eq:str-bound}
  \|u\|_{L^{7/3}(I; L^{14/3})} < +\infty.
\end{equation}
\begin{proposition}
  \label{prop:persistence}
  Suppose that $0 \in I = [T_1, T_2]\subset (T_-, T_+)$ and that $(u_0, u_1) \in X^1\times H^1$.
  Then $\bs u \in C(I, X^1\times H^1)$.
\end{proposition}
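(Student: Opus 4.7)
The plan is to combine the \emph{a priori} critical Strichartz bound \eqref{eq:str-bound} with the linear energy estimate \eqref{eq:energy-lin} at regularity level $s=1$, via a sub-interval decomposition and an absorption/bootstrap argument. Uniqueness of the critical $\dot H^1\times L^2$ solution reduces matters to producing a uniform $L^\infty(I;X^1\times H^1)$ bound: continuity in time will then follow by applying the linear energy estimate to the differences $\bs u(t+h)-\bs u(t)$, or equivalently by running Proposition~\ref{prop:energy-est} forward from any $t\in I$ using the uniform bound as a continuation criterion.

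Concretely, I would fix a small parameter $\eta>0$ (to be specified later) and use \eqref{eq:str-bound} to partition $I$ into finitely many consecutive intervals $J_0,\ldots,J_N$ with endpoints $T_1=\tau_0<\cdots<\tau_{N+1}=T_2$ and $\|u\|_{L^{7/3}(J_k;L^{14/3})}\le\eta$ for every $k$. On each $J_k$, differentiating \eqref{eq:nlw} shows that $\nabla u$ solves the linear wave equation with source $\nabla f(u)=f'(u)\nabla u$; applying Lemma~\ref{lem:strichartz} to $\nabla u$ together with \eqref{eq:energy-lin} at $s=1$ yields the mixed estimate
\[
\|\bs u\|_{L^\infty(J_k;X^1\times H^1)} + \|\nabla u\|_{L^{7/3}(J_k;L^{14/3})}
\lesssim \|\bs u(\tau_k)\|_{X^1\times H^1} + \|f(u)\|_{L^1(J_k;H^1)}.
\]

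For the source I would use the pointwise bounds $|f(u)|\lesssim|u|^{7/3}$ and $|f'(u)|\lesssim|u|^{4/3}$ together with the spatial Hölder split $\tfrac12=\tfrac27+\tfrac{3}{14}$, which gives $\|f(u)\|_{L^2}\lesssim\|u\|_{L^{14/3}}^{7/3}$ and $\|\nabla f(u)\|_{L^2}\lesssim\|u\|_{L^{14/3}}^{4/3}\|\nabla u\|_{L^{14/3}}$; Hölder in time with the split $\tfrac47+\tfrac37=1$ then produces
\[
\|f(u)\|_{L^1(J_k;H^1)}
\lesssim \|u\|_{L^{7/3}(J_k;L^{14/3})}^{7/3}
+ \|u\|_{L^{7/3}(J_k;L^{14/3})}^{4/3}\,\|\nabla u\|_{L^{7/3}(J_k;L^{14/3})}.
\]
Choosing $\eta$ small enough that $C\eta^{4/3}\le\tfrac12$ allows the $\|\nabla u\|_{L^{7/3}(J_k;L^{14/3})}$ term to be absorbed into the left-hand side of the mixed estimate, yielding $\|\bs u\|_{L^\infty(J_k;X^1\times H^1)}\le C(\|\bs u(\tau_k)\|_{X^1\times H^1}+\eta^{7/3})$. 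Iterating this bound across the $N+1$ subintervals, starting from $\bs u(0)=(u_0,u_1)\in X^1\times H^1$, produces the sought finite uniform bound.

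The main obstacle is the spatial nonlinear estimate: the Hölder pairing $L^{7/2}\cdot L^{14/3}\hookrightarrow L^2$ used to split $\nabla f(u)=f'(u)\nabla u$ is forced by the critical Strichartz and Sobolev geometry in dimension $5$, and only this pairing makes the coefficient in front of $\|\nabla u\|_{L^{7/3}(J_k;L^{14/3})}$ a \emph{power} of the small Strichartz norm $\|u\|_{L^{7/3}(J_k;L^{14/3})}$ that can be absorbed. With any other exponent split (for instance putting all derivatives on $f(u)$ and using Lemma~\ref{lem:contraction} directly) one would end up with a superlinear Grönwall-type inequality that does not close on a large interval, which is precisely why the higher-order Strichartz norm $\|\nabla u\|_{L^{7/3}(J_k;L^{14/3})}$ must be propagated in tandem with the energy norm.
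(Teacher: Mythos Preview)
Your proposal is correct and is essentially the same argument as the paper's: both differentiate the equation, apply Strichartz to $\nabla u$, use the H\"older split $\|\nabla f(u)\|_{L^1L^2}\lesssim\|u\|_{L^{7/3}L^{14/3}}^{4/3}\|\nabla u\|_{L^{7/3}L^{14/3}}$, and absorb using smallness of the critical Strichartz norm. The only cosmetic difference is that the paper packages this as a blowup/continuation argument (assume a finite $X^1\times H^1$ blowup time $T_*<T_+$ and contradict it on a single short interval near $T_*$), whereas you do a direct finite partition of $I$ and iterate; the absorption step is identical in both.
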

\begin{proof}
  The proof is classical, see for example \cite[Chapter 5]{Cazenave03} for more general results in the case of NLS.

  We consider positive times. The proof for negative times is the same.
  Let $T_*$ be the maximal time of existence of $\bs u$ in $X^1 \times H^1$.
  Suppose that $T_* < T_+$. From Proposition 
\ref{prop:energy-est} it follows that
  \begin{equation}
    \label{eq:blowup-H2}
    \lim_{t \to T_*}\|\bs u\|_{X^1\times H^1} = +\infty.
  \end{equation}
  Consider the time interval $I = [T_* - \tau, T_*]$. Derivating \eqref{eq:nlw} once and using Lemma 
\ref{lem:strichartz} we get
  \begin{equation}
    \label{eq:extrap}
    \|\grad u\|_{L^{7/3}(I; L^{14/3})} \leq C \|(u(T_*-\tau), \partial_t u(T_* - \tau))\|_{X^1 \times H^1} + C\|\grad(f(u))\|_{L^1(I; L^2)},
  \end{equation}
  with $C$ independent of $\tau$. From H\"older inequality we have
  \begin{equation*}
    \|\grad(f(u))\|_{L^1(I; L^2)} \leq \|\grad u\|_{L^{7/3}(I; L^{14/3})}\cdot f'\big(\|u\|_{L^{7/3}(I; L^{14/3})}\big).
  \end{equation*}
  By \eqref{eq:str-bound}, the last term is arbitrarily small when $\tau \to 0^+$, so for $\tau$ small enough the second term on the right hand side of \eqref{eq:extrap}
  can be absorbed by the left hand side, which implies $\|\grad u\|_{L^{7/3}(I; L^{14/3})} < +\infty$ and $\|\grad(f(u))\|_{L^1(I; L^2)} < +\infty$.
  This is in contradiction with \eqref{eq:blowup-H2}, because of the energy estimate~\eqref{eq:energy-lin}.
\end{proof}

\subsection{Propagation of regularity around a non-degenerate point}
\begin{proposition}
  \label{prop:cauchy-nondeg}
  Let $(u_0, u_1) \in X^4 \times H^4$ such that $u_0(0) > 0$. Let $(u, \partial_t u) \in C([0, T_0]; X^1\times H^1)$ be the solution of the Cauchy problem:
  \begin{equation*}
    \begin{cases}
      \partial_{tt}u - \Delta u = f(u), \\
      (u(0), \partial_t u(0)) = (u_0, u_1),
    \end{cases}
  \end{equation*}
  constructed in Proposition 
\ref{prop:energy-est}. There exists $\tau, \rho > 0$ such that $(u, \partial_t u)$ satisfies:
  \begin{equation}
    \label{eq:cut-reg}
    \big(\chi\big(\frac{\cdot}{\rho}\big) u, \chi\big(\frac{\cdot}{\rho}\big)\partial_t u\big) \in C([0, \tau]; X^4\times H^4)
  \end{equation}
  (where $\chi$ is a standard regular cut-off function).
\end{proposition}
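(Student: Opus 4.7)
My plan is to exploit the fact that $f(u) = |u|^{4/3}u$ is only $C^{2,1/3}$ at $u = 0$ (since $f'''(u) \sim |u|^{-2/3}$) but is $C^\infty$ away from zero, so $X^4 \times H^4$ regularity can be propagated only in regions where $u$ stays bounded away from zero. Since $X^4 = \dot H^5 \cap \dot H^1 \hookrightarrow C^{2,1/2}(\mathbb{R}^5)$ by Sobolev embedding ($5 - 5/2 = 5/2$), the function $u_0$ is continuous; combined with $u_0(0) > 0$, this yields $\rho_0, c > 0$ such that $u_0(x) \geq c$ for all $x \in B(0, 4\rho_0)$.

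To circumvent the limited smoothness of $f$, I will introduce a modified nonlinearity $g \in C^\infty(\mathbb{R})$ with $g \equiv f$ on $[c/2, +\infty)$ and with the same growth as $f$ at infinity (a smooth interpolation below $s = c/2$ suffices, as this modification happens on a bounded range). Consider the Cauchy problem
\begin{equation*}
  \partial_{tt}\tilde u - \Delta \tilde u = g(\tilde u), \qquad (\tilde u(0), \partial_t \tilde u(0)) = (u_0, u_1).
\end{equation*}
Since $g$ is smooth, classical Moser-type product and composition estimates of the form $\|g(v)\|_{H^s} \leq C(\|v\|_{L^\infty})(1 + \|v\|_{H^s})$, combined with iterated application of the linear energy estimate (Lemma \ref{lem:energy-est}) for $s = 1, 2, 3, 4$, give existence of a solution with $(\tilde u, \partial_t \tilde u) \in C([0, \tau_0]; X^4 \times H^4)$ for some $\tau_0 > 0$.

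By the embedding $H^4(\mathbb{R}^5) \hookrightarrow C^{1,1/2}$, the solution $\tilde u$ is continuous in $(t, x)$; since $\tilde u(0, \cdot) = u_0 \geq c$ on $B(0, 4\rho_0)$, shrinking $\tau_0$ if necessary ensures $\tilde u(t, x) \geq c/2$ on $B(0, 4\rho_0)$ for every $t \in [0, \tau_0]$. Setting $\tau := \min(\tau_0, 2\rho_0, T_0)$, on the spacetime cone $\Omega := \{(t, x) : 0 \leq t \leq \tau,\ |x| \leq 4\rho_0 - t\}$ we have $g(\tilde u) = f(\tilde u)$, so $\tilde u$ solves the original equation \eqref{eq:nlw} there. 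By uniqueness in $C([0, T_0]; X^1 \times H^1)$ (Proposition \ref{prop:energy-est}) and finite speed of propagation — the backward light cone from any point of $\Omega$ remains inside $\Omega$ down to $t = 0$ — we conclude $u \equiv \tilde u$ on $\Omega$. Taking $\rho := \rho_0$, the support of $\chi(\cdot/\rho)$ lies inside $B(0, 2\rho_0) \subset \{|x| \leq 4\rho_0 - t\}$ for all $t \in [0, \tau]$, so $\chi(\cdot/\rho) u(t) = \chi(\cdot/\rho)\tilde u(t)$ and similarly for the time derivatives. Since multiplication by the compactly supported smooth cut-off $\chi(\cdot/\rho)$ maps $X^4 \to X^4$ and $H^4 \to H^4$, this gives \eqref{eq:cut-reg}.

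The main obstacle is the higher-order energy theory for the modified smooth equation: one must control the $L^\infty$ norm of $\tilde u$ and its low derivatives simultaneously with the $X^4 \times H^4$ energy, via a Moser-type bootstrap in which these controls feed back into the Gronwall inequality for the energy norm. This is classical but tedious, and essentially amounts to iterating the argument of Proposition \ref{prop:energy-est} at successively higher regularity levels, with the smoothness of $g$ replacing the criticality of $f$ that prevented the original argument from being pushed beyond $s = 1$ uniformly in the sign of $u$.
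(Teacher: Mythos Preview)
Your proposal is correct and follows essentially the same strategy as the paper: modify the nonlinearity to a smooth $g$ agreeing with $f$ on $[c/2,\infty)$, solve the modified problem in $X^4\times H^4$ via higher-order energy estimates (the paper invokes Fa\`a di Bruno where you cite Moser-type composition estimates, which amounts to the same thing), use continuity to keep $\tilde u \geq c/2$ near the origin, and then identify $u$ with $\tilde u$ inside a light cone by finite speed of propagation. The only point where the paper is slightly more explicit is the last step: rather than appealing directly to ``uniqueness plus finite speed of propagation,'' it writes the equation for $v = u - \tilde u$, applies the localized energy estimate \eqref{eq:energy-lin-loc} together with the Lipschitz bound $\|f(u)-f(\tilde u)\|_{H^1}\lesssim \|v\|_{X^1}$ on the relevant ball, and closes by Gronwall---you may want to spell this out, since the global $X^1\times H^1$ uniqueness from Proposition~\ref{prop:energy-est} does not by itself localize.
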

\begin{proof}
  Denote $v_0 := u_0(0) > 0$
  and introduce an auxiliary function $\wt f \in C^\infty$, $\wt f(u) = f(u)$ when $u \geq v_0 / 2$, $f(u) = 0$ when $u \leq 0$.
  Using Fa\`a di Bruno formula one can prove an analog of Lemma 
\ref{lem:energy-est}:
  \begin{align*}
    \|\wt f(u)\|_{H^4} &\leq C(\|u\|_{X^4}), \\
    \|\wt f(u) - \wt f(v)\|_{H^4} &\leq \|u-v\|_{X^4}\cdot C(\|u\|_{X^4} + \|v\|_{X^4}),
  \end{align*}
  where $C: \bR_+ \to \bR_+$ is a continuous function. The same procedure as in the proof of Proposition 
\ref{prop:energy-est}
  leads to the conlusion that there exists $\tau > 0$ such that the Cauchy problem:
  \begin{equation*}
    \begin{cases}
      \partial_{tt}\wt u - \Delta \wt u = \wt f(\wt u), \\
      (\wt u(0), \partial_t \wt u(0)) = (u_0, u_1)
    \end{cases}
  \end{equation*}
  has a solution $(\wt u, \partial_t \wt u) \in C([0, \tau], X^4 \times H^4)$.
  By continuity and Schauder estimates, if we take $\tau$ and $\rho$ sufficiently small, we have $\wt u(t, x) > \frac 12 v_0$ for $|x| \leq 4\rho$ and $0\leq t\leq \tau$.
  We may assume that $\tau \leq 2\rho$.
  Consider $v = u - \wt u$. We will prove that $v = 0$ when $0 \leq t \leq \tau$ and $|x| \leq 2\rho$, which will finish the proof.
  The function $v$ solves the Cauchy problem:
  \begin{equation*}
    \begin{cases}
      \partial_{tt}v - \Delta v = f(u) - \wt f(\wt u), \\
      (v(0), \partial_t v(0)) = 0.
    \end{cases}
  \end{equation*}
  We run the localized energy estimate \eqref{eq:energy-lin-loc} for $|x| \leq 2\rho + |t - \tau|$. We suppose that $\tau \leq 2\rho$, so $|x| \leq 4\rho$,
  which means that $\|f(u) - \wt f(\wt u)\|_{H^1} = \|f(u) - f(\wt u)\|_{H^1} \lesssim \|u - \wt u\|_{X^1}$ (the norm is taken in the ball $B(0, 2\rho + |t - \tau|$).
  From \eqref{eq:energy-lin-loc} and Gronwall inequality we deduce that $u = \wt u$ when $|x| \leq 2\rho + |t - \tau|$, in particular when $|x| \leq 2\rho$. 
\end{proof}

\subsection{Short-time asymptotics in the case $(u_0, u_1) = (p|x|^\beta, 0)$.}
Let $(u, \partial_t u)$ denote the solution of \eqref{eq:nlw} corresponding to the intial data
\begin{equation*}
  (u_0, u_1) = \big(\chi\big(\frac{\cdot}{\rho}\big)p|x|^\beta, 0\big),
\end{equation*}
where $p, \rho > 0$ and $\beta > \frac 52$ are constants and $\chi$ is a standard cut-off function.
Let $(u\lin, \partial_t u\lin)$ denote the solution of the free wave equation corresponding to the same initial data.
\begin{proposition}
  \label{prop:u-lin}
  Let $q = \frac{(\beta+1)(\beta+3)}{3}p$. There exist $T_0 > 0$ and a constant $C > 0$ such that
  for $0 \leq t \leq T_0$ and $|x| \leq \frac 12 t$ there holds
  \begin{equation}
    \label{eq:u-lin}
    |u\lin(t, x) - qt^\beta| \leq Ct^{\beta-2}|x|^2.
  \end{equation}
\end{proposition}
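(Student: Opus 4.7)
The plan is to reduce the claim to an explicit formula for the $5$-dimensional radial free wave and then perform a Taylor expansion in $r/t$.

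First, by finite speed of propagation, for $|x|\leq t/2$ and $t \leq T_0$ with $T_0 < 2\rho/3$, the function $u\lin(t,x)$ coincides with the value at $(t,x)$ of the free-wave solution for the \emph{uncut} data $(p|x|^\beta, 0)$. I can therefore discard $\chi(\cdot/\rho)$ and work with the pure power.

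Second, I would use the classical descent. A direct computation shows that whenever $v(r,t)$ solves the 3D radial wave equation $v_{tt}=v_{rr}+\tfrac{2}{r}v_r$, the function $\tilde u:=\tfrac{1}{r}\partial_r v$ solves the 5D radial wave equation. Taking the 3D data $(v,\partial_t v)|_{t=0}=(\tfrac{p}{\beta+2}r^{\beta+2},0)$ produces $\tilde u|_{t=0}=pr^\beta$ and $\partial_t\tilde u|_{t=0}=0$, hence $u\lin=\tilde u$ by uniqueness. Setting $w:=rv$, the function $w$ solves $w_{tt}=w_{rr}$ on the half-line with Dirichlet boundary condition $w(t,0)=0$, initial datum $w(0,r)=\tfrac{p}{\beta+2}r^{\beta+3}$ and $\partial_t w(0,\cdot)=0$. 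D'Alembert's formula applied to the odd extension of $w(0,\cdot)$ yields, for $0\leq r\leq t$,
\[
w(t,r)=\frac{p}{2(\beta+2)}\big[(t+r)^{\beta+3}-(t-r)^{\beta+3}\big],
\]
and therefore
\[
u\lin(t,r)=\frac{1}{r}\partial_r\!\Big(\frac{w(t,r)}{r}\Big)=\frac{rw_r-w}{r^3}.
\]

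Third, I would Taylor expand in $s:=r/t\in[0,1/2]$. Factoring out $t^{\beta+3}$ from the bracket gives
\[
u\lin(t,r)=\frac{p\,t^{\beta}}{2(\beta+2)\,s^3}\,h(s),\qquad h(s):=s(\beta+3)\!\big[(1+s)^{\beta+2}+(1-s)^{\beta+2}\big]-\big[(1+s)^{\beta+3}-(1-s)^{\beta+3}\big].
\]
The function $h$ is smooth and odd on $[-1/2,1/2]$, so only odd powers of $s$ appear in its expansion. The $s$-terms cancel exactly, and the coefficient of $s^3$ reduces, via
\[
(\beta+3)\binom{\beta+2}{2}-\binom{\beta+3}{3}=\frac{(\beta+1)(\beta+2)(\beta+3)}{3},
\]
to $\tfrac{2(\beta+1)(\beta+2)(\beta+3)}{3}$. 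Uniformly on $[0,1/2]$ one thus has $h(s)/s^3=\tfrac{2(\beta+1)(\beta+2)(\beta+3)}{3}+O(s^2)$, which plugged into the display above gives $u\lin(t,r)=qt^\beta+O(t^\beta s^2)=qt^\beta+O(t^{\beta-2}r^2)$ with $q=\tfrac{(\beta+1)(\beta+3)}{3}p$, as claimed.

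The only subtle point is the double cancellation in the expansion of $h$: both the $s^0$ and the $s^1$ terms must vanish before one extracts the correct coefficient from $s^3$, and it is precisely this cancellation that produces the constant $q$. Everything else is either finite speed of propagation or a standard manipulation of the radial wave equation.
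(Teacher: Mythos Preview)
Your proof is correct and takes a genuinely different route from the paper. The paper works directly with the five-dimensional spherical-means (Kirchhoff) representation
\[
u\lin(t,x)=\tfrac13\,\partial_t\!\Big(\tfrac1t\partial_t\Big)\Big(t^3\fint_{\partial B(x,t)}p|y|^\beta\,\vd\sigma(y)\Big),
\]
rewrites the spherical average as $t^\beta\wt w(|x|^2/t^2)$ with $\wt w$ analytic near $0$, and reads off $\wt w_1(0)=q$ from the differentiation. You instead descend from the 5D radial equation to the 3D radial equation and then to the 1D d'Alembert formula, which yields an explicit closed form for $u\lin$ inside the cone and makes the Taylor expansion in $s=r/t$ completely concrete. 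Your approach is more elementary (no spherical-means machinery) and gives the full formula rather than just analyticity; the paper's argument is more conceptual and would adapt to other data analytic in $|x|^2$. One small remark: since $h$ is odd, the $s^0$ and $s^2$ terms vanish automatically, so the only nontrivial cancellation is that of the $s^1$ coefficient, which you verify via $(\beta+3)\binom{\beta+2}{0}-\binom{\beta+3}{1}=0$.
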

\begin{proof}
  Define
  \begin{equation*}
    w(y) := \fint_{\partial B(0, 1)} p|\omega + ye_1|^\beta\ud\sigma(\omega),\qquad -\frac{1}{\sqrt 2} < y < \frac{1}{\sqrt 2},
  \end{equation*}
  where $B(0, 1)$ denote the unit ball in $\bR^5$, $\ud\sigma$ is the surface measure on the unit sphere and $e_1 = (1, 0, 0, 0, 0)$.
  Notice that
  \begin{equation*}
    |\omega + ye_1|^\beta = (1-\omega_1^2 + (y + \omega_1)^2)^{\beta/2} = (1+\omega_1^2)^{\beta/2}\cdot\big(1 + y\frac{2\omega_1}{1+\omega_1^2}\big)^{\beta/2}
  \end{equation*}
  can be developped in a power series in $y$ which converges uniformly for $-\frac{1}{\sqrt 2} < y < \frac{1}{\sqrt 2}$.
  Hence, $w$ is an analytic function. It is also symetric, so it is in fact analytic in $y^2$, $$w(y) = \wt w(y^2),\qquad \wt w(z) \text{ analytic for }|z| < \frac 12.$$
  We have $\wt w(0) = w(0) = p$.

  The representation formula for solutions of the free wave equation, see for example \cite[p. 77]{evans98}, yields
  \begin{equation*}
    u\lin(t, x) = \frac 13 \big(\pd t\big)\big(\frac 1t \pd t\big)\big(t^3 \fint_{\partial B(x, t)}p|y|^\beta\ud \sigma(y)\big).
  \end{equation*}
  A change of variables shows that for $|x| < \frac{1}{\sqrt 2}t$ and $t$ sufficiently small we have
  \begin{equation*}
    u\lin(t, x) = \frac 13 \big(\pd t\big)\big(\frac 1t \pd t\big)\big(t^3 \cdot t^\beta \wt w\big(\frac{|x|^2}{t^2}\big)\big) = t^\beta \wt w_1\big(\frac{|x|^2}{t^2}\big),
  \end{equation*}
  where $\wt w_1(z)$ is analytic for $|z| < \frac 12$. It is easily seen that $\wt w_1(0) = \frac{(\beta+1)(\beta+3)}{3}p = q$
  (all the terms coming from differentiating $\wt w$ vanish at $z = 0$).
  Hence, there exists a constant $C$ such that $|\wt w_1(z) - q| \leq C |z|$ for $|z| \leq \frac 14$, and the conclusion follows.
\end{proof}

\begin{proposition}
  \label{prop:u-nonlin}
  For $t$ small enough there holds
  \begin{equation*}
    \|u-u\lin\|_{X^1(|x|\leq \frac 12 t)} \lesssim t^{\frac 73 \beta + \frac 76}.
  \end{equation*}
\end{proposition}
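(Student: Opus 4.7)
I would set $v := u - u\lin$, which satisfies the inhomogeneous wave equation $\partial_{tt}v - \Delta v = f(u)$ with zero initial data. The localized energy estimate \eqref{eq:energy-lin-loc} with $s=1$, applied on the backward cone from $\{|x|\leq t/2\}$ at time $t$, gives
\[
\|v(t)\|_{X^1(|x|\leq t/2)} \;\lesssim\; \int_0^t \|f(u(\tau))\|_{H^1(\Omega_\tau)}\,\mathrm{d}\tau, \qquad \Omega_\tau := \{|x|\leq 3t/2-\tau\},
\]
and more generally $\|v(\tau')\|_{X^1(\Omega_{\tau'})}$ is bounded by the same integral truncated at $\tau'$ for every $\tau'\leq t$.

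The key preparatory step is to establish pointwise bounds on $u\lin$ and its first two derivatives on the expanded region $[0,t]\times\{|x|\leq 3t/2\}$, by a scaling argument analogous to Proposition \ref{prop:u-lin}. The rescaled profile $\tilde u(\sigma,y):=t^{-\beta}u\lin(t\sigma,ty)$ solves the free wave equation with data $p|y|^\beta$ (the cutoff being inactive for $t$ small), hence is a $t$-independent function, smooth away from $y=0$, whose behavior near $y=0$ is controlled by the local expansion already obtained in Proposition \ref{prop:u-lin}. Since $\beta>5/2$, this yields
\[
|u\lin(\tau,x)|\lesssim t^\beta,\qquad |\nabla u\lin(\tau,x)|\lesssim t^{\beta-1},\qquad |\nabla^2 u\lin(\tau,x)|\lesssim t^{\beta-2}
\]
uniformly on $[0,t]\times\{|x|\leq 3t/2\}$. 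Combined with $|\Omega_\tau|\lesssim t^5$ and the pointwise inequalities $|f(a)|\lesssim|a|^{7/3}$, $|\nabla f(a)|\lesssim|a|^{4/3}|\nabla a|$, the ``linear part'' of the forcing is controlled by
\[
\|f(u\lin(\tau))\|_{H^1(\Omega_\tau)}\lesssim t^{7\beta/3+3/2},\qquad \int_0^t\|f(u\lin(\tau))\|_{H^1(\Omega_\tau)}\,\mathrm{d}\tau\lesssim t^{7\beta/3+5/2},
\]
which is already stronger than the claimed $t^{7\beta/3+7/6}$.

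The nonlinear correction $f(u)-f(u\lin)$ is handled by a bootstrap (equivalently, a contraction mapping) argument on $N(t):=\sup_{\tau\leq t}\|v(\tau)\|_{X^1(\Omega_\tau)}$. Assuming $N(t)\leq K t^{7\beta/3+7/6}$, I would use the pointwise estimates
\[
|f(u)-f(u\lin)|\lesssim\big(|u\lin|^{4/3}+|v|^{4/3}\big)|v|,
\]
\[
|\nabla(f(u)-f(u\lin))|\lesssim |u|^{4/3}|\nabla v|+|v|\big(|u|^{1/3}+|u\lin|^{1/3}\big)|\nabla u\lin|,
\]
together with the Sobolev embeddings in dimension $5$ ($\dot H^1\hookrightarrow L^{10/3}$, $\dot H^2\hookrightarrow L^{10}$, $X^1\hookrightarrow L^{14/3}$ by interpolation), the $L^\infty$ bounds on $u\lin$ and $\nabla u\lin$, and the volume bound $|\Omega_\tau|\lesssim t^5$, to check that each resulting contribution is of higher order in $t$ than the leading $t^{7\beta/3+3/2}$ whenever $\beta>5/2$ and $T_0$ is small. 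Closing the bootstrap by a continuity argument in $t$ then yields $N(t)\lesssim t^{7\beta/3+5/2}$, and in particular the stated inequality. The main obstacle is the bookkeeping: because the relevant cone $\Omega_\tau$ has radius comparable to $t$ rather than $\tau$, the $L^\infty$ control on $u\lin$ is only at the level $t^\beta$ and not the sharper $\tau^\beta$ one has on $\{|x|\leq\tau/2\}$, so every intermediate inequality must be made uniform on this larger set; the gap between the leading exponent $7\beta/3+5/2$ and the target exponent $7\beta/3+7/6$ is what provides the margin to absorb all nonlinear contributions and close the bootstrap.
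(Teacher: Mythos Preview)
Your approach is correct but takes a much longer route than the paper. The paper's proof is essentially two lines: it invokes the already-established global estimate \eqref{eq:energy-nonlin-2},
\[
\|(u-u\lin,\partial_t u-\partial_t u\lin)\|_{X^1\times H^1}\ \lesssim\ f\bigl(\|(u_0,u_1)\|_{X^1\times H^1}\bigr),
\]
localizes it by finite speed of propagation (replacing the initial data by their restriction to $\{|x|\le \tfrac32 t\}$), and then simply computes
\[
\|(u_0,u_1)\|_{X^1\times H^1(|x|\le \tfrac32 t)}^2 \ \sim\ \int_0^{3t/2}(r^{\beta-2})^2 r^4\,\mathrm{d}r\ \sim\ t^{2\beta+1},
\]
so that $f(t^{\beta+1/2})=t^{7\beta/3+7/6}$. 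All the analytic work (the $H^1$ bound on $f(u)$ and the contraction estimate) is already encapsulated in Lemma~\ref{lem:contraction} and Proposition~\ref{prop:energy-est}, so no new bootstrap is needed here.

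Your direct argument --- pointwise bounds on $u\lin$ via the self-similar rescaling, then a localized energy estimate for $v=u-u\lin$ with forcing $f(u)$ and a bootstrap on $\sup_{\tau\le t}\|v(\tau)\|_{X^1(\Omega_\tau)}$ --- is also valid and in fact yields the sharper exponent $\tfrac73\beta+\tfrac52$. The cost is the bookkeeping you mention (uniform pointwise control on the enlarged cone, Sobolev embeddings on varying balls, closing the bootstrap), none of which the paper needs since the nonlinear estimate was already proved globally. In short: the paper trades sharpness for brevity by recycling Proposition~\ref{prop:energy-est}, while you reprove a refined localized version from scratch.
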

\begin{proof}
  From \eqref{eq:energy-nonlin-2} and finite speed of propagation we obtain
  \begin{equation*}
    \|u-u\lin\|_{X^1(|x|\leq \frac 12 t)} \lesssim f(\|(u_0, u_1)\|)_{X^1\times H^1(|x| \leq \frac 32 t)}.
  \end{equation*}
  We have
  \begin{equation*}
    \|(u_0, u_1)\|_{X^1\times H^1(|x|\leq \frac 32 t)}^2 \sim \int_0^{\frac 32 t} (r^{\beta-2})^2 r^4 \ud r \sim t^{2\beta+1},
  \end{equation*}
  and the conclusion follows.
\end{proof}

\bibliographystyle{plain}
\bibliography{construction}

\end{document}